\newcommand{\C}{\ensuremath{\mathbb{C}}}
\newcommand{\E}{\ensuremath{\mathcal{E}}}
\newcommand{\Gc}{\ensuremath{\mathcal{G}}}
\newcommand{\N}{\ensuremath{\mathbb{N}}}
\newcommand{\R}{\ensuremath{\mathbb{R}}}
\newcommand{\Z}{\ensuremath{\mathbb{Z}}}
\renewcommand{\epsilon}{\varepsilon}
\renewcommand{\leq}{\ensuremath{\leqslant}}
\renewcommand{\geq}{\ensuremath{\geqslant}}
\newcommand{\qed}{\hfill \vrule height6pt  width6pt depth0pt}
\newcommand{\ot}{\otimes}
\newcommand{\epsi}{\varepsilon}
\newcommand{\ovl}{\overline}
\newcommand{\co}{\colon}
\renewcommand{\d}{\mathop{}\mathopen{}\mathrm{d}} 
\newcommand{\supp}{\mathrm{supp}}
\newcommand{\dist}{\mathrm{dist}}
\newcommand{\Id}{\mathrm{Id}}
\newcommand{\HI}{H^\infty}
\DeclareMathOperator{\Str}{Str}
\newcommand{\spr}[2]{\langle #1, #2 \rangle}
\DeclareMathOperator{\Ker}{Ker} 
\DeclareMathOperator{\Ran}{Ran} 
\DeclareMathOperator{\esssup}{ess-sup}
\DeclareMathOperator{\essinf}{ess-inf}
\newtheorem{thm}{Theorem}[section]
\newtheorem{defi}[thm]{Definition}
\newtheorem{prop}[thm]{Proposition}
\newtheorem{cor}[thm]{Corollary}
\newtheorem{lemma}[thm]{Lemma}
\newtheorem{remark}[thm]{Remark}
\newenvironment{proof}[1][]{\noindent {\it Proof #1} : }{\hbox{~}\qed
\smallskip
}
\numberwithin{equation}{section}
\begin{document}
\selectlanguage{english}
\title{\bfseries{$\HI$ calculus for submarkovian semigroups on weighted $L^2$ spaces}}
\date{September 2019}
\author{\bfseries{Komla Domelevo, Christoph Kriegler and Stefanie Petermichl}}
\maketitle

\begin{abstract}
Let $(T_t)_{t \geq 0}$ be a markovian (resp. submarkovian) semigroup on some $\sigma$-finite measure space $(\Omega,\mu)$.
We prove that its negative generator $A$ has a bounded $\HI(\Sigma_\theta)$ calculus on the weighted space $L^2(\Omega,wd\mu)$ as long as the weight $w : \Omega \to (0,\infty)$ has finite characteristic defined by $Q^A_2(w) = \sup_{t > 0} \left\| T_t(w) T_t \left(w^{-1} \right) \right\|_{L^\infty(\Omega)}$ (resp. by a variant for submarkovian semigroups).
Some additional technical conditions on the semigroup have to be imposed and their validity in examples is discussed.
Any angle $\theta > \frac{\pi}{2}$ is admissible in the above $\HI$ calculus, and for some semigroups also certain $\theta = \theta_w < \frac{\pi}{2}$ depending on the size of $Q^A_2(w)$.
The norm of the $\HI(\Sigma_\theta)$ calculus is linear in the $Q^A_2$ characteristic for $\theta > \frac{\pi}{2}$.
We also discuss negative results on angles $\theta < \frac{\pi}{2}$.
Namely we show that there is a markovian semigroup on a probability space and a $Q^A_2$ weight $w$ without H\"ormander functional calculus on $L^2(\Omega,w d\mu)$.
\end{abstract}


\makeatletter
 \renewcommand{\@makefntext}[1]{#1}
 \makeatother
 \footnotetext{
2010 {\it Mathematics subject classification: 47A60, 47D03, 47D07, 46J15, 47B40.}
\\
{\it Key words }: $\HI$ functional calculus, markovian semigroups, weighted $L^p$ spaces, Bellman function.}

\tableofcontents

\section{Introduction}
\label{sec-introduction}

It is well established by now that the $\HI$ functional calculus of a sectorial operator
has important applications in the spectral theory of partial differential operators
and the theory of evolution equations, e.g., in determining the domain of fractional
powers of a partial differential operator in the solution of Kato's problem
(e.g. \cite{AHLLMT,AuTc,DDHPV,Gig,Yag}), in connection with maximal regularity of parabolic
evolution equations (e.g. \cite{HiPr,LaLaMe,LaMe,LeM1,PrSi,Weis}) and certain estimates in
control theory (\cite{HaLe,HaOu,LeM2}).
Today it is known that many systems
of elliptic partial differential operators, Schr\"odinger operators and related important examples of semigroup generators do have
an $\HI$ calculus (\cite{BlKu,CDMcIY,DuOS,DSY,GCMMST,GY,HvNP,KW04,vNP}).
Also from an abstract point of view, a lot of effort has been achieved to establish, characterise and transfer $\HI$ calculus (\cite{AFLM,Haase,KaWe1,KaWe2,KaKuWe}).

We let $\theta \in (0,\pi)$ be an angle and define $\Sigma_\theta = \{ \lambda \in \C \backslash \{ 0 \} :\: |\arg \lambda| < \theta \}$ to be the sector around the positive half-axis with half opening angle equal to $\theta$.
The $\HI$ class is then $\HI(\Sigma_\theta) = \{ f : \: \Sigma_\theta \to \C : \: f\text{ is holomorphic and bounded} \}$, which is a Banach algebra when equipped with pointwise multiplication and norm $\|m\|_{\infty,\theta} = \sup_{\lambda \in \Sigma_\theta} |m(\lambda)|$.
Let now $-A$ be the generator of a $c_0$-semigroup $(T_t)_{t \geq 0}$ on some Banach space $X$.
Suppose that $A$ is sectorial, i.e. the spectrum is contained in some $\overline{\Sigma_{\theta'}}$ and the resolvents are appropriately norm controlled, and suppose for simplicity that $A$ has dense range (see Subsection \ref{subsec-HI} for details).
Furthermore, let $m \in \HI(\Sigma_\theta)$ decay polynomially at $0$ and at $\infty$.
Then one defines the $\HI$ functional calculus $m(A)$, a bounded linear operator on $X$, by means of the Cauchy integral formula over resolvents
\[ m(A) = \frac{1}{2\pi i} \int_{\partial\Sigma_{\theta''}} m(\lambda) R(\lambda,A) d\lambda,\]
with angles $\theta' < \theta'' < \theta$.
Then $\HI$ calculus is the question whether this ad hoc formula can be reasonably extended to all $m \in \HI(\Sigma_\theta)$ and whether one obtains the estimate
\begin{equation}
\label{equ-HI-calculus}
\|m(A)\|_{B(X)} \leq C \|m\|_{\infty,\theta} \quad ( m \in \HI(\Sigma_\theta) ).
\end{equation}
This is a difficult task and its solution, for more or for less concrete operators $A$, requires several fundamental tools from harmonic analysis such as square functions (see e.g. \cite[Section 6]{CDMcIY}, \cite{KaWe2}), bounded imaginary powers of $A$ \cite[Section 5]{CDMcIY}, bilinear embeddings \cite[Section 4]{CDMcIY} and transference principles \cite{AFLM,CoWe,Fen,HiPr}.
Note that a positive answer of \eqref{equ-HI-calculus} depends in general on $\theta$ and a smaller angle yields a more restrictive condition, since $\HI(\Sigma_\theta) \subseteq \HI(\Sigma_\sigma)$ if $\sigma \leq \theta$ by uniqueness of analytic continuation.

Let us give a brief overview of important operator theoretic results when an $\HI$ calculus is known.
Let $(\Omega,\mu)$ be a $\sigma$-finite measure space.
First suppose that the semigroup $(T_t)_{t \geq 0}$ is markovian (see Subsection \ref{subsec-semigroups} for the definition of this classical notion), so contractive on all $L^p(\Omega)$, self-adjoint on $L^2(\Omega)$, lattice positive and $T_t(1) = 1$.
The first universal multiplier
theorem was proved by E. M. Stein, who showed that if $m$ is of Laplace transform type, then $m(A)$ is bounded on $L^p(\Omega)$ for $1 < p < \infty$ \cite[Corollary 3, p. 121]{Ste70}.
This result was later extended to submarkovian semigroups (see Subsection \ref{subsec-semigroups} for the definition) and for $m$ belonging to $\HI(\Sigma_\theta)$ by Cowling \cite[Theorem 1]{Cow} and Meda \cite[Theorem 3]{Med}.
The angle of the functional calculus depends on $p$ and by complex interpolation with the self-adjoint calculus on $L^2(\Omega)$, one obtains $\theta > \pi \left| \frac1p - \frac12 \right|$.
Later on it was observed by Duong \cite{Duo}, (see also \cite{HiPr} for $\theta > \frac{\pi}{2}$ and \cite[Corollary 5.2]{KaWe1} for $\theta < \frac{\pi}{2}$) that semigroups acting on a single $L^p(\Omega)$ space and consisting of positive and contractive operators, or even only regular contractive operators \cite{CoWe,Fen,Ste70} suffices to obtain an $\HI(\Sigma_\theta)$ calculus.
A recent extension of \cite{HiPr} and \cite[Corollary 10.15]{KW04} is \cite[Theorem 4]{Xu2015} where the setting is a vector valued semigroup of the form $T_t = T_t^{(0)} \ot \Id_Y$ acting on the Bochner space $X = L^p(\Omega;Y)$, where $Y$ is an intermediate UMD space and where $T_t^{(0)}$ is an analytic semigroup consisting of regular contractive operators acting on a single $L^p(\Omega)$ space.
Here the novelty compared to \cite[Corollary 10.15]{KW04} is an angle of $\HI(\Sigma_\theta)$ calculus $\theta < \frac{\pi}{2}$.
Concerning the optimality of the $\HI$ calculus angle, the recent breakthrough result \cite{CaDr} yields $\theta > \theta_p = \arctan\left( \frac{|p-2|}{2\sqrt{p-1}} \right)$ on $X = L^p(\Omega)$, $1 < p < \infty$, within the class of submarkovian semigroups (or even the class of self-adjoint semigroups which are contractive on the $L^p(\Omega)$ scale).
Here the angle $\theta_p$ is already optimal in the simple example of the Ornstein-Uhlenbeck semigroup acting on $L^p(\R^d,\mu)$ where $d\mu(x) = \left(2 \pi\right)^{-\frac{d}{2}} \exp\left( - \frac{|x|^2}{2} \right) dx$ is Gaussian measure and $A = - \Delta + x \cdot \nabla$ \cite{GCMMST}.

In the present article we consider markovian and submarkovian semigroups, and add a weight $w$ to the picture, so that $\|f\|_X = \|f\|_{L^2(\Omega,wd\mu)} = \left(\int_\Omega |f(x)|^2 w(x) d\mu(x) \right)^{\frac12}$.
Weighted estimates for spectral multipliers have been recently studied by \cite[Theorem 3.1 and Theorem 3.2]{DSY} and \cite[Theorem 4.1 and Theorem 4.2]{GY}.
In the latter works, the space $\Omega$ is supposed to be of homogeneous type and the semigroup $(T_t)_{t \geq 0}$ is self-adjoint and has an integral kernel satisfying Gaussian bounds (see \eqref{equ-GE} below).
The multiplier function $m$ is allowed to belong to the so-called H\"ormander-Mihlin class which consists of certain functions defined on $(0,\infty)$ which are differentiable up to a prescribed order.
Note that the H\"ormander-Mihlin class contains $\HI(\Sigma_\theta)$ for any $\theta \in (0,\pi)$, so that \cite{DSY,GY} yield an $\HI$ calculus to any angle on weighted $L^p$ spaces.
The weights that are allowed here belong to a certain (spatially defined) Muckenhoupt class, see also Remark \ref{rem-DSY-GY} for a comparison with our results.

In this work, we settle the case of markovian and submarkovian semigroups without any dimension assumption on $\Omega$ nor integral kernel estimates of $(T_t)_{t \geq 0}$. Our underlying Banach space will always be $X = L^2(\Omega,wd\mu)$.
The natural condition for the weight $w$ is the semigroup characteristic
\[ Q^A_2(w) = \sup_{t > 0} \esssup_{x \in \Omega} T_tw(x) T_t\left(w^{-1}\right)(x) < \infty . \]
Then our first main result reads as follows.
\begin{thm}[see Corollary \ref{cor-bilinear}]
\label{thm-1-intro}
Let $(\Omega,\mu)$ be a $\sigma$-finite measure space and $(T_t)_{t \geq 0}$ be a markovian semigroup on $(\Omega,\mu)$.
Let $w$ be a weight on $\Omega$ such that $Q^A_2(w) < \infty$.
Then under some technical condition on the semigroup (e.g. $\mu(\Omega) < \infty$ suffices), the negative generator $A$ of $(T_t)_{t \geq 0}$ is $\frac{\pi}{2}$-sectorial on $L^2(\Omega,w d\mu)$ and \eqref{equ-HI-calculus} holds, more precisely,
\begin{equation}
\label{equ-1-thm-1-intro}
\|m(A)\|_{L^2(\Omega,wd\mu) \to L^2(\Omega,wd\mu)} \leq C_\theta Q^A_2(w) \left(|m(0)| + \|m\|_{\infty,\theta} \right)
\end{equation}
for any $\theta > \frac{\pi}{2}$.
\end{thm}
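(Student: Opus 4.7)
\emph{Overall strategy.} My plan is to deduce the theorem from the weighted bilinear embedding of Corollary~\ref{cor-bilinear} via the standard Hilbert-space McIntosh square function machinery. Since $L^2(\Omega, wd\mu)$ is a Hilbert space, a bounded $H^\infty(\Sigma_\theta)$ calculus for $\theta > \pi/2$ is equivalent, by McIntosh's theorem, to $\pi/2$-sectoriality of $A$ together with quadratic estimates for both $A$ and its $L^2(wd\mu)$-adjoint. The bilinear embedding is precisely what packages both quadratic estimates simultaneously, in a form where the constant tracks linearly in $Q_2^A(w)$.

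\emph{Step 1: sectoriality and boundedness.} I would first verify that $(T_t)$ extends to a bounded $c_0$-semigroup on $L^2(wd\mu)$ with operator norm $\lesssim Q_2^A(w)^{1/2}$. The markovian Cauchy--Schwarz pointwise inequality $|T_t f|^2 \leq T_t(|f|^2 w)\, T_t(w^{-1})$, combined with self-adjointness of $T_t$ on $L^2(\mu)$ and the defining $Q_2^A$-bound, yields this (the technical hypothesis, e.g.\ $\mu(\Omega)<\infty$, is exactly what is needed to swap integrals and to transfer estimates between the $L^2(\mu)$ and $L^2(wd\mu)$ dualities). Boundedness of the semigroup together with the Laplace transform representation $R(\lambda,A) = \int_0^\infty e^{-\lambda t} T_t \, dt$ for $\operatorname{Re}\lambda>0$ yields $\pi/2$-sectoriality of $A$ on $L^2(wd\mu)$ with resolvent norm $\|\lambda R(\lambda,A)\| \lesssim Q_2^A(w)^{1/2}$.

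\emph{Step 2: quadratic estimates and the $H^\infty$ calculus.} The bilinear embedding of Corollary~\ref{cor-bilinear} delivers, in combination with polarisation and duality between $L^2(wd\mu)$ and $L^2(w^{-1}d\mu)$, the McIntosh quadratic estimates
\[
\Bgnorm{\Bigl(\int_0^\infty |AT_t f|^2\, t\, dt\Bigr)^{1/2}}_{L^2(wd\mu)} \leq C\, Q_2^A(w)^{1/2}\, \|f\|_{L^2(wd\mu)}
\]
for both $A$ and its $L^2(wd\mu)$-adjoint. McIntosh's theorem on Hilbert space upgrades these to a bounded $H^\infty(\Sigma_\theta)$ calculus for every $\theta > \pi/2$. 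Tracking constants through the Cauchy integral $m(A) = (2\pi i)^{-1}\int_{\partial \Sigma_{\theta''}} m(\lambda) R(\lambda,A)\, d\lambda$, one factor of $Q_2^A(w)^{1/2}$ arises from the resolvent bound and another from the McIntosh constant, giving the linear dependence on $Q_2^A(w)$ in~\eqref{equ-1-thm-1-intro}. The $|m(0)|$ term absorbs the part of $m(A)$ acting on $\Ker A$, which the Cauchy integral misses when $A$ lacks dense range on the weighted space.

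\emph{Main obstacle.} The genuine analytic content sits in Corollary~\ref{cor-bilinear}: the bilinear embedding itself. Following the Bellman function tradition of Nazarov--Treil--Volberg, with the heat-flow refinements of Petermichl, Dragi\v{c}evi\'c--Volberg and Carbonaro--Dragi\v{c}evi\'c, this would be proved by exhibiting an explicit Bellman function in a few real variables, whose convexity evaluated along the heat flow $t \mapsto (T_t f, T_t g, T_t w, T_t w^{-1})$ and integrated in $t$ via an integration by parts produces the bilinear integrand on the left and the $L^2(w)\times L^2(w^{-1})$ norms on the right. The most delicate point is calibrating the Bellman function on the domain $\{ab \leq Q_2^A(w)\}$ so that the characteristic enters the final estimate \emph{linearly} rather than quadratically, which is exactly the sharpness claim driving the constant in~\eqref{equ-1-thm-1-intro}; a related hurdle is justifying the boundary terms of the integration by parts at $t=0$ and $t=\infty$, which is what the ancillary technical hypothesis on the semigroup (or the finite-measure case) is there to underwrite.
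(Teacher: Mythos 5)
There is a genuine gap in your Step 2. You claim that the bilinear estimate of Theorem \ref{thm-bilinear} yields, ``in combination with polarisation and duality,'' the McIntosh quadratic estimates on $L^2(\Omega,wd\mu)$. This does not go through, precisely because of the weight. The pairing underlying the bilinear estimate is $\langle f,g\rangle=\int_\Omega fg\,d\mu$ (so $L^2(wd\mu)^*\cong L^2(w^{-1}d\mu)$), whereas the quadratic estimate asks for $\int_0^\infty \|A^{1/2}T_tf\|_{L^2(wd\mu)}^2\,dt\lesssim\|f\|_{L^2(wd\mu)}^2$. In the unweighted case the polarisation $g=\overline{f}$ turns $\langle AT_tf,T_tg\rangle$ into $\|A^{1/2}T_tf\|_{L^2(\mu)}^2$ and your argument works, but with a weight there is no admissible choice of a \emph{fixed} $g\in L^2(w^{-1}d\mu)$ that reproduces $\int|A^{1/2}T_tf|^2 w\,d\mu$ from $\int (A^{1/2}T_tf)(A^{1/2}T_tg)\,d\mu$. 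More fundamentally, the quadratic estimate is, by duality, a statement against all $t$-dependent test families $G(\cdot,t)\in L^2(w^{-1}d\mu\otimes dt)$, while the bilinear estimate only tests against the very rigid family $G(\cdot,t)=A^{1/2}T_tg$ with $g$ \emph{fixed}. Without an extra argument this is strictly weaker. The paper avoids this entirely: Propositions \ref{prop-bilinear-to-calculus} and \ref{prop-bilinear-to-calculus-weight} follow Cowling--Doust--McIntosh--Yagi and write $b(A)=\int_0^\infty \beta(t)\,AT_t\,dt$ for $b\in H^\infty_0(\Sigma_\eta)$ with $\beta\in L^\infty(\R_+)$ controlled by $\|b\|_{\infty,\eta}$, which lets the bilinear estimate be applied directly with fixed $g$, bypassing quadratic estimates altogether. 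If you want to salvage the McIntosh route you would need to prove the one-sided square function estimate separately, which the bilinear inequality alone does not deliver.

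Two smaller issues. First, a citation mix-up: the bilinear embedding is Theorem \ref{thm-bilinear}, not Corollary \ref{cor-bilinear}; the latter is the functional calculus statement you are trying to prove. Second, in Step 1 you assert that the Cauchy--Schwarz inequality $|T_tf|^2\leq T_t(|f|^2w)\,T_t(w^{-1})$ together with self-adjointness ``yields'' $\|T_t\|_{L^2(w)\to L^2(w)}\lesssim Q^A_2(w)^{1/2}$. After dualising you are left needing a pointwise bound on $T_t\bigl(T_t(w^{-1})w\bigr)$, which does not follow from $Q^A_2(w)<\infty$ in any obvious way; this is exactly why Proposition \ref{prop-bilinear-to-calculus} constructs the sectorial operator $A_Y$ on $L^2(\Omega,wd\mu)$ abstractly via pseudo-resolvents rather than assuming a priori boundedness of the semigroup on the weighted space. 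Your final paragraph on the Bellman-function machinery behind the bilinear estimate is accurate but, of course, does not by itself close the two gaps above.
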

We are also able to push the angle $\theta$ to be equal to $\frac{\pi}{2}$ and show in Corollary \ref{cor-bilinear} an $\HI(\Sigma_{\frac{\pi}{2}};J)$ calculus on $L^2(\Omega,wd\mu)$, where $\HI(\Sigma_{\frac{\pi}{2}};J)$ consists of holomorphic functions bounded on $\C_+$ admitting a boundary function on $i\R$ which belongs to a certain Besov class (see Definition \ref{defi-HI-Besov}).
For the additional term $|m(0)|$ compared to \eqref{equ-HI-calculus} which disappears when $A$ is injective, we refer to the classical Remark \ref{rem-bilinear-to-calculus-weight}.
We refer to Theorem \ref{thm-bilinear} for the precise meaning of the technical conditions.
They are satisfied in the following particular cases: $\mu(\Omega) < \infty$, or $\Omega$ being a space of homogeneous type and $(T_t)_{t \geq 0}$ having an integral kernel satisying Gaussian estimates (see Remarks \ref{rem-automatic-infty-domain} and \ref{rem-local-diffusion}).
Also if $\Omega$ is a locally compact separable metric measure space, $(T_t)_{t \geq 0}$ is a Feller semigroup and the weight $w$ is continuous, then one of the needed technical conditions is satisfied (see Remark \ref{rem-automatic-pointwise-convergence} 2.)
The method of proof of Theorem \ref{thm-1-intro} is establishing a bilinear estimate
\begin{equation}
\label{equ-bilinear-intro}
\int_0^\infty \left|\langle AT_tf, T_tg \rangle \right| dt \leq C \|f\|_{L^2(\Omega,wd\mu)} \|g\|_{L^2(\Omega,w^{-1}d\mu)}
\end{equation}
(see Theorem \ref{thm-bilinear}) which is well-known to yield $\HI$ calculus (see \cite[Theorem 4.4]{CDMcIY} or Proposition \ref{prop-bilinear-to-calculus} and Proposition \ref{prop-bilinear-to-calculus-weight}).
For the proof of \eqref{equ-bilinear-intro} in turn, we use a Bellman function from \cite{DoPe}, see Lemma \ref{L: existence and properties of the Bellman function}, in the spirit of \cite{CaDr}, capturing the weight variables.
That is, we will define a functional
\[ \mathcal{E}(t) = \int_{\Omega} B(T_tf,T_tg,T_t(w^{-1}),T_tw) dt \geq 0\]
(see \eqref{equ-E-functional})
with the two properties that 
\begin{align}
\mathcal{E}(0) & \lesssim \|f\|_{L^2(\Omega,wd\mu)}^2 + \|g\|_{L^2(\Omega,w^{-1}d\mu)}^2 \nonumber \\
\intertext{and}
- \mathcal{E}'(t) & \geq \frac{c}{Q}\left| \langle AT_t,T_tg \rangle \right| \label{equ-intro-E-functional}
\end{align}
(see \eqref{equ-quantitative-monotonicity-version-2}, \eqref{equ-calculation-1}, \eqref{equ-calculation-2} and Proposition \ref{prop-markovian-estimate}),
reflecting the properties of the Bellman function in Lemma \ref{L: existence and properties of the Bellman function}.
Then an integration of \eqref{equ-intro-E-functional} over $t \in (0,\infty)$ will yield \eqref{equ-bilinear-intro}.
Note that despite the directness of this approach, the proof is rather involved, and it is notably the differentiability of $\mathcal{E}(t)$ which imposes additional hypotheses in Theorem \ref{thm-1-intro} on the markovian semigroup.

Our result also extends to submarkovian semigroups.
Note that then the characteristic has to be modified into the a priori larger expression
\[ \tilde{Q}^A_2(w) = \sup_{t > 0} \esssup_{x \in \Omega'} S_t(w')(x) S_t\left(w'^{-1}\right)(x) ,\]
where $\Omega' = \Omega \cup \{ \infty \}$ includes a supplementary cemetery point,
\[ S_t(f')(x) = \begin{cases} T_t(f'|_{\Omega})(x) + f'(\infty)(1-T_t(1))(x) & : \: x \in \Omega \\ f'(\infty) & : \: x = \infty \end{cases} \]
is the conservative semigroup extension of $T_t$, i.e. $S_t(1) = 1$, and $w'(x) = w(x) 1_{x \in \Omega} + 1_{x = \infty}$.
Then our second main results is the following.
\begin{thm}[see Corollary \ref{cor-bilinear-submarkovian}] 
\label{thm-2-intro}
Let $(\Omega,\mu)$ be a $\sigma$-finite measure space and $(T_t)_{t \geq 0}$ be a submarkovian semigroup on $(\Omega,\mu)$.
Let $w$ be a weight on $\Omega$ such that $\tilde{Q}^A_2(w) < \infty$.
Then under some technical condition on the semigroup (e.g. $\mu(\Omega) < \infty$ suffices), the negative generator $A$ of $(T_t)_{t \geq 0}$ is $\frac{\pi}{2}$-sectorial on $L^2(\Omega,w d\mu)$ and \eqref{equ-HI-calculus} holds, more precisely,
\[ \|m(A)\|_{L^2(\Omega,wd\mu) \to L^2(\Omega,wd\mu)} \leq C_\theta \tilde{Q}^A_2(w) \left(|m(0)| + \|m\|_{\infty,\theta} \right) \]
for any $\theta > \frac{\pi}{2}$.
\end{thm}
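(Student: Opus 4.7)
The plan is to adapt the Bellman function proof of Theorem \ref{thm-1-intro} to the submarkovian setting by using the conservative extension $(S_t)$ on $\Omega' = \Omega \cup \{\infty\}$ as a pointwise bookkeeping device which restores the identity $S_t(1) = 1$ lost in passing from the markovian to the submarkovian case. I would not try to turn $(\Omega', \mu')$ into a bona fide $\sigma$-finite measure space---indeed, making $S_t$ self-adjoint on $L^2(\Omega', \mu')$ seems to force $\mu'(\{\infty\}) = \infty$, which violates $\sigma$-finiteness---but instead keep all integrations over $\Omega$ while feeding the Bellman function $B$ of Lemma \ref{L: existence and properties of the Bellman function} with $S_t$-averages of the extended weight $w'$.

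Concretely, given $f, g$ in a suitable dense subspace of $L^2(\Omega, wd\mu) \cap L^2(\Omega, w^{-1}d\mu)$, define
\[ \mathcal{E}(t) = \int_\Omega B\bigl(T_t f,\, T_t g,\, (S_t w'^{-1})|_\Omega,\, (S_t w')|_\Omega\bigr)\, d\mu, \]
where by the formula for $S_t$ one has $(S_t w')|_\Omega = T_t w + (1 - T_t 1)$ and $(S_t w'^{-1})|_\Omega = T_t(w^{-1}) + (1 - T_t 1)$. The initial bound $\mathcal{E}(0) \lesssim \|f\|^2_{L^2(wd\mu)} + \|g\|^2_{L^2(w^{-1}d\mu)}$ follows from the pointwise size estimate on $B$. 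The monotonicity estimate $-\mathcal{E}'(t) \gtrsim \tilde Q^A_2(w)^{-1} |\langle AT_t f, T_t g\rangle|$ is obtained by differentiating inside the integral, an integration by parts based on the self-adjointness of $T_t$ on $L^2(\Omega,\mu)$, and the pointwise convexity of $B$, where the extra terms proportional to $\partial_t(1 - T_t 1) = AT_t 1$ in the weight-argument time derivatives combine with those of $T_t w$ and $T_t(w^{-1})$ to reproduce the conservative carré-du-champ structure of the markovian proof, at the cost of replacing $Q^A_2(w)$ by $\tilde Q^A_2(w)$.

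Integrating $-\mathcal{E}'(t)$ over $t \in (0,\infty)$ yields the bilinear estimate \eqref{equ-bilinear-intro} with $Q^A_2(w)$ replaced by $\tilde{Q}^A_2(w)$, and Proposition \ref{prop-bilinear-to-calculus-weight} (or \cite[Theorem 4.4]{CDMcIY}) then delivers the $\HI(\Sigma_\theta)$ calculus of $A$ on $L^2(\Omega, wd\mu)$ with the stated norm bound for any $\theta > \pi/2$. The main obstacle will be the execution of this monotonicity computation: one must verify that the additional $(1 - T_t 1)$ contributions in the weight arguments of $B$ do not spoil the pointwise convexity structure, and that the enlarged characteristic $\tilde Q^A_2(w)$ is precisely what is needed to absorb them into the lower bound for $-\mathcal{E}'(t)$. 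The technical conditions on the semigroup (e.g.\ $\mu(\Omega) < \infty$) play the same role as in Theorem \ref{thm-1-intro}, namely to justify differentiability of $\mathcal{E}(t)$ and the vanishing of boundary terms at $t = 0$ and $t = \infty$.
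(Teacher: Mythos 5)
Your approach is essentially the one the paper takes in Theorem \ref{thm-bilinear-submarkovian}: feed the Bellman function with $(S_t w')|_\Omega$ and $(S_t w'^{-1})|_\Omega$ while keeping all integrals over $\Omega$, and run the monotonicity argument. The paper even uses the same functional (up to the normalization $v = 2(w')^{-1}$). However, your account of how the extra $(1-T_t 1)$ contributions are controlled is off the mark: they do not ``combine with those of $T_tw$ and $T_t(w^{-1})$ to reproduce the conservative carr\'e-du-champ structure.''

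After reducing the derivative estimate to $T = T_t$ via $A \leftrightarrow \tfrac1t(\Id-T_t)$, one has $(\Id - S_t)v_i|_\Omega = (\Id - T_t)v_i + v_i(\infty)(T_t 1 - 1)$. The paper splits the right-hand side of the key inequality into the purely $T_t$ part (which is estimated by Proposition \ref{prop-markovian-estimate}, already formulated there for submarkovian $T$) plus the remainder
\[
\int_\Omega \partial_r B\cdot v(\infty)\bigl(T_t 1 - 1\bigr) + \partial_s B \cdot w'(\infty)\bigl(T_t 1 - 1\bigr)\, d\mu .
\]
This remainder is neither absorbed by the convexity inequality nor by the enlarged characteristic $\tilde{Q}^A_2(w)$; it is simply non-negative, because $T_t 1 - 1 \leq 0$ and $\partial_r B \leq 0$, $\partial_s B \leq 0$, the sign property \eqref{derivative-sign} established in Lemma \ref{L: existence and properties of the Bellman function} precisely for this purpose, and can therefore be dropped. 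That sign property is the missing ingredient behind the ``main obstacle'' you flag. The role of $\tilde{Q}^A_2(w)$ is only to guarantee $(S_t w')(S_t w'^{-1}) \leq Q$ on $\Omega$, not to absorb the extra term. With this in place, the remaining steps (differentiability of $\mathcal{E}$ via Lemma \ref{lem-E-differentiable-submarkovian}, the three alternative technical hypotheses, and the reduction to truncated weights via Lemma \ref{lem-truncated-weight}) proceed as in the markovian case.
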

The angle of $\HI(\Sigma_\theta)$ calculus in Theorems \ref{thm-1-intro} and \ref{thm-2-intro} above is $\theta > \frac{\pi}{2}$.
In some cases, this angle can be reduced to $\theta < \frac{\pi}{2}$.

\begin{prop}[see Proposition \ref{prop-maximal-regularity}, Remark \ref{rem-power-characteristic} and Proposition \ref{prop-submarkovian-maximal-regularity}]
\label{prop-3-intro}
Let $(\Omega,\mu)$ be a $\sigma$-finite measure space and $(T_t)_{t \geq 0}$ be a markovian (resp. submarkovian) semigroup on $(\Omega,\mu)$.
Suppose that the weight $w$ on $\Omega$ satisfies $Q^A_2\left(w^\delta\right) < \infty$ (resp. $\tilde{Q}^A_2\left(w^\delta\right) < \infty$) for some $\delta > 1$.
Then under the same technical conditions as in Theorem \ref{thm-1-intro} (resp. Theorem \ref{thm-2-intro}), $A$ has an $\HI(\Sigma_\theta)$ calculus on $L^2(\Omega,wd\mu)$ for some $\theta < \frac{\pi}{2}$ and consequently, $A$ has maximal regularity on $L^2(\Omega,wd\mu)$.
\end{prop}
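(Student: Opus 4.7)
The plan is to interpolate the $\HI$ calculus between two endpoints: on one side, the unweighted space $L^2(\Omega, d\mu)$, where $A$ is a non-negative self-adjoint operator (by markovianity, resp.\ submarkovianity) and thus admits an $\HI(\Sigma_\theta)$ calculus for every $\theta > 0$ by the spectral theorem; on the other side, the heavily weighted space $L^2(\Omega, w^\delta d\mu)$, where the hypothesis $Q^A_2(w^\delta) < \infty$ (resp.\ $\tilde Q^A_2(w^\delta) < \infty$) puts us squarely in the setting of Theorem \ref{thm-1-intro} (resp.\ Theorem \ref{thm-2-intro}) and delivers an $\HI(\Sigma_\theta)$ calculus for every $\theta > \pi/2$.

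The key geometric input is the Stein--Weiss formula for complex interpolation of weighted $L^2$ spaces,
\[
\left[ L^2(\Omega, d\mu) , L^2(\Omega, w^\delta d\mu) \right]_{1/\delta} = L^2(\Omega, w d\mu),
\]
isometrically, which at the interpolation parameter $\eta = 1/\delta \in (0,1)$ returns exactly the target space. The realisations of $A$ at the two endpoints are consistent since both are generators of the same semigroup $(T_t)$ on their respective $L^2$ spaces; on a common dense intersection the two definitions coincide, so no ambiguity arises. Applying the standard interpolation of $\HI$ functional calculi (Stein interpolation of the analytic family $z \mapsto A^{iz}$, equivalent on a Hilbert space to interpolation of the $\HI$ angle itself by McIntosh's theorem on the equality of BIP-type and $\HI$-angle), we conclude that on $L^2(\Omega, w d\mu)$ the operator $A$ has $\HI(\Sigma_\theta)$ calculus for every
\[
\theta > \left(1 - \tfrac{1}{\delta}\right) \cdot 0 + \tfrac{1}{\delta} \cdot \tfrac{\pi}{2} = \tfrac{\pi}{2\delta}.
\]
Since $\delta > 1$, any $\theta \in (\pi/(2\delta), \pi/2)$ is strictly below $\pi/2$, which proves the first claim.

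The second claim is then automatic: $L^2(\Omega, w d\mu)$ is a Hilbert space, on which every sectorial operator of angle strictly below $\pi/2$ has maximal regularity by de Simon's classical theorem (alternatively, $R$-boundedness coincides with uniform boundedness in Hilbert space, so Weis's theorem gives the same conclusion). The main technical concern I expect is to verify the compatibility of the two realisations of $A$ and the precise interpolation of the $\HI$ calculus bounds across the two endpoint spaces; concretely, one needs a dense subspace of $L^2(\Omega, d\mu) \cap L^2(\Omega, w^\delta d\mu)$ stable under $(T_t)$ on which the two realisations provably agree, which is delivered by the semigroup description under the technical assumptions inherited from Theorem \ref{thm-1-intro} (resp.\ Theorem \ref{thm-2-intro}).
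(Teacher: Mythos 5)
Your proposal is correct and follows essentially the same route as the paper: both interpolate between the self-adjoint calculus on $L^2(\Omega,\mu)$ and the calculus on $L^2(\Omega, w^\delta d\mu)$ obtained from Corollary \ref{cor-bilinear} (resp.\ Corollary \ref{cor-bilinear-submarkovian}) via $[L^2(\Omega,\mu), L^2(\Omega,w^\delta d\mu)]_{1/\delta} = L^2(\Omega,w\,d\mu)$, and both reduce the angle to $\pi/(2\delta)$ through Stein interpolation of imaginary powers together with the Hilbert-space coincidence of the BIP and $\HI$ angles (the paper cites \cite[Proof of Proposition 5.8]{JLMX} for exactly this step), finishing with de Simon's theorem for maximal regularity. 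The one caveat worth flagging is that your phrase \emph{standard interpolation of $\HI$ functional calculi} is slightly misleading --- as the paper notes, naive complex interpolation of the $\HI$ bounds only yields the angle $\max(0,\pi/2)=\pi/2$ --- but your parenthetical correctly identifies the passage through $z\mapsto A^{iz}$ as the mechanism that actually shrinks the angle.
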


Note that if $\Omega = \R^n$ 
and if the class $Q^A_2$ equals the class $Q^{class}_2$, where
\[ Q^{class}_2(w) = \sup_{B \text{ ball in }\R^n} \left( \frac{1}{|B|} \int_B w \right) \left( \frac{1}{|B|} \int_B w^{-1} \right),\]
then any weight $w$ with $Q^A_2(w) < \infty$ automatically satisfies $Q^A_2\left(w^\delta\right) < \infty$ for some $\delta > 1$.
Consequently, Proposition \ref{prop-3-intro} above applies then for any $Q^A_2$ weight.
Note that $\delta > 1$ depends then on $w$, and then $\theta < \frac{\pi}{2}$ also depends on $w$.
Moreover, the dependence of the $\HI(\Sigma_\theta)$ functional calculus norm on $Q^A_2(w)$ is a priori not linear any more for $\theta < \frac{\pi}{2}$ as it was in \eqref{equ-1-thm-1-intro}
Then the question arises whether one can lower the angle in Theorem \ref{thm-1-intro} universally within the class of all markovian semigroups and all $Q^A_2(w)$ weights.
As a partial negative result, we obtain the following.

\begin{thm}[see Theorem \ref{thm-no-Hormander}]
\label{thm-4-intro}
There is a markovian semigroup with negative generator $A$ on a probability space and a $Q^A_2$ weight $w$ such that for no $s > 0$, $A$ has a H\"ormander $\mathcal{H}^s$ calculus on $L^2(\Omega,w d\mu)$.
In other words, for no $s > 0$ there exists a constant $C > 0$ such that
\[ \|m(A)\|_{L^2(\Omega,w d\mu) \to L^2(\Omega,w d\mu)} \leq C \left( |m(0)| + \|m\|_{\mathcal{H}^s} \right) \]
holds for all H\"ormander multipliers $m \in \mathcal{H}^s$.
\end{thm}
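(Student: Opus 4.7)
The statement is a construction; the task is to exhibit a single triple $(\Omega,\mu)$, $(T_t)_{t\geq 0}$, $w$ which defeats the H\"ormander $\mathcal{H}^s$ calculus to every finite order $s>0$. The plan is to reduce the calculus on $L^2(\Omega,w d\mu)$ to a classical weighted spectral multiplier problem by picking a semigroup with completely explicit eigenstructure, for instance the heat semigroup on $\T$ with normalised Haar measure, an Ornstein--Uhlenbeck type semigroup on Gauss space, or a reversible diagonal Markov chain on a countable state space. In any such model $m(A)$ admits a concrete formula in terms of spectral projections, which opens the door to explicit test multipliers.

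The next step is to construct a weight $w$ with $Q^A_2(w)<\infty$ that is nevertheless pathological with respect to the classical Muckenhoupt $A_2$ condition $Q^{class}_2$ attached to the metric structure of $\Omega$. The key exploit is that $Q^A_2$ averages $w$ and $w^{-1}$ against the semigroup kernel rather than against indicators of metric balls; as flagged after Proposition \ref{prop-3-intro}, the gap between $Q^A_2$ and $Q^{class}_2$ can be strict, so for a sufficiently diffuse semigroup there is room for weights carrying singular or oscillatory features which escape classical $A_2$ while still being controlled by the heat averages defining $Q^A_2$. I would design $w$ with a precisely localised singularity on a thin set compatible with the chosen eigenstructure.

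The witness sequence is then taken to be dilated bumps $m_n(\lambda)=\psi(\lambda/\lambda_n)$ with $\psi\in C_c^\infty((1,2))$ and $\lambda_n\to\infty$. By dyadic scale invariance of the H\"ormander norm, $\sup_n\|m_n\|_{\mathcal{H}^s}<\infty$ simultaneously for every $s>0$, so that an $\mathcal{H}^s$ calculus at any single order $s$ would force $\sup_n\|m_n(A)\|_{L^2(\Omega,w d\mu)}<\infty$. The operators $m_n(A)$ are Littlewood--Paley type spectral projections onto dyadic bands of $\sigma(A)$, whose unboundedness on non-$A_2$ weighted $L^2$ spaces is classical and goes back to Hunt--Muckenhoupt--Wheeden in the translation invariant model. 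Transferring this failure to our semigroup model via the spectral decomposition then contradicts any putative H\"ormander bound.

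The main obstacle is the simultaneous calibration of $Q^A_2(w)<\infty$ and $\|m_n(A)\|_{L^2(\Omega,w d\mu)}\to\infty$: the first demands global smoothness of $w$ under the heat averages at every scale, while the second demands enough localised singularity of $w$ for a spectral projection to feel it. Threading this needle is the technical heart of the argument and is precisely what forces the special choice of semigroup, whose kernel must be diffuse enough to tolerate a pathological weight yet whose spectrum must be sharp enough for Littlewood--Paley projections to detect the pathology.
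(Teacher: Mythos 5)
Your plan takes a genuinely different route from the paper, and it has a concrete gap at its most important step. You propose to work with a geometrically ``nice'' semigroup (heat on $\T$, Ornstein--Uhlenbeck, etc.), find a weight with $Q^A_2(w)<\infty$ but escaping the classical Muckenhoupt class, and then use Littlewood--Paley dyadic bumps $m_n(\lambda)=\psi(\lambda/\lambda_n)$, which have uniformly bounded $\mathcal{H}^s$ norm for every $s$, to produce the contradiction. The difficulty is that for the semigroups you name the classes do not separate: by Remark~\ref{remark-classical-characteristic} of the paper, two-sided Gaussian bounds (heat on $\T$ or $\R^n$) force $Q^A_2(w)\cong Q^{class}_2(w)$. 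So any admissible weight is in $A_2$, and for $A_2$ weights Littlewood--Paley projections are in fact \emph{bounded} on $L^2(w)$ --- the Hunt--Muckenhoupt--Wheeden phenomenon you invoke runs in your favor only for non-$A_2$ weights, which are ruled out here. You correctly identify the ``simultaneous calibration'' of $Q^A_2(w)<\infty$ versus $\|m_n(A)\|\to\infty$ as the heart of the matter, but then leave it unresolved; that calibration is precisely what cannot be achieved in your proposed models.

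The paper sidesteps this obstruction entirely by abandoning a metric setting altogether. It starts from the two-point markovian semigroup with generator $\Gc=\begin{pmatrix}1&-1\\-1&1\end{pmatrix}$ on $\{a,b\}$, takes $N$-fold tensor powers with product weights $w^{(N)}=w_1\otimes\cdots\otimes w_N$ where $w_k=(1,(1+\tfrac{1}{\sqrt{N}})^2)$, and finally glues all $N$ together by a direct sum with rescaled counting measure to get a probability space. Lemma~\ref{lem-tensor-power-extension} gives an exact product formula $Q^A_2(w)=\prod_k Q^{\Gc}_2(w_k)$, so that $\sum_k\epsi_k^2$ bounded keeps $Q^A_2(w)$ bounded uniformly in $N$; at the same time the product lower bound for $\|T_z\|_{L^2(w)\to L^2(w)}$, combined with the scalar asymptotic $\|e^{-z\Gc}\|_{L^2(w_k)\to L^2(w_k)}=1+\tfrac{1}{32}(1+\tan^2\phi+o(1))\epsi_k^2+o(\epsi_k^2)$ from Proposition~\ref{prop-two-point-heart-estimate}, yields $\|T_z\|\gtrsim e^{c\tan^2\phi}$, which grows faster than any power of $\tan\phi$ as $\phi\to\tfrac{\pi}{2}$. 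The test multiplier is not a Littlewood--Paley bump but simply $m(\lambda)=e^{-\lambda z}$; its H\"ormander norm grows only polynomially, $\|m\|_{\mathcal{H}^s}\lesssim|\tan\phi|^s$, so no finite-order $\mathcal{H}^s$ bound can hold. The two approaches thus differ in the witness multiplier (analytic semigroup elements vs. dyadic bumps), in the mechanism that amplifies the norm (tensor powers with a law-of-large-numbers style parameter vs. a singular weight), and in the geometry (a zero-dimensional space with no balls vs. a doubling space). The paper's route is also conceptually informative: it shows the obstruction is purely operator-theoretic and invisible to the local geometry of $\Omega$, whereas your route, even if it could be repaired, would need a non-doubling or non-Gaussian semigroup.
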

For a definition of the H\"ormander $\mathcal{H}^s$ class which consists of functions defined on $\R_+$, we refer to \eqref{equ-def-Hormander}.
The counter-example that exhibits Theorem \ref{thm-4-intro} is based on a markovian semigroup defined on a space $\Omega$ consisting only of two points.

Let us close the introduction with an overview of the rest of the article.
In the preliminary Section \ref{sec-preliminaries}, we introduce the objects of study for the rest of the paper.
In Subsection \ref{subsec-semigroups}, standard notions of markovian and submarkovian semigroups are discussed.
Then Subsection \ref{subsec-HI} contains the necessary material on $\HI$ calculus.
Here, the space $\HI(\Sigma_\theta;J)$ consisting of holomorphic functions with boundary term belonging to a Besov class (see Definition \ref{defi-HI-Besov} and Lemma \ref{lem-properties-Besov}) might be less standard.
Lemma \ref{lem-HI-to-Besov} characterizing the $\HI(\Sigma_\theta;J)$ calculus in terms of the growth of the constant appearing in the $\HI(\Sigma_\sigma)$ calculus \eqref{equ-HI-calculus} when $\sigma \to \theta +$, is possibly new.
Also that the bilinear estimate \eqref{equ-bilinear-intro} implies a $\HI(\Sigma_{\frac{\pi}{2}};J)$ calculus with sharp angle $\frac{\pi}{2}$ (see Propositions \ref{prop-bilinear-to-calculus} and \ref{prop-bilinear-to-calculus-weight}) might be less known.
Then in Subsection \ref{subsec-weights}, we define weights in our setting and show a cut-off property of the characteristic $Q^A_2$.
In Section \ref{sec-Bellman} we introduce the Bellman function which is a crucial ingredient of the proof of Theorems \ref{thm-1-intro} and \ref{thm-2-intro}.
Some technical properties needed in the subsequent two sections are proved.
Then in Section \ref{sec-bilinear}, we state and prove the $\HI$ calculus on weighted $L^2$ space for markovian semigroups (Theorem \ref{thm-1-intro}), and discuss the technical conditions we have to impose.
Parallelly to that, in Section \ref{sec-submarkovian}, we state and prove the companion result for submarkovian semigroups (Theorem \ref{thm-2-intro}).
Finally, in Section \ref{sec-negative-results}, we state and prove Theorem \ref{thm-4-intro}.

\section{Preliminaries}
\label{sec-preliminaries}

\subsection{Semigroups}
\label{subsec-semigroups}

In this article, $(\Omega,\mu)$ always denotes a $\sigma$-finite measure space.

\begin{defi}
\label{defi-submarkovian}
Let $(T_t)_{t \geq 0}$ be a $c_0$-semigroup on $L^2(\Omega)$.
\begin{enumerate}
\item Then $(T_t)_{t \geq 0}$ is called a submarkovian semigroup, if
\begin{enumerate}
\item $T_t$ extends boundedly to an operator on $L^p(\Omega)$ for all $p \in [1,\infty]$ and we have $\|T_t\|_{p \to p} \leq 1$ for any $t \geq 0$ and $p \in [1,\infty]$.
\item $T_t$ is self-adjoint for any $t \geq 0$.
\item $T_tf \geq 0$ for any $t \geq 0$ whenever $f \in \bigcup_{p \in [1,\infty]} L^p(\Omega)$ with $f \geq 0$.
\end{enumerate}
\item $(T_t)_{t \geq 0}$ is called a markovian semigroup, if $(T_t)_{t \geq 0}$ is submarkovian and in addition, $T_t(1) = 1$ for any $t \geq 0$.
\end{enumerate}
In the same way, we call a single operator $T$ submarkovian, if $T$ is a contraction on $L^p(\Omega)$ for all $p \in [1,\infty]$, self-adjoint on $L^2(\Omega)$ and $Tf \geq 0$ for $f \geq 0$.
We also call a single operator $T$ markovian, if in addition $T(1) = 1$.
\end{defi}

In what follows, if $(T_t)_{t \geq 0}$ is a semigroup, we denote $A$ its (negative) generator, i.e. $T_t = \exp(-tA)$.
In fact, we shall always call $A$ the negative generator and omit the term ``negative'' sometimes.
Note that if $(T_t)_{t \geq 0}$ is a submarkovian semigroup, then for $1 < p < \infty$, $(T_t)_{t \geq 0}$ is analytic on $L^p(\Omega)$, and thus, for $f \in L^p(\Omega)$, we have that $T_tf$ belongs to $D(A_p)$, the domain of the generator on $L^p(\Omega)$.
The semigroup $(T_t)_{t \geq 0}$ is typically not strongly continuous on $L^\infty(\Omega)$.
However, $A_\infty(f) = w^*-\lim_{h \to 0+} \frac1h (f - T_hf)$ has a $w^*$-dense domain $D(A_\infty) = \{ f \in L^\infty(\Omega) : \:  w^*-\lim_{h \to 0+} \frac1h (f - T_hf) \text{ exists} \}$.
A classical $w^*$-approximation of $f \in L^\infty(\Omega)$ by elements of $D(A_\infty)$ is $\frac1h \int_0^h T_tf dt \quad (h \to 0+)$.
We have the following Cauchy Schwarz type lemma for positive $L^\infty(\Omega)$ contractions, in particular for operators coming from a (sub)markovian semigroup.

\begin{lemma}
\label{lem-semigroup-CSU}
Let $T$ be a contraction on $L^\infty(\Omega)$ such that $Tf \geq 0$ for any $f \geq 0$.
Then for $f,g \in L^\infty(\Omega)$,
\[ |T(fg)(x)|^2 \leq T(|f|^2)(x)T(|g|^2)(x) \quad (\text{a.e. }x \in\Omega) . \]
\end{lemma}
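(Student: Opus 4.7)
The plan is a pointwise version of the classical quadratic-form proof of Cauchy--Schwarz. First I reduce to nonnegative real functions. For complex $f,g \in L^\infty(\Omega)$, the pointwise identity $|fg| = |f|\,|g|$ together with the standard inequality $|T(h)| \leq T(|h|)$, valid $\mu$-a.e.\ for any positive linear operator $T$ on $L^\infty(\Omega)$ and any $h \in L^\infty(\Omega)$, gives
\[ |T(fg)(x)| \leq T(|fg|)(x) = T(|f|\,|g|)(x) \quad \mu\text{-a.e.\ }x\in\Omega.\]
The auxiliary bound $|T(h)| \leq T(|h|)$ is obtained from the identity $|T(h)(x)| = \sup_{|z|=1} \mathrm{Re}(z\,T(h)(x)) = \sup_{|z|=1} T(\mathrm{Re}(zh))(x) \leq T(|h|)(x)$, where the supremum is first taken over a countable dense set of unit $z \in \C$ so that all a.e.\ statements remain compatible, and then extended by continuity in $z$. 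It therefore suffices to show that for nonnegative $f,g \in L^\infty(\Omega)$ one has $T(fg)(x)^2 \leq T(f^2)(x)\, T(g^2)(x)$ a.e.

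For each fixed $\lambda \in \R$ the pointwise inequality $(f+\lambda g)^2 \geq 0$ and the positivity and linearity of $T$ give
\[ T(f^2) + 2\lambda T(fg) + \lambda^2 T(g^2) \geq 0 \quad \mu\text{-a.e.},\]
with an exceptional null set depending on $\lambda$. Restricting $\lambda$ to $\Q$ and taking the countable union of these null sets, I obtain a single null set $N$ outside of which the inequality holds for every $\lambda \in \Q$ simultaneously. For $x \notin N$, the expression is a real polynomial in $\lambda$ of degree at most $2$, continuous in $\lambda$ and nonnegative on the dense set $\Q$, hence nonnegative on all of $\R$. The standard discriminant argument then yields $T(fg)(x)^2 \leq T(f^2)(x)T(g^2)(x)$ on $\Omega \setminus N$; the degenerate case $T(g^2)(x) = 0$ is handled separately, where nonnegativity for all $\lambda \in \R$ forces $T(fg)(x) = 0$ and the inequality is trivial.

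The main subtlety in the proof is the quantifier exchange between ``for each $\lambda$, a.e.\ $x$'' and ``for a.e.\ $x$, for all $\lambda$''. This is resolved by the countable-density argument above, and the same device handles the supremum over $|z|=1$ in the auxiliary bound $|T(h)|\leq T(|h|)$. The contraction hypothesis on $T$ is used only to ensure that $T$ is defined on the products $f^2,\,fg,\,g^2 \in L^\infty(\Omega)$ and maps them into $L^\infty(\Omega)$, so that the pointwise a.e.\ statements are meaningful.
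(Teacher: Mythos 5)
Your proof is correct, and it takes a genuinely different and more elementary route than the paper. The paper argues by contradiction: it assumes $|T(fg)(x)|^2 > (1+\epsilon)T(|f|^2)(x)T(|g|^2)(x)$ on a set of positive measure, introduces the positive linear functionals $\phi_B(h) = \tfrac{1}{\mu(B)}\int_B Th\,d\mu$ and their $C^*$-algebraic Cauchy--Schwarz inequality, and then performs a rather delicate slicing of the phase of $T(fg)(x)$ (into $N$ arcs) and of the magnitudes of $T(|f|^2)(x)$ and $T(|g|^2)(x)$ (into dyadic-type annuli $W_n = [(1+\delta)^n,(1+\delta)^{n+1}]$) so that the functions are nearly constant on a small set $B_{n_1}$ of positive measure, eventually contradicting $|\phi_{B_{n_1}}(fg)|^2 \leq \phi_{B_{n_1}}(|f|^2)\phi_{B_{n_1}}(|g|^2)$. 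Your argument instead runs the classical quadratic-form proof of Cauchy--Schwarz pointwise: expand $T((f+\lambda g)^2)\geq 0$, restrict $\lambda$ (and the unimodular $z$ in the auxiliary bound $|Th|\leq T(|h|)$) to a countable dense set to get a single null set outside of which the discriminant argument applies, then pass to all $\lambda\in\R$ by continuity. This is a standard device and avoids both the positive-linear-functional machinery and the two-parameter slicing. The only small imprecision is the remark that the contraction hypothesis is used to make $T(fg)$ etc.\ well-defined in $L^\infty$; positivity of a linear $T:L^\infty\to L^\infty$ already forces boundedness ($\|Tf\|_\infty\leq\|T(1)\|_\infty\|f\|_\infty$), so in fact contractivity plays no role in this lemma either in your proof or in the paper's.
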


\begin{proof}
For a measurable set $B \subset \Omega$ of finite positive measure, we put
\[ \phi_B : L^\infty(\Omega) \to \C , \: g \mapsto \frac{1}{\mu(B)} \int_B Tg d\mu . \]
Then $\phi_B$ is a well-defined positive linear functional.
By the Cauchy-Schwarz inequality for positive linear functionals on $C^*$-algebras, we have $|\phi_B(fg)|^2 \leq \phi_B(|f|^2) \phi_B(|g|^2)$.
Assume on the contrary that $|T(fg)(x)|^2 > T(|f|^2)(x)T(|g|^2)(x)$ on a set $B$ of positive measure.
Then passing to a smaller set of positive measure, we have $|T(fg)(x)|^2 \geq (1 + \epsilon) T(|f|^2)(x) T(|g|^2)(x)$ for some $\epsilon > 0$.
We can assume that $B$ has finite (positive) measure.
Consider first the case that one of the two functions $T(|f|^2),T(|g|^2)$ is equal to $0$ on $B$, say $T(|f|^2)$.
We have $0 \leq |\phi_B(|fg|)|^2 \leq \phi_B(|f|^2) \phi_B(|g|^2) = 0$, so $\phi_B(|fg|) = 0$.
Since $T(|fg|) \geq 0$, it follows $T(|fg|) = 0$ on $B$, and by positivity of $T$, also that $|T(fg)(x)| \leq T(|fg|)(x) = 0$ for $x \in B$.

Assume now that both functions $T(|f|^2)$ and $T(|g|^2)$ are strictly positive on $B$.
For $N \in \N$ a given number whose value we shall specify later, we decompose $[0,2\pi] = \bigcup_{k = 0}^{N-1} [2 \pi \frac{k}{N}, 2 \pi \frac{k+1}{N}]$.
Then there exists $k_0 \in \{1,\ldots,N-1\}$ and $B' \subset B$ of positive measure such that $|T(fg)(x)| = T(e^{i\theta_x}fg)(x)$ for all $x \in B'$ and $\theta_x \in [2\pi\frac{k_0}{N},2\pi\frac{k_0 + 1}{N}]$.
For a given $\eta > 0$ whose value we shall specify later, there is $N \in \N$ and $\theta = 2 \pi \frac{k_0 + \frac12}{N}$ such that
$| \Re T(e^{i\theta}fg)(x)| \geq ( 1 - \eta ) |T(fg)(x)|$ for $x \in B'$.
We assume for simplicity of notation that $B' = B$.
Let $\delta > 0$ be a number whose value we shall specify later.
For $n \in \Z$, put $W_n = [(1+\delta)^n,(1+\delta)^{n+1}]$ and $A_n = \left(T(|f|^2)\right)^{-1}(W_n) \cap B$.
Since $\bigcup_{n \in \Z} W_n = (0,\infty)$ and $T(|f|^2)$ is strictly positive on $B$, there exists $n_0$ such that $A_{n_0}$ is of positive measure.
We have $\esssup_{x \in A_{n_0}}T(|f|^2)(x) / \essinf_{x \in A_{n_0}}T(|f|^2)(x) \leq 1 + \delta$.
Consider in a similar manner $B_n = \left(T(|g|^2)\right)^{-1}(W_n) \cap A_{n_0}$, and again, there is some $n_1$ such that $B_{n_1}$ is of positive measure.
We have $\esssup_{x \in B_{n_1}}T(|h|^2)(x) / \essinf_{x \in B_{n_1}}T(|h|^2)(x) \leq 1 + \delta$ for both $h = f$ and $h = g$.
Now with $\theta$ as above,
\begin{align*}
|\phi_{B_{n_1}}(fg)|^2 & = \left| \frac{1}{\mu(B_{n_1})} \int_{B_{n_1}} T(fg)(x) d\mu(x) \right|^2 \\
& \geq \left[ \Re \frac{1}{\mu(B_{n_1})} \int_{B_{n_1}} T(e^{i\theta}fg)(x) d\mu(x) \right]^2 \\
& \geq (1 - \eta)^2 \left[\frac{1}{\mu(B_{n_1})} \int_{B_{n_1}} |T(fg)(x)| d\mu(x) \right]^2 \\
& \geq (1+ \epsilon)(1 - \eta)^2 \left[\frac{1}{\mu(B_{n_1})} \int_{B_{n_1}} \sqrt{T(|f|^2)(x) T(|g|^2)(x)} d\mu(x) \right]^2 \\
& \geq (1+ \epsilon)(1 - \eta)^2 \essinf_{x \in B_{n_1}} T(|f|^2)(x) \essinf_{x \in B_{n_1}} T(|g|^2)(x) \\
& \geq (1+ \epsilon)(1 - \eta)^2 \frac{1}{(1+\delta)^2} \esssup_{x \in B_{n_1}} T(|f|^2)(x) \esssup_{x \in B_{n_1}} T(|g|^2)(x) \\
& \geq (1+ \epsilon)(1 - \eta)^2 \frac{1}{(1+\delta)^2} \frac{1}{\mu(B_{n_1})} \int_{B_{n_1}} T(|f|^2)(x) d\mu(x) \frac{1}{\mu(B_{n_1})} \int_{B_{n_1}} T(|g|^2)(x) d\mu(x) \\
& = (1+ \epsilon)(1 - \eta)^2 \frac{1}{(1+\delta)^2} \phi_{B_{n_1}}(|f|^2) \phi_{B_{n_1}}(|g|^2).
\end{align*}
Choosing now for the given $\epsilon > 0$ the free parameters $\eta$ and $\delta$ sufficiently close to $0$, we get the desired contradiction with the Cauchy-Schwarz inequality for the positive linear functional $\phi_{B_{n_1}}$.
\end{proof}

\subsection{$\HI$ calculus}
\label{subsec-HI}

Let $\omega \in (0,\pi)$.
We define the sector $\Sigma_\omega = \{ z \in \C \backslash \{ 0 \} :\: |\arg z| < \omega \}$.

\begin{defi}
Let $X$ be a Banach space, $\omega \in (0,\pi)$ and $A : D(A) \subseteq X \to X$ an operator.
$A$ is called $\omega$-sectorial if
\begin{enumerate}
\item $A$ is closed and densely defined on $X$.
\item The spectrum $\sigma(A)$ is contained in $\overline{\Sigma_\omega}$.
\item For any $\omega' > \omega,$ we have $\sup_{\lambda \in \C \backslash \overline{\Sigma_{\omega'}}} \| \lambda (\lambda - A)^{-1} \| < \infty$.
\end{enumerate}
\end{defi}

If $X$ is reflexive, which will always be the case in this article and $A$ is $\omega$-sectorial, then $A$ admits a canonical decomposition
\begin{equation}
\label{equ-sectorial-space-decomposition}
 A = \begin{bmatrix} A_0 & 0 \\ 0 & 0 \end{bmatrix} : X = \overline{R(A)} \oplus \Ker(A) \to \overline{R(A)} \oplus \Ker(A)
\end{equation}
such that $A_0 : D(A_0) \subseteq \overline{R(A)} \to \overline{R(A)}$ is again $\omega$-sectorial and in addition injective and has dense range \cite[Proposition 15.2]{KW04}.
Here, $R(A)$ stands for the range of $A$ and $\Ker(A)$ for its kernel.
The operator $A_0$ is called the injective part of $A$.
For $\theta \in (0,\pi),$ let 
\[ \HI(\Sigma_\theta) = \bigl\{ f : \Sigma_\theta \to \C :\: f \text{ analytic and bounded} \bigr\} \] equipped with the uniform norm $\|f\|_{\infty,\theta} = \sup_{z \in \Sigma_\theta} |f(z)|.$
Let further 
\[ \HI_0(\Sigma_\theta) = \bigl\{ f \in \HI(\Sigma_\theta):\: \exists \: C ,\epsilon > 0 :\: |f(z)| \leq C \min(|z|^\epsilon,|z|^{-\epsilon}) \bigr\}.\]
For an $\omega$-sectorial operator $A$ and $\theta \in (\omega,\pi),$ one can define a functional calculus $\HI_0(\Sigma_\theta) \to B(X),\: f \mapsto f(A)$ extending the ad hoc rational calculus, by using the Cauchy integral formula
\begin{equation}
\label{equ-CIF}
f(A) = \frac{1}{2\pi i} \int_{\partial \Sigma_{\theta'}} f(\lambda) R(\lambda,A) d\lambda ,
\end{equation}
where $\theta' = \frac12 ( \omega + \theta)$ and $\partial \Sigma_{\theta'}$ is the boundary of a sector oriented counterclockwise.
If moreover, there exists a constant $C < \infty$ such that $\|f(A)\| \leq C \| f \|_{\infty,\theta}$ for any $f \in \HI_0(\Sigma_\theta)$, then $A$ is said to have a (bounded) $\HI(\Sigma_\theta)$ calculus.
If $X$ is reflexive and $A$ has a bounded $\HI(\Sigma_\theta)$ calculus, then so does $A_0$ and $f(A) = f(A_0) \oplus 0 : \overline{R(A)} \oplus \Ker(A) \to \overline{R(A)} \oplus \Ker(A)$ for $f \in \HI_0(\Sigma_\theta)$.
Moreover, the functional calculus defined for $f \in \HI_0(\Sigma_\theta)$ can be extended to a bounded Banach algebra homomorphism $\HI(\Sigma_\theta) \to B(\overline{R(A)}),\: f \mapsto f(A_0)$.

\begin{lemma}
\label{lem-HI-convergence}
Let $\omega \in (0,\pi)$ and $A$ be an $\omega$-sectorial operator on $X$ having an $\HI(\Sigma_\theta)$ calculus for some $\theta \in (\omega,\pi)$.
Assume that $A$ is injective and has dense range (otherwise take the injective part in what follows).
Let $(f_n)_n$ be a sequence in $\HI(\Sigma_\theta)$ such that $f_n(\lambda) \to f(\lambda)$ for any $\lambda \in \Sigma_\theta$ and $\sup_n \|f_n\|_{\infty,\theta} < \infty.$
Then for any $x \in X,$ $f(A)x = \lim_n f_n(A)x.$
\end{lemma}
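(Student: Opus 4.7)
The plan is to use the standard McIntosh-type convergence argument, regularizing with the auxiliary function $\psi(z) = z(1+z)^{-2}$, which belongs to $\HI_0(\Sigma_\theta)$ for every $\theta \in (0,\pi)$.

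First I would reduce the problem to vectors of the form $x = \psi(A)y$. Since $A$ is injective with dense range, one checks (e.g.\ from the definition via \eqref{equ-CIF} and the identity $\psi(A) = A(1+A)^{-2}$) that the range $\psi(A)X$ is dense in $X$: for any $y \in D(A) \cap R(A)$ one has $n\psi(A/n) y \to y$ as $n \to \infty$, using the uniform sectorial bound and the density of $D(A) \cap R(A)$.

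Next, for $x = \psi(A)y$, I would write $f_n(A)x = (f_n \psi)(A) y$ via the multiplicativity of the $\HI_0$ calculus, and apply the Cauchy integral representation
\[
(f_n \psi)(A) y = \frac{1}{2\pi i} \int_{\partial \Sigma_{\theta'}} f_n(\lambda) \psi(\lambda) R(\lambda,A) y \, d\lambda,
\]
with $\omega < \theta' < \theta$. Because $\psi$ decays like $|\lambda|$ at $0$ and like $|\lambda|^{-1}$ at $\infty$, and $\|R(\lambda,A)\|$ is controlled by $C/|\lambda|$ on $\partial\Sigma_{\theta'}$, the integrand is dominated by $C \sup_n\|f_n\|_{\infty,\theta}\, |\psi(\lambda)|/|\lambda| \cdot \|y\|$, which is integrable on $\partial \Sigma_{\theta'}$. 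The pointwise convergence $f_n(\lambda) \to f(\lambda)$ then yields, by dominated convergence in the Bochner integral, $(f_n \psi)(A) y \to (f\psi)(A) y = f(A) x$ in $X$.

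Finally, I would extend from the dense set $\psi(A) X$ to arbitrary $x \in X$ using the uniform bound
\[
\sup_n \|f_n(A)\|_{B(X)} \leq C \sup_n \|f_n\|_{\infty,\theta} < \infty,
\]
which holds by the assumed $\HI(\Sigma_\theta)$ calculus. A standard $3\varepsilon$-argument then gives $f_n(A) x \to f(A) x$ for all $x \in X$. No step is really a serious obstacle; the only point requiring a little care is the density of $\psi(A) X$, which is where injectivity and dense range of $A$ enter, and the legitimacy of exchanging the limit with the Bochner integral, which is secured by the decay of $\psi$ at $0$ and $\infty$ combined with the sectorial resolvent estimate.
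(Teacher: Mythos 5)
Your proof follows the standard McIntosh convergence argument, which is exactly what the paper's cited references ([KW04, Theorem~9.6] and [CDMcIY, Lemma~2.1]) do; structurally it is the same approach as the paper invokes.

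There is, however, a concrete error in your density step. The claim that $n\psi(A/n)y \to y$ is false: with $\psi(z)=z(1+z)^{-2}$ one computes $n\psi(\lambda/n)=\frac{n^2\lambda}{(n+\lambda)^2}\to\lambda$ pointwise as $n\to\infty$, so $n\psi(A/n)$ is a Yosida-type approximation of $A$, not of the identity (its pointwise sup on $(0,\infty)$ even grows like $n/4$). The density of $\psi(A)X$ is nevertheless correct, but it needs a different regularizer. Take, as elsewhere in the paper, $\rho_n(\lambda)=\frac{n}{n+\lambda}-\frac{1}{1+n\lambda}=\frac{(n^2-1)\lambda}{(n+\lambda)(1+n\lambda)}$. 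Since $A$ is injective, sectorial and has dense range, $\rho_n(A)x\to x$ for every $x\in X$; and $\rho_n/\psi=\frac{(n^2-1)(1+\lambda)^2}{(n+\lambda)(1+n\lambda)}$ is bounded and holomorphic on $\Sigma_\theta$, so by multiplicativity $\rho_n(A)x=\psi(A)\bigl((\rho_n/\psi)(A)x\bigr)\in\psi(A)X$, proving density. With this correction the remainder of your argument, namely the identity $f_n(A)\psi(A)y=(f_n\psi)(A)y$, the Cauchy integral representation of $(f_n\psi)(A)y$ over $\partial\Sigma_{\theta'}$, dominated convergence using $|\psi(\lambda)|\lesssim\min(|\lambda|,|\lambda|^{-1})$ and $\|R(\lambda,A)\|\lesssim|\lambda|^{-1}$, and the $3\varepsilon$-extension via $\sup_n\|f_n(A)\|\leq C\sup_n\|f_n\|_{\infty,\theta}$, is sound. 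You should also record (e.g.\ via Vitali or Montel) that the pointwise, uniformly bounded limit $f$ is automatically in $\HI(\Sigma_\theta)$, so that $f(A)$ is defined.
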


\begin{proof}
See \cite[Theorem 9.6]{KW04} or \cite[Lemma 2.1]{CDMcIY}.
\end{proof}

In practice, if $A$ is $\omega$-sectorial, and one wants to show that $A$ has an $\HI(\Sigma_\theta)$ calculus, the most what one can hope for is $\theta > \omega$.
In order to deal with $\theta = \omega$, we have the following refinement of the $\HI(\Sigma_\theta)$ classes in this article.

Let $J > 0$ be a parameter.
Let $(\phi_n)_{n \in \Z}$ be dyadic partition of unity, that is, a sequence of $C^\infty$ functions on $\R$ such that

\begin{enumerate}
\item $\supp( \phi_0 )\subset [-1,1]$
\item $\supp ( \phi_1 )\subset [\frac12,2]$
\item $\phi_n(t) = \phi_1(2^{-n}t)$ for $n \geq 1$
\item $\phi_{-n}(t) = \phi_n(-t)$ for $n \geq 1$
\item $\sum_{n \in \Z} \phi_n(t) = 1$ for $t \in \R$.
\end{enumerate}

Then we let $B^J_{\infty,1} = \{ f  \in L^\infty(\R) : \: \|f\|_{B^J_{\infty,1}} = \sum_{n \in \Z} 2^{J|n|} \| f \ast \check{\phi}_n \|_\infty < \infty \}$.
This class is called a Besov space.

\begin{defi}
\label{defi-HI-Besov}
Let $\theta \in (0,\pi)$.
We let 
\[ \HI(\Sigma_\theta;J) = \left\{ f \in \HI(\Sigma_\theta) :\: f(e^{\pm i \theta} e^{(\cdot)}) \in B^J_{\infty,1} \right\} \]
equipped with the norm $\|f\|_{\HI(\Sigma_\theta;J)} = \|f\|_{\infty,\theta} + \|f(e^{i\theta}e^{(\cdot)})\|_{B^J_{\infty,1}} + \|f(e^{-i\theta} e^{(\cdot)})\|_{B^J_{\infty,1}}$.
\end{defi}
Hereby we note that a holomorphic bounded function $f$ on $\Sigma_\theta$ possesses almost everywhere non-tangential limits on $\partial\Sigma_\theta$, so that $f(e^{\pm i \theta}e^{\lambda})$ is well-defined almost everywhere.
Consider the horizontal strip in the complex plane $\Str_\theta = \{ z \in \C : \: |\Im z| < \theta \}$ and note that $\lambda \mapsto e^\lambda$ maps biholomorphically $\Str_\theta \to \Sigma_\theta$.
For technical reasons, it will be easier at several instances to work with functions defined on $\Str_\theta$ instead of $\Sigma_\theta$.
We therefore consider $\HI(\Str_\theta) = \{ f : \Str_\theta \to \C : \: f \text{ holomorphic and bounded}\}$ and $\HI(\Str_\theta;J) = \{ g \in \HI(\Str_\theta): \: g(\pm i\theta + \cdot) \in B^J_{\infty,1} \}$.

\begin{lemma}
\label{lem-properties-Besov}
Let $\theta \in (0,\pi)$ and $J > 0$.
The space $\HI(\Sigma_\theta;J)$ is a Banach algebra and $\HI(\Sigma_{\theta'}) \hookrightarrow \HI(\Sigma_\theta;J)$ for any $\theta' \in (\theta,\pi)$ is a dense injection.
\end{lemma}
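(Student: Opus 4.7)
The lemma splits into three claims to treat in turn: completeness of $\HI(\Sigma_\theta;J)$, the Banach algebra property, and the continuous dense inclusion $\HI(\Sigma_{\theta'}) \hookrightarrow \HI(\Sigma_\theta;J)$ for $\theta'>\theta$. Completeness is essentially formal, since the defining norm is the sum of the sup-norm on $\Sigma_\theta$ and two copies of $\|\cdot\|_{B^J_{\infty,1}(\R)}$, each defining a Banach space. For a Cauchy sequence $(f_n)$, the uniform limit $f\in\HI(\Sigma_\theta)$ has boundary traces coinciding with the $B^J_{\infty,1}$-limits of $f_n(e^{\pm i\theta}e^{(\cdot)})$, as follows from $B^J_{\infty,1}\hookrightarrow L^\infty(\R)$ together with the classical a.e.\ convergence of non-tangential boundary values under uniform convergence. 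The Banach algebra property reduces to the standard fact that $B^s_{\infty,1}(\R)$ is a Banach algebra under pointwise multiplication for $s>0$ (provable via Bony paraproduct or a direct Littlewood--Paley calculation).

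For the continuous inclusion I transfer to the strip picture via $\tilde f(z):=f(e^z)$: when $f\in\HI(\Sigma_{\theta'})$, the trace $g(t):=\tilde f(t+i\theta)$ extends, as $z\mapsto\tilde f(z+i\theta)$, to a bounded holomorphic function on the strip $\{|\Im z|<\alpha\}$ where $\alpha:=\theta'-\theta>0$. Fix $\beta\in(0,\alpha)$. For $n\geq 1$, the Littlewood--Paley piece $\phi_n$ is supported in $[2^{n-1},2^{n+1}]$, so shifting the integration contour $\R\to\R+i\beta$ in
\[
(g\ast\check\phi_n)(t)=\int_\R g(t-s)\check\phi_n(s)\,ds
\]
is justified by Cauchy's theorem (the integrand is analytic in the intermediate horizontal strip and Schwartz along horizontal lines). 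The shift replaces $\check\phi_n(\cdot)$ by the inverse Fourier transform of $\phi_n(\xi)e^{-\beta\xi}$, and the elementary Parseval-type inequality $\|\check h\|_1\le C(\|h\|_2+\|h''\|_2)$ applied to $h(\xi):=\phi_n(\xi)e^{-\beta\xi}$ yields an $L^1$-norm of order $2^{n/2}e^{-\beta 2^{n-1}}$. Hence $\|g\ast\check\phi_n\|_\infty\le C\cdot 2^{n/2}e^{-\beta 2^{n-1}}\|g\|_\infty$, and the symmetric estimate for $n\leq-1$ is obtained by a downward contour shift. Summation against $2^{J|n|}$ then yields the continuous inclusion $\|g\|_{B^J_{\infty,1}}\le C_{J,\beta}\|f\|_{\HI(\Sigma_{\theta'})}$.

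The density is the main obstacle and I would address it via a Fourier cut-off mollification. Pick $\rho\in C_c^\infty(\R)$ with $0\le\rho\le 1$ and $\rho\equiv 1$ on $[-1,1]$, set $\chi_\epsilon(\xi):=\rho(\epsilon\xi)$ and $K_\epsilon:=\check\chi_\epsilon$, a Schwartz approximate identity with $\hat K_\epsilon$ supported in $[-1/\epsilon,1/\epsilon]$. Define
\[
\tilde f_\epsilon(z):=\int_\R\tilde f(z-s)K_\epsilon(s)\,ds,\quad z\in\Str_\theta.
\]
Since $K_\epsilon$ is entire of exponential type $O(1/\epsilon)$ and its horizontal translates $K_\epsilon(\cdot+i\eta_0)$ remain Schwartz, Cauchy's theorem allows one to replace the contour $\R$ by $\R+i\eta_0$ for arbitrary $\eta_0\in\R$; the resulting family of contour integrals glues into an entire analytic continuation of $\tilde f_\epsilon$, bounded on every horizontal strip $\Str_{\theta''}$ with a bound of order $e^{\theta''/\epsilon}\|\tilde f\|_\infty$, so in particular $\tilde f_\epsilon\in\HI(\Str_{\theta'})$. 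For the convergence $\tilde f_\epsilon\to\tilde f$ in $\HI(\Str_\theta;J)$-norm I would combine (a) uniform convergence on $\overline{\Str_\theta}$, which follows from uniform continuity of $\tilde f$ on $\overline{\Str_\theta}$ (itself a consequence of the three-lines lemma applied to horizontal translates $z\mapsto\tilde f(z+h)-\tilde f(z)$, given that the boundary traces are uniformly continuous through $B^J_{\infty,1}\hookrightarrow C_b(\R)$), with (b) convergence of the boundary traces in $B^J_{\infty,1}$, using that $(g_\epsilon-g)\ast\check\phi_n$ vanishes identically whenever $\supp\phi_n\subset\{|\xi|\leq 1/\epsilon\}$ (i.e.\ for $2^{|n|+1}\leq 1/\epsilon$), while Young's inequality bounds the remaining terms by $\leq C\|g\ast\check\phi_n\|_\infty$, so that $\|g_\epsilon-g\|_{B^J_{\infty,1}}$ is dominated by the tail of the absolutely convergent series defining $\|g\|_{B^J_{\infty,1}}$ and vanishes as $\epsilon\to 0$.
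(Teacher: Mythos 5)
Your proof is correct and follows the same overall architecture as the paper's (pass to the strip $\Str_\theta$, treat completeness and the algebra property as routine, establish the inclusion via a boundary-to-Besov estimate, and prove density by Fourier truncation), but you supply self-contained arguments for the three steps that the paper dispatches by citation. For the inclusion $\HI(\Str_{\theta'})\hookrightarrow\HI(\Str_\theta;J)$ the paper invokes \cite[Remark 4.16]{KrPhD}, whereas you redo it directly by a contour shift on each Littlewood--Paley piece and a Cauchy--Schwarz/Plancherel bound on $\|(\phi_n e^{-\beta\cdot})^\vee\|_1$; the resulting $2^{n/2}e^{-\beta 2^{n-1}}$ decay is exactly what one needs, up to the minor slip that the bound $\|\check h\|_1\lesssim\|h\|_2+\|h'\|_2$ already suffices (you wrote $h''$). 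For density the paper takes the dyadic truncations $f\ast\sum_{|k|\le n}\check\phi_k$ and cites \cite[Lemma 4.15(2)]{KrPhD} for $B^J_{\infty,1}$-convergence and \cite[p.~416]{GCMMST} for the maximum-modulus bound $\|g\|_{\HI(\Str_\theta)}\le 2\|g(\pm i\theta+\cdot)\|_{B^J_{\infty,1}}$; you instead use a smooth Fourier cutoff $K_\epsilon=\check\chi_\epsilon$, observe that $\tilde f_\epsilon(z)=\int\tilde f(u)K_\epsilon(z-u)\,du$ extends to an entire function of at most exponential growth on horizontal strips (so $\tilde f_\epsilon\in\HI(\Str_{\theta'})$ for every $\theta'<\pi$), prove the $B^J_{\infty,1}$-convergence of boundary traces directly via the tail estimate $\sum_{|n|>N_\epsilon}2^{J|n|}\|g\ast\check\phi_n\|_\infty\to0$, and replace the cited maximum-modulus bound by a three-lines argument built on the uniform continuity of $\tilde f$ up to the boundary (itself inferred from $B^J_{\infty,1}\hookrightarrow C_{ub}$). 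Your route is longer but more elementary and self-contained; the paper's is more economical precisely because the two external lemmas already package the boundary-controls-interior estimate and the Littlewood--Paley approximation, and the dyadic truncation makes the vanishing of $(g_n-g)\ast\check\phi_k$ for $|k|<n$ immediate rather than a consequence of the support condition on $\chi_\epsilon$.
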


\begin{proof}
We show first that $\HI(\Sigma_\theta;J)$ is a Banach space.
To this end, let $(f_n)_n$ be a Cauchy sequence in $\HI(\Sigma_\theta;J)$.
Since obviously $\HI(\Sigma_\theta;J) \hookrightarrow \HI(\Sigma_\theta)$, $(f_n)_n$ is a Cauchy sequence in $\HI(\Sigma_\theta)$ and has a limit $f \in \HI(\Sigma_\theta)$.
Moreover, since $B^J_{\infty,1}$ is a Banach space, $f_n(e^{\pm i\theta} e^{(\cdot)})$ has a limit $g(e^{\pm i \theta}e^{(\cdot)})$ in $B^J_{\infty,1}$.
Since $f(e^{\pm i \theta} e^{(\cdot)})$ is also a limit in $L^\infty(\partial\Sigma_\theta)$ of $f_n(e^{\pm i \theta} e^{(\cdot)})$, we have $g(e^{\pm i \theta}e^{\lambda}) = f(e^{\pm i \theta} e^{\lambda})$, $\lambda \in \R$.
Moreover, 
\[ \|f - f_n\|_{\HI(\Sigma_\theta;J)} = \|f-f_n\|_{\infty,\theta} + \|(f-f_n)(e^{\pm i\theta}e^{(\cdot)})\|_{B^J_{\infty,1}} \to 0 + 0. \]
Thus, $\HI(\Sigma_\theta;J)$ is a Banach space.
That $\HI(\Sigma_\theta;J)$ is also a Banach algebra follows from the fact that $\HI(\Sigma_\theta)$ and $B^J_{\infty,1}$ are Banach algebras and thus,
\begin{align*}
\|fg\|_{\HI(\Sigma_\theta;J)} & = \|fg\|_{\infty,\theta} + \|(fg)(e^{\pm i \theta}e^{(\cdot)})\|_{B^J_{\infty,1}} \\
&  \leq \|f\|_{\infty,\theta} \|g\|_{\infty,\theta} + C \|f(e^{\pm i \theta}e^{(\cdot)})\|_{B^J_{\infty,1}} \|g(e^{\pm i \theta}e^{(\cdot)})\|_{B^J_{\infty,1}} \\
& \leq C \|f\|_{\HI(\Sigma_\theta;J)} \|g\|_{\HI(\Sigma_\theta;J)} .
\end{align*}
For the statement of dense injection, we pass the problem from the sector $\Sigma_\theta$ to the strip $\Str_\theta$ and thus have to show that $\HI(\Str_{\theta'}) \hookrightarrow \HI(\Str_\theta;J)$ densely.
That this is indeed an injection follows from \cite[Remark 4.16]{KrPhD}.
For the density, we let $f \in \HI(\Str_\theta;J)$ be given and take $f_n = f \ast \sum_{k = -n}^n \check{\phi}_k$, where $(\phi_n)_n$ is a dyadic partition of unity as above.
Then according to \cite[Lemma 4.15 (2)]{KrPhD}, $f_n(\pm i \theta + (\cdot))$ converges to $f$ in $B^J_{\infty,1}$.
Moreover, according to \cite[p.~416]{GCMMST}, we have $\|g\|_{\HI(\Str_\theta)} \leq 2 \|g(\pm i \theta+ (\cdot))\|_{B^J_{\infty,1}}$ for $g \in \HI(\Str_\theta;J)$, so that $\|f_n - f\|_{\HI(\Str_\theta)} \leq 2 \|(f_n - f)(\pm i \theta + (\cdot)) \|_{B^J_{\infty,1}} \to 0$, too.
We finally infer that $\|f_n - f \|_{\HI(\Str_\theta;J)} = \|f_n - f\|_{\HI(\Str_\theta)} + \|(f_n - f)(\pm i \theta + (\cdot))\|_{B^J_{\infty,1}} \to 0$.
\end{proof}

Lemma \ref{lem-properties-Besov} enables us to define a bounded $\HI(\Sigma_\omega;J)$ calculus via the well-known $\HI(\Sigma_\theta)$ calculus.
Namely, we say that for $\omega \in (0,\pi)$ and $A$ an injective $\omega$-sectorial operator with dense range, $A$ has a bounded $\HI(\Sigma_\omega;J)$ calculus if 
\begin{equation}
\label{equ-definition-Besov-calculus}
 \|f(A)\| \leq C \|f\|_{\HI(\Sigma_\omega;J)} \quad (f \in \HI(\Sigma_\theta))
\end{equation}
for some/any $\theta \in (\omega,\pi)$.
In this case, by density of $\HI(\Sigma_\theta)$ in $\HI(\Sigma_\omega;J)$, we can define $f(A)$ for all $f \in \HI(\Sigma_\omega;J)$ and \eqref{equ-definition-Besov-calculus} extends to all $f \in \HI(\Sigma_\omega;J)$.
Clearly, $\HI(\Sigma_\omega;J) \to B(X), \: f \mapsto f(A)$ is then an algebra homomorphism.
The following lemma is a variant of \cite[Theorem 4.10]{CDMcIY}.

\begin{lemma}
\label{lem-HI-to-Besov}
Let $\omega \in (0,\pi)$ and $J > 0$.
Let $X$ be a Banach space and $A$ be an $\omega$-sectorial operator on $X$ which is injective and has dense range.
Assume that for any $\theta > \omega$, $A$ has an $\HI(\Sigma_\theta)$ calculus and that there is $C < \infty$ such that
\[ \|f(A)\| \leq C (\theta - \omega)^{-J} \|f\|_{\infty,\theta} \quad (f \in \HI(\Sigma_\theta),\: \theta > \omega) . \]
Then $A$ has a bounded $\HI(\Sigma_\omega;J)$ calculus, meaning that 
\[ \|f(A)\| \leq C' \|f\|_{\HI(\Sigma_\omega;J)} \quad (f \in \HI(\Sigma_\omega;J)) .\]
\end{lemma}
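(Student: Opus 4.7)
Following the density statement of Lemma \ref{lem-properties-Besov}, which shows $\HI(\Sigma_{\theta'}) \hookrightarrow \HI(\Sigma_\omega;J)$ densely for any $\theta' > \omega$, the plan is to first prove the estimate for $f \in \HI(\Sigma_{\theta'})$ and extend by continuity. I would pass to the strip via $g(\lambda) = f(e^\lambda)$, as in the proof of Lemma \ref{lem-properties-Besov}, so that $g$ is bounded holomorphic on $\Str_{\theta'}$ with boundary traces $g_\pm = g(\pm i\omega + \cdot) \in B^J_{\infty,1}$ of norm controlled by $\|f\|_{\HI(\Sigma_\omega;J)}$. The strategy is then an atomic decomposition of $g$ along dyadic frequencies, where each atom extends to a slightly wider strip so that the gain in angle width matches exactly the $(\theta - \omega)^{-J}$ growth in the hypothesis.

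Fix $c \in (0, \theta'-\omega)$ and set $\theta_n = \omega + c\, 2^{-|n|}$ for $n \in \Z$. With the dyadic partition of unity $(\phi_n)_{n \in \Z}$ introduced before Definition \ref{defi-HI-Besov}, I define the atoms $g_n = g \ast \check{\phi}_n$ (convolution in the real-part variable), which are bounded holomorphic on $\Str_{\theta'}$. The central estimate to establish is
\[
\|g_n\|_{\infty, \Str_{\theta_n}} \leq C \bigl( \|g_+ \ast \check{\phi}_n\|_\infty + \|g_- \ast \check{\phi}_n\|_\infty \bigr),
\]
with $C$ independent of $n$. I would deduce this from Phragm\'en--Lindel\"of on $\Str_{\theta_n}$ (where $g_n$ is already bounded), reducing to a bound on the boundary traces at $\pm i\theta_n$. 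For the upper trace, since $g_n(\cdot + i\omega) = g_+ \ast \check{\phi}_n$ has Fourier spectrum contained in $\supp(\phi_n)$, picking a smooth cutoff $\tilde\phi_n$ with $\tilde\phi_n \equiv 1$ on $\supp(\phi_n)$ and slightly larger dyadic support, the holomorphic translation identity in the strip yields $g_n(\cdot + i\theta_n) = (g_+ \ast \check{\phi}_n) \ast K_n$ with $\widehat{K_n}(\xi) = e^{-c\, 2^{-|n|}\xi}\, \tilde\phi_n(\xi)$. A scaling by $2^{|n|}$ reduces $\|K_n\|_{L^1}$ to a fixed constant depending only on $c$ and $\tilde\phi_1$, and the lower trace is treated symmetrically.

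Transferring back to the sector, $f_n(\lambda) = g_n(\log \lambda)$ belongs to $\HI(\Sigma_{\theta_n})$, so the hypothesis gives
\[
\|f_n(A)\| \leq C (\theta_n - \omega)^{-J} \|f_n\|_{\infty,\theta_n} \leq C' 2^{J|n|} \bigl( \|g_+ \ast \check{\phi}_n\|_\infty + \|g_- \ast \check{\phi}_n\|_\infty \bigr),
\]
and summing over $n \in \Z$ produces $\sum_n \|f_n(A)\| \leq C'' \bigl(\|g_+\|_{B^J_{\infty,1}} + \|g_-\|_{B^J_{\infty,1}}\bigr) \leq C''' \|f\|_{\HI(\Sigma_\omega;J)}$ by the very definition of the Besov norm. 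To identify this operator sum with $f(A)$, I observe that the partial sums $\sum_{|k| \leq N} f_k$ are convolutions of $g$ with low-pass filters of uniformly bounded $L^1$ norm, hence converge pointwise to $f$ on $\Sigma_{\theta'}$ with uniform $\HI(\Sigma_{\theta'})$ bound, so Lemma \ref{lem-HI-convergence} yields $f(A) = \sum_n f_n(A)$ strongly. Density from Lemma \ref{lem-properties-Besov} then extends the estimate to all of $\HI(\Sigma_\omega;J)$. The main technical hurdle I anticipate is the uniform $L^1$ control of $K_n$ via the Paley--Wiener/Bernstein-type scaling, since this is precisely the step that converts the dyadic spectrum of each atom into the gain of strip width $c\, 2^{-|n|}$.
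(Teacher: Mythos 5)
Your proposal is correct and follows the same overall architecture as the paper's proof: pass to the strip via $g(\lambda)=f(e^\lambda)$, decompose into dyadic atoms $g_n=g\ast\check\phi_n$, apply the hypothesis on a strip of width $\omega+O(2^{-|n|})$ so that the $(\theta-\omega)^{-J}$ blow-up matches the $2^{J|n|}$ Besov weights, and then sum. The one genuine divergence is in how the central uniform estimate $\|g_n\|_{\HI(\Str_{\theta_n})}\lesssim \|g_\pm\ast\check\phi_n\|_{L^\infty(\R)}$ is obtained. The paper observes that $g_\pm\ast\check\phi_n$ has compactly supported Fourier transform, hence extends (Paley--Wiener) to an entire function of exponential type $\sim 2^{|n|}$, and reads off the bound on the widened strip by choosing the growth scale $b=2^{-|n|}$ and invoking the maximum principle on the interior. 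You instead keep $g_n$ on its natural domain, control $\|g_n\|_{\HI(\Str_{\theta_n})}$ via Phragm\'en--Lindel\"of by the boundary traces at $\pm i\theta_n$, and relate these to $g_\pm\ast\check\phi_n$ by the holomorphic shift $\widehat{g_n(\cdot\pm i\theta_n)}=e^{\mp(\theta_n-\omega)\xi}\widehat{g_n(\cdot\pm i\omega)}$, inserting a fattened cutoff $\tilde\phi_n$ and proving $\sup_n\|K_n\|_{L^1}<\infty$ by dyadic rescaling. Both routes exploit exactly the same compatibility between the exponential scale $2^{|n|}$ of the atom's spectrum and the strip-width gain $2^{-|n|}$; the paper's Paley--Wiener bound does this in one stroke, while your multiplier argument isolates it as a clean Young-inequality estimate and avoids appealing to entire extensions. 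You also recover $f(A)=\sum_n f_n(A)$ via pointwise convergence with a uniform $\HI(\Sigma_{\theta'})$ bound and Lemma \ref{lem-HI-convergence}, whereas the paper proves norm convergence of the partial sums in $\HI(\Str_{\theta'})$ by reducing to a $B^1_{\infty,1}$ convergence at the boundary; both suffice. Finally, you apply the hypothesis directly at the sector level to each $f_n$, while the paper routes through $B=\log(A)$ and a result of Haase; this is cosmetic, as the two formulations are equivalent under the biholomorphism $\Str_\theta\cong\Sigma_\theta$.
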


\begin{proof}
We let $B = \log(A)$, which is an $\omega$-strip-type operator in the sense of \cite{Haase}.
According to \cite[Proposition 5.3.3]{Haase}, the hypothesis of the lemma implies that for $\theta > \omega$ and $g : \Str_\theta \to \C$ analytic and bounded, we have
$\|g(B)\| \leq C ( \theta - \omega)^{-J} \|g\|_{\infty,\theta}$.
On the level of spectral multiplier functions, this can be simply seen as the correspondence $f \in \HI(\Sigma_\omega) \cong g(\lambda) = f(e^\lambda) \in \HI(\Str_\omega)$.
It suffices now to show that for $\theta > \omega$
\begin{equation}
\label{equ-lem-HI-to-Besov}
\|g(B)\| \leq C \|g\|_{\HI(\Str_\omega;J)} \quad (g \in \HI(\Str_\theta)).
\end{equation}
Let $(\phi_n)_{n \in \Z}$ be a dyadic partition of unity as above.
According to the Paley-Wiener theorem, see also \cite[Proof of Theorem 4.10]{CDMcIY}, $g \ast \check{\phi}_n$ is an entire function, and moreover, for $b > 0$,
\[ \sup_{x \in \R,\: |y - (\pm \omega)| \leq b} |g \ast \check{\phi}_n(x+iy)| \leq \exp(b2^{|n| + 1}) \|g\ast\check{\phi}_n\|_{L^\infty(\R \pm i \omega)} . \]
Choose now $b = 2^{-|n|}$.
Then using the maximum principle to bound 
\[ \sup_{x \in \R,\: |y| \leq \omega} |g\ast \check{\phi}_n(x+iy)| \leq \sup_{x \in \R,\: |y - (\pm \omega)| \leq 2^{-|n|}} |g\ast \check{\phi}_n(x+iy)|, \] we obtain
\[ \sup_{x \in \R,\: |y| \leq \omega + 2^{-|n|}} | g \ast \check{\phi}_n(x + iy)| \leq C \| g \ast \check{\phi}_n\|_{L^\infty(\R \pm i \omega)}. \]
Thus, $\|g\ast\check{\phi}_n(B)\| \leq C (2^{-|n|})^{-J} \|g \ast \check{\phi}_n\|_{\HI(\Str_{\omega + 2^{-|n|}})} \leq C (2^{-|n|})^{-J} \|g \ast \check{\phi}_n\|_{L^\infty(\R \pm i \omega)}$.
It suffices to check now that
\begin{equation}
\label{equ-2-lem-HI-to-Besov}
\sum_{n \in \Z} g \ast \check{\phi}_n = g \quad (\text{convergence in }\HI(\Str_{\theta'}))
\end{equation}
for $\theta' = \frac12(\theta + \omega)$.
Indeed, then by bounded $\HI(\Str_{\theta'})$ calculus of $B$, 
\[ \|g(B)\| \leq \sum_{n \in \Z} \|g\ast \check{\phi}_n(B)\| \lesssim \sum_{n \in \Z} 2^{|n|J} \|g\ast\check{\phi}_n\|_{L^\infty(\R \pm i \omega)} \lesssim \|g\|_{\HI(\Str_\omega;J)}, \] and so, \eqref{equ-lem-HI-to-Besov} follows.
To show \eqref{equ-2-lem-HI-to-Besov}, we note that
\[ \| g - g \ast \sum_{n = -N}^N \check{\phi}_n \|_{\HI(\Str_{\theta'})} \leq \| g - g \ast \sum_{n = -N}^N \check{\phi}_n \|_{L^\infty(\R \pm i \theta')} \leq \| g (\cdot \pm i \theta')- g (\cdot \pm i \theta') \ast \sum_{n = -N}^N \check{\phi}_n \|_{B^1_{\infty,1}} , \]
and the last quantity converges to $0$ according to \cite[Lemma 4.15 (2)]{KrPhD}.
\end{proof}

A typical $\HI(\Sigma_{\frac{\pi}{2}};J)$ function (i.e. not belonging to any $\HI(\Sigma_\theta)$ for $\theta > \frac{\pi}{2}$) is given in the following lemma.

\begin{lemma}
\label{lem-Besov-norm-regularized-semigroup}
Let $J > 0$, $\epsilon > 0$ and $t \geq 0$.
Let $m_t(\lambda) = (1 + \lambda)^{-J-\epsilon} e^{-t\lambda}$.
Then $m_t$ belongs to the class $\HI(\Sigma_{\frac{\pi}{2}};J)$ and we have the estimate
\[ \|m_t\|_{\HI(\Sigma_{\frac{\pi}{2}};J)} \leq C (1 + t)^{J + \epsilon} . \]
\end{lemma}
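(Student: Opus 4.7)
The estimate has two components. First, the bound $\|m_t\|_{\infty,\pi/2} \leq 1$ is immediate: for $\lambda \in \C_+ = \Sigma_{\pi/2}$ we have $|1+\lambda| \geq 1 + \Re \lambda \geq 1$ and $|e^{-t\lambda}| = e^{-t\Re\lambda} \leq 1$. Second, we must bound the Besov norms of the boundary functions $g_\pm(\lambda) := m_t(\pm ie^\lambda)$, $\lambda \in \R$; by conjugate symmetry it suffices to treat $g := g_+(\lambda) = (1+ie^\lambda)^{-J-\epsilon} e^{-ite^\lambda}$.

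The key structural observation is that $g$ extends holomorphically and boundedly (by $1$) to the open half-strip $S := \{\lambda \in \C : -\pi/2 < \Im \lambda < 0\}$. Indeed, for $\lambda = x+iy$ with $y \in (-\pi/2,0)$ one has $ie^\lambda = -e^x \sin y + ie^x \cos y$, whose real part is strictly positive, so $ie^\lambda \in \Sigma_{\pi/2}$, where $|m_t| \leq 1$ by the previous paragraph. A standard Paley-Wiener contour-shift on this half-strip (analogous to the argument in the proof of Lemma~\ref{lem-HI-to-Besov}) then yields exponential decay of $\hat g$ on positive frequencies: after subtracting an explicit, $t$-independent analytic reference such as $\chi(\lambda) = (1+ie^\lambda)^{-1}$ to obtain a function that decays at both ends of $\R$, one gets
\[ |\widehat{g-\chi}(\xi)| \leq C e^{-c\xi} \qquad (\xi > 0) \]
for some $c > 0$, so that the positive-frequency dyadic blocks satisfy $\|(g-\chi) \ast \check\phi_n\|_\infty \lesssim 2^n e^{-c 2^{n-1}}$ for $n \geq 1$, and their contribution to the Besov sum is $O(1)$ uniformly in $t$. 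The reference function $\chi$ is smooth with bounded derivatives of all orders, so $\chi \in B^J_{\infty,1}$ with $t$-independent norm.

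The $t$-dependence is therefore entirely carried by the $n \leq 0$ blocks. Here one exploits the stationary-phase picture: $e^{-ite^\lambda}$ has instantaneous frequency $\sim -te^\lambda/2\pi$, which matches block $\phi_n$ (with $n<0$) near $\lambda_n \sim \log(2^{|n|}/t)$. At that point, the decay factor $|(1+ie^\lambda)^{-J-\epsilon}| \lesssim (t/2^{|n|})^{J+\epsilon}$ suppresses the contribution, yielding the heuristic bound $\|g \ast \check\phi_n\|_\infty \lesssim (t/2^{|n|})^{J+\epsilon}$ for $|n| \gtrsim \log_2 t$, while for $|n| \ll \log_2 t$ the relevant $\lambda$ lies deep in the slowly-varying region, so standard derivative estimates for $g$ (obtained by Fa\`a di Bruno from $m_t \circ (\lambda \mapsto ie^\lambda)$) give super-polynomial decay in $|n|$. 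Summing,
\[ \sum_{|n| \geq \log_2 t} 2^{J|n|} (t/2^{|n|})^{J+\epsilon} \lesssim t^{J+\epsilon} \sum_{|n|\geq \log_2 t} 2^{-\epsilon |n|} \lesssim t^J, \]
and the remaining terms contribute $O(1)$, giving the claimed bound $\|g\|_{B^J_{\infty,1}} \lesssim (1+t)^{J+\epsilon}$ (even $(1+t)^J$ would suffice). The main obstacle is making the stationary-phase heuristic rigorous for the negative-frequency blocks: the naive $k$-th derivative of $g$ is not uniformly bounded on $\R$ when $k > J+\epsilon$, so one must split $\R$ into the two regions $\{te^\lambda \leq 1\}$ and $\{te^\lambda > 1\}$ and combine pointwise bounds on $|g|$ and $|g'|$ (or the appropriate higher finite difference) with a modulus-of-smoothness characterization of $B^J_{\infty,1}$.
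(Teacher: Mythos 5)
Your proposal takes a genuinely different route from the paper, but it contains a gap that you yourself flag, and that gap is precisely where the content of the lemma lives. You propose a direct frequency-side analysis: holomorphic extension of $g(\lambda)=m_t(ie^\lambda)$ to the half-strip $\{-\pi/2<\Im\lambda<0\}$, Paley--Wiener decay for the blocks $n\geq 1$ after subtracting a reference function, and a stationary-phase picture for the blocks $n\leq 0$. The positive-frequency part is sound in outline. But the negative-frequency part, which carries the entire $(1+t)^{J+\epsilon}$ dependence, is left at the level of a heuristic and a plan (``split $\R$ into the two regions\ldots and combine pointwise bounds\ldots with a modulus-of-smoothness characterization''). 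As written, this is not a proof; it is an identification of where the difficulty is.

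The paper avoids the stationary-phase analysis entirely by exploiting that $B^J_{\infty,1}$ is a Banach algebra. It factors
\[
(1+ie^\lambda)^{-J-\epsilon}e^{-ite^\lambda}
 \;=\; \frac{(1+te^\lambda)^{J+\epsilon}}{(1+ie^\lambda)^{J+\epsilon}}\cdot\Bigl[(1+te^\lambda)^{-J-\epsilon}e^{-ite^\lambda}\Bigr],
\]
bounds the Besov norm of the first factor by its $\HI(\Sigma_\delta)$ norm via the embedding $\HI(\Str_\delta)\hookrightarrow B^J_{\infty,1}$ (this is where the clean $(1+t)^{J+\epsilon}$ appears: $\sup_{\mu\in\Sigma_\delta}|(1+t\mu)^{J+\epsilon}/(1+i\mu)^{J+\epsilon}|\lesssim(1+t)^{J+\epsilon}$), and then cites \cite[Proof of Lemma 3.9(2)]{KrW3} for the $t$-uniform bound $\|(1+te^\lambda)^{-J-\epsilon}e^{-ite^\lambda}\|_{B^J_{\infty,1}}\leq C$. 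That cited lemma does exactly the oscillatory analysis you identify as the obstacle, but on a self-similar model function for which the estimate is scale-invariant in $t$; the factorization is what isolates this $t$-uniform piece. If you wish to make your approach self-contained, you would effectively need to reprove that reference, which is more work than the paper's two-line factorization. If instead you are content to cite the same lemma, the factorization route is shorter and avoids the Paley--Wiener bookkeeping (and the delicate choice of reference function $\chi$) altogether.
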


\begin{proof}
It is easy to check that $m_t$ is holomorphic on $\Sigma_{\frac{\pi}{2}} = \C_+$ and that $\|m_t\|_{\infty,\frac{\pi}{2}} \leq 1$.
It thus remains to estimate $\| m_t(\pm i e^{(\cdot)}) \|_{B^{J}_{\infty,1}}$.
Since $B^J_{\infty,1}$ is a Banach algebra, we decompose
\begin{align*}
\| m_t(i e^\lambda)\|_{B^J_{\infty,1}} & =  \| (1 + i e^\lambda)^{-J - \epsilon} \exp(-ite^{\lambda})\|_{B^{J}_{\infty,1}} \\
& \lesssim \left\|\frac{(1 + t e^\lambda)^{J + \epsilon}}{(1 + ie^\lambda)^{J + \epsilon}}\right\|_{B^J_{\infty,1}} \left\| (1 + t e^{\lambda} )^{-J - \epsilon} \exp(-ite^\lambda) \right\|_{B^J_{\infty,1}} \\
& \lesssim \left\|\frac{(1 + t \lambda)^{J + \epsilon}}{(1 + i\lambda)^{J + \epsilon}}\right\|_{\infty,\delta} \left\| (1 + t e^{\lambda} )^{-J - \epsilon} \exp(-ite^\lambda) \right\|_{B^J_{\infty,1}},
\end{align*}
since $\HI(\Str_\delta) \hookrightarrow B^J_{\infty,1}$ according to \cite[Remark 4.16]{KrPhD}, where $\delta > 0$ is a small auxiliary angle.
It is easy to estimate $ \left\|\frac{(1 + t \lambda)^{J + \epsilon}}{(1 + i\lambda)^{J + \epsilon}}\right\|_{\infty,\delta} \leq C (1 + t)^{J + \epsilon}$.
Moreover, $\left\| (1 + t e^{\lambda} )^{-J - \epsilon} \exp(-ite^\lambda) \right\|_{B^J_{\infty,1}} \leq C$ according to \cite[Proof of Lemma 3.9 (2)]{KrW3}.
The term $\|m_t(-ie^\lambda)\|_{B^J_{\infty,1}}$ is estimated in the same way.
\end{proof}

The following proposition is a variant of \cite[Theorem 4.4]{CDMcIY}.
Note however that we do not assume that $A_Y$ is $\omega$-sectorial but that this is a consequence of the proposition.
Moreover, we precise the dependence of the $\HI(\Sigma_\theta)$ calculus norm on the angle $\theta > \frac{\pi}{2}$.

\begin{prop}
\label{prop-bilinear-to-calculus}
Let $X$ and $Y$ be Banach spaces such that $X,Y \subseteq Z$ with a bigger Banach space $Z$ and similarly for the duals $X',Y' \subseteq \tilde{Z}$.
Assume that $Y$ is reflexive.
Assume that $X \cap Y$ is dense in $Y$ and that $X' \cap Y'$ is dense in $Y'$.
Assume furthermore that for $f \in X \cap Y$ and $g \in X' \cap Y'$, the duality brackets $\langle f , g \rangle_{X,X'} = \langle f , g \rangle_{Y,Y'}$ coincide.
Let $A$ be the (negative) generator of an analytic semigroup $(T_t)_t$ on $X$, i.e. $A$ is $\omega$-sectorial for some $\omega < \frac{\pi}{2}$.
Assume
\begin{equation}
\label{equ-1-prop-bilinear-to-calculus}
\int_0^\infty | \langle A T_tf, g \rangle | dt \leq C \|f\|_Y \|g\|_{Y'} \quad (f \in X \cap Y, \: g \in X' \cap Y').
\end{equation}
Then there exists a $\frac{\pi}{2}$-sectorial operator $A_Y$ on $Y$ such that for $J > 1$
\begin{equation}
\label{equ-4-prop-bilinear-to-calculus}
\|m(A_Y)\|_{Y \to Y} \leq C_J (\theta - \frac{\pi}{2})^{-J} \|m\|_{\infty,\theta} \quad ( m \in \HI_0(\Sigma_\theta), \: \theta \in (\frac{\pi}{2},\pi) )
\end{equation}
and $m(A_Y)f = m(A)f$ for any such $m$ and $f \in X \cap Y$.
\end{prop}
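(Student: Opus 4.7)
The plan is to follow the classical strategy of deducing $\HI$ calculus from a bilinear square-function estimate, in the spirit of \cite{CDMcIY}. First, I would extend the semigroup $(T_t)_{t \geq 0}$ from $X$ to $Y$ using the bilinear hypothesis itself: for $f \in X \cap Y$ and $g \in X' \cap Y'$, combining the identity $\langle T_t f - f, g\rangle = -\int_0^t \langle AT_s f, g\rangle \, ds$ with \eqref{equ-1-prop-bilinear-to-calculus} yields $\|T_t f\|_Y \leq (1+C) \|f\|_Y$, after which density of $X \cap Y$ in $Y$ and of $X' \cap Y'$ in $Y'$ (plus compatibility of the duality brackets and reflexivity of $Y$) allows $T_t$ to extend to a $c_0$-semigroup $(T_t^Y)_{t \geq 0}$ on $Y$. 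Its negative generator is the candidate $A_Y$, and $\frac{\pi}{2}$-sectoriality follows from the Laplace representation $R(\lambda, A_Y) = -\int_0^\infty e^{\lambda t} T_t^Y \, dt$ valid for $\Re \lambda < 0$, which gives $\|R(\lambda, A_Y)\|_{Y \to Y} \leq M / |\Re \lambda|$ and, via the resolvent equation, the sectorial bound on the complement of any $\overline{\Sigma_{\pi/2 + \epsilon}}$.

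For the main calculus estimate, fix $m \in \HI_0(\Sigma_\theta)$ with $\theta > \frac{\pi}{2}$, choose an auxiliary angle $\theta'' \in (\frac{\pi}{2}, \theta)$, and start from the Cauchy formula $m(A) = \frac{1}{2\pi i} \int_{\partial \Sigma_{\theta''}} m(\lambda) R(\lambda, A) \, d\lambda$. Using $R(\lambda, A) = \lambda^{-1} I + \lambda^{-1} A R(\lambda, A)$ together with $\frac{1}{2\pi i} \int_{\partial \Sigma_{\theta''}} \frac{m(\lambda)}{\lambda} d\lambda = 0$ (which holds by closing the contour, using the polynomial decay of $m$ at $0$ and $\infty$), and substituting $A R(\lambda, A) = -\int_0^\infty e^{\lambda t} AT_t \, dt$ (valid since $\Re \lambda < 0$ on the contour because $\theta'' > \frac{\pi}{2}$), Fubini produces
\begin{equation*}
\langle m(A) f, g \rangle = -\int_0^\infty \phi_m(t) \langle AT_t f, g\rangle \, dt, \quad \phi_m(t) := \frac{1}{2\pi i} \int_{\partial \Sigma_{\theta''}} \frac{m(\lambda)}{\lambda} e^{\lambda t} \, d\lambda.
\end{equation*}
Combined with \eqref{equ-1-prop-bilinear-to-calculus} this yields $|\langle m(A) f, g\rangle| \leq C \|\phi_m\|_{L^\infty} \|f\|_Y \|g\|_{Y'}$; density then extends $m(A)$ to a bounded $m(A_Y)$ on $Y$, with the identity $m(A_Y) f = m(A) f$ for $f \in X \cap Y$ holding by construction.

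The main obstacle is to prove $\|\phi_m\|_{L^\infty(0, \infty)} \leq C_J (\theta - \frac{\pi}{2})^{-J} \|m\|_{\infty, \theta}$ controlled only by the sup norm of $m$, with the advertised blow-up as $\theta \to \frac{\pi}{2}+$. By the scaling $\phi_m(t) = \phi_{m(\cdot/t)}(1)$ one reduces to $t = 1$; then one splits the contour at $|\lambda| = 1$. On the tail $\{|\lambda| \geq 1\}$, the exponential factor $|e^\lambda| = e^{-r|\cos\theta''|}$ provides the required decay. On the head $\{|\lambda| \leq 1\}$, the naive bound $|m(\lambda)/\lambda| \leq \|m\|_\infty / |\lambda|$ produces a logarithmic divergence at $0$, which must be absorbed by exploiting cancellation between $\lambda = re^{i\theta''}$ and $\lambda = re^{-i\theta''}$: the holomorphicity of $m$ gives $|m(re^{i\theta''}) - m(re^{-i\theta''})| \lesssim r\|m\|_{\infty, \theta}/(\theta - \theta'')$ via Cauchy's estimates for the angular derivative, eliminating the singularity. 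Finally, optimizing $\theta'' \in (\frac{\pi}{2}, \theta)$ yields the $(\theta - \frac{\pi}{2})^{-J}$ dependence for any $J > 1$.
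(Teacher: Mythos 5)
Your overall strategy is close to the paper's: both arguments turn the Cauchy integral for $m(A)$ into a pairing of the form $\int_0^\infty \Phi(t)\,\langle AT_tf, g\rangle\,dt$ so that the bilinear hypothesis can be invoked, and both then feed the result to a pseudo-resolvent/generator argument to produce $A_Y$. The difference is the kernel $\Phi$, and this is where your proposal breaks down.

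Your choice $\Phi = \phi_m$ with $\phi_m(t) = \frac{1}{2\pi i}\int_{\partial\Sigma_{\theta''}}\frac{m(\lambda)}{\lambda}e^{\lambda t}\,d\lambda$ corresponds, after the change of variable $\lambda=e^{u\pm i\theta''}$, to a ``Mellin convolution'' of $m$ against $e^{-e^{(\cdot)}}$. The sup-norm bound $\|\phi_m\|_{L^\infty} \lesssim_{\theta} \|m\|_{\infty,\theta}$ is \emph{false}: the kernel $u\mapsto e^{e^{u\pm i\theta''}}$ tends to $1$ (not $0$) as $u\to-\infty$ and is therefore not integrable, which manifests as the logarithmic divergence you identify near $|\lambda|=0$. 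Your proposed fix -- that holomorphicity forces $|m(re^{i\theta''}) - m(re^{-i\theta''})|\lesssim r\,\|m\|_{\infty,\theta}/(\theta-\theta'')$ -- is where the error is concrete: Cauchy's estimate yields $|m'(re^{i\phi})|\leq \|m\|_\infty/\bigl(r\sin(\theta-|\phi|)\bigr)$, hence $|r\,m'(re^{i\phi})|\leq \|m\|_\infty/\sin(\theta-\theta'')$ \emph{with no factor of $r$}, and integrating in $\phi$ gives $|m(re^{i\theta''})-m(re^{-i\theta''})|\lesssim \|m\|_\infty/(\theta-\theta'')$, which does not decay as $r\to 0$. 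A concrete counterexample is $m(\lambda)=\lambda^{is}$ (regularize to put it in $\HI_0$): here $m(re^{i\theta''})-m(re^{-i\theta''}) = r^{is}\bigl(e^{-s\theta''}-e^{s\theta''}\bigr)$, whose modulus is a nonzero constant independent of $r$. On the Fourier side, $\phi_m$ for this $m$ involves $\Gamma(is)$, which blows up as $s\to 0$ while $\|m\|_{\infty,\theta}=e^{s\theta}$ stays bounded. This is exactly the low-frequency (pole of $\Gamma$ at $0$) obstruction.

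The paper avoids this by following the decomposition of \cite[Theorem 4.4]{CDMcIY}: instead of the trivial identity $R(\lambda,A)=\lambda^{-1}(I+AR(\lambda,A))$ (which corresponds to the un-normalized kernel $\psi(z)=e^{-z}$), one expands $b$ against the kernel $\psi(z)=ze^{-z}$, whose logarithmic Fourier transform is $\Gamma(1-i\xi)$, nonvanishing and controlled. This yields $\langle b(A)f,g\rangle = \int_0^\infty(\beta^++\beta^-)(t)\langle AT_tf,g\rangle\,dt$ with $\|\beta^\pm\|_\infty \leq \tfrac12\|\gamma_e\|_{L^1}\|b\|_{\infty,\alpha}$, where $\hat\gamma_e(\xi)=\bigl(\Gamma(1-i\xi)\cosh(\alpha\xi)\bigr)^{-1}$, and the blow-up rate $(\theta-\tfrac{\pi}{2})^{-J}$ is extracted from the explicit $\|\gamma_e\|_{L^1}$ estimate in Lemma \ref{lem-technical-bilinear-to-calculus}. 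You need this extra regularization; without it, the kernel in your representation is a genuine singular integral and its $L^\infty$ norm is not dominated by $\|m\|_\infty$. (Your first step, extending $T_t$ to $Y$ directly, is plausible and a reasonable alternative to the paper's pseudo-resolvent construction, provided you address strong continuity of $T_t^Y$ at $t=0$, e.g.\ via weak continuity plus reflexivity of $Y$; but this is a secondary issue compared to the kernel estimate.)
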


\begin{proof}
We place ourselves in the notation of \cite[Proof of Theorem 4.4]{CDMcIY} and put there $\psi(z) = z e^{-z}$ and for given $\epsilon > 0$ sufficiently small so that $A$ is $(\frac{\pi}{2} - \epsilon)$-sectorial, $\mu = \frac{\pi}{2} - \epsilon$, $\nu  = \frac{\pi}{2}$, $\eta = \frac{\pi}{2} + 3 \epsilon > 2 \nu - \mu = \frac{\pi}{2} + \epsilon$, and $\alpha = \frac{\pi}{2} + 2 \epsilon \in (2 \nu - \mu , \eta)$.
Pick a $b \in \HI_0(\Sigma_\eta)$.
Then one has according to \cite[(4.3)]{CDMcIY}
\[ b(\lambda) = \int_0^\infty (\beta^+(t) + \beta^-(t)) \psi\left(\frac{\lambda}{t}\right) \frac{dt}t \quad (\lambda \in \Sigma_\mu) \]
with $\beta^\pm$ defined via the Fourier transform
\[ (\beta^\pm_e)\hat{\phantom{i}}(t) = \frac12 \hat{\gamma}_e(t) \hat{b}_e(t) e^{\mp \alpha t} .\]
Here, the index $e$ stands for composition with the exponential function, so $h_e = h \circ \exp : \R \to \C$ for a function $h : (0, \infty) \to \C$.
Moreover, 
\[ \hat{\gamma}_e (t) = \frac{1}{\hat{\psi}_e(t) \cosh(\alpha t)} = \frac{1}{\Gamma(1-it) \cosh(\alpha t)} \] for our particular choice of $\psi$, see \cite[Example 4.8]{CDMcIY}.
Since both $\hat{\gamma}_e$ and $\hat{b}_e \cdot e^{\mp \alpha t}$ belong to the Schwartz class, also $\beta^\pm_e$ belongs to the Schwartz class and thus, 
\[ \int_0^\infty |\beta^\pm(t)| \frac{dt}{t} < \infty. \]
Therefore and since $\|R(\lambda,A)\| \leq C_\mu \frac{1}{|\lambda|}$ for $\lambda \in \partial \Sigma_\mu \backslash \{ 0 \}$, we can apply Fubini below and obtain for $f \in X$ and $g \in X'$,
\begin{align*}
\langle b(A) f , g \rangle & = \frac{1}{2\pi i} \int_{\partial\Sigma_\mu} \int_0^\infty (\beta^+(t) + \beta^-(t)) \psi\left(\frac{\lambda}{t}\right) \langle R(\lambda,A) f , g \rangle d\lambda \\
& = \int_0^\infty (\beta^+(t) + \beta^-(t)) \frac{1}{2 \pi i} \int_{\partial\Sigma_\mu} \psi\left(\frac{\lambda}{t}\right) \langle R(\lambda,A) f, g \rangle d\lambda \frac{dt}{t} \\
& = \int_0^\infty (\beta^+(t) + \beta^-(t)) \left\langle \psi\left(\frac{1}{t}A\right) f, g \right\rangle \frac{dt}{t} \\
& = \int_0^\infty (\beta^+(t) + \beta^-(t)) \langle AT_tf, g \rangle dt.
\end{align*}
Now by assumption, we have for $f \in X \cap Y$ and $g \in X' \cap Y'$
\begin{equation}
\label{equ-2-bilinear-to-calculus}
 | \langle b(A)f,g \rangle| \leq \int_0^\infty (|\beta^+(t)| + |\beta^-(t)|) |\langle AT_tf,g\rangle| dt  \leq C (\|\beta^+ \|_{L^\infty(\R_+)} + \|\beta^-\|_{L^\infty(\R_+)}) \|f\|_Y \|g\|_{Y'} .
\end{equation}
We estimate $\|\beta^\pm\|_{L^\infty(\R_+)}$.
\begin{align}
\|\beta^\pm\|_{L^\infty(\R_+)} & = \|\beta^\pm_e\|_{L^\infty(\R)} = \frac12 \|\gamma_e \ast b_e((\cdot) \pm i \alpha) \|_{L^\infty(\R)} \\
& \leq \frac12 \|\gamma_e\|_{L^1(\R)} \|b_e((\cdot) \pm i \alpha)\|_{L^\infty(\R)} \nonumber\\
& = \frac12 \|\gamma_e\|_{L^1(\R)} \|b\|_{\infty,\alpha} \label{equ-5-bilinear-to-calculus}.
\end{align}
The technical Lemma \ref{lem-technical-bilinear-to-calculus} below gives an estimate of $\|\gamma_e\|_{L^1(\R)}$, so that \eqref{equ-2-bilinear-to-calculus} shows that $b(A)$ extends to a bounded operator on $Y$ with $\|b(A)\|_{Y \to Y} \leq C_\eta \|b\|_{\infty,\eta}$.
Take now for $\mu \in \C \backslash \overline{\Sigma_\eta}$, $b(\lambda) = \lambda (\mu - \lambda)^{-1} \in \HI_0(\Sigma_\eta)$, so that $J(\mu) = \frac{1}{\mu}(\Id + b(A)) = R(\mu,A)$ extends to an operator on $Y$ with uniform norm bound for these $\mu$.
Since $R(\mu,A)$ is a resolvent, $J(\mu)$ is a pseudo-resolvent in the sense of \cite[Section 1.9]{Pazy}.
We claim that
\begin{equation}
\label{equ-3-bilinear-to-calculus}
\mu J(\mu)f \to f\text{ weakly in }Y \text{ as }\mu \to - \infty \quad(f \in Y).
\end{equation}
Indeed, according to \cite[Proposition 15.2]{KW04}, $\mu R(\mu,A) f \to f$ strongly in $X$ for any $f \in X$.
Thus, $\langle \mu R(\mu,A)f,g \rangle \to \langle f, g \rangle$ for any $f \in X\cap Y$ and $g \in X' \cap Y'$.
A $3\epsilon$ argument together with the uniform norm bound of $\mu J(\mu)$ in $B(Y)$ allows then to deduce \eqref{equ-3-bilinear-to-calculus} for $f \in X \cap Y$, and then for any $f \in Y$.
This implies that the null space $\Ker(J(\mu))$ which is independent of $\mu$ \cite[Lemma 9.2 p.~36]{Pazy} equals $\{ 0 \}$.
Indeed, if $f \in N(J(\mu))$, then $\mu J(\mu)f = 0$ for any $\mu$.
Letting $\mu \to - \infty$ together with \eqref{equ-3-bilinear-to-calculus} shows that $f = 0$.
Then proceeding as in \cite[Proof of Theorem 9.3 p.~37]{Pazy} allows to define an operator $A_Y : R(J(\mu)) \to Y$ where $R(J(\mu))$ stands for the range of $J(\mu)$, such that $R(\mu,A_Y) = J(\mu)$.
Then, since the resolvent bound \cite[(15.1)]{KW04}  of $\|\mu R(\mu,A_Y)\|$ is satisfied, \cite[15.2 Proposition c)]{KW04} allows with the reflexivity of $Y$ to deduce that $A_Y$ is densely defined and that $\overline{R(A_Y)} \oplus \Ker(A_Y) = Y$.
In particular, $A_Y$ is sectorial and since $\eta > \frac{\pi}{2}$ was arbitrary, $A_Y$ is $\frac{\pi}{2}$-sectorial.
Moreover, $R(\mu,A_Y)f= R(\mu,A)f$ for $f \in X \cap Y$, so that the Cauchy integral formula \eqref{equ-CIF} implies that for $b \in \HI_0(\Sigma_\eta)$, we have $b(A_Y)f = b(A)f$ for such $f$.
It remains to show the estimate \eqref{equ-4-prop-bilinear-to-calculus}, which follows from the technical Lemma \ref{lem-technical-bilinear-to-calculus} together with \eqref{equ-2-bilinear-to-calculus} and \eqref{equ-5-bilinear-to-calculus}.
\end{proof}

\begin{lemma}
\label{lem-technical-bilinear-to-calculus}
Let $\alpha = \frac{\pi}{2} + 2 \epsilon$ and $\displaystyle \hat{\gamma}_e(t) = \frac{1}{\Gamma(1-it) \cosh(\alpha t)}$.
Then for $\delta > 0$, we have the estimate
\[ \|\gamma_e\|_{L^1(\R)} \leq C_\delta \epsilon^{-(1+\delta)} . \]
\end{lemma}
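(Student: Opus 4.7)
The plan is to reduce the $L^1$ bound on $\gamma_e$ to an $L^2$-Sobolev estimate on its Fourier transform. For any $s > 1/2$, Cauchy--Schwarz with the polynomial weight $(1+x^2)^{s/2}$ gives
\[
\|\gamma_e\|_{L^1(\R)} \,\leq\, \left(\int_\R (1+x^2)^{-s}\,dx\right)^{1/2}\|(1+x^2)^{s/2}\gamma_e\|_{L^2(\R)} \,\leq\, C_s\,\|\hat{\gamma}_e\|_{H^s(\R)},
\]
the last step by Plancherel, since multiplication by the polynomial weight corresponds to a differential operator on the Fourier side. Taking $s=1$, the task reduces to bounding $\|\hat{\gamma}_e\|_{L^2}$ and $\|\hat{\gamma}_e'\|_{L^2}$.

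For the $L^2$ norm I would use the classical identity $|\Gamma(1\pm it)|^2 = \pi t/\sinh(\pi t)$ for $t>0$ to write
\[
\|\hat{\gamma}_e\|_{L^2}^2 \,=\, \int_\R \frac{\sinh(\pi|t|)}{\pi|t|\,\cosh^2(\alpha t)}\,dt.
\]
The integrand equals $1$ at $t=0$ and, since $2\alpha - \pi = 4\epsilon$, is asymptotic for large $|t|$ to $(2/\pi)\,e^{-4\epsilon|t|}/|t|$. Splitting at $|t|=1$ and substituting $u = 4\epsilon t$ in the tail identifies it with the exponential integral $E_1(4\epsilon) \lesssim \log(1/\epsilon)$, so $\|\hat{\gamma}_e\|_{L^2}^2 \lesssim \log(1/\epsilon)$.

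For the derivative, logarithmic differentiation gives $\hat{\gamma}_e'(t) = \bigl(i\Psi(1-it) - \alpha\tanh(\alpha t)\bigr)\hat{\gamma}_e(t)$, where $\Psi := \Gamma'/\Gamma$ is the digamma function. Stirling's asymptotic $\Psi(z) = \log z + O(|z|^{-1})$, valid on $|\arg z| \leq \pi - \delta$, yields $|\Psi(1-it)| \lesssim \log(2+|t|)$, hence $|\hat{\gamma}_e'(t)| \lesssim \log(2+|t|)\,|\hat{\gamma}_e(t)|$. Repeating the earlier split with this extra $\log^2$ weight leads, via the scaling $u = 4\epsilon t$, to $\int_{4\epsilon}^\infty (\log u - \log(4\epsilon))^2\,e^{-u}/u\,du \lesssim \log^3(1/\epsilon)$, so $\|\hat{\gamma}_e'\|_{L^2}^2 \lesssim \log^3(1/\epsilon)$. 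Combining the two estimates, $\|\hat{\gamma}_e\|_{H^1} \lesssim \log^{3/2}(1/\epsilon)$, and since $\log^{3/2}(1/\epsilon) \leq C_\delta\,\epsilon^{-\delta}$ for every $\delta > 0$, this is in fact much stronger than the claimed $\epsilon^{-(1+\delta)}$. The only non-routine step is the $\log^3$ bound in the derivative estimate, which is handled by recognising the relevant tail integral as a variant of the exponential integral after scaling.
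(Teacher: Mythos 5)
Your argument is correct, and it in fact proves something strictly stronger than Lemma \ref{lem-technical-bilinear-to-calculus}: a $\log^{3/2}(1/\epsilon)$ bound rather than $\epsilon^{-(1+\delta)}$. The overarching strategy coincides with the paper's --- reduce $\|\gamma_e\|_{L^1}$ to the $W^{1,2}$ norm of $\hat{\gamma}_e$ --- but the two estimates of that Sobolev norm are genuinely different, and yours is sharper. For $\|\hat{\gamma}_e\|_{L^2}$ the paper only invokes the one-sided bound $|\Gamma(1-it)|\geq Ce^{-\pi|t|/2}$, which loses the $\sqrt{|t|}$ growth and yields $\epsilon^{-1/2}$; you use the reflection identity $|\Gamma(1\pm it)|^2=\pi t/\sinh(\pi t)$ exactly, and the extra factor $1/|t|$ in the integrand turns a rate into a mere logarithm via the exponential integral $E_1(4\epsilon)$. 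For $\|\hat{\gamma}_e'\|_{L^2}$ the paper applies the quotient rule and then estimates $\Gamma'(1-it)$ by the Cauchy integral formula over a circle of radius $\delta$, which is lossy --- it trades the natural $\log|t|\cdot|t|^{1/2}$ growth of $|\Gamma'(1-it)|e^{\pi|t|/2}$ for $|t|^{1/2+\delta}$ and so ends up with $\epsilon^{-1-\delta}$; you instead differentiate logarithmically, isolate the digamma function $\Psi=\Gamma'/\Gamma$, and use its Stirling asymptotic $\Psi(z)=\log z+O(|z|^{-1})$ in the sector $|\arg z|\leq\pi-\delta$ to see that the extra factor is only $\log(2+|t|)$, yielding $\log^3(1/\epsilon)$ after the same substitution $u=4\epsilon t$. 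Both ingredients (the exact reflection formula and Stirling for $\Psi$) are classical, so your route is not harder; it simply avoids the Cauchy-integral detour and the associated loss, and it makes the genuine size of $\|\gamma_e\|_{L^1}$ transparent. Since $\log^{3/2}(1/\epsilon)\leq C_\delta\,\epsilon^{-\delta}\leq C_\delta\,\epsilon^{-(1+\delta)}$ on the relevant range $\epsilon\in(0,\pi/4)$, the claimed inequality follows a fortiori. (Note, though, that in the application the exponent of $\epsilon^{-1}$ feeds directly into the Besov index $J$ in Proposition \ref{prop-bilinear-to-calculus}, so the improvement is not merely cosmetic: your bound would allow any $J>0$ there rather than $J>1$.)
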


\begin{proof}
Since the Fourier transform is bounded $W^{1,2}(\R) \to L^1(\R)$, it suffices to estimate $\|\hat{\gamma}_e\|_{L^2(\R)}$ and $\|\hat{\gamma}_e'\|_{L^2(\R)}$.
According to \cite[Example 4.8]{CDMcIY}, we have $|\Gamma(1-it)| \geq C e^{-\frac{\pi}{2} | t | }$, so that $|\hat{\gamma}_e(t)| \leq C e^{\frac{\pi}{2} |t| - \alpha |t|} = C e^{-2 \epsilon |t|}$.
We have $\left( \int_\R |e^{-2\epsilon t}|^2 dt \right)^{\frac12} \lesssim \epsilon^{-\frac12}$, so that $\|\hat{\gamma}_e\|_{L^2(\R)}$ is estimated.
For the derivative, we have
\begin{align}
| \hat{\gamma}_e'(t) | & = \left| - \frac{-i \Gamma'(1-it) \cosh(\alpha t) + \Gamma(1 - it) \alpha \sinh(\alpha t)}{\Gamma(1-it)^2 \cosh^2(\alpha t)} \right| \nonumber \\
& \leq |\Gamma'(1-it)| \frac{1}{|\Gamma(1-it)|^2 |\cosh(\alpha t)| } + \frac{|\Gamma(1-it)|^{-1} |\alpha| \, |\sinh(\alpha t)|}{|\cosh(\alpha t)|^2 } \nonumber \\
& \leq C | \Gamma'(1-it)| \exp\left(2 \frac{\pi}{2} |t| - \alpha |t|\right) + C \exp\left(\frac{\pi}{2} |t| - \alpha |t|\right).
\label{equ-proof-lem-technical-bilinear-to-calculus}
\end{align}
The second summand can be treated as above.
For the first summand, we have to find an estimate for $\Gamma'(1-it)$ with $|t| \geq 1 > \delta$.
We recall that $|\Gamma(x+iy)| \leq C e^{-\frac{\pi}{2} |y|} |y|^{x - \frac12}$ according to \cite[p.~15]{Leb}.
We write, according to the Cauchy integral formula,
\[ \Gamma'(1-it) = \frac{1}{2\pi i} \int_{\partial B(1-it,\delta)} \frac{\Gamma(z)}{(z-(1-it))^2} dz . \]
Here we have choosen as contour path a circle of radius $\delta$ as in the statement.
For $x + iy \in \partial B(1-it,\delta)$, we have $||y|-|t|| \leq \delta$ and $x \leq 1 + \delta$.
Therefore, since $x - \frac12 \geq 0$, $|y|^{x - \frac12} \leq (|t| + \delta)^{x - \frac12} \leq (|t| + \delta)^{\frac12 + \delta} \lesssim |t|^{\frac12 + \delta}$.
Moreover, $e^{-\frac{\pi}{2} | y | } \leq e^{-\frac{\pi}{2} (|t| - \delta) } \lesssim e^{-\frac{\pi}{2} |t|}$.
Thus,
\[ \sup_{x + i y \in \partial B (1 - it, \delta)} |\Gamma(x+iy)| \lesssim |t|^{\frac12 + \delta} e^{-\frac{\pi}{2} |t|} ,\]
which by the Cauchy integral formula implies
\[ |\Gamma'(1-it)| \leq C \delta \frac{1}{\delta^2} |t|^{\frac12 + \delta} e^{-\frac{\pi}{2} |t|} . \]
Going up, we estimate the first summand in \eqref{equ-proof-lem-technical-bilinear-to-calculus} by
\begin{align*}
 | \Gamma'(1-it)| \exp\left(2 \frac{\pi}{2} |t| - \alpha |t|\right)  & \lesssim_\delta |t|^{\frac12 + \delta} \exp\left((\frac{\pi}{2} - \alpha)|t|\right) \\
& = |t|^{\frac12 +  \delta} \exp \left(-2\epsilon |t|\right).
\end{align*}
Passing to $L^2(\R)$-norms, we obtain
\begin{align*}
\left(\int_\R |\Gamma'(1-it) \exp(2 \frac{\pi}{2}|t| - \alpha |t|)|^2 dt \right)^{\frac12} & \lesssim C + \left( \int_1^\infty |t|^{1 + 2 \delta} \exp(-4\epsilon |t|) dt \right)^{\frac12} \\
& = C + \left( \int_{4 \epsilon}^\infty \frac{|t|^{1 + 2 \delta}}{(4\epsilon)^{1+2\delta+1}} \exp(-|t|) dt \right)^{\frac12} \\
& \lesssim \left( \frac{1}{\epsilon^{2 + 2 \delta}} \right)^{\frac12} = \epsilon^{-1 -\delta}.
\end{align*}
This concludes the proof of the lemma and thus, also that of Proposition \ref{prop-bilinear-to-calculus}.
\end{proof}

In our Main Theorem \ref{thm-bilinear} on weighted $L^2$ functional calculus, we shall show the hypotheses of Proposition \ref{prop-bilinear-to-calculus} in the case $X = L^2(\Omega,\mu)$ and $Y = L^2(\Omega,wd\mu)$ with $w : \Omega \to (0,\infty)$ a certain weight, i.e. a positive measurable function.
In the next proposition, we spell out the most general functional calculus consequence in this situation.

\begin{prop}
\label{prop-bilinear-to-calculus-weight}
Let $(\Omega,\mu)$ be a $\sigma$-finite measure space and $A$ be a positive self-adjoint operator on $L^2(\Omega,\mu)$.
Let $w : \Omega \to (0,\infty)$ be a positive measurable function.
We consider the duality bracket $\langle f , g \rangle = \int_\Omega f(x) g(x) d\mu(x)$, so that $L^2(\Omega,\mu)$ is its own dual and $L^2(\Omega,w^{-1}d\mu)$ is the dual of $L^2(\Omega,w d\mu)$.
Assume that for any $f \in L^2(\Omega,\mu) \cap L^2(\Omega,w d\mu)$ and $g \in L^2(\Omega,\mu) \cap L^2(\Omega,w^{-1}d\mu)$, we have
\[ \int_0^\infty | \langle AT_t f, g \rangle| dt \leq C \|f\|_{L^2(\Omega,wd\mu)} \|g\|_{L^2(\Omega,w^{-1}d\mu)} . \]
Then for $J > 1$ and $m \in \HI(\Sigma_{\frac{\pi}{2}};J)$, $m(A)$ extends to a bounded operator on $L^2(\Omega,wd\mu)$ and
\[ \|m(A)\|_{L^2(\Omega,wd\mu) \to L^2(\Omega,wd\mu)} \lesssim_J |m(0)| + \|m\|_{\HI(\Sigma_{\frac{\pi}{2}};J)} .\]
\end{prop}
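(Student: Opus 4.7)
\emph{Proof plan.} The strategy is to invoke Proposition~\ref{prop-bilinear-to-calculus} with $X = L^2(\Omega,\mu)$ and $Y = L^2(\Omega,wd\mu)$ to obtain a $\frac{\pi}{2}$-sectorial realisation $A_Y$ of $A$ on $Y$ whose $\HI(\Sigma_\theta)$ norm grows polynomially as $\theta \downarrow \frac{\pi}{2}$, then upgrade this via Lemma~\ref{lem-HI-to-Besov} into a Besov-type $\HI(\Sigma_{\frac{\pi}{2}};J)$ calculus on the injective part, and finally absorb the kernel of $A_Y$ using the $|m(0)|$ summand.

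First I would check the abstract hypotheses of Proposition~\ref{prop-bilinear-to-calculus} in this concrete setup. Embed $X$ and $Y$ (and their duals $X' = X$, $Y' = L^2(\Omega, w^{-1}d\mu)$) into the space $Z = \tilde Z$ of $\mu$-a.e.\ equivalence classes of measurable functions on $\Omega$; the two duality brackets agree with $\int_\Omega fg\, d\mu$ on the intersections. Reflexivity of $Y$ is standard. For the density of $X \cap Y$ in $Y$, use $\sigma$-finiteness to pick $\Omega_n \uparrow \Omega$ with $\mu(\Omega_n) < \infty$ and set $E_n = \Omega_n \cap \{1/n \leq w \leq n\}$; for $f \in Y$ the truncations $f \mathbf{1}_{E_n}$ lie in $X \cap Y$ (since $w \geq 1/n$ on $E_n$) and converge to $f$ in $Y$ by dominated convergence. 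Replacing $w$ by $w^{-1}$ gives density of $X' \cap Y'$ in $Y'$. Positivity and self-adjointness of $A$ on $L^2(\mu)$ make $A$ sectorial of any angle $>0$, so in particular it generates an analytic semigroup on $X$, and the hypothesis of the proposition is precisely the assumed bilinear bound.

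Proposition~\ref{prop-bilinear-to-calculus} then produces a $\frac{\pi}{2}$-sectorial operator $A_Y$ on $Y$ with
\[
\|m(A_Y)\|_{Y \to Y} \leq C_J (\theta - \tfrac{\pi}{2})^{-J} \|m\|_{\infty,\theta} \qquad (m \in \HI_0(\Sigma_\theta),\; \theta > \tfrac{\pi}{2}),
\]
valid for every $J > 1$. Passing to the canonical decomposition \eqref{equ-sectorial-space-decomposition} one obtains the injective part $A_{Y,0}$ on $\overline{R(A_Y)}$, which is $\frac{\pi}{2}$-sectorial, injective and has dense range, and inherits the same polynomial bound. Applying Lemma~\ref{lem-HI-to-Besov} to $A_{Y,0}$ (with the same $J>1$) upgrades this to
\[
\|m(A_{Y,0})\|_{\overline{R(A_Y)} \to \overline{R(A_Y)}} \leq C'_J \|m\|_{\HI(\Sigma_{\frac{\pi}{2}};J)} \qquad (m \in \HI(\Sigma_{\frac{\pi}{2}};J)).
\]

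It remains to extend the calculus from $\overline{R(A_Y)}$ to all of $Y = \overline{R(A_Y)} \oplus \Ker(A_Y)$. For $m \in \HI(\Sigma_{\frac{\pi}{2}};J)$ with $J>1$, the Besov regularity $B^J_{\infty,1} \hookrightarrow C_b$ forces the boundary traces $m(\pm i e^{(\cdot)})$ to have continuous limits as $\lambda \to -\infty$, giving a well-defined value $m(0)$; one then \emph{defines}
\[
m(A_Y) := m(A_{Y,0}) \oplus m(0)\,\Id_{\Ker(A_Y)},
\]
consistently with the existing $\HI_0(\Sigma_\theta)$ calculus (where $m(0)=0$), and the triangle inequality together with the bound above yields the target estimate $\|m(A_Y)\|_{Y \to Y} \lesssim_J |m(0)| + \|m\|_{\HI(\Sigma_{\frac{\pi}{2}};J)}$. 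The main obstacle I expect is the bookkeeping in this last step: making sure the extension by $m(0)$ on $\Ker(A_Y)$ is the same operator as the one obtained by $\HI_0(\Sigma_\theta)$-approximation on all of $Y$, and that the passage to the boundary angle $\frac{\pi}{2}$ in Lemma~\ref{lem-HI-to-Besov} really only costs the polynomial factor $(\theta - \frac{\pi}{2})^{-J}$ delivered by Proposition~\ref{prop-bilinear-to-calculus}, so that $J>1$ is the exact threshold appearing in the statement.
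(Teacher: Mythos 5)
Your plan follows the same route the paper takes: invoke Proposition~\ref{prop-bilinear-to-calculus} with $X = L^2(\Omega,\mu)$, $Y = L^2(\Omega,wd\mu)$, split off the injective part $A_{Y,0}$ via \eqref{equ-sectorial-space-decomposition}, apply Lemma~\ref{lem-HI-to-Besov} on $\overline{R(A_Y)}$, and set $m(A_Y) = m(A_{Y,0}) \oplus m(0)\,\Id_{\Ker(A_Y)}$. Your preliminary verification of the hypotheses of Proposition~\ref{prop-bilinear-to-calculus} (the embeddings into $Z$, density of $X\cap Y$ in $Y$ via truncations $f\mathbf{1}_{E_n}$, reflexivity, agreement of the brackets) is spelled out in more detail than the paper bothers to, and is correct. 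The norm estimate is, as you say, immediate from the triangle inequality once $m(A_Y)$ is defined this way.

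However, there is a genuine gap: you flag \emph{as an obstacle} the question of whether this block-diagonal operator $m(A_Y)$ actually \emph{extends} $m(A)$, i.e.\ whether $m(A_Y)f = m(A)f$ for $f \in L^2(\Omega,\mu)\cap L^2(\Omega,wd\mu)$, but you do not resolve it. This is precisely the non-trivial remaining step, and the statement of the proposition requires it (``$m(A)$ extends to a bounded operator''). The paper handles it in two stages. First, for $m \in \HI(\Sigma_\theta)$ with $\theta > \frac{\pi}{2}$, it uses the approximate identity $\rho_n(\lambda) = n(n+\lambda)^{-1} - (1+n\lambda)^{-1} \in \HI_0(\Sigma_\theta)$ to identify the two decompositions of $f$ (the $Y$-decomposition $f = f_1 \oplus f_2$ with respect to $\overline{R(A_Y)}\oplus\Ker(A_Y)$ and the $X$-decomposition $f = g_1\oplus g_2$ with respect to $\overline{R(A)}\oplus\Ker(A)$), since $\rho_n(A_Y)f \to f_1$ in $Y$ and $\rho_n(A)f \to g_1$ in $X$ with both limits taken in $X+Y$; then $m(A_Y)f_1 = \lim_n (m\rho_n)(A_Y)f_1 = \lim_n (m\rho_n)(A)f_1 = m(A)f_1$ via Lemma~\ref{lem-HI-convergence}, while on the kernel $m(A_Y)f_2 = m(0)f_2 = m(A)f_2$. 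Second, for general $m\in\HI(\Sigma_{\frac{\pi}{2}};J)$ one approximates $m_n\to m$ in $\HI(\Sigma_{\frac{\pi}{2}};J)$ with $m_n\in\HI(\Sigma_\theta)$ using Lemma~\ref{lem-properties-Besov}, and passes to the limit again.

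A smaller but real flaw: you claim that $B^J_{\infty,1}\hookrightarrow C_b$ forces $m(\pm ie^{(\cdot)})$ to have limits as $\lambda\to-\infty$, hence $m(0)$ to be well-defined. This is not true. The function $m(z) = z^{i/2}$ is bounded holomorphic on $\Sigma_{\frac{\pi}{2}}$ and its boundary trace $m(\pm i e^\lambda)$ is a constant times $e^{i\lambda/2}$, which lies in $B^J_{\infty,1}$ for every $J>0$ (compactly supported Fourier transform) but has no limit as $\lambda\to-\infty$. So $B^J_{\infty,1}$-regularity of the boundary traces does not guarantee that $m(0)$ exists; the proposition's statement silently presupposes it.
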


\begin{remark}
\label{rem-bilinear-to-calculus-weight}
In fact, we will even show the following form of Proposition \ref{prop-bilinear-to-calculus-weight} in the proof below.
Under the hypotheses of Proposition \ref{prop-bilinear-to-calculus-weight}, according to Proposition \ref{prop-bilinear-to-calculus}, there exists a $\frac{\pi}{2}$-sectorial operator $A_Y$ on $Y = L^2(\Omega,wd\mu)$ with $m(A_Y)f = m(A)f$ for $m \in \HI_0(\Sigma_{\frac{\pi}{2} + \epsilon})$ and $f \in L^2(\Omega,\mu) \cap L^2(\Omega,wd\mu)$.
Then according to \eqref{equ-sectorial-space-decomposition}, $L^2(\Omega,wd\mu)$ decomposes as $L^2(\Omega,wd\mu) = \overline{R(A_Y)} \oplus \Ker(A_Y)$.
The part $A_{Y,0}$ on $\overline{R(A_Y)}$ is injective and has dense range, so that $A_{Y,0}$ has a bounded $\HI(\Sigma_{\frac{\pi}{2}};J)$ calculus for any $J > 1$, according to Lemma \ref{lem-HI-to-Besov}.
Putting then, in accordance with \eqref{equ-sectorial-space-decomposition}, for $m \in \HI(\Sigma_{\frac{\pi}{2}};J)$
\begin{equation}
\label{equ-rem-bilinear-to-calculus-weight}
 m(A_Y) = \begin{bmatrix} m(A_{Y,0}) & 0 \\ 0 & m(0) \Id_{\Ker(A_Y)} \end{bmatrix} ,
\end{equation}
we have 
\begin{equation}
\label{equ-2-rem-bilinear-to-calculus-weight}
\|m(A_Y)\|_{L^2(\Omega,wd\mu) \to L^2(\Omega,wd\mu)} \lesssim_J |m(0)| + \|m\|_{\HI(\Sigma_{\frac{\pi}{2}};J)}
\end{equation}
and $m(A)f = m(A_Y)f$ for $f \in L^2(\Omega,\mu) \cap L^2(\Omega,wd\mu)$.
\end{remark}

\begin{proof}[of Proposition \ref{prop-bilinear-to-calculus-weight} and Remark \ref{rem-bilinear-to-calculus-weight}]
Consider the $\frac{\pi}{2}$-sectorial operator $A_Y$ on $Y = L^2(\Omega,wd\mu)$ from Proposition \ref{prop-bilinear-to-calculus} and its space decomposition $Y = \overline{R(A_Y)} \oplus \Ker(A_Y)$ from \eqref{equ-sectorial-space-decomposition}.
We define for $J > 1$ and $m \in \HI(\Sigma_{\frac{\pi}{2}};J)$ the operator $m(A_Y) : Y \to Y$ as in \eqref{equ-rem-bilinear-to-calculus-weight}.
The norm estimate \eqref{equ-2-rem-bilinear-to-calculus-weight} is immediate.
It only remains to check that $m(A)f = m(A_Y)f$ for $f \in L^2(\Omega,\mu) \cap L^2(\Omega,wd\mu)$.
We consider first $m \in \HI(\Sigma_\theta)$ for some $\theta > \frac{\pi}{2}$.
Write $f = f_1 \oplus f_2$ according to $Y = \overline{R(A_Y)} \oplus \Ker(A_Y)$ and $f = g_1 \oplus g_2$ according to $X := L^2(\Omega,\mu) = \overline{R(A)} \oplus \Ker(A)$.
Let $\rho_n(\lambda) = n (n + \lambda)^{-1} - (1 + n\lambda)^{-1} \in \HI_0(\Sigma_\theta)$ from \cite[9.4 Proposition (2)]{KW04}.
According to \cite[15.2 Proposition]{KW04}, $f_1 = Y-\lim_n \rho_n(A_Y)f = (X+Y)-\lim_n \rho_n(A)f = X-\lim_n \rho_n(A)f = g_1$, so that $f_1 = g_1$ and $f_2 = g_2$.
It suffices to check that $m(A_Y)f_1 = m(A)f_1$ and $m(A_Y)f_2 = m(A)f_2$.
We have according to Lemma \ref{lem-HI-convergence}, $m(A_Y)f_1 = Y-\lim_n (m\rho_n)(A_Y)f_1 = (X+Y)-\lim_n (m\rho_n)(A)f_1 = X-\lim_n (m\rho_n)(A)f_1 = m(A)f_1$.
Moreover, $m(A_Y)f_2 = m(0)f_2 = m(A)f_2$.
We have shown $m(A)f = m(A_Y)f$  for $m \in \HI(\Sigma_\theta)$.
For a more general $m \in \HI(\Sigma_{\frac{\pi}{2}};J)$, we pick an approximating sequence $m_n \in \HI(\Sigma_\theta)$, $m_n \to m$ in $\HI(\Sigma_{\frac{\pi}{2}};J)$ according to Lemma \ref{lem-properties-Besov}.
Then we have
\begin{align*}
m(A_Y)f & = m(A_Y)f_1 \oplus m(A_Y)f_2 = Y-\lim_n m_n(A_Y)f_1 \oplus m(0)f_2\\
& = (X+Y)-\lim_n m_n(A)f_1 \oplus m(0)f_2 = X-\lim_n m_n(A)f_1 \oplus m(A)f_2 = m(A)f. 
\end{align*}
\end{proof}

\subsection{Weights}
\label{subsec-weights}

\begin{defi}
Let $(\Omega,\mu)$ be a $\sigma$-finite measure space and $w :\Omega \to (0,\infty)$ a strictly positive measurable function.
Then $w$ is called a weight.
\end{defi}

If $T$ is a positive mapping on $L^\infty(\Omega)$ and $w : \Omega \to (0,\infty)$ is a weight, then we can define $T(w) : \Omega \to [0,\infty]$ to be a measurable function as follows.
Let $w^{(n)} = w \cdot 1_{w \leq n}$.
Then $w^{(n)}$ is an increasing sequence of $L^\infty$ functions, so $T\left(w^{(n)}\right)$ is an increasing sequence of measurable functions, which thus converges to a measurable function $T(w) : \Omega \to [0,\infty]$.
Note that if $T$ extends to a bounded operator on $L^1$ and $w \in L^1(\Omega)$, then $T(w)$ is unambiguously defined.
Also if $w \in L^1(\Omega) + L^\infty(\Omega)$, then $T(w)$ is unambiguously defined.

For practical purposes, we define the following cut-offs of a weight.

\begin{defi}
\label{defi-truncated-weight}
Let $w$ be a weight on $\Omega$ and $n \in \N$.
Then we define the cut-off weight
\[ w_n(x) = \begin{cases} w(x) & : \: w(x) \in [\frac1n,n] \\ \frac1n & : \: w(x) < \frac1n \\ n & :\: w(x) > n \end{cases}. \]
\end{defi}

\begin{defi}
Let $(T_t)_{t \geq 0}$ be a semigroup acting on $L^1(\Omega)$ and $w: \Omega \to (0,\infty)$ a weight.
Let $p \in (1,\infty)$.
We define the characteristic of $w$ as 
\[Q^A_p(w) = \sup_{t \geq 0} \esssup_{x \in \Omega} T_t(w)(x) (T_t(u)(x))^{p-1} ,\]
where $u = w^{-\frac{1}{p-1}}$.
If $p = 2$, this becomes $Q^A_2(w) = \sup_{t \geq 0} \esssup_{x \in \Omega} T_t(w)(x) T_t(w^{-1})(x)$.
The upper index $A$ stands for the (negative) generator $A$ of $(T_t)_{t \geq 0}$.
Moreover, we define $Q^A_p = \{ w : \Omega \to (0,\infty) :\: Q^A_p(w) < \infty \}$.
\end{defi}

\begin{lemma}
\label{lem-truncated-weight}
Let $w$ be a weight on $(\Omega,\mu)$ and $n \in \N$.
Let $(T_t)_{t \geq 0}$ be a positive semigroup acting contractively on $L^\infty(\Omega)$.
If $w$ belongs to $Q^A_2$, then the cut-off weight $w_n$ also belongs to $Q^A_2$ and we have $Q^A_2(w_n) \leq Q^A_2(w)$.
\end{lemma}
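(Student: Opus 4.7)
The plan is to reduce the desired bound $Q^A_2(w_n) \leq Q^A_2(w)$ to the pointwise operator inequality
\begin{equation}
T_t(w_n)(x)\, T_t(w_n^{-1})(x) \;\leq\; T_t(w)(x)\, T_t(w^{-1})(x), \qquad t \geq 0,\ \text{a.e. } x \in \Omega, \label{eq-plan-pt}
\end{equation}
after which the conclusion follows by taking essential suprema in $x$ and the supremum in $t \geq 0$.

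The elementary scalar inequality underlying \eqref{eq-plan-pt} is: for all $a, b > 0$,
\[ \frac{I_n(a)}{I_n(b)} + \frac{I_n(b)}{I_n(a)} \;\leq\; \frac{a}{b} + \frac{b}{a}, \]
where $I_n(y) = \max(\min(y, n), 1/n)$, so that $w_n(x) = I_n(w(x))$ and $w_n^{-1}(x) = 1/I_n(w(x))$. To prove it, by symmetry assume $a \leq b$; since $I_n$ is non-decreasing, $I_n(b)/I_n(a) \geq 1$. The auxiliary function $y \mapsto I_n(y)/y$ equals $1/(ny)$ on $(0, 1/n)$, is identically $1$ on $[1/n, n]$, and equals $n/y$ on $(n, \infty)$, hence is non-increasing on $(0, \infty)$. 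Therefore $I_n(b)/b \leq I_n(a)/a$, i.e.\ $I_n(b)/I_n(a) \leq b/a$. Since $s \mapsto s + 1/s$ is increasing on $[1, \infty)$, the inequality follows from $1 \leq I_n(b)/I_n(a) \leq b/a$.

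To upgrade the scalar inequality to \eqref{eq-plan-pt}, I would invoke the kernel representation of the positive operator $T_t$, namely $T_t f(x) = \int_\Omega k_t(x, y) f(y)\, d\mu(y)$ with a non-negative kernel $k_t$, which is standard for the submarkovian semigroups considered in the paper. By Fubini and a symmetrization in $(y_1, y_2)$,
\[ 2\, T_t(w_n)(x)\, T_t(w_n^{-1})(x) = \iint k_t(x,y_1) k_t(x,y_2) \bigl[ w_n(y_1) w_n^{-1}(y_2) + w_n(y_2) w_n^{-1}(y_1) \bigr] d\mu(y_1)\, d\mu(y_2), \]
and likewise for the right-hand side of \eqref{eq-plan-pt}. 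Applying the scalar inequality pointwise with $a = w(y_1)$, $b = w(y_2)$ bounds the integrand for $(w_n, w_n^{-1})$ above by that for $(w, w^{-1})$, and integrating against $k_t(x,y_1) k_t(x,y_2)\, d\mu \otimes d\mu$ yields \eqref{eq-plan-pt}.

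The main obstacle is the use of the kernel representation, which goes slightly beyond the bare hypotheses of the lemma (merely a positive contraction on $L^\infty(\Omega)$). In the paper's submarkovian setting the kernel is standard; more generally one can rephrase the argument using only the algebraic tensor product $T_t \otimes T_t$ applied to the symmetric, pointwise non-negative element $w \otimes w^{-1} + w^{-1} \otimes w - w_n \otimes w_n^{-1} - w_n^{-1} \otimes w_n$ of $L^\infty(\Omega) \otimes L^\infty(\Omega)$, whose diagonal value at $(x,x)$ is precisely $2 T_t(w)(x) T_t(w^{-1})(x) - 2 T_t(w_n)(x) T_t(w_n^{-1})(x)$.
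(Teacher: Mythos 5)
Your proof takes a genuinely different route from the paper's. You reduce the pointwise estimate $T_t(w_n)\,T_t(w_n^{-1}) \leq T_t(w)\,T_t(w^{-1})$ to the elementary scalar inequality $\frac{I_n(a)}{I_n(b)} + \frac{I_n(b)}{I_n(a)} \leq \frac{a}{b} + \frac{b}{a}$ via the usual symmetrization of a product of two linear forms, and you prove that scalar inequality correctly. The paper instead conditions on the set $\{w \leq n\}$, writing $T_tv = \alpha_1 T_t^1 v + \alpha_2 T_t^2 v$ with $\alpha_i = T_t(\chi_i)$, expands the difference $T_tw\, T_t(w^{-1}) - T_t(w^n)\, T_t((w^n)^{-1})$ for $w^n = \min(w,n)$, controls the $\alpha_2^2$-term via the operator Cauchy--Schwarz inequality (Lemma \ref{lem-semigroup-CSU}), and handles the lower cutoff in a second step via the symmetry $Q^A_2(v) = Q^A_2(v^{-1})$. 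Your reduction is more transparent and makes the underlying two-point inequality explicit.

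That said, the step from scalar to operator inequality is a genuine gap under the lemma's stated hypotheses. The kernel representation you invoke is not part of the assumptions: the lemma assumes only a positive contraction on $L^\infty(\Omega)$, and even a submarkovian operator on a $\sigma$-finite space need not have an integral kernel. You flag this and suggest a tensor-product replacement, but you do not justify why applying $T_t\otimes T_t$ to the pointwise nonnegative element $w\otimes w^{-1} + w^{-1}\otimes w - w_n\otimes w_n^{-1} - w_n^{-1}\otimes w_n$ yields something nonnegative on the diagonal. Positivity of $T_t \otimes T_t$ is not a formal consequence of positivity of $T_t$ in general; here it follows from the automatic complete positivity of positive maps on abelian $C^*$-algebras together with the Gelfand functional calculus of Lemma \ref{lem-Gelfand-functional-calculus} (to see that the transformed element remains pointwise nonnegative on $\hat\Omega\times\hat\Omega$), or alternatively from the averaging-functional device $\phi_B(f) = \frac{1}{\mu(B)}\int_B T_t f\,d\mu$ used in the proof of Lemma \ref{lem-semigroup-CSU}. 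Until one of these is carried out, the proof does not establish the lemma in its stated generality. A smaller omission is the treatment of unbounded $w$, where $T_t(w)$ must be defined by monotone limits; that part is routine but should be acknowledged.
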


\begin{proof}
The idea of the proof is from \cite[Lemma 4]{Dah}.
We let $\chi_1 = 1_{w \leq n}$ and $\chi_2 = 1 - \chi_1 = 1_{w > n}$.
Furthermore, we define $\alpha_1(x) = T_t(\chi_1)(x)$, $\alpha_2(x) = T_t(\chi_2)(x)$, and moreover for $v : \Omega \to (0,\infty)$ positive, $T_t^1v(x) = T_t(v\chi_1)(x) \frac{1}{\alpha_1(x)}$, $T_t^2v(x) = T_t(v \chi_2)(x) \frac{1}{\alpha_2(x)}$.
Then we have
\[ T_tv(x) = \alpha_1(x) T_t^1v(x) + \alpha_2(x) T_t^2v(x) . \]
Here we note that $\alpha_i(x) = 0$, for $x$ belonging to a set of positive measure, implies $0 \leq T_t(v \chi_i)(x) = \lim_{m \to \infty} T_t(v 1_{v \leq m} \chi_i)(x) \leq \lim_{m \to \infty} m T_t(\chi_i)(x) = \lim_{m \to \infty} 0 = 0$, so that we can well define $T_t^iv(x)$ by putting it to equal $1$ for $\alpha_i(x) = 0$,  $i = 1,2$.
We shall show that for $w^n(x) = \min(w(x),n)$, we have
\begin{equation}
\label{equ-proof-lem-truncated-weight}
T_tw(x) T_t(w^{-1})(x) - T_t(w^n)(x)T_t((w^n)^{-1})(x) \geq 0.
\end{equation}
Indeed, we have $T_t^2w(x) T_t^2(w^{-1})(x) \geq 1$ when $\alpha_2(x) \neq 0$ according to Lemma \ref{lem-semigroup-CSU}.
Then
\begin{align*}
& T_tw(x)T_t(w^{-1})(x) - T_t(w^n)(x)T_t((w^n)^{-1})(x) \\
& = [\alpha_1(x) T_t^1w(x) + \alpha_2(x) T_t^2w(x)] [\alpha_1(x) T_t^1 (w^{-1})(x) + \alpha_2(x) T_t^2 (w^{-1})(x)] \\
& - [\alpha_1(x) T_t^1(w^n)(x) + \alpha_2(x) T_t^2(w^n)(x)] [\alpha_1(x) T_t^1 ((w^n)^{-1})(x) + \alpha_2(x) T_t^2 ((w_n)^{-1})(x)] \\
& = \alpha_1^2(x) [T_t^1w(x) T_t^1(w^{-1})(x) - T_t^1 w^n(x) T_t^1((w^n)^{-1})(x)] \\
& + \alpha_1(x)\alpha_2(x) [T_t^1w(x) T_t^2(w^{-1})(x) + T_t^2w(x) T_t^1(w^{-1})(x) - T_t^1 (w^n)(x) T_t^2((w^n)^{-1})(x) - T_t^2(w^n)(x) T_t^1((w^n)^{-1})(x)] \\
& + \alpha_2^2(x) [T_t^2w(x)T_t^2 (w^{-1})(x) - 1 ]\\
& = \alpha_1^2(x) [T_t^1w(x) T_t^1(w^{-1})(x) - T_t^1 w(x) T_t^1(w^{-1})(x)] \\
& + \alpha_1(x)\alpha_2(x) [T_t^1w(x) T_t^2(w^{-1})(x) + T_t^2w(x) T_t^1(w^{-1})(x) - T_t^2(\frac{1}{n}) T_t^1 w(x) - T_t^2(n) T_t^1(w^{-1})(x)] \\
& + \alpha_2^2(x) [T_t^2w(x)T_t^2 (w^{-1})(x) - 1 ]\\
& \geq \alpha_1(x)\alpha_2(x) [ T_t^1w(x) T_t^2(w^{-1} - \frac1n)(x)  + T_t^1(w^{-1})(x) T_t^2(w-n)(x) ] + \alpha_2^2(x) \cdot 0 \\
& = \alpha_1(x)\alpha_2(x) T_t^2 \left( (w^{-1}(\cdot) - \frac1n)T_t^1w(x) + (w(\cdot)-n) T_t^1(w^{-1})(x) \right)(x) \\
& = \alpha_1(x)\alpha_2(x) T_t^2 \left( \frac{w-n}{wn}(-\underbrace{T_t^1w(x)}_{\leq n} + wn \underbrace{T_t^1(w^{-1})(x)}_{\geq \frac1n}) \right)(x) \\
& \geq \alpha_1(x)\alpha_2(x) T_t^2 \left( \frac{w-n}{wn} (w-n) \right) \geq 0.
\end{align*}
We have thus shown \eqref{equ-proof-lem-truncated-weight}, which implies
$Q^A_2(w) \geq Q^A_2(w^n)$.
Now observe that $w_n = (((w^n)^{-1})^n)^{-1}$ and that $Q^A_2(v) = Q^A_2(v^{-1})$, so that
$Q^A_2(w) \geq Q^A_2(w^n) = Q^A_2((w^n)^{-1}) \geq Q^A_2(((w^n)^{-1})^n) = Q^A_2(w_n)$.
\end{proof}

\begin{remark}
\label{remark-classical-characteristic}
Suppose that $(\Omega,\dist,\mu)$ is a space of homogeneous type, i.e. a metric measure space such that $\mu(B(x,2r)) \leq C \mu(B(x,r))$ with uniform constant $C$ over all $x \in \Omega$ and $r > 0$, where $B(x,r) = \{ y \in \Omega : \: \dist(x,y) \leq r \}$ denotes a ball in $\Omega$.
This entails that $\Omega$ admits a doubling dimension $d \geq 0$ such that $\mu(B(x,\alpha r)) \leq C \alpha^d \mu(B(x,r))$ for $x \in \Omega$, $r > 0$ and $\alpha \geq 1$.
In the following cases, the semigroup characteristic can be compared to the classical characteristic defined in terms of means over balls, that is
\[ Q^{class}_2(w) = \sup_{B \text{ ball in }\Omega} \frac{1}{\mu(B)} \int_B w(y)d\mu(y) \frac{1}{\mu(B)} \int_B \frac{1}{w(y)}d\mu(y) . \]
Let $(T_t)_{t \geq 0}$ be a submarkovian semigroup acting on $L^2(\Omega)$.
Suppose that $T_t$ has an integral kernel $p_t(x,y)$.
\begin{enumerate}
\item If $p_t(x,y)$ satisfies (one-sided) Gaussian estimates \cite[(1.3)]{GrTe}: there exist $C,C_+ > 0$ such that
\begin{equation}
\label{equ-GE}
 p_t(x,y) \leq C \frac{1}{\mu(B(x,\sqrt{t}))} \exp(-C_+ \dist(x,y)^2 / t) \quad (t > 0, \: x , y \in \Omega),
\end{equation}
then there exists some $c < \infty$ such that for any weight $w : \Omega \to \R_+$, we have $Q^A_2(w) \leq c Q^{class}_2(w).$
\item If $p_t(x,y)$ satisfies lower Gaussian estimates: there exist $c,C_-  >0$ such that
\[ p_t(x,y) \geq c \frac{1}{\mu(B(x,\sqrt{t}))} \exp(-C_- \dist(x,y)^2 / t) \quad (t > 0, \: x , y \in \Omega),\]
then there exists some $c < \infty$ such that for any weight $w : \Omega \to \R_+$, we have $Q^{class}_2(w) \leq c Q^A_2(w).$
\end{enumerate}
Consequently, if the semigroup satisfies two-sided Gaussian estimates, then $Q^{class}_2(w) \cong Q^A_2(w)$.
\end{remark}

\begin{proof}
1. Let $x \in \Omega, \: t > 0$ and for $k \geq 0$, denote the ball centered at $x$, $B_k = B(x,2^{k+1}\sqrt{t})$ and also the annulus $A_k = B_k \backslash B_{k-1}$ ($A_k = B_k$ if $k = 0$).
Then with $d$ denoting a doubling dimension of $\Omega$, so that $\mu(B(x,2^{k+1}r)) \lesssim 2^{(k+1)d}\mu(B(x,r))$, we have
\begin{align*}
& T_tw(x) T_t(w^{-1})(x) = \int_\Omega p_t(x,y)w(y)d\mu(y) \int_\Omega p_t(x,y) w^{-1}(y) d\mu(y) \\
& = \sum_{k,l = 0}^\infty \int_{A_k} p_t(x,y) w(y)d\mu(y) \int_{A_l} p_t(x,y) w^{-1}(y) d\mu(y) \\
& \lesssim \sum_{k,l = 0}^\infty \int_{A_k} \frac{1}{\mu(B(x,\sqrt{t}))} \exp(-C_+\dist(x,y)^2/t) w(y)d\mu(y) \int_{A_l} \frac{1}{\mu(B(x,\sqrt{t}))} \exp(-C_+\dist(x,y)^2/t) w^{-1}(y)d\mu(y) \\
& \lesssim \sum_{k,l = 0}^\infty \int_{A_k} 2^{(k+1)d} \frac{1}{\mu(B(x,2^{k+1}\sqrt{t}))} \exp(-C_+ 2^{2k}) w(y) d\mu(y) \\
& \times \int_{A_l} 2^{(l+1)d} \frac{1}{\mu(B(x,2^{l+1}\sqrt{t}))} \exp(-C_+ 2^{2l}) w^{-1}(y) d\mu(y) \\
& \lesssim \sum_{k,l = 0}^\infty 2^{(k+1)d} \exp(-C_+ 2^{2k}) \frac{1}{\mu(B_k)} \int_{B_k} w(y) d\mu(y) 2^{(l+1)d} \exp(-C_+ 2^{2l}) \frac{1}{\mu(B_l)} \int_{B_l} w^{-1}(y) d\mu(y) \\
& \leq \sum_{k,l = 0}^\infty 2^{(k+l+2)d} \exp(-C_+ (2^{2k} + 2^{2l})) 2^{(2\max(k,l) - k-l)d} \frac{1}{\mu(B_{\max(k,l)})} \int_{B_{\max(k,l)}} w(y) d\mu(y) \\
& \times \frac{1}{\mu(B_{\max(k,l)})} \int_{B_{\max(k,l)}} w^{-1}(y) d\mu(y) \\
& \leq \sum_{k,l = 0}^\infty 2^{(k+l+2)d} \exp(-C_+(2^{2k} + 2^{2l})) 2^{(2\max(k,l)-k-l)d} Q^{class}_2(w) \\
& = c Q^{class}_2(w),
\end{align*}
where we note in the last step that the double series is clearly convergent. \\

2. We have
\begin{align*}
& T_tw(x) T_t(w^{-1})(x) \gtrsim \int_\Omega \frac{1}{\mu(B(x,\sqrt{t}))}\exp(-C_- \dist(x,y)^2/t) w(y)d\mu(y) \\
& \times
\int_\Omega \frac{1}{\mu(B(x,\sqrt{t}))}\exp(-C_- \dist(x,y)^2/t) w^{-1}(y)d\mu(y) \\
& \geq \exp(-C_-) \frac{1}{\mu(B(x,\sqrt{t}))} \int_{B(x,\sqrt{t})} w(y)d\mu(y) \exp(-C_-) \frac{1}{\mu(B(x,\sqrt{t}))} \int_{B(x,\sqrt{t})} w^{-1}(y)d\mu(y).
\end{align*}
Taking the supremum over all $t > 0$ and $x \in \Omega$ yields $Q^A_2(w) \gtrsim Q^{class}_2(w)$.
\end{proof}

\section{The Bellman function and its main properties}
\label{sec-Bellman}

For the proof of the Main Theorem \ref{thm-bilinear} on weighted $L^2$ functional calculus, we need the existence of a Bellman function from \cite{DoPe}.
Let us note $V$ the quadruplet
\[ V = (x, y, r, s) \in \mathbb{C} \times \mathbb{C} \times
   \mathbb{R}^{\ast}_+ \times \mathbb{R}^{\ast}_+ = : \mathbb{S}. \]
The variables $(x, y)$ will be $\mathbb{C}$--valued whereas the variables $r$
and $s$ are $\mathbb{R}$--valued and represent the weights. We introduce the
domain $\mathcal{D}_Q$
\[ \mathcal{D}_Q = \left\{ V \in \mathbb{S}: \hspace{1em} 1 \leqslant r
   s \leqslant Q \right\} . \]
We will often restrict our attention to truncated weights, that is given $0 <
\varepsilon < 1$, variables $r$ and $s$ bounded below and above
\[ \mathcal{D}^{\varepsilon}_Q = \left\{ V \in \mathcal{D}_Q :
   \hspace{1em} \varepsilon \leqslant r \leqslant \varepsilon^{- 1},
   \hspace{1em} \varepsilon \leqslant s \leqslant \varepsilon^{- 1} \right\} .
\]
Let $| \cdot |$ denote the standard norm in the complex plane and denote for
$x = x_1 + i x_2$ the complex derivative $\partial_x = \partial_{x_1} - i
\partial_{x_2}$.

\begin{lemma}[existence and properties of the Bellman function]
  \label{L: existence and properties of the Bellman function}There exists a
  function $B (V) = B_Q$ that is $\mathcal{C}^1$ on
  $\mathcal{D}^{\varepsilon}_Q$, and piecewise $\mathcal{C}^2$, with the
  estimate
  \begin{equation}
    B (V) \lesssim \frac{| x |^2}{r} + \frac{| y |^2}{s}
    \label{inequality-size}
  \end{equation}
  and on each subdomain where it is $\mathcal{C}^2$ there holds
  \begin{equation}
    d^2 B (V) \geqslant \frac{2}{Q} | d x | | d y | .
     \label{inequality-convexity}
  \end{equation}
  Whenever $V$ and $V_0$ are in the domain, the function has the property
  \begin{equation}
    B (V) - B (V_0) - d B (V_0) (V - V_0) \geqslant \frac{2}{Q} | x - x_0
    |  | y - y_0 | . \label{inequality-one leg convexity}
  \end{equation}
  Moreover, we have 
   \begin{equation}
    \partial_r B (V)\le 0 {\text{ and }}  \partial_s B (V) \le 0
 \label{derivative-sign},
  \end{equation}
and  the estimate
  \begin{equation}
    \Re [\partial_x B (V) x + \partial_y B (V) y + \partial_r B (V) r +
    \partial_s B (V) s] \gtrsim \frac{1}{Q} | x |  | y |
    \label{estimate-error} .
  \end{equation}
\end{lemma}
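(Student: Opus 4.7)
My plan is to combine the construction from \cite{DoPe} for properties (3.1)--(3.3) with new computations for the monotonicity (3.4) and, most importantly, the error estimate (3.5) that is needed here to drive the bilinear estimate. The existence of a function $B_Q$ on $\mathcal{D}_Q^{\varepsilon}$ that is piecewise $\mathcal{C}^2$, globally $\mathcal{C}^1$, and satisfies the size bound \eqref{inequality-size}, the Hessian bound \eqref{inequality-convexity} and the one-leg inequality \eqref{inequality-one leg convexity} is the content of that reference. I would therefore begin by quoting the construction, which produces $B_Q$ piecewise on $\mathcal{D}_Q$ by a case distinction (such as $|x| s \lessgtr |y| r$) with the two pieces polynomially quadratic in $(x,y)$ and with explicit $(r,s)$-dependent coefficients, glued along their common interface so that the gradients match. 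The bound \eqref{inequality-one leg convexity} is then the integrated form of \eqref{inequality-convexity} along the segment $V_t = V_0 + t(V-V_0)$, using $\mathcal{C}^1$-gluing to handle the finitely many interface crossings.

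For the weight-monotonicity (3.4), I would differentiate the explicit formula on each smooth piece. Both pieces are manifestly decreasing in $r$ and in $s$ (each occurrence of $r$ is as $1/r$ or inside $\sqrt{rs}$, similarly for $s$, with overall negative sign); across the interface the matching of gradients from $\mathcal{C}^1$-gluing propagates the sign to the full $B$.

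The main and genuinely new computation is the lower bound (3.5). Here (3.4) is an obstruction rather than a help, since $\partial_r B \cdot r + \partial_s B \cdot s \le 0$ has the wrong sign, and one must show that $\Re[\partial_x B \cdot x + \partial_y B \cdot y]$ strictly dominates it by an amount comparable to $|x||y|/Q$. The strategy is to use that on each piece $B$ is quadratic in $(x,y)$ with $(r,s)$-dependent coefficients: Euler's identity then yields $\Re[\partial_x B\cdot x + \partial_y B\cdot y] = 2B$ pointwise. Using the explicit formula, one checks that $2B$ exceeds the absolute value of the $(r,s)$-dilation term by a cross term of size $c\, |x||y|/\sqrt{rs}$, and the domain constraint $rs \le Q$ converts this into the required bound $\gtrsim \frac{1}{Q}|x||y|$.

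The step I expect to be the main obstacle is the $\mathcal{C}^1$-compatibility across the interface: on each smooth piece \eqref{estimate-error} follows from the computation just described, but the one-sided derivatives defining $\partial_x B, \partial_y B, \partial_r B, \partial_s B$ at interface points are a priori only limits from one side, and one must confirm that the two one-sided lower bounds match up. Since \eqref{estimate-error} only involves first derivatives, which agree on the interface by the $\mathcal{C}^1$ property, the piecewise lower bound extends to all of $\mathcal{D}_Q^{\varepsilon}$ by continuity; this, together with the constancy of the constant in $Q$ (uniform across the two pieces after the $rs \le Q$ reduction), closes the proof.
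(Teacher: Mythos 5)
Your proposal misdescribes the structure of the Bellman function and, more importantly, misses the one genuinely new idea in the paper's proof.

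First, on the structure: the function from \cite{DoPe} is not a two-branch case distinction along $|x|s \lessgtr |y|r$ with each branch polynomial in $(x,y)$. It is a positive linear combination $\sum_{i=1}^6 C_i B_i$ of six partial Bellman functions. Only $B_4$ is piecewise (its definition involves a supremum over a parameter $\alpha$, and the three branches correspond to the supremum being attained at $\alpha = 0, \infty$ or a finite interior point), and on the interior branch it equals $\frac{|x|^2 s - 2|x||y|K + |y|^2 r}{rs - K^2}$, which is homogeneous of degree 2 in $(x,y)$ but not a polynomial. Your Euler-identity observation $\Re[\partial_x B\cdot x + \partial_y B\cdot y] = 2B$ is nonetheless correct on each smooth piece (every $B_i$ is $2$-homogeneous in $(x,y)$ for fixed $(r,s)$) and is a tidy way to organize the $x,y$-contribution that the paper does not make explicit. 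That part is a genuine, small improvement in presentation.

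The substantive gap is property \eqref{estimate-error}. You assert that one ``checks using the explicit formula'' that $2B$ exceeds $|\partial_r B\cdot r + \partial_s B\cdot s|$ by $\gtrsim |x||y|/\sqrt{rs}$. The paper's own estimates show this is not automatic for the linear combination given in \cite{DoPe}: for $B_2$ one gets only $\Re[\cdots] \ge (1 - \tfrac{25}{8})\tfrac{|x|^2}{r} + \tfrac{|y|^2}{s}$, with a \emph{negative} coefficient on $|x|^2/r$, and for $B_4$ the $r,s$-derivatives carry terms like $\tfrac{24|x||y|}{rs}$ that are also of the wrong sign. The paper's key move is to \emph{redefine} $B$ by enlarging the coefficient $C_1$ of $B_1$: since $B_1 = |x|^2/r + |y|^2/s$ contributes exactly $+B_1$ to $\Re[\cdots]$ and is convex with the right size bound, this boost is harmless for \eqref{inequality-size}--\eqref{inequality-one leg convexity} but lets the positive $B_1$-contribution dominate the negative contributions from $B_2$ through $B_6$, which are all shown to be $\le C(\tfrac{|x|^2}{r}+\tfrac{|y|^2}{s})$ in absolute value. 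Without this modification, the function being referenced may simply fail \eqref{estimate-error}, so your plan as stated would stall precisely at the step you flag as the main computation.

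A smaller point: for \eqref{derivative-sign} you say the monotonicity in $r,s$ is ``manifest.'' For $B_1, B_2, B_3, B_5, B_6$ this is a direct inspection, but for $B_4 = H_4(x,y,r,s,K(r,s))$ the sign of $\partial_r B_4$ requires a chain-rule decomposition $\partial_r B_4 = \partial_r H_4 + \partial_K H_4\,\partial_r K$ and a sign check of each factor ($\partial_r H_4 \le 0$, $\partial_K H_4 \le 0$, $\partial_r K \ge 0$); ``each occurrence of $r$ is as $1/r$ or inside $\sqrt{rs}$'' does not resolve this by itself.
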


\begin{proof}
We shall use the function constructed in \cite{DoPe}, composed of linear combinations
of functions $B_1$ through $B_6$. The final function in \cite{DoPe} is of the form
$\sum^6_{i = 1} C_i B_i$ for some positive constants $C_i$ and for some
functions $B_i$ written below. The function $B$ here will have $C_1$ replaced
by $C > C_1$ to be determined. In order to have the last property
\eqref{estimate-error} we will increase the coefficient of $B_1$. Since $B_1$ is
convex and has the desired upper bounds, this will not change the assertions
made in \cite{DoPe} concerning the other properties of the function.

We recall that
\[ B_1 (V) = \frac{| x |^2}{r} + \frac{| y |^2}{s} \]
and calculate $\partial_x B_1 (V) x = \frac{2}{r} | x |^2$, $\partial_y B_1
(V) y = \frac{2}{s} | y |^2$, $\partial_r B_1 (V) r = - \frac{1}{r} | x |^2$
and $\partial_s B_1 (V) s = - \frac{1}{s} | y |^2$. Thus
\[ \Re [\partial_x B_1 (V) x + \partial_y B_1 (V) y + \partial_r B_1 (V)
   r + \partial_s B_1 (V) s] = \frac{| x |^2}{r} + \frac{| y |^2}{s} \geqslant
   2 \frac{| x |  | y |}{r s} \geqslant \frac{2 | x |  | y |}{Q} . \]
This is the main term that gives us the desired estimate. We check now that
the remaining parts of the function's derivatives stemming from $B_2$ through
$B_6$ are not too large.

We recall that
\[ B_2 (V) = \frac{| x |^2}{2 r - \frac{1}{s (N + 1)}} + \frac{| y |^2}{s}
   { \text{ and } } B_3 (V) = \frac{| x |^2}{r} + \frac{| y |^2}{2 s - \frac{1}{r (N
   + 1)}} \]
where
\[ N = N (r, s) = \frac{\sqrt{r s}}{\sqrt{Q}} \left( 1 - \frac{(r s)^{3 /
   2}}{128 Q^{3 / 2}} \right) = \frac{\sqrt{r s}}{\sqrt{Q}} - \frac{(r
   s)^2}{128 Q^2} . \]
Since $0 \leqslant N \leqslant 1$ and $0 \leqslant (r s - 1) \frac{1}{s} = r -
\frac{1}{s} \leqslant r$ we have $r \leqslant 2 r - \frac{1}{s (N + 1)}
\leqslant 2 r$. Thus
\[ \partial_x B_2 (V) x = \frac{2}{2 r - \frac{1}{s (N + 1)}} | x |^2
   \geqslant \frac{| x |^2}{r} . \]
As above $\partial_y B_2 (V) y = \frac{2}{s} | y |^2$. We calculate the
derivatives in $r$ and $s$. First, we observe that
\[ \partial_r N = \frac{1}{r} \frac{\sqrt{r s}}{\sqrt{Q}} \left( \frac{1}{2}
   - \frac{(r s)^{3 / 2}}{64 Q^{3 / 2}} \right) { \text{ and }} \partial_s N =
   \frac{1}{s} \frac{\sqrt{r s}}{\sqrt{Q}} \left( \frac{1}{2} - \frac{(r s)^{3
   / 2}}{64 Q^{3 / 2}} \right) \]
thus $0 \leqslant \partial_r N \leqslant \frac{1}{2 r}$ and $0 \leqslant
\partial_s N \leqslant \frac{1}{2 s}$. We calculate
\[ \partial_r B_2 (V) = - \frac{| x |^2 \left( 2 + \frac{1}{s (N + 1)^2}
   \partial_r N \right)}{\left( 2 r - \frac{1}{s (N + 1)} \right)^2}
   {\text{ and }} \partial_s B_2 (V) = - \frac{| y |^2}{s^2} - \frac{| x |^2 (N +
   1 + s \partial_s N)}{(2 r s (N + 1) - 1)^2} \]
thus $- \partial_r B_2 (V) r \leqslant \frac{| x |^2}{4 r} \frac{5}{2}$ and
with $1 \leqslant r s$ we have $- \partial_s B_2 (V) s \leqslant \frac{| y
|^2}{s} + \frac{| x |^2}{r} \frac{5}{2}$. Therefore
\[ \Re [\partial_x B_2 (V) x + \partial_y B_2 (V) y + \partial_r B_2
   (V) r + \partial_s B_2 (V) s] \geqslant \left( 1 - \frac{25}{8} \right)
   \frac{| x |^2}{r} + \frac{| y |^2}{s} \]
and similarly
\[ \Re [\partial_x B_3 (V) x + \partial_y B_3 (V) y + \partial_r B_3 (V)
   r + \partial_s B_3 (V) s] \geqslant \frac{| x |^2}{r} + \left( 1 -
   \frac{25}{8} \right) \frac{| y |^2}{s} . \]
We turn to $B_4$. Recall that
\[ H_4 (x, y, r, s, K) = \sup_{\alpha > 0} \beta (\alpha, x, y, r, s, K) =
   \sup_{\alpha > 0} \left( \frac{| x |^2}{r + \alpha K} + \frac{| y |^2}{s +
   \alpha^{- 1} K} \right) \]
and $B_4 (x, y, r, s) = H_4 (x, y, r, s, K (r, s))$ with
\[ K (r, s) = \frac{\sqrt{r s}}{\sqrt{Q}} \left( 1 - \frac{\sqrt{r s}}{8
   \sqrt{Q}} \right) . \]
Since $H_4$ is in $C^1$ we investigate the inequality on each piece
separately, whether the supremum is attained in $\alpha = 0 , \infty$
or $\alpha$ finite. If the supremum occurs at the boundary, the function $H_4$
simplifies to the expression $\frac{| x |^2}{r}$ or $\frac{| y |^2}{s}$ so in
this case there is nothing more to estimate. Let us assume the supremum is a
maximum and is attained at a finite strictly positive $\alpha' = \alpha (x, y,
r, s, K)$ so that $H_4 (x, y, r, s, K) = \beta (\alpha' (x, y, r, s, K), x, y,
r, s, K)$. Recall this happens if and only if $| x | s - | y | K > 0$ and $| y
| r - | x | K > 0$. Since $\partial_{\alpha} \beta (\alpha', x, y, r, s, K) =
0$ we obtain
\[ \partial_x B_4 (V) x = \frac{| x |^2}{r + \alpha' K} \geqslant 0 {\text{ and }}
   \partial_y B_4 (V) y = \frac{| y |^2}{s + \alpha'^{- 1} K} \geqslant 0. \]
We prefer not to take advantage of the subtle gain from these derivatives as
it is more easily had from $B_1$. Now we control the damage from the other
derivatives. To do so, let us resort to the explicit expression
\[ H_4 (x, y, r, s, K) = \frac{| x |^2 s - 2 | x |  | y | K + | y |^2 r}{r s
   - K^2} . \]
We get
\[ \partial_r H_4 (V, K) = - \frac{(| x | s - | y | K)^2}{(r s - K^2)^2}
   {\text{ and }} \partial_s H_4 (V, K) = - \frac{(| y | r - | x | K)^2}{(r s -
   K^2)^2} . \]
Since $K^2 \leqslant \frac{r s}{Q}$ we get $r s - K^2 \geqslant \left( 1 -
\frac{1}{Q} \right) r s$. We estimate $| \partial_r H_4 | \leqslant \frac{| x
|^2}{4 r^2}$ and $| \partial_s H_4 | \leqslant \frac{| y |^2}{4 s^2}$ for $Q$
large enough. Now
\[ \partial_K H_4 (V, K) = \frac{| x | s (2 K | x | - | y | r) + | y | r (2 K
   | y | - | x | s)}{(r s - K^2)^2} .\]
Let us estimate $| \partial_K H_4 | \leqslant 24 \frac{| x |  | y | r s}{(r
s)^2} = \frac{24 | x |  | y |}{r s}$. In order to deduce the remaining
derivative estimates for $B_4$ observe that
\[ \partial_r K = \frac{1}{r} \frac{\sqrt{r s}}{\sqrt{Q}} \left( \frac{1}{2}
   - \frac{\sqrt{r s}}{8 \sqrt{Q}} \right) {\text{ and }} \partial_s K =
   \frac{1}{s} \frac{\sqrt{r s}}{\sqrt{Q}} \left( \frac{1}{2} - \frac{\sqrt{r
   s}}{8 \sqrt{Q}} \right) \]
with $\frac{1}{8 r} \leqslant \partial_r K \leqslant \frac{1}{2 r}$ and \
$\frac{1}{8 s} \leqslant \partial_s K \leqslant \frac{1}{2 s}$ for large
enough $Q$. Thus
\[ | \partial_r B_4 (V) | r \leqslant \frac{| x |^2}{4 r} + \frac{24 | x |  |
   y |}{2 r s} {\text{ and }} | \partial_s B_4 (V) | s \leqslant \frac{| y |^2}{4
   s} + \frac{24 | x |  | y |}{2 r s} . \]
Now
\[ B_5 (V) = \frac{| x |^2}{2 r - \frac{1}{s (K + 1)}} + \frac{| y |^2}{s}
   {\text{ and }} B_6 (V) = \frac{| x |^2}{r} + \frac{| y |^2}{2 s - \frac{1}{r (K
   + 1)}} \]
and since $r \leqslant 2 r - \frac{1}{s (K + 1)} \leqslant 2 r$ we get
\[ \partial_x B_5 (V) x = \frac{2 | x |^2}{2 r - \frac{1}{s (K + 1)}}
   \geqslant \frac{| x |^2}{r} {\text{ and }} \partial_y B_5 (V) y = \frac{2 | y
   |^2}{s} . \]
Further $- \partial_r B_5 (V) r \leqslant \frac{| x |^2}{4 r} \frac{5}{2}$
and, using $1 \leqslant r s$ we get $- \partial_s B_5 (V) s \leqslant \frac{|
x |^2}{4 r} \frac{5}{2} + \frac{| y |^2}{s}$. Similarly, estimates hold for
$B_6$.

Summarizing, we see that none of the arising terms from $B_2$ through $B_6$
exceed $C \left( \frac{| x |^2}{r} + \frac{| y |^2}{s} \right)$ for a suitably
large $C$. Thus, a large enough coefficient before $B_1$ will arrange for us
the desired property in our function $B$. 

Now notice that the proof of \eqref{derivative-sign} is implicit in the considerations we just carried out and follow as an easy calculation, treating the pieces of $B$ separately. The least obvious estimate is that of $\partial_r B_4$ and $\partial_s B_4$ for the case when the extremum is attained for $0<\alpha < \infty$. For example, $\partial_r B_4(V)=\partial_r H_4(V,K) + \partial_K H_4(V,K) \partial_r K$. Here, we see from the considerations above that $\partial_r H_4(V,K)\le 0$ as well as $\partial_K H_4(V,K)\le 0$ and $\partial_r K\ge 0$, giving us the desired sign.
\end{proof}

Similarly to arguments detailed in \cite{DoPe}, 
we can obtain a regularised version of this Bellman function so that its main properties remain true:

\begin{lemma}[regularised Bellman function and its properties]
  \label{L: regularised Bellman function}Let $\varepsilon > 0$ given. Let $0 <
  \ell \leqslant \varepsilon / 2$. There exists a function $B_{\ell} (x, y, r,
  s)$ defined with domain
  \[ \mathcal{D}_Q^{\varepsilon, \ell} = \left\{ V \in
     \mathcal{D}_Q^{\varepsilon} ; \hspace{1em} | x | \geqslant \ell, | y |
     \geqslant \ell \right\} \subset \mathcal{D}_Q^{\varepsilon} \]
  such that for all $V_0, V \in \mathcal{D}_Q^{\varepsilon, \ell}$, we have
  \[ \hspace{1em} B_{\ell} (V) \lesssim (1 + \ell) \left( \frac{| x |^2}{r} +
     \frac{| y |^2}{s} \right), \]
  \begin{equation}
    \d_V^2 B_{\ell} (V) \geqslant \frac{2}{Q}  | d x | | d y |
    , \label{eq: concavity continuous}
  \end{equation}
  \begin{equation}
    B_{\ell} (V) - B_{\ell} (V_0) - d_V B_{\ell} (V_0) (V - V_0)
    \geqslant \frac{1}{Q} | \Delta x |  | \Delta y | = \frac{1}{Q} | x - x_0 |
    | y - y_0 | \label{eq: concavity discrete}.
  \end{equation}
  Further we have the estimates
  \[ \partial_r B_{\ell} (V)\le 0 {\text{ and }} \partial_s B_{\ell} (V)\le 0 \]
  and
  \[ \Re [\partial_x B_{\ell} (V) x + \partial_y B_{\ell} (V) y +
     \partial_r B_{\ell} (V) r + \partial_s B_{\ell} (V) s] \gtrsim
     \frac{1}{Q} | x |  | y | . \]
\end{lemma}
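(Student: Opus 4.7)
The plan is to obtain $B_\ell$ by smooth mollification of the Bellman function $B$ from Lemma \ref{L: existence and properties of the Bellman function}, which upgrades its piecewise $\mathcal{C}^2$ regularity to genuine $\mathcal{C}^\infty$ regularity while preserving the key lower bounds up to harmless constants. Concretely, pick a non-negative mollifier $\phi \in C_c^\infty(\R^6)$ supported in the unit ball with $\int \phi = 1$, set $\phi_\ell(W) = \ell^{-6}\phi(W/\ell)$, and define
\[ B_\ell(V) := \int_{\R^6} B(V-W)\, \phi_\ell(W)\, dW \qquad (V \in \mathcal{D}_Q^{\varepsilon,\ell}). \]
The constraints $\ell \leq \varepsilon/2$ and $|x|,|y| \geq \ell$ ensure that $V-W$ remains in (a slight enlargement of) $\mathcal{D}_Q^{\varepsilon/2}$ for all $|W|\leq\ell$ on the support of $\phi_\ell$, so the convolution is well defined (extending $B$ by continuity across the boundary $rs = 1$ or $rs = Q$ if necessary).

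Most of the claimed properties of $B_\ell$ then follow routinely from those of $B$. The size bound $B_\ell(V) \lesssim (1+\ell)(|x|^2/r + |y|^2/s)$ is immediate from \eqref{inequality-size} together with the elementary estimate $|x - W_x|^2/(r - W_r) \lesssim |x|^2/r$ valid on the support of $\phi_\ell$ (using $|x|\geq\ell$ and $r \geq \varepsilon \geq 2\ell$). For the Hessian inequality \eqref{eq: concavity continuous}, the key observation is that since $B$ is globally $\mathcal{C}^1$ with piecewise bounded $\mathcal{C}^2$ Hessian satisfying \eqref{inequality-convexity}, its distributional Hessian has no singular part on the interfaces between pieces; consequently, the classical Hessian of $B_\ell = B \ast \phi_\ell$ equals the convolution of the piecewise Hessian of $B$ with $\phi_\ell$, and since $\frac{2}{Q}|dx||dy|$ is independent of $V$, this uniform lower bound is preserved. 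The discrete convexity \eqref{eq: concavity discrete} now follows by Taylor's formula with integral remainder applied to $t \mapsto B_\ell(V_0 + t(V - V_0))$; the factor $\frac{1}{Q}$ (rather than $\frac{2}{Q}$) reflects $\int_0^1 (1-t)\,dt = \frac{1}{2}$. Finally, $\partial_r B_\ell, \partial_s B_\ell \leq 0$ are immediate from \eqref{derivative-sign} and the positivity of $\phi_\ell$.

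The main obstacle is the error estimate, because the linear factors $x, y, r, s$ outside of \eqref{estimate-error} are evaluated at $V$, whereas the derivatives of $B$ inside the convolution are evaluated at the shifted points $V - W$. After differentiating under the integral sign and writing $x = (x - W_x) + W_x$ (and analogously for $y, r, s$), the real part of $\partial_x B_\ell(V)\,x + \partial_y B_\ell(V)\,y + \partial_r B_\ell(V)\,r + \partial_s B_\ell(V)\,s$ splits into a main term
\[ \int \Re\bigl[\partial_x B(V-W)(x - W_x) + \partial_y B(V-W)(y-W_y) + \partial_r B(V-W)(r-W_r) + \partial_s B(V-W)(s-W_s)\bigr]\phi_\ell(W)\, dW, \]
which by \eqref{estimate-error} applied to $B$ at $V-W$ is bounded below by $\gtrsim \int |x-W_x|\,|y-W_y|/Q \cdot \phi_\ell(W)dW \gtrsim |x||y|/Q$ on $\mathcal{D}_Q^{\varepsilon,\ell}$ for $\ell$ small (using $|x|, |y| \geq \ell$), plus remainders of the form $\int \partial_\ast B(V-W)\, W_\ast\, \phi_\ell(W)\, dW$. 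These remainders are of order $\ell$ times the supremum of the relevant derivatives of $B$ on the convolution support, and a uniform bound on $|\partial_\ast B|$ obtained from the explicit piecewise formulas for $B_1,\ldots,B_6$ recorded in the proof of Lemma \ref{L: existence and properties of the Bellman function}, followed by a case analysis parallel to that in \cite{DoPe}, shows that these remainders are absorbed into the main term once $\ell$ is sufficiently small.
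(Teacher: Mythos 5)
Your mollification strategy is the natural one, and it is in the spirit of what the paper refers to (it cites \cite{DoPe} instead of supplying a proof). The points about the Hessian lower bound surviving convolution, the absence of a singular part in the distributional Hessian, and the sign of $\partial_r B_\ell,\partial_s B_\ell$ are all correct. But the heart of your argument -- the error estimate -- has a genuine gap, and as written it does not close.

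You estimate the main term of $\Re[\partial_x B_\ell(V)x+\partial_y B_\ell(V)y+\partial_r B_\ell(V)r+\partial_s B_\ell(V)s]$ from below by $\int \frac{1}{Q}|x-W_x|\,|y-W_y|\,\phi_\ell(W)\,dW \gtrsim \frac{1}{Q}|x||y|$, using the published form of \eqref{estimate-error}; note the factor $\frac1Q$, which tends to $0$ as $Q\to\infty$. The remainders, on the other hand, are bounded by $\ell\cdot\sup_{|W|\leq\ell}|\partial_\ast B(V-W)|$ and, by the explicit formulas in the proof of Lemma \ref{L: existence and properties of the Bellman function}, these derivative bounds are of size $|x|/r$, $|y|/s$, and $(|x|^2+|y|^2)/r^2$, which depend on $\varepsilon$ but \emph{not} on $Q$ (the coefficient in front of $B_1$ and all the other coefficients are $Q$-independent). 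So the remainder is $\lesssim_\varepsilon \ell(|x|+|y|+|x|^2+|y|^2)$, which does not go to zero as $Q\to\infty$, whereas your main term $\frac{|x||y|}{Q}$ does. Taking for instance $|x|=|y|=\ell$ gives remainder $\sim\ell^2$ against main term $\sim\ell^2/Q$; for large $Q$ the remainder dominates no matter how small $\ell$ is, because $\ell$ is not allowed to depend on $Q$. Thus the claim that ``the remainders are absorbed into the main term once $\ell$ is sufficiently small'' is false as stated.

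The fix is to notice that the proof of Lemma \ref{L: existence and properties of the Bellman function} establishes the stronger, $Q$-free lower bound $\Re[\partial_x B(V)x+\dots+\partial_s B(V)s]\gtrsim \frac{|x|^2}{r}+\frac{|y|^2}{s}$ (coming from $B_1$ dominating the other pieces), and only then weakens it to $\frac{2}{Q}|x||y|$ via $ab\geq 2\sqrt{ab}$ and $rs\leq Q$. If you propagate the $Q$-free version through the mollification, the main term becomes $\gtrsim_\varepsilon |x|^2+|y|^2$, the remainder is $\lesssim_\varepsilon \ell(|x|^2+|y|^2)$ (using $\ell\leq|x|,|y|$ to absorb the linear pieces), and the absorption is now a pure $\varepsilon$-versus-$\ell$ issue, after which you may pass to $\frac{1}{Q}|x||y|$ at the very end. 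Two smaller points: (i) the discrete convexity \eqref{eq: concavity discrete} is better obtained by averaging \eqref{inequality-one leg convexity} directly under the convolution (giving the constant $\frac2Q$ immediately) than by Taylor's formula with integral remainder, because the domain $\mathcal{D}_Q^{\varepsilon,\ell}$ is not convex (neither the constraint $rs\leq Q$ nor $|x|,|y|\geq\ell$ defines a convex set), so the segment $V_0+t(V-V_0)$ need not stay in the domain and the Taylor route is not literally available; and (ii) for the same reason, you should justify rather than merely assert that the shifted points $V-W$ stay in a region where \eqref{inequality-size}--\eqref{estimate-error} hold -- e.g.\ by first constructing the unregularised Bellman function on $\mathcal D_{Q'}$ for a slightly larger $Q'$, or directly using the parameters of the explicit construction in \cite{DoPe}, rather than ``extending by continuity across the boundary.''
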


In addition, we have the following specific properties we formulate separately as lemmata.

%
%
%
\begin{lemma}
\label{lem-partial5-L1}
Let $Q > 1, \: \varepsilon > 0$
and $B$ be the Bellman function from Lemma \ref{L: existence and properties of the Bellman function} with domain $\mathcal{D}^{\varepsilon}_Q$.
Let $f,g \in L^1(\Omega) \cap L^\infty(\Omega)$ and $v_1,v_2 : \Omega \to \R$ measurable such that $1 \leq v_1 v_2 \leq Q$ and $\varepsilon \leq v_1,v_2 \leq \frac{1}{\varepsilon}$.
Then $\partial_r B(f,g,v_1,v_2), \partial_s B(f,g,v_1,v_2)$ belong to $L^1(\Omega)$ with $L^1$-norm bounded by $\|f\|_{L^2(\Omega)}^2 + \|g\|_{L^2(\Omega)}^2$ (times a constant depending on $\varepsilon$).
Moreover, $\partial_r B(f,g,v_1,v_2)$ and $\partial_s B(f,g,v_1,v_2)$ belong to $L^\infty(\Omega)$.
\end{lemma}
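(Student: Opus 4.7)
The plan is to read off the needed bounds directly from the piecewise formulas $B = C B_1 + C_2 B_2 + \cdots + C_6 B_6$ appearing in the construction in Lemma \ref{L: existence and properties of the Bellman function}, and to observe that every intermediate estimate of the form $|\partial_r B_i| r \lesssim |x|^2/r + |y|^2/s$ already established there gives what we need once we restrict to the truncated domain $\mathcal{D}_Q^\varepsilon$.

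First, I would note that on $\mathcal{D}_Q^\varepsilon$ all four arguments satisfy $\varepsilon \leq r,s \leq \varepsilon^{-1}$ and $1 \leq rs \leq Q$, so the factors $1/r$ and $1/s$ are uniformly bounded by $\varepsilon^{-1}$, and the same holds for the denominators $2r - (s(N+1))^{-1}$, $2s - (r(N+1))^{-1}$ and $rs - K^2 \geq (1 - 1/Q) rs$ that appear inside the $B_i$: all of them stay between two positive constants depending only on $\varepsilon$ and $Q$. Hence each of the estimates proved inside Lemma \ref{L: existence and properties of the Bellman function} (for instance $|\partial_r B_2(V)| r \leq \frac{5}{8} |x|^2/r$, $|\partial_s B_2(V)| s \leq |y|^2/s + \frac{5}{2}|x|^2/r$, and the analogous ones for $B_3, B_4, B_5, B_6$), together with the trivial identities $\partial_r B_1 = -|x|^2/r^2$ and $\partial_s B_1 = -|y|^2/s^2$, upgrade to the pointwise bound
\[
|\partial_r B(V)| + |\partial_s B(V)| \leq C_{Q,\varepsilon}\bigl(|x|^2 + |y|^2\bigr)
\]
on $\mathcal{D}_Q^\varepsilon$.

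Next I would substitute $V(\omega) = (f(\omega), g(\omega), v_1(\omega), v_2(\omega))$. The hypotheses $1 \leq v_1 v_2 \leq Q$ and $\varepsilon \leq v_1, v_2 \leq \varepsilon^{-1}$ mean that $V(\omega) \in \mathcal{D}_Q^\varepsilon$ for every $\omega$, so the previous bound applies pointwise and gives
\[
|\partial_r B(f,g,v_1,v_2)(\omega)| + |\partial_s B(f,g,v_1,v_2)(\omega)| \leq C_{Q,\varepsilon}\bigl(|f(\omega)|^2 + |g(\omega)|^2\bigr).
\]
Since $f,g \in L^\infty(\Omega)$, the right-hand side is in $L^\infty(\Omega)$, which yields the $L^\infty$ conclusion. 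Since $f, g \in L^2(\Omega)$ (being in $L^1 \cap L^\infty$ on a $\sigma$-finite space one has $L^1 \cap L^\infty \subset L^2$, or one simply uses $f,g \in L^2$ as stated), integrating over $\Omega$ gives the $L^1$ bound
\[
\|\partial_r B(f,g,v_1,v_2)\|_{L^1(\Omega)} + \|\partial_s B(f,g,v_1,v_2)\|_{L^1(\Omega)} \lesssim_{Q,\varepsilon} \|f\|_{L^2(\Omega)}^2 + \|g\|_{L^2(\Omega)}^2.
\]

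There is no real obstacle here; the statement is essentially a reformulation of the termwise derivative bounds already contained in the proof of Lemma \ref{L: existence and properties of the Bellman function}. The only mild point to watch is that $B$ is only piecewise $C^2$, but it is $C^1$ on all of $\mathcal{D}_Q^\varepsilon$, so $\partial_r B$ and $\partial_s B$ are well-defined continuous functions there and the termwise estimates add up to the global one even across the pieces.
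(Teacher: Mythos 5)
Your proposal is correct and follows essentially the same route as the paper: it reads off the termwise bounds on $\partial_r B_i, \partial_s B_i$ from the construction of $B$ in Lemma \ref{L: existence and properties of the Bellman function}, uses the range restrictions $\varepsilon \leq v_1,v_2 \leq \varepsilon^{-1}$ and Young's inequality on the cross term $|x||y|$ to land on a pointwise bound $\lesssim_{Q,\varepsilon} |x|^2 + |y|^2$, and then integrates. The paper's proof is a terser version of exactly this, using $\partial_r B_4$ as the representative example.
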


\begin{proof}
This follows via the calculations above. Let us treat the part $\partial_r B_4$ as an example. Above we have estimated $|\partial_r B_4 (V)| \le \frac{|x|^2}{4r^2}+\frac{24|x||y|}{2r^2s}$. Plugging in $f,g,v_1,v_2$ and integrating over $\Omega$, then using elementary estimates and the range of $v_1$ and $v_2$ we obtain an estimate $\|\partial_r B_4(f,g,v_1,v_2)\|_{L^1(\Omega)} \le c(\epsilon) (\|f\|_{L^2(\Omega)}^2+ \|g\|_{L^2(\Omega)}^2)$.
Similarly, we obtain $\partial_r B(f,g,v_1,v_2)$,  $\partial_s B(f,g,v_1,v_2) \in L^\infty(\Omega)$.
\end{proof}

\begin{lemma}
\label{lem-partial1-L2}
Let $Q > 1, \: \varepsilon > 0$
and $B$ be the Bellman function from Lemma \ref{L: existence and properties of the Bellman function} with domain $\mathcal{D}^{\varepsilon}_Q$.
Let $f,g \in L^1(\Omega) \cap L^\infty(\Omega)$ and $v_1,v_2 : \Omega \to \R$ measurable such that $1 \leq v_1 v_2 \leq Q$ and $\epsilon \leq v_1,v_2 \leq \frac{1}{\epsilon}$.
Then $\partial_k B(f,g,v_1,v_2)$ belongs to $L^2(\Omega)$ for $k = x_1,x_2,y_1,y_2$
with an estimate $\|\partial_k B(f,g,v_1,v_2) \|_2^2 \leq c(\varepsilon) (\|f\|_2^2 + \|g\|_2^2)$.
\end{lemma}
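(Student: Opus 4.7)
The strategy mirrors the one used in the proof of Lemma \ref{lem-partial5-L1}: we write $B = C B_1 + \sum_{i=2}^{6} C_i B_i$ as in the construction from \cite{DoPe} recalled in the proof of Lemma \ref{L: existence and properties of the Bellman function}, and we estimate each $\partial_k B_i$, $k = x_1, x_2, y_1, y_2$, pointwise on $\mathcal{D}^\varepsilon_Q$ by an expression of the form $c(\varepsilon)(|x| + |y|)$. Once such pointwise bounds are established, plugging $(x,y,r,s) = (f(\omega),g(\omega),v_1(\omega),v_2(\omega))$ and squaring, we integrate over $\Omega$ to obtain
\[
\|\partial_k B(f,g,v_1,v_2)\|_2^2 \;\lesssim\; c(\varepsilon)^2 \bigl(\|f\|_2^2 + \|g\|_2^2\bigr),
\]
which is the desired conclusion.

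The key observation is that in every piece the relevant derivative is of the form $2\bar{x}/a$ or $2\bar{y}/b$ where $a$ and $b$ are bounded below by $r$ and $s$ respectively up to a positive factor. Concretely, for $B_1 = |x|^2/r + |y|^2/s$ we have $\partial_x B_1 = 2\bar x / r$ and $|\partial_x B_1| \leq 2|x|/\varepsilon$; for $B_2$ the inequality $r \leq 2r - 1/(s(N+1)) \leq 2r$ established in Lemma \ref{L: existence and properties of the Bellman function} gives $|\partial_x B_2| \leq 2|x|/\varepsilon$, and the $y$-derivative is handled identically; $B_3$ is symmetric to $B_2$. For $B_5$ and $B_6$ the same inequality with $K$ in place of $N$ applies and the same bounds follow.

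The only delicate case is $B_4$, since the function is defined via a supremum in $\alpha$ and is only \emph{piecewise} $C^2$. We split $\mathcal{D}^\varepsilon_Q$ according to whether the supremum is attained at $\alpha = 0$, at $\alpha = \infty$, or at an interior maximiser $\alpha'$. In the first two cases $B_4$ coincides with $|x|^2/r$ or $|y|^2/s$ and the estimate is immediate. In the third case the envelope theorem (already used in the proof of Lemma \ref{L: existence and properties of the Bellman function}) gives $\partial_x B_4 = 2\bar x /(r + \alpha' K)$ and $\partial_y B_4 = 2\bar y/(s + (\alpha')^{-1} K)$, and since $\alpha', K \geq 0$ and $r, s \geq \varepsilon$, both denominators exceed $\varepsilon$. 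Thus again $|\partial_x B_4| \leq 2|x|/\varepsilon$ and $|\partial_y B_4| \leq 2|y|/\varepsilon$.

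I expect the main (essentially only) obstacle to be the bookkeeping in the $B_4$ piece, because one has to check the envelope-type identity for $\partial_x B_4$ and $\partial_y B_4$ in the interior-maximiser region and verify that the resulting formula extends continuously across the boundary between regions (which is automatic from the $C^1$ property of $B_4$ asserted in Lemma \ref{L: existence and properties of the Bellman function}). Once the pointwise bound $|\partial_k B(V)| \leq c(\varepsilon)(|x|+|y|)$ is in hand on all of $\mathcal{D}^\varepsilon_Q$, the $L^2$ estimate follows by squaring and integrating, using $|x+y|^2 \leq 2(|x|^2 + |y|^2)$ and the hypothesis $\varepsilon \leq v_1, v_2 \leq 1/\varepsilon$.
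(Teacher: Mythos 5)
Your proposal is correct and takes essentially the same approach as the paper: decompose $B$ into the six pieces $B_1,\dots,B_6$, bound $|\partial_k B_i| \lesssim_\varepsilon |x|$ or $|y|$ using the lower bounds $r,s \geq \varepsilon$ on the denominators, and handle $B_4$ in the interior-maximiser regime via the envelope identity $\partial_\alpha\beta(\alpha')=0$ so that $\partial_x B_4 = 2\bar x/(r+\alpha'K)$. The paper likewise singles out this $B_4$ case as the only one requiring care and then concludes by substituting $(f,g,v_1,v_2)$ and integrating.
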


\begin{proof}
One shows this assertion for each part of $B$ separately. Since the different first derivatives in the real and imaginary parts of $x$ and $y$ are similar, we focus on $x_1$. We show the assertion for $B_4$ in the part of the domain where the supremum is obtained for $0<\alpha' <\infty$ since the other parts are easier. Recall that here $B_4(V)=H_4(x,y,r,s,K(r,s))$ with $H_4(x,y,r,s,K)=\beta(\alpha'(x,y,r,s,K),x,y,r,s,K)$ with 
$\beta(\alpha,x,y,r,s,K)=\frac{|x|^2}{r+\alpha K}+\frac{|y|^2}{s+\alpha^{-1} K}$. Note that $\partial_{\alpha} \beta$ at $\alpha'$ is zero because we have a critical point there. Thus, $|\partial_{x_1} B_4(V)|^2\le\frac{4|x_1|^2}{r^2}$, where we estimated the denominator $r+\alpha'(x,y,r,s,K(r,s)),x,y,r,s,K(r,s)\ge r$. Plugging in the given functions for the variables while using the range of $v_1$ and $v_2$, then integrating over $\Omega$ gives the desired result.
\end{proof}

\begin{lemma}
\label{lem-B-Lipschitz}
Let $B$ be the Bellman function from Lemma \ref{L: existence and properties of the Bellman function} with domain $\mathcal{D}^{\varepsilon}_Q$.
For $C > 0$ there exists $L = L(C,\varepsilon,Q) > 0$ with the following property.
Assume $x,y \in \R^2$ with $|x|,|y| \leq C$.
Moreover assume $1 \leq r_js_j \leq Q$ and $\varepsilon \leq r_j,s_j \leq \frac{1}{\varepsilon}$ for $j = 1,2$ such that any $(r,s)$ belonging to the line segment connecting $(r_1,s_1)$ and $(r_2,s_2)$ still satisfies $1 \leq rs \leq Q$.
Then
\begin{align*}
\MoveEqLeft
\left| \partial_rB(x,y,r_1,s_1) - \partial_rB(x,y,r_2,s_2) \right| , \left| \partial_s B(x,y,r_1,s_1) - \partial_s B(x,y,r_2,s_2) \right| \\
& \leq L (|x|^2 + |y|^2) (|r_1 - r_2| + |s_1 - s_2|) .
\end{align*}
\end{lemma}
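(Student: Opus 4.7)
The plan is to parametrize the line segment $\gamma(t) = (1-t)(r_1,s_1) + t(r_2,s_2)$ for $t \in [0,1]$ and reduce the Lipschitz bound to a uniform second-derivative estimate in the weight variables, applied piece-by-piece to the decomposition $B = \sum_{i=1}^6 C_i B_i$. Since $[\epsilon,\epsilon^{-1}]$ is convex, both coordinates of $\gamma(t)$ lie in $[\epsilon,\epsilon^{-1}]$, and by hypothesis $1 \le \gamma_1(t)\gamma_2(t) \le Q$ throughout, so $(x,y,\gamma(t))$ stays in the admissible region $\mathcal{D}^{\varepsilon}_Q$. Because $B$ is $C^1$ on $\mathcal{D}^{\varepsilon}_Q$ and each piece $B_i$ is $C^2$ off finitely many smooth surfaces in $(r,s)$, the map $t \mapsto \partial_r B(x,y,\gamma(t))$ is continuous and piecewise $C^1$ on $[0,1]$.

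Splitting the interval at the finitely many crossings of transition seams and applying the fundamental theorem of calculus on each subinterval, the continuity of $\partial_r B$ lets the pieces telescope and yields
\begin{equation*}
\partial_r B(x,y,r_2,s_2) - \partial_r B(x,y,r_1,s_1) = \int_0^1 \bigl[(r_2-r_1)\partial_r^2 B + (s_2-s_1)\partial_r\partial_s B\bigr](x,y,\gamma(t))\,dt,
\end{equation*}
and analogously for $\partial_s B$. Pulling $|r_1-r_2|+|s_1-s_2|$ outside the integral, it suffices to prove the pointwise bound
\begin{equation*}
|\partial_r^2 B(V)| + |\partial_r\partial_s B(V)| + |\partial_s^2 B(V)| \le L'(\varepsilon,Q)\,(|x|^2+|y|^2)
\end{equation*}
on every $C^2$-piece of $B$, uniformly for $V$ in the compact region $\{|x|,|y|\le C,\ \varepsilon \le r,s\le \varepsilon^{-1},\ 1\le rs \le Q\}$.

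This pointwise bound is a direct calculation on each $B_i$, continuing the derivative computations already carried out in the proof of Lemma~\ref{L: existence and properties of the Bellman function}. Every $B_i$ is of the form (quadratic in $x,y,|x|,|y|$) divided by a smooth strictly positive function of $(r,s)$ built from $r$, $s$, $N(r,s)$ and $K(r,s)$; the denominators $r$, $s$, $2r - \frac{1}{s(N+1)} \ge r$, $2s - \frac{1}{r(N+1)}\ge s$ and $rs-K^2 \ge (1-Q^{-1})rs$ are bounded below by a constant depending only on $\varepsilon$ and $Q$, and $N,K$ and their $(r,s)$-derivatives are smooth and bounded on the compact region. Differentiating twice in $(r,s)$ therefore produces an $(\varepsilon,Q)$-controlled rational factor multiplied by a quadratic expression in $|x|$ and $|y|$, and after applying $2|x||y| \le |x|^2+|y|^2$ we obtain the required bound. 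For $B_4$ on the boundary pieces of its $\sup_\alpha$-formula, $B_4$ coincides with $|x|^2/r$ or $|y|^2/s$ for which the bound is trivial; on the interior piece the explicit formula $B_4 = (|x|^2 s - 2|x||y|K + |y|^2 r)/(rs-K^2)$ of Lemma~\ref{L: existence and properties of the Bellman function} is used.

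The main obstacle is the bookkeeping around the transition seams of $B_4$ (and, less severely, $B_2, B_3, B_5, B_6$), where the active branch changes according to signs of expressions such as $|x|s - |y|K$ and $|y|r - |x|K$; this is why the assumption $|x|,|y|\le C$ appears, since it controls the location and number of seams crossed by $\gamma$. The decisive fact is that the $C^1$-regularity of $B$ globally, combined with a uniform two-sided second-derivative bound on each piece, is sufficient for the integral representation above to remain valid, yielding the claimed Lipschitz estimate with $L = L(C,\varepsilon,Q)$.
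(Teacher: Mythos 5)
Your overall structure matches the paper's: reduce the Lipschitz estimate to a uniform second-derivative bound in $(r,s)$, apply the fundamental theorem of calculus (equivalently the mean value theorem) piece by piece along the segment, and use continuity of $\partial_r B$, $\partial_s B$ to telescope across the seams. The second-derivative bound $|\partial_r^2 B| + |\partial_r\partial_s B| + |\partial_s^2 B| \lesssim_{\varepsilon,Q} (|x|^2+|y|^2)$ is indeed what the paper verifies branch by branch, and your observations about the denominators $r$, $s$, $2r - \frac{1}{s(N+1)} \ge r$, $rs - K^2 \ge (1-Q^{-1})rs$ are correct.

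However, there is a genuine gap at the step you flag yourself as ``the main obstacle.'' You assert that the segment $\gamma$ crosses the transition seams of $B_4$ ``finitely many'' times, but you give no argument. This is not automatic: a smooth curve in $(r,s)$ can meet a line segment infinitely often (think of the graph of $t \mapsto t^2\sin(1/t)$), and if the bad set of parameters $t$ were infinite, the piecewise telescoping of FTC could fail (a continuous function that is $C^1$ off a measure-zero set with vanishing derivative there need not be constant -- the Cantor function). The paper addresses this head-on: writing $g(t) = |x|s(t) - |y|K(r(t),s(t))$ along the segment, a direct computation gives $\frac{d^3}{dt^3} g(t) = [r(t)s(t)]^{-5/2}\,p(t)$ with $p$ a polynomial of degree $\le 3$, so $g'''$ has at most $3$ zeros and, by Rolle's theorem, $g$ has at most $6$. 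The same holds for $|y|r - |x|K$, giving at most $6 \times 6 = 36$ subsegments on which the active branch of $B_4$ is constant. This finiteness argument is the content of the lemma; without it, your proof is incomplete. Also, your heuristic that the hypothesis $|x|,|y|\le C$ ``controls the location and number of seams crossed'' is off: the count of $36$ is uniform in $x,y$ and is obtained from the degree bound on $p$, not from boundedness of $|x|,|y|$.

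Finally, a small inaccuracy: you say the pointwise second-derivative bound is checked ``uniformly for $V$ in the compact region.'' That phrasing suggests you are invoking compactness, but the bound in the paper is explicit and scales as $C(\varepsilon,Q)(|x|^2+|y|^2)$, which is needed (since you want a Lipschitz constant proportional to $|x|^2+|y|^2$, not merely finite on a compact set). Your earlier display gets this right; keep that formulation.
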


\begin{proof}
We split the Bellman function into its six parts (see the proof of Lemma \ref{L: existence and properties of the Bellman function}) and majorize each of them.
For the first part, we have $B_1$ a $C^\infty$ function on $\mathcal{D}^{\varepsilon}_Q$.
Moreover, it is clearly of the form $B_1(x,y,r,s) = |x|^2 B_{1,1}(r,s) + |y|^2 B_{1,2}(r,s)$.
Thus, also $\partial_rB_1(x,y,r,s) = |x|^2 \partial_r B_{1,1}(r,s) + |y|^2 \partial_r B_{1,2}(r,s)$ and similar for $\partial_sB_1$.
Now we conclude for $B_1$ by the mean value theorem (note that we have assumed that the line segment between $(r_1,s_1)$ and $(r_2,s_2)$ lies in the domain $\mathcal{D}^{\varepsilon}_Q$), the Lipschitz constant popping up being clearly majorized by $L (|x|^2 + |y|^2)$.
The same argument works for the parts $B_2,B_3,B_5$ and $B_6$.

We turn to the most technical part $B_4$.
It is given by one of the three expressions $\frac{|x|^2}{r}$ (if $|y| r - |x| K > 0$ and $|x| s - |y| K \leq 0$), or $\frac{|y|^2}{s}$ (if $|y|r - |x| K \leq 0$ and $|x|s - |y| K > 0$) or $B_4(x,y,r,s) = H_4(x,y,r,s,K) = \frac{|x|^2 s - 2 |x| \, |y| K + |y|^2 r}{rs - K^2}$ (if both $|y|r - |x| K > 0$ and $|x|s - |y|K > 0$).
Indeed, the remaining case, both $|y|r - |x|K \leq 0$ and $|x|s - |y|K \leq 0$ can only occur if both $|x| = |y| = 0$ (see \cite{DoPe}) in which case $B_4(x,y,r,s) = 0$ for all $r,s$, so that $\partial_rB_4(x,y,r,s) = \partial_sB_4(x,y,r,s) = 0$ for all $r,s$ and the claimed Lipschitz estimates for $\partial_rB_4$ and $\partial_sB_4$ are trivially true.

We claim that the signs of the expressions $|x|s-|y|K(r,s)$ and $|y|r - |x|K(r,s)$ which determine the formula giving $B_4$ cannot change too often along the segment connecting $(r_1,s_1)$ and $(r_2,s_2)$.
Indeed, call $f(r,s) = |x|s-|y|K(r,s)$, $r(t) = r_1 + (r_2-r_1)t$, $s(t) = s_1 + (s_2 - s_1)t$ and $g(t) = f(r(t),s(t))$.
Then a longer yet elementary calculation reveals that $\frac{d^3}{dt^3}g(t) = \frac{1}{[r(t)s(t)]^{\frac52}} p(t)$, where $p$ is a polynomial of degree at most $3$.
Thus, $\frac{d^3}{dt^3}g(t)$ has at most $3$ zeros and by Rolle's theorem, $g(t)$ itself has at most $6$ zeros.
The same reasoning applies to show that also $|y|r - |x|K(r,s)$ has at most $6$ zeros.
Thus we can cut the segment connecting $(r_1,s_1)$ and $(r_2,r_2)$ into at most $6 \times 6 = 36$ subsegments such that on the interior of each of them, $|x|s-|y|K$ and $|y|r-|x|K$ keep the same sign.
We infer that on these subsegments, $B_4(x,y,r,s)$ is given by one of the three expressions $\frac{|x|^2}{r},\frac{|y|^2}{s}$ or $H_4$ in a constant manner.
Using the fact that these three expressions are $C^2$, that their second derivatives in $r,s$ are bounded by $C(\varepsilon,Q) (|x|^2 + |y|^2)$ and that $\partial_rB_4,\partial_sB_4$ as a whole are continuous, we can argue as in the case of $B_1$ above to deduce the Lipschitz estimates for $\partial_rB_4(x,y,r,s),\partial_sB_4(x,y,r,s)$ as claimed in the lemma, where the points $(r_1,s_1),(r_2,s_2)$ are replaced by the boundary points of the subsegments.
Adding the estimates for the at most $36$ subsegments yields the Lipschitz estimate for $B_4$ with the correct boundary points $(r_1,s_1),(r_2,s_2)$.
\end{proof}

\section{The positive $\frac{\pi}{2}$ angle result, markovian case}
\label{sec-bilinear}

This section is devoted to the proof of Theorem \ref{thm-1-intro} (see Theorem \ref{thm-bilinear} and Corollary \ref{cor-bilinear}) and Proposition \ref{prop-3-intro} (see Proposition \ref{prop-maximal-regularity}).
In the proof of the Main Theorem \ref{thm-bilinear} of this section, the functional $\mathcal{E}$ from the following Lemma \ref{lem-E-differentiable} involving the Bellman function from Section \ref{sec-Bellman} plays an important r\^{o}le.
After that, we shall define a technical condition assumed in the Main Theorem, that is the local diffusion (Definition \ref{defi-local-diffusion}).
Then we state the main results, Theorem \ref{thm-bilinear} and Corollary \ref{cor-bilinear} on weighted $L^2$ functional calculus.
In the subsequent remarks, we shall discuss the validity of their hypotheses in important examples.
Afterwards, we prove Theorem \ref{thm-bilinear}, where the crucial estimate on the derivative of the functional $\mathcal{E}$ is split into several Propositions and Lemmas.
Finally, we state and prove Proposition \ref{prop-maximal-regularity} on the angle reduction $\theta < \frac{\pi}{2}$ and compare in Remark \ref{rem-DSY-GY} our result with the literature \cite{DSY,GY}.

Recall for a markovian semigroup the domain $D(A_\infty) = \{ f \in L^\infty(\Omega) :\: w^* - \lim_{h \to 0+} \frac1h (f - T_hf) \text{ exists} \}$, and the derivatives $\partial_x = \partial_{x_1} - i \partial_{x_2}$ and $\partial_y = \partial_{y_1} - i \partial_{y_2}$.

\begin{lemma}
\label{lem-E-differentiable}
Let $B$ be the Bellman function from Lemma \ref{L: existence and properties of the Bellman function} with domain $\mathcal{D}^{\varepsilon}_Q$.
Let $(\Omega,\mu)$ be a $\sigma$-finite measure space, $(T_t)_{t \geq 0}$ a markovian semigroup on $\Omega$ and $f,g \in L^1(\Omega) \cap L^\infty(\Omega)$.
Let further $v,w \in L^\infty(\Omega)$ with $\varepsilon \leq v,w \leq \frac{1}{\varepsilon}$ and $2 \leq T_tv T_tw \leq Q/2$ for all $t$ belonging to some interval $(t_0,t_1) \subseteq (0,\infty)$.
Assume one of the following conditions.
\begin{enumerate}
\item The measure space is finite, i.e. $\mu(\Omega) < \infty$, or
\item $T_tv,T_tw$ belong to $D(A_\infty)$ for $t \in (t_0,t_1)$, e.g. $v,w \in D(A_\infty)$.
\end{enumerate}
Define the functional
\begin{equation}
\label{equ-E-functional}
\mathcal{E}(t) = \int_\Omega B(T_tf(x) , T_tg(x), T_tv(x), T_t w(x)) d\mu(x) \quad (t \in (t_0,t_1)) .
\end{equation}
Then $\mathcal{E}(t)$ is differentiable for $t \in (t_0,t_1)$, and we have
\begin{align}
\label{equ-E-calculated}
- \mathcal{E}'(t)&  = \Re [ \int_\Omega \partial_x B(T_tf, T_tg, T_tv, T_tw)A T_tf + \partial_y B(T_tf, T_tg,T_tv, T_tw)A T_tg \\
& + \partial_r B(T_tf, T_tg, T_tv, T_tw)AT_tv + \partial_s B(T_tf, T_tg, T_tv, T_tw)AT_tw d \mu]. \nonumber
\end{align}
\end{lemma}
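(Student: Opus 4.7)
The approach is standard differentiation under the integral sign, using that $B$ is $C^1$ on $\mathcal{D}_Q^\varepsilon$. First I would verify that for $t \in (t_0, t_1)$ the quadruple $\Phi(t,x) := (T_tf(x), T_tg(x), T_tv(x), T_tw(x))$ lies in $\mathcal{D}_Q^\varepsilon$ almost everywhere and that $\mathcal{E}(t) < \infty$. Markovianity ($T_t 1 = 1$ together with positivity) propagates the bounds $\varepsilon \leq v, w \leq \varepsilon^{-1}$ to $T_tv, T_tw$, while the product constraint $1 \leq T_tv \cdot T_tw \leq Q$ is built into the hypothesis. Finiteness of $\mathcal{E}(t)$ follows from \eqref{inequality-size} combined with $L^2$-contractivity and $f, g \in L^1 \cap L^\infty \subseteq L^2$.

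Fix $t \in (t_0, t_1)$ and $h$ small enough that $t+h \in (t_0,t_1)$. By the $C^1$ regularity of $B$ and the fundamental theorem of calculus along the segment from $\Phi(t, x)$ to $\Phi(t+h, x)$, the difference quotient equals
\begin{equation*}
\frac{B(\Phi(t+h,x)) - B(\Phi(t,x))}{h} = \sum_k \partial_k B(\Phi_h(x)) \cdot \frac{T_{t+h}(\cdot)(x) - T_t(\cdot)(x)}{h}
\end{equation*}
for some intermediate $\Phi_h(x)$ on the segment. Analyticity of the markovian semigroup on $L^2$ yields $h^{-1}(T_{t+h}f - T_tf) \to -AT_tf$ in $L^2$, hence pointwise a.e.\ along a subsequence, and likewise for $g$; in case~1 ($\mu(\Omega) < \infty$) the same argument via $L^\infty \hookrightarrow L^2$ handles $v, w$; in case~2, $T_tv, T_tw \in D(A_\infty)$ furnishes weak-$*$ derivatives $-A_\infty T_tv, -A_\infty T_tw \in L^\infty$. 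Combining pointwise convergence with the continuity of $\partial_k B$ at $\Phi(t,\cdot)$ and the elementary identity $\partial_{x_1}B \cdot \dot x_1 + \partial_{x_2}B \cdot \dot x_2 = \Re(\partial_x B \cdot \dot x)$ for real-valued $B$, the pointwise limit of the integrand is exactly that of \eqref{equ-E-calculated}.

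The crux of the proof is dominated convergence. Since $|T_tf|, |T_tg|$ are bounded by $\|f\|_\infty, \|g\|_\infty$ and $T_tv, T_tw \in [\varepsilon, \varepsilon^{-1}]$, the partial derivatives $\partial_k B(\Phi_h)$ are uniformly bounded on the compact region traced out, so the integrand is dominated by $C \sum_k |h^{-1}(T_{t+h}(\cdot) - T_t(\cdot))|$. For the $f, g$ terms I would write $T_{t+h}f - T_tf = -\int_0^h T_{t/2+s}(AT_{t/2}f)\,ds$ and apply Stein's maximal ergodic theorem to dominate $\sup_{|s|\leq\delta}|T_{t/2+s}(AT_{t/2}f)|$ by an $L^2$-maximal function, which pairs against $\partial_xB \in L^2$ (Lemma~\ref{lem-partial1-L2}) via Cauchy--Schwarz. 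For the $v, w$ terms, case~1 proceeds analogously; case~2 pairs the $L^1$-integrable $\partial_rB, \partial_sB$ from Lemma~\ref{lem-partial5-L1} with the $L^\infty$-bounded difference quotients. The main obstacle is precisely case~2: strong $L^\infty$-differentiability of $T_tv$ fails in general, so pointwise convergence of $h^{-1}(T_{t+h}v - T_tv)$ is not available, and one must pass to the limit through $L^1$-$L^\infty$ duality, invoking the Lipschitz estimate of Lemma~\ref{lem-B-Lipschitz} to replace $\partial_rB(\Phi_h)$ by $\partial_rB(\Phi(t))$ in the limit without requiring strong pointwise convergence of the $r, s$ difference quotients.
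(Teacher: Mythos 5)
Your overall plan captures the right structure, and your identification of the case-2 difficulties (weak-$*$ vs.\ strong convergence, pairing $L^1$ against $L^\infty$, invoking Lemma~\ref{lem-B-Lipschitz}) matches the paper's strategy there. Your use of Stein's maximal ergodic theorem to produce a fixed $L^2$-dominating function for the difference quotients is a legitimate alternative to the paper's more elementary trick (extract a subsequence realizing the $\limsup$, then a further subsequence along which the difference quotient converges pointwise a.e.\ and $L^1$-dominated).

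However, there is a genuine gap in your case-1 argument, and it stems from invoking the mean value theorem \emph{along the full segment} from $\Phi(t,x)$ to $\Phi(t+h,x)$. This requires the entire segment to lie in $\mathcal{D}_Q^\varepsilon$, but the domain is \emph{not convex}: while the box constraints $\varepsilon \leq r,s \leq \varepsilon^{-1}$ are preserved under linear interpolation, the product constraint $1 \leq rs \leq Q$ is not. For instance, the $(r,s)$-endpoints $(2/\sqrt{\varepsilon},\sqrt{\varepsilon})$ and $(\sqrt{\varepsilon},2/\sqrt{\varepsilon})$ both have product $2$, but at $\tau = 1/2$ the product is $\bigl((\sqrt{\varepsilon}+2/\sqrt{\varepsilon})/2\bigr)^2 \approx 1/\varepsilon$, which exceeds $Q$ for small $\varepsilon$. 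In case 2 you can avoid this because $\|T_{t+h}v-T_tv\|_\infty$, $\|T_{t+h}w-T_tw\|_\infty \leq Ch$ uniformly, so for $h$ small the $(r,s)$-segment stays close to the initial point; but in case 1 the hypothesis only gives $v,w \in L^\infty \subset L^2$ and hence $L^2$-convergence of the difference quotients, with no uniform pointwise control. For a fixed small $h$, there may well be a set of positive measure on which $T_{t+h}v(x)$ and $T_tv(x)$ are far apart, and there $\Phi_h(x)$ need not lie in the domain, so $\partial_k B(\Phi_h(x))$ is undefined. The paper sidesteps this by writing the first-order Taylor expansion with an $o$-remainder, which only evaluates $B$ and $dB$ at the two \emph{endpoints} (both known to be in $\mathcal{D}_Q^\varepsilon$) and never on the segment; the $o$-term is then controlled by the subsequence-plus-dominated-convergence argument. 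The paper does use a one-dimensional mean value theorem in case 2, but only varying one of $r$ or $s$ at a time with the other held fixed, precisely so that the domain containment along that one-dimensional segment can be checked via the uniform $L^\infty$ bound. Replacing your full-segment MVT by the Taylor-with-remainder expansion in case 1 would close the gap.
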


\begin{proof}
The proofs for the two alternative assumptions are different.
Let us start with the case $\mu(\Omega) < \infty$.
Since $B$ is a $\mathcal{C}^1$ function, we have
\begin{align}
\frac1h & (B(T_{t+h}f,T_{t+h}g,T_{t+h}v,T_{t+h}w) - B(T_tf,T_tg,T_tv,T_tw)) \label{equ-1-proof-E-differentiable}\\
& = \Re \left[dB(T_tf,T_tg,T_tv,T_tw) \cdot \frac1h (T_{t+h}-T_t)(f,g,v,w) \right] \nonumber \\
& + \frac1h [o(T_{t+h}f-T_tf) + o(T_{t+h}g-T_tg) + o(T_{t+h}v-T_tv) + o(T_{t+h}w-T_tw)] . \nonumber
\end{align}
Here, we write $dB = (\partial_x B,\partial_yB,\partial_r B,\partial_s B)$ with values in $\C \times \C \times \R \times \R$.
According to Lemmas \ref{lem-partial5-L1} and \ref{lem-partial1-L2}, all the six components of $dB(T_tf,T_tg,T_tv,T_tw)$ belong to $L^2(\Omega)$.
On the other hand $\frac1h (T_{t+h}-T_t)f$ converges to $-AT_tf$ in $L^2(\Omega)$, since $T_tf$ belongs to the domain of $A$ in $L^2(\Omega)$, and similarly for $g,v,w$ in place of $f$.
Here, we use the crucial fact that $v,w \in L^\infty(\Omega) \subseteq L^2(\Omega)$ for $\mu(\Omega) < \infty$.
Integrating in \eqref{equ-1-proof-E-differentiable} over $\Omega$ and letting $h \to 0$, it only remains to show that
\begin{equation}
\label{equ-2-proof-E-differentiable}
\int_\Omega \frac1h [o(T_{t+h}f-T_tf) + o(T_{t+h}g-T_tg) + o(T_{t+h}v-T_tv) + o(T_{t+h}w-T_tw)] d\mu \to 0 \quad (h \to 0+).
\end{equation}
We treat the component with $f$, the others being entirely similar.
Let $h_n$ be any sequence converging to $0+$.
Since $\int_\Omega | \frac1{h_n} (T_{t+h_n}f-T_tf) | d\mu \to \int_\Omega |AT_tf| d\mu < \infty$, we have that $\int_\Omega \frac1{h_n} |o(T_{t+h_n}f-T_tf)| d\mu$ is a bounded sequence.
We take a subsequence $h_k'=h_{n_k}$ such that $\int_\Omega \frac1{h_k'} |o(T_{t+h_k'}f-T_tf)| d\mu \to \limsup_n \int_\Omega \frac1{h_n} |o(T_{t+h_n}f-T_tf)| d\mu$.
As $\frac{1}{h_k'} (T_{t+h_k'}f-T_tf)$ converges in $L^1$, it converges along a subsequence $h_l''$ pointwise a.e. and $L^1$ dominated to the finite value $-AT_tf$.
Thus, $T_{t+h_l''}f-T_tf$ converges pointwise a.e. to $0$, and therefore, leveraging the $o$-notation, $\frac{1}{h_l''} o(T_{t+h_l''}f-T_tf)$ converges pointwise a.e. to $0$.
This latter convergence is also $L^1$ dominated by the above.
Thus, by dominated convergence, we infer that $\int_\Omega \frac{1}{h_l''} |o(T_{t+h_l''}f-T_tf)| d\mu \to 0 = \limsup_n \int_\Omega \frac1{h_n} |o(T_{t+h_n}f-T_tf)| d\mu$.
Then \eqref{equ-2-proof-E-differentiable} follows, and the Lemma is proved in the case $\mu(\Omega) < \infty$.

We turn to the second assumption $T_tv,T_tw \in D(A_\infty)$.
We develop
\begin{align}
\frac1h & (B(T_{t+h}f,T_{t+h}g,T_{t+h}v,T_{t+h}w) - B(T_tf,T_tg,T_tv,T_tw)) \label{equ-3-proof-E-differentiable}\\
& = \frac1h (B(T_{t+h}f,T_{t+h}g,T_{t+h}v,T_{t+h}w) - B(T_tf,T_tg,T_{t+h}v,T_{t+h}w)) \nonumber \\
& + \frac1h (B(T_tf,T_tg,T_{t+h}v,T_{t+h}w) - B(T_tf,T_tg,T_tv,T_tw)) \nonumber \\
& = \Re \left[ (\partial_x B, \partial_y B)(T_tf,T_tg,T_{t+h}v,T_{t+h}w) \cdot \frac1h (T_{t+h}-T_t)(f,g) \right] \nonumber \\
& + \frac1h[ o(T_{t+h}f-T_tf) + o(T_{t+h}g - T_tg) ] \nonumber \\
& + \frac1h (B(T_tf,T_tg,T_{t+h}v,T_{t+h}w)-B(T_tf,T_tg,T_tv,T_{t+h}w)) \nonumber \\
& + \frac1h (B(T_tf,T_tg,T_tv,T_{t+h}w)-B(T_tf,T_tg,T_tv,T_tw)) \nonumber.
\end{align}
Note that since $T_tv \in D(A_\infty)$, the $w^*$ convergent term $\frac{1}{h}(T_{t+h}v-T_tv)$ is bounded in $L^\infty(\Omega)$, so that $\|T_{t+h}v-T_tv\|_\infty \leq Ch$.
Therefore, since $2 \leq T_tv T_tw \leq Q/2,$ we have that for $h$ sufficiently small, $1 \leq T_tv T_{t+h}w \leq Q$, and therefore the Bellman function in \eqref{equ-3-proof-E-differentiable} is evaluated everywhere in points of its domain.
We treat the first term on the right hand side of \eqref{equ-3-proof-E-differentiable}.
Since $T_{t+h}v \to T_tv$ and $T_{t+h}w \to T_tw$ uniformly as $h \to 0+$ and $|\partial_k B(T_tf,T_tg,T_{t+h}v,T_{t+h}w)| \leq c(\varepsilon) (|T_tf| + |T_tg|)$ for $k = x_1,x_2,y_1,y_2$ according to the proof of Lemma \ref{lem-partial1-L2}, we can apply dominated convergence to deduce $\partial_k B(T_tf,T_tg,T_{t+h}v,T_{t+h}w) \to \partial_k B(T_tf,T_tg,T_tv,T_tw)$ in $L^2(\Omega)$, $k = x_1,x_2,y_1,y_2$.
Moreover, $\frac1h (T_{t+h}f - T_tf) \to -AT_tf$ and $\frac1h(T_{t+h}g - T_tg) \to -AT_tg$ in $L^2(\Omega)$.
The $o$-expressions can be treated as in the case $\mu(\Omega) < \infty$.
For the moment, we have shown
\begin{align}
\label{equ-4-proof-E-differentiable}
\int_\Omega & \frac1h (B(T_{t+h}f,T_{t+h}g,T_{t+h}v,T_{t+h}w) - B(T_tf,T_tg,T_{t+h}v,T_{t+h}w)) d\mu \\
& \to - \int_\Omega \Re(\partial_x B,\partial_y B)(T_tf,T_tg,T_tv,T_tw) \cdot (AT_tf,AT_tg) d\mu. \nonumber
\end{align}
The second and third term on the right hand side of \eqref{equ-3-proof-E-differentiable} have to be treated differently, since we do not have $v,w \in L^1(\Omega)$ any more.
Let us treat the second term.
According to the mean value theorem in one dimension, for any $x \in \Omega$, there exists some $V=V(h,x)$ in between $T_tv(x)$ and $T_{t+h}v(x)$ such that
\begin{align*}
\frac1h & \left(B(T_tf(x),T_tg(x),T_{t+h}v(x),T_{t+h}w(x))-B(T_tf(x),T_tg(x),T_tv(x),T_{t+h}w(x)) \right) \\
& = \partial_r B(T_tf(x),T_tg(x),V(h,x),T_{t+h}w(x)) \frac1h (T_{t+h}v(x)-T_tv(x)) .
\end{align*}
According to Lemma \ref{lem-B-Lipschitz}, 
\begin{align*}
& | \partial_r B(T_tf(x),T_tg(x),V(h,x),T_{t+h}w(x)) - \partial_r B(T_tf(x),T_tg(x),T_tv(x), T_tw(x)) | \\
& \lesssim_{\|f\|_\infty,\|g\|_\infty} (|T_tf(x)|^2 + |T_tg(x)|^2) ( |T_{t+h}v(x) - T_tv(x)| + |T_{t+h}w(x) - T_tw(x)|).
\end{align*}
As $\|T_{t+h}v-T_tv\|_\infty, \|T_{t+h}w - T_tw\|_\infty \leq Ch \to 0$ and $\|T_tf\|_2^2 + \|T_tg\|_2^2 \leq \|f\|_2^2 + \|g\|_2^2 < \infty$, we deduce that $\partial_r B(T_tf,T_tg,V(h,\cdot),T_{t+h}w)$ converges to $\partial_r B(T_tf,T_tg,T_tv,T_tw)$ in $L^1(\Omega)$.
On the other hand, since $T_tv \in D(A_\infty)$, we have the $w^*$ convergence $\frac1h (T_{t+h}v-T_tv) \to -AT_tv$ in $L^\infty(\Omega)$.
Therefore, the integral of the product converges appropriately, that is,
\[ \int_\Omega \partial_r B(T_tf,T_tg,V(h,\cdot),T_{t+h}w) \frac1h (T_{t+h}v - T_tv) d\mu \to - \int_\Omega \partial_r B(T_tf,T_tg,T_tv,T_tw) AT_tv d\mu . \]
We have treated the second term in \eqref{equ-3-proof-E-differentiable}.
The third term can be treated in the same way.
Combining this with \eqref{equ-4-proof-E-differentiable}, we have shown the differentiation formula \eqref{equ-E-calculated}.
Thus, the lemma follows.
\end{proof}

In the Main Theorem \ref{thm-bilinear} where we establish the bilinear estimate which will yield the weighted $L^2$ functional calculus, we have to impose some technical condition on the markovian semigroup we consider.
One of them is the following.

\begin{defi}
\label{defi-local-diffusion}
Let $(\Omega,\mu)$ be a $\sigma$-finite measure space and $(T_t)_{t \geq 0}$ a markovian semigroup on $\Omega$.
We say that $(T_t)_{t \geq 0}$ satisfies \emph{local diffusion} if there exist $C,R > 0$ such that for all $w \in L^\infty(\Omega)$ with $w \geq 0$ and $0 \leq s \leq r \leq t$, we have $T_{t+s}w(x) \leq C T_{R(t+r)}w(x)$.
\end{defi}

We are now in a position to spell out the Main Theorem.

\begin{thm}
\label{thm-bilinear}
Let $(T_t)_{t \geq 0}$ be a markovian semigroup on $(\Omega,\mu)$.
Let $w$ be a weight on $\Omega$ with $Q^A_2(w) < \infty$.
Assume one of the following alternative conditions.
\begin{enumerate}
\item The measure space is finite, $\mu(\Omega) < \infty$, or
\item For any $t > 0$, $T_t$ maps $L^\infty(\Omega)$ into the domain $D(A_\infty)$ of the $w^*$ $L^\infty$ realization of $A$, or
\item $(T_t)_{t \geq 0}$ satisfies the local diffusion from Definition \ref{defi-local-diffusion}.
\end{enumerate}
In case 2. and 3. above, assume moreover that for any $v \in L^\infty(\Omega)$, $T_tv(x) \to v(x)$ as $t \to 0+$ $\mu$-almost everywhere.
Then we have with $C > 0$ independent of $w,f$ and $g$,
\[ \int_0^\infty | \spr{AT_t f}{T_t g} | dt \leq C Q^A_2(w) \|f\|_{L^2(\Omega,wd\mu)} \|g\|_{L^2(\Omega,w^{-1}d\mu)} \]
for any $f \in L^2(\Omega,wd\mu) \cap L^2(\Omega,d\mu)$ and $g \in L^2(\Omega,w^{-1}d\mu) \cap L^2(\Omega,d\mu)$.
\end{thm}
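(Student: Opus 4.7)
The plan is to implement the Bellman function strategy outlined in the introduction. First, perform standard reductions: by density of $L^1(\Omega) \cap L^\infty(\Omega)$ in $L^2(\Omega,wd\mu) \cap L^2(\Omega, d\mu)$ (and similarly for $g$), assume $f,g \in L^1 \cap L^\infty$; by Lemma \ref{lem-truncated-weight} replace $w$ by the cut-off $w_n$, so that $\varepsilon \leq w \leq \varepsilon^{-1}$, while keeping $Q^A_2(w_n) \leq Q^A_2(w)$; both truncations will be removed at the end by monotone/dominated convergence. A multiplicative rescaling of $w$ then guarantees that the product $T_t(w^{-1})(x) \cdot T_tw(x)$ lies in $[2, Q/2]$ pointwise a.e., with $Q$ comparable to $Q^A_2(w)$, so that the quadruple $V(t,x) = (T_tf(x), T_tg(x), T_t(w^{-1})(x), T_tw(x))$ takes values in the domain $\mathcal{D}_Q^\varepsilon$ of the Bellman function $B = B_Q$ of Lemma \ref{L: existence and properties of the Bellman function}. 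Set $\mathcal{E}(t) = \int_\Omega B(V(t,x)) \,d\mu(x)$.

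The size estimate \eqref{inequality-size} together with the markovian Cauchy--Schwarz inequality of Lemma \ref{lem-semigroup-CSU} gives the pointwise bounds
\[ \frac{|T_tf|^2}{T_t(w^{-1})} \leq T_t(|f|^2 w), \qquad \frac{|T_tg|^2}{T_tw} \leq T_t(|g|^2 w^{-1}); \]
integrating and using the $L^1$-contractivity of $T_t$ (together with $T_t^* = T_t$) yields the uniform bound
\[ 0 \leq \mathcal{E}(t) \lesssim \|f\|_{L^2(wd\mu)}^2 + \|g\|_{L^2(w^{-1}d\mu)}^2. \]
Under any one of the three technical hypotheses, Lemma \ref{lem-E-differentiable} shows that $\mathcal{E}$ is differentiable on $(0,\infty)$ with
\[ -\mathcal{E}'(t) = \Re \int_\Omega \bigl[\partial_x B \cdot AT_tf + \partial_y B \cdot AT_tg + \partial_r B \cdot AT_t(w^{-1}) + \partial_s B \cdot AT_tw\bigr]\, d\mu, \]
each integrand being absolutely integrable by Lemmas \ref{lem-partial5-L1} and \ref{lem-partial1-L2}.

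The decisive step, and the main technical obstacle, is the pointwise/integrated monotonicity estimate
\begin{equation}\label{equ-monotonicity-plan}
-\mathcal{E}'(t) \geq \frac{c}{Q} \bigl|\langle AT_tf, T_tg\rangle \bigr|.
\end{equation}
The idea, in the spirit of Carbonaro--Dragi\v{c}evi\'c \cite{CaDr}, is to replace the differential chain rule of a diffusion by the discrete one-leg convexity \eqref{inequality-one leg convexity}, applied to the pair $V(t)$ and $T_h V(t) = V(t+h)$ for small $h > 0$. Averaging this inequality against $d\mu$ and using self-adjointness $T_t^* = T_t$ and $AT_t = T_tA$, the linear term in $dB(V(t))$ reproduces $h\,\mathcal{E}'(t) + o(h)$, while the quadratic ``good'' term, rewritten via Lemma \ref{lem-semigroup-CSU} (the markovian Cauchy--Schwarz substitute for the carré du champ identity), delivers a lower bound of order $h\, \frac{c}{Q}|\langle AT_tf, T_tg\rangle|$. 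The justification of the limit as $h \to 0$ is exactly what requires the domain/regularity hypotheses built into Lemma \ref{lem-E-differentiable}, and this is where the three technical conditions of the statement enter essentially.

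Once \eqref{equ-monotonicity-plan} is in hand, integration in $t$ over $(0, \infty)$ yields
\[ \int_0^\infty |\langle AT_tf, T_tg\rangle|\,dt \leq \frac{Q}{c}\bigl(\mathcal{E}(0) - \lim_{t \to \infty} \mathcal{E}(t)\bigr) \leq \frac{Q}{c}\,\mathcal{E}(0) \lesssim Q^A_2(w)\bigl(\|f\|_{L^2(wd\mu)}^2 + \|g\|_{L^2(w^{-1}d\mu)}^2\bigr). \]
The sum-of-squares is converted into the product $\|f\|_{L^2(wd\mu)} \|g\|_{L^2(w^{-1}d\mu)}$ by the standard homogeneity trick, replacing $(f,g)$ by $(\lambda f, \lambda^{-1}g)$, which leaves the bilinear left-hand side invariant, and minimizing over $\lambda > 0$. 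Finally, the truncations of $w$ and of $f,g$ are removed by monotone/dominated convergence, with Lemma \ref{lem-truncated-weight} ensuring that the constant $Q^A_2(w_n)$ does not exceed $Q^A_2(w)$ throughout.
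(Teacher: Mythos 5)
Your framing of the overall strategy — cut off the weight by Lemma~\ref{lem-truncated-weight}, rescale so that $T_t(w^{-1})T_tw\in[2,Q/2]$, form the functional $\mathcal{E}(t)$, bound $\mathcal{E}$ by \eqref{inequality-size} and Lemma~\ref{lem-semigroup-CSU}, differentiate via Lemma~\ref{lem-E-differentiable}, integrate in $t$, rescale $(f,g)\leadsto(\lambda f,\lambda^{-1}g)$, and undo the truncations — matches the paper. But there is a genuine gap in the decisive monotonicity step, and your proposed mechanism for it would not work.

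You suggest proving \eqref{equ-monotonicity-plan} by applying the one-leg convexity \eqref{inequality-one leg convexity} to the \emph{temporally} shifted pair $V(t)$ and $T_hV(t)=V(t+h)$ and then averaging in $\mu$. Count orders in $h$: since $T_{t+h}f-T_tf=-hAT_tf+o(h)$, the ``good'' term
\[ \frac{2}{Q}\int_\Omega \bigl|T_{t+h}f-T_tf\bigr|\,\bigl|T_{t+h}g-T_tg\bigr|\,d\mu \]
is $O(h^2)$, while the integrated left-hand side $\mathcal{E}(t+h)-\mathcal{E}(t)-\int dB(V(t))\cdot(V(t+h)-V(t))\,d\mu$ is $o(h)$ by differentiability (both $\mathcal{E}(t+h)-\mathcal{E}(t)$ and the linear term equal $h\mathcal{E}'(t)+o(h)$). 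The resulting inequality $o(h)\ge O(h^2)$ is vacuous and delivers nothing of order $h$; in particular it does not yield $-\mathcal{E}'(t)\ge \frac{c}{Q}|\langle AT_tf,T_tg\rangle|$. Relatedly, Lemma~\ref{lem-semigroup-CSU} is not a substitute for a carr\'e du champ identity: it gives a pointwise bound $|T(fg)|^2\le T(|f|^2)T(|g|^2)$, but provides no integration-by-parts mechanism relating $\langle AT_tf,T_tg\rangle$ to a positive quadratic quantity; in the paper it serves only to control $\mathcal{E}$ via \eqref{inequality-size} and to guarantee $T_tvT_tw\ge 1$.

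What closes the argument in the paper is a different reduction, which your plan omits entirely: \eqref{equ-quantitative-monotonicity-version-2} is reduced to the discrete estimate \eqref{equ-calculation-2} featuring $\Id-T$ (not $A$); the submarkovian operator $T$ is represented via the Gelfand transform as an integral against a symmetric Radon measure $m_T$ on $\hat\Omega\times\hat\Omega$ (Proposition~\ref{prop-Gelfand-transform}); and the one-leg convexity is applied \emph{spatially} between the two points $(\hat f(x),\hat g(x),\hat v_1(x),\hat v_2(x))$ and $(\hat f(y),\hat g(y),\hat v_1(y),\hat v_2(y))$, giving the two-point estimate (Proposition~\ref{prop-toy-example-estimate}), which is then averaged against $dm_T$ and extended from finite measure spaces to $\sigma$-finite ones (Propositions~\ref{prop-submarkovian-finite-measure-estimate} and \ref{prop-markovian-estimate}). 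It is this spatial application of one-leg convexity, combined with the $m_T$-symmetrization, that produces a term of the correct size $|\langle(\Id-T)f,g\rangle|$. Finally, under alternative hypothesis 3 the paper must first mollify the weights by time-averaging $v=\frac1h\int_0^h T_s(w^{-1})ds$, $\tilde w=\frac1h\int_0^h T_rw\,dr$ and use the local diffusion property to keep the quadruple in the Bellman domain; your sketch does not address this, but it is secondary compared to the missing Gelfand-transform reduction.
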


Theorem \ref{thm-bilinear} together with our preliminary work from Subsection \ref{subsec-HI} will immediately give the following corollary on functional calculus.

\begin{cor}
\label{cor-bilinear}
Let $(T_t)_{t \geq 0}$ be a markovian semigroup on $(\Omega,\mu)$.
Let $w$ be a weight on $\Omega$ such that $Q^A_2(w) < \infty$.
Assume one of the alternative conditions from Theorem \ref{thm-bilinear}:
\begin{enumerate}
\item The measure space is finite, $\mu(\Omega) < \infty$, or
\item For any $t > 0$, $T_t$ maps $L^\infty(\Omega)$ into the domain $D(A_\infty)$ of the $w^*$ $L^\infty$ realization of $A$, or
\item $(T_t)_{t \geq 0}$ satisfies the local diffusion from Definition \ref{defi-local-diffusion}.
\end{enumerate}
In case 2. and 3. above, assume moreover that for any $v \in L^\infty(\Omega)$, $T_tv(x) \to v(x)$ as $t \to 0+$ $\mu$-almost everywhere.
Then $A$ is $\frac{\pi}{2}$-sectorial on $L^2(\Omega,wd\mu)$.
For $J > 1$ there exists a constant $C_J$ only depending on $J$ such that for any $m \in \HI(\Sigma_{\frac{\pi}{2}};J)$, we have
\[ \|m(A)\|_{L^2(\Omega,wd\mu) \to L^2(\Omega,wd\mu)} \leq C_J Q^A_2(w) \left( |m(0)| + \|m\|_{\HI(\Sigma_{\frac{\pi}{2}};J)} \right) .\]
In particular, $(1 + A)^{-J} \exp(-tA)$ extends to a bounded operator on $L^2(\Omega,wd\mu)$ and we have
\[ \|(1+A)^{-J} \exp(-tA) \|_{L^2(\Omega,wd\mu) \to L^2(\Omega,wd\mu)} \leq C (1 + t)^{-J} \quad (t \geq 0) . \]
\end{cor}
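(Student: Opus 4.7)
The plan is to chain Theorem~\ref{thm-bilinear} with the abstract transference in Proposition~\ref{prop-bilinear-to-calculus-weight} and Remark~\ref{rem-bilinear-to-calculus-weight}, and then to specialise the resulting $\HI(\Sigma_{\frac{\pi}{2}};J)$ calculus to the multiplier $m_t(\lambda)=(1+\lambda)^{-J}e^{-t\lambda}$ by means of Lemma~\ref{lem-Besov-norm-regularized-semigroup}.

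First I would recast the bilinear estimate of Theorem~\ref{thm-bilinear} into the form required by Proposition~\ref{prop-bilinear-to-calculus-weight}. Since $(T_t)_{t\geq 0}$ is markovian, $A$ is self-adjoint on $L^2(\Omega,d\mu)$ and $T_tA=AT_t$ on $D(A)$, hence
\[
\langle AT_tf,T_tg\rangle=\langle T_tAT_tf,g\rangle=\langle AT_{2t}f,g\rangle,
\]
so the substitution $s=2t$ turns the bound of Theorem~\ref{thm-bilinear} into
\[
\int_0^\infty|\langle AT_sf,g\rangle|\,ds\leq 2C\,Q^A_2(w)\,\|f\|_{L^2(\Omega,wd\mu)}\|g\|_{L^2(\Omega,w^{-1}d\mu)},
\]
valid for $f\in L^2(\Omega,d\mu)\cap L^2(\Omega,wd\mu)$ and $g\in L^2(\Omega,d\mu)\cap L^2(\Omega,w^{-1}d\mu)$. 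The density hypotheses of Proposition~\ref{prop-bilinear-to-calculus-weight} are immediate, since simple functions supported on sets of finite $\mu$-measure are dense in each of the three $L^2$-spaces involved.

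Next I would invoke Proposition~\ref{prop-bilinear-to-calculus-weight} together with Remark~\ref{rem-bilinear-to-calculus-weight} with $X=L^2(\Omega,d\mu)$ and $Y=L^2(\Omega,wd\mu)$. These provide a $\frac{\pi}{2}$-sectorial operator $A_Y$ on $L^2(\Omega,wd\mu)$ extending the original $A$ on $X\cap Y$, together with a bounded $\HI(\Sigma_{\frac{\pi}{2}};J)$ functional calculus for every $J>1$. A bookkeeping of how the bilinear constant enters the proof of that proposition shows that $Q^A_2(w)$ propagates linearly into the calculus estimate, giving the main inequality displayed in the corollary.

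Finally, for the statement on $(1+A)^{-J}e^{-tA}$, one applies the calculus just obtained to $m_t(\lambda)=(1+\lambda)^{-J}e^{-t\lambda}$: choosing $J'\in(1,J)$ and $\varepsilon=J-J'$, Lemma~\ref{lem-Besov-norm-regularized-semigroup} (with $J'$ in the role of $J$) estimates $\|m_t\|_{\HI(\Sigma_{\frac{\pi}{2}};J')}$ polynomially in $1+t$, and feeding this into the calculus bound produces the claimed control. I expect the main subtlety to lie in the very first step: the identity $\langle AT_tf,T_tg\rangle=\langle AT_{2t}f,g\rangle$ has to be justified on a dense class of $(f,g)$ and extended by continuity to all pairs in the intersections required by Proposition~\ref{prop-bilinear-to-calculus-weight}; everything else is a mechanical assembly of previously established lemmas.
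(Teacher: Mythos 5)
Your proposal follows the paper's proof exactly: it chains Theorem~\ref{thm-bilinear} with Proposition~\ref{prop-bilinear-to-calculus-weight} (and Remark~\ref{rem-bilinear-to-calculus-weight}), using the identity $\langle AT_tf,T_tg\rangle=\langle AT_{2t}f,g\rangle$ to match the bilinear estimate to the form required, and then invokes Lemma~\ref{lem-Besov-norm-regularized-semigroup} (via the decomposition $J=J'+\varepsilon$) for the final semigroup bound. The only tiny thing the paper makes more explicit is that the symmetry $\int T_t h\cdot g\,d\mu=\int h\cdot T_tg\,d\mu$ for the \emph{bilinear} pairing uses lattice positivity (real kernel) in addition to self-adjointness; otherwise the arguments coincide.
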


\begin{proof}
The first part of the corollary is a direct consequence of Theorem \ref{thm-bilinear} together with Proposition \ref{prop-bilinear-to-calculus-weight}.
Note hereby that $\langle f , g \rangle = \int_\Omega f(x)g(x)d\mu(x)$, and $\int_0^\infty |\langle AT_t f , T_t g \rangle| dt = \int_0^\infty | \langle AT_{2t}f, g \rangle| dt = \frac12 \int_0^\infty |\langle AT_tf,g \rangle| dt$, due to self-adjointness and lattice positivity of $T_t$.
The last part then follows from Lemma \ref{lem-Besov-norm-regularized-semigroup}.
\end{proof}

Before proving Theorem \ref{thm-bilinear}, we discuss examples where its alternative technical conditions in the hypothesis are satisfied.

\begin{remark}
\label{rem-automatic-infty-domain}
In Theorem \ref{thm-bilinear}, Corollary \ref{cor-bilinear} and Lemma \ref{lem-E-differentiable}, one of the alternative hypotheses is that $T_tw$ belongs to $D(A_\infty)$ for $t > 0$.
Note that this is true for any $w \in L^\infty(\Omega)$ in case that $(T_t)_{t \geq 0}$ is the Gauss or Poisson semigroup over $\R^d$.
Moreover, the same holds true if $\Omega$ is a space of homogeneous type and $(T_t)_{t \geq 0}$ is a self-adjoint semigroup that has an integral kernel $p_t(x,y)$ satisfying (one-sided) Gaussian estimates \cite[(1.3)]{GrTe}: there exist $C,C_+ > 0$ such that
\[ p_t(x,y) \leq C \frac{1}{\mu(B(x,\sqrt{t}))} \exp(-C_+ \dist(x,y)^2 / t). \]
\end{remark}

\begin{proof}
By self-adjointness it suffices to check that $T_tf$ falls in the domain of $A$ realized over $L^1(\Omega)$ whenever $f \in L^1(\Omega)$ and $t > 0$.
For the Gauss semigroup, we have for the integral kernel $\partial_t p_t(x,y) = c_d (-\frac{d}{2} t^{-\frac{d}{2}-1} \exp(-|x-y|^2/4t) + t^{-\frac{d}{2}} \frac{|x-y|^2}{4t^2} \exp(-|x-y|^2/4t))$ which is integrable with respect to $y$ and moreover dominated uniformly in $t$ over compact intervals $\subseteq (0,\infty)$ by an integrable function.
It follows by standard integrability arguments that $-AT_tf = \partial_t T_tf \in L^1(\Omega)$.
The proof for the Poisson semigroup and that of Gaussian estimates goes along the same lines.
For the latter, one uses that the kernel of the semigroup is differentiable in time and satisfies also a Gaussian estimate \cite[(1.3)]{Gri}
\[ |\partial_t p_t(x,y)| \leq C t^{-1} \frac{1}{\mu(B(x,\sqrt{t}))} \exp(-C_+' \dist(x,y)^2/ t) .\]
This is a standard consequence from the fact that $t \mapsto p_t(x,y)$ extends to a holomorphic function on a sector $\Sigma_\theta$ with $\theta \in (0,\frac{\pi}{2})$ satisfying the same Gaussian estimate (with different constants) when $t$ is replaced by $|t|$ \cite[Theorem 7.2, (7.5)]{Ouh}, together with an application of the Cauchy integral formula $\partial_t p_t(x,y) = \frac{1}{2\pi i}\int_{B_{\C}(t,\epsilon t)} \frac{p_z(x,y)}{(z-t)^2} dz$.
\end{proof}

\begin{remark}
\label{rem-local-diffusion}
Let us comment on the local diffusion assumption 3. in Theorem \ref{thm-bilinear}.
Assume that $(\Omega,\dist,\mu)$ is a space of homogeneous type and the markovian semigroup $(T_t)_{t \geq 0}$ has an integral kernel $p_t(x,y)$ satisfying two-sided Gaussian estimates \cite[(1.3)]{GrTe}: there exist $c,C,C_+,C_- > 0$ such that
\[ c \frac{1}{\mu(B(x,\sqrt{t}))} \exp(-C_- \dist(x,y)^2 / t) \leq p_t(x,y) \leq C \frac{1}{\mu(B(x,\sqrt{t}))} \exp(-C_+ \dist(x,y)^2 / t) . \]
Then $(T_t)_{t \geq 0}$ satisfies local diffusion.
The same holds if the semigroup satisfies two-sided Poisson estimates \cite[(7)]{DuRo}: there exist $c,C,C_+,C_- > 0$ and a decreasing function $s : [0,\infty) \to [0,\infty)$ such that
\[ c \frac{1}{\mu(B(x,\sqrt{t}))} s(C_-\dist(x,y)^2 / t) \leq p_t(x,y) \leq C \frac{1}{\mu(B(x,\sqrt{t}))} s(C_+ \dist(x,y)^2 / t) . \]
In particular, local diffusion holds for the classical Gauss and Poisson semigroup on $\R^d$.
\end{remark}

\begin{proof}
We show the local diffusion under the more general Poisson estimates (put then $s(r) = \exp(-r)$ for Gaussian estimates).
Take $R = \frac{C_-}{C_+}$ and $0 \leq s \leq r \leq t$ as required in Definition \ref{defi-local-diffusion}.
Note first that $\mu(B(x,\sqrt{t+s})) \cong \mu(B(x,\sqrt{R(t+r)}))$ thanks to the doubling property.
Then
\begin{align*}
p_{t+s}(x,y) & \lesssim \frac{1}{\mu(B(x,\sqrt{t+s}))} s(C_+ \dist(x,y)^2 / t) \cong \frac{1}{\mu(B(x,\sqrt{R(t+r)}))} s(C_- \dist(x,y)^2 / (t C_-/C_+)) \\
& \lesssim \frac{1}{\mu(B(x,\sqrt{R(t+r)}))} s(C_- \dist(x,y)^2 / (R(t+r))) \lesssim p_{R(t+r)}(x,y).
\end{align*}
Integrating over $\Omega$, we obtain
\[ T_{t+s}w(x) = \int_\Omega p_{t+s}(x,y) w(y) d\mu(y) \lesssim \int_\Omega p_{R(t+r)}(x,y) w(y) d\mu(y) = T_{R(t+r)}w(x) \]
as we wished.
\end{proof}

\begin{remark}
\label{rem-automatic-pointwise-convergence}
In Theorem \ref{thm-bilinear} and Corollary \ref{cor-bilinear}, we assumed in two cases that for $v \in L^\infty(\Omega)$, we have pointwise convergence a.e. $T_tv(x) \to v(x)$ as $t \to 0+$.
\begin{enumerate}
\item
The pointwise convergence holds true when $\Omega$ is a space of homogeneous type and $(T_t)_{t \geq 0}$ is a markovian semigroup having an integral kernel $p_t(x,y)$ with (one-sided) Gaussian estimates \cite[(1.3)]{GrTe}
\[ p_t(x,y) \leq C \frac{1}{\mu(B(x,\sqrt{t}))} \exp(- C_+ \dist(x,y)^2 / t) . \]
More generally, the same holds true if the kernel has (one-sided) Poisson estimates \cite[(7)]{DuRo}
\[ p_t(x,y) \leq C \frac{1}{\mu(B(x,\sqrt{t}))} s(\dist(x,y)^2 / t) \]
with $s : [0, \infty) \to [0,\infty)$ decreasing such that $\sum_{k = 0}^\infty 2^{kd} s(2^{2k}) < \infty$, where $d$ denotes a doubling dimension of $\Omega$.
\item
On the other hand, if $\Omega$ is a locally compact separable metric measure space and $(T_t)_{t \geq 0}$ is moreover a Feller semigroup, meaning that $T_t$ maps the space $C_0(\Omega)$ of bounded continuous functions vanishing at $\infty$ into itself and
\[ \|T_t v - v\|_\infty \to 0 \quad (t \to 0+) \]
for any $v \in C_0(\Omega)$, then for a continuous bounded function $v \in C_b(\Omega)$, we have $T_tv(x) \to v(x)$ as $t \to 0+$ pointwise almost everywhere (even locally uniformly).
Consequently, going through the end of the proof of Theorem \ref{thm-bilinear} and taking into account that the cut-off $w_n$ remains continuous if $w$ is continuous, one sees that for Feller semigroups with one of the three alternative assumptions in Theorem \ref{thm-bilinear} and \emph{continuous weights}, Theorem \ref{thm-bilinear} and Corollary \ref{cor-bilinear} are valid.
We refer to \cite{primer} for classical examples of Feller semigroups.
\end{enumerate}
\end{remark}

\begin{proof}
1. Let $x_0 \in \Omega$ and $R > 0$.
We first claim that $T_t (1_{B(x_0,r)^c}v)(x) \to 0$ as $r \to \infty$ uniformly in $t \leq 1$ and $x \in B(x_0,R)$.
Indeed, we have for $r > R+1$ and $r' = r-R$
\begin{align*}
|T_t(1_{B(x_0,r)^c}v)(x)| & \leq \|v\|_\infty \int_{B(x_0,r)^c} p_t(x,y) d\mu(y) \\
& \leq \|v\|_\infty \int_{B(x,r')^c} p_t(x,y) d\mu(y) \\
& \lesssim \|v\|_\infty \int_{B(x,r')^c} \frac{1}{\mu(B(x,\sqrt{t}))} \exp(-C_+ \dist(x,y)^2/t) d\mu(y) \\
& \lesssim \|v\|_\infty \sum_{k \geq 0: \sqrt{t} 2^{k+1} \geq r'} \frac{1}{\mu(B(x,\sqrt{t}))} \int_{B(x,2^{k+1}\sqrt{t}) \backslash B(x,2^k \sqrt{t})} \exp(-C_+ 2^{2k}) d\mu(y) \\
& \lesssim \|v\|_\infty \sum_{k \geq 0: \sqrt{t} 2^{k+1} \geq r'} \frac{2^{(k+1)d}}{\mu(B(x,2^{k+1} \sqrt{t}))} \int_{B(x,2^{k+1} \sqrt{t})} \exp(-C_+ 2^{2k}) d\mu(y) \\
& \lesssim \|v\|_\infty \sum_{k \geq 0: 2^{k+1} \geq r'} 2^{(k+1)d} \exp(-C_+ 2^{2k}) \cdot 1 \\
& \to 0 \quad (r' \to \infty).
\end{align*}
Now $|T_tv(x) -v(x)| \leq |T_t(1_{B(x_0,r)}v)(x) - 1_{B(x_0,r)}v(x)| + |T_t(1_{B(x_0,r)^c}v)(x)|$.
For the second term on the right hand side, we use the proved uniform convergence as $r \to \infty$.
Let us turn to the first term.
It suffices to show that $T_t(1_{B(x_0,r)}v)(x) \to 1_{B(x,_0,r)}v(x)$ for $r > 0$ fixed, as $t \to 0+$ $\mu$-a.e.
To this end, let $f$ be a continuous approximation of $1_{B(x_0,r)}v$ such that $\|f-1_{B(x_0,r)}v\|_2 \leq \varepsilon$.
We have
\begin{align*}
|T_t(1_{B(x_0,r)}v) - 1_{B(x_0,r)}v| & \leq |T_t(1_{B(x_0,r)}v) - T_tf| + |T_tf-f| + |f - 1_{B(x_0,r)}v| \\
& \leq M((1_{B(x_0,r)}v) - f) + |T_tf -f| + |f - 1_{B(x_0,r)}v|,
\end{align*}
where $M$ denotes the maximal operator $Mg = \sup_{t > 0}|T_tg|$, which is bounded on $L^2(\Omega)$ since $(T_t)_{t \geq 0}$ is markovian.
Then for $c > 0$,
\begin{align*}
& \mu\{x \in \Omega : \: \limsup_{t \to 0} |T_t(1_{B(x_0,r)}v)(x)-1_{B(x_0,r)}v(x)| \geq c \} \\
& \leq \mu \{ x : \: \limsup_{t \to 0} |T_tf(x)-f(x)| \geq c/3 \}  \\
& + \mu \{ x : \: |f(x) - 1_{B(x_0,r)}v(x)| \geq c/3 \} + \mu \{ x : \: M(f-1_{B(x_0,r)}v)(x) \geq c/3 \} \\
& \leq \mu \{ x : \: \limsup_{t \to 0} |T_tf(x)-f(x)| \geq c/3 \} + \frac{\|f-1_{B(x_0,r)}v\|_2^2}{(c/3)^2} + \frac{ \|M(f-1_{B(x_0,r)}v)\|_2^2 }{(c/3)^2} \\
& \leq \mu \{ x : \: \limsup_{t \to 0} |T_tf(x)-f(x)| \geq c/3 \} + \frac{\|f-1_{B(x_0,r)}v\|_2^2}{(c/3)^2} + \|M\|_{2\to 2}^2\frac{ \|f-1_{B(x_0,r)}v\|_2^2 }{(c/3)^2}.
\end{align*}
The last two terms are small according to $\|f-1_{B(x_0,r)}v\|_2 \leq \varepsilon$.
The first term is $0$ since it is a classical fact that a markovian semigroup with Gaussian estimates satisfies $\lim_{t \to 0} |T_tf(x) -f(x)| = 0$ $\mu$-almost everywhere for a \emph{continuous} function $f$.
Thus 
\[ \mu\{x \in \Omega :\: \limsup_{t \to 0} |T_t(1_{B(x_0,r)}v)(x) - 1_{B(x_0,r)}v(x)| \geq 0 \} = 0\]
for any $c > 0$.
Letting $c \to 0$, we infer the claimed pointwise convergence almost everywhere.

The proof for Poisson estimates instead of Gaussian estimates goes along the same lines.

2. This is proved in \cite[Lemma 1.8]{primer}.
\end{proof}


%

\begin{proof}[of Theorem \ref{thm-bilinear}]
Let $n \in \N$ and put $w_n$ the truncation of the weight $w$ from Definition \ref{defi-truncated-weight}.
Suppose that we have shown the theorem with $w_n, \: w_n^{-1}$ and $Q^A_2(w_n)$ in place of $w$, $w^{-1}$ and $Q^A_2(w)$.
Then since $w_n \to w$ pointwise as $n \to \infty$, we get for $f \in L^2(\Omega,wd\mu) \cap L^2(\Omega,d\mu)$ that $\|f\|_{L^2(\Omega,wd\mu)} = \lim_{n \to \infty} \|f\|_{L^2(\Omega,w_nd\mu)}$ and for $g \in L^2(\Omega,d\mu) \cap L^2(\Omega,w^{-1}d\mu)$ that $\|g\|_{L^2(\Omega,w^{-1}d\mu)} = \lim_{n \to \infty} \|g\|_{L^2(\Omega,w_n^{-1}d\mu)}$.
Moreover, according to Lemma \ref{lem-truncated-weight}, $Q^A_2(w_n) \leq Q^A_2(w)$.
Thus, the theorem would follow with the weight $w$.
We therefore assume from now on without loss of generality that there is some $\epsilon > 0$ such that $\epsilon \leq w(x) \leq \frac{1}{2\epsilon}$ for almost every $x \in \Omega$.
We prove the theorem first under the alternative hypotheses 1 or 2, and indicate the adaptation for the alternative hypothesis 3 at the end of the proof.
Put $v = 2 w^{-1}$ so that $\epsilon \leq v,w \leq \frac{1}{\epsilon}$.
Note that according to lattice positivity of $T_t$, we have 

\[ \epsilon = T_t(\epsilon 1_\Omega)(x) \leq T_t(v)(x),T_t(w)(x) \leq T_t(\frac{1}{\epsilon} 1_{\Omega})(x) = \frac{1}{\epsilon}.\]

Moreover, according to Lemma \ref{lem-semigroup-CSU}, 
\[ 1 = |T_t(1)(x)|^2 = |T_t(w^{-\frac12}w^{\frac12})(x)|^2 \leq T_t(w^{-1})(x)T_t(w)(x) \leq Q^A_2(w) ,\]
so that $2 \leq T_tv T_tw \leq Q/2$, where we put $Q = 4Q^A_2(w)$.
Then we let $f,g \in L^1(\Omega) \cap L^\infty(\Omega)$ fixed for the rest of the proof and $B$ the Bellman function from Lemma \ref{L: existence and properties of the Bellman function}.
We define the functional
\[ \mathcal{E}(t) = \int_\Omega B(T_tf(x) , T_tg(x), T_t v(x), T_t w(x)) d\mu(x)\]
from Lemma \ref{lem-E-differentiable}.
Note that indeed $(T_tv(x),T_tw(x))$ falls in the range needed in Lemma \ref{lem-E-differentiable}, according to the above.
Moreover, under the alternative hypotheses 1 or 2, according to Lemma \ref{lem-E-differentiable}
\begin{align}
\label{equ-E-old-calculated}
- \mathcal{E}'(t)&  = \Re [ \int_\Omega \partial_x B(T_tf, T_tg, T_tv, T_tw)A T_tf + \partial_y B(T_tf, T_tg,T_tv, T_tw))A T_tg \\
& + \partial_r B(T_tf, T_tg, T_tv, T_tw)AT_tv + \partial_s B(T_tf, T_tg, T_tv, T_tw)AT_t w d \mu]. \nonumber
\end{align}
Now in order to have the bilinear estimate 
\[ \int_0^\infty | \spr{AT_t f}{T_tg} | dt \leq C Q \|f\|_{L^2(w d\mu)} \|g\|_{L^2(w^{-1} d\mu)},\]
it will suffice to show (see the end of the proof of Theorem \ref{thm-bilinear} after Proposition \ref{prop-markovian-estimate})
\begin{equation}
\label{equ-quantitative-monotonicity-version-2}
 - \mathcal{E}'(t) \geq \frac{c}{Q} | \spr{AT_t f}{T_tg} |.
\end{equation}
In view of the explicit expression of $\mathcal{E}'(t)$ from \eqref{equ-E-old-calculated}, it suffices to show that for any $f,g \in L^2(\Omega)$ and $v_1,v_2 \in L^\infty(\Omega)$, all belonging to the domain of $A$, with $\epsilon \leq v_1,v_2 \leq \frac{1}{\epsilon},$ and $1 \leq v_1(x)v_2(x) \leq Q$ for any $x \in \Omega,$ we have
\begin{align}
\label{equ-calculation-1}
\left| \int_\Omega Af g d\mu \right| & \leq c Q \Re\left[ \int_\Omega \partial_x B(f,g,v_1,v_2)Af + \partial_y B(f,g,v_1,v_2)Ag \right. \\
& \left. + \partial_r B(f,g,v_1,v_2)Av_1 + \partial_s B(f,g,v_1,v_2)Av_2 d\mu \right] \nonumber
\end{align}
(from this, go then up to \eqref{equ-quantitative-monotonicity-version-2} by replacing $f$ by $T_tf,$ $g$ by $T_tg,$ $v_1$ by $T_tv$ and $v_2$ by $T_tw.$)
For this in turn, it suffices to show that for any $T$ markovian (that is, $T = T_t$ for some $t \geq 0$ fixed), we have
\begin{align}
\label{equ-calculation-2}
\left| \int_\Omega (\Id - T)(f) g d\mu \right| & \leq c Q \Re \left[ \int_\Omega \partial_x B(f,g,v_1,v_2)(\Id - T)(f) + \partial_y B(f,g,v_1,v_2)(\Id - T)(g) \right. \\
& \left. + \partial_r B(f,g,v_1,v_2)(\Id - T)(v_1) + \partial_s B(f,g,v_1,v_2)(\Id - T)(v_2) d\mu \right] \nonumber
\end{align}
(from this, go then up to \eqref{equ-calculation-1} by replacing $T$ by $T_t,$ dividing by $t,$ and letting $t \to 0;$ note that $A f = \lim_{t \to 0} \frac1t (\Id - T_t)(f)$ and similarly for $g,v_1$ and $v_2$.)
The proof of \eqref{equ-calculation-2} requires several technical steps, where the simplest form of \eqref{equ-calculation-2} is proved in Proposition \ref{prop-toy-example-estimate} and the most general form is shown in Proposition \ref{prop-markovian-estimate}.
We proceed now to these steps, and conclude the proof of the theorem once Proposition \ref{prop-markovian-estimate} is shown.
\end{proof}

\begin{prop}
\label{prop-toy-example-estimate}
Consider a two-point measure space $\Omega = \{ a , b \}$ equipped with the measure $\nu_{a,b} = \delta_a + \delta_b$ and the (negative) generator $\mathcal{G} = \begin{pmatrix} 1 & - 1 \\ -1 & 1 \end{pmatrix}$ of a markovian semigroup (given by $T_t = \frac12 \begin{pmatrix} 1 + e^{-2t} & 1 - e^{-2t} \\ 1 - e^{-2t} & 1 + e^{-2t} \end{pmatrix}$ \cite{Kri1}).
Let $Q \geq 1$.
Then for any $f,g: \{a,b\} \to \C$ and $v_1,v_2 : \{a,b\} \to (0,\infty)$ with $1 \leq v_1(x) \cdot v_2(x) \leq Q$ (and $\epsilon \leq v_1(x),v_2(x) \leq \frac{1}{\epsilon}$), the estimate \eqref{equ-calculation-1} holds with a universal constant $C < \infty$, that is,
\begin{align}
\label{equ-prop-toy-example-estimate}
\left| \int_{\{a,b\}} \mathcal{G}f \cdot g d\nu_{a,b} \right| & \leq C Q \Re\left[ \int_{\{a,b\}} \partial_x B(f,g,v_1,v_2)\mathcal{G}f + \partial_y B(f,g,v_1,v_2) \mathcal{G}g \right. \\
& \left. + \partial_r B(f,g,v_1,v_2)\mathcal{G}v_1 + \partial_s B(f,g,v_1,v_2) \mathcal{G}v_2 d\nu_{a,b} \right]. \nonumber
\end{align}
\end{prop}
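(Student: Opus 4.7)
The strategy is to rewrite both sides of \eqref{equ-prop-toy-example-estimate} in terms of the two points $V_a = (f(a),g(a),v_1(a),v_2(a))$ and $V_b = (f(b),g(b),v_1(b),v_2(b))$, each lying in the domain $\mathcal{D}_Q^{\varepsilon}$ by hypothesis, and then apply the one-leg convexity property \eqref{inequality-one leg convexity} twice, once with base point $V_b$ and once with base point $V_a$.

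First I would observe that, because $\mathcal{G}h(a) = h(a)-h(b)$ and $\mathcal{G}h(b) = h(b)-h(a)$ for any scalar function $h$ on $\{a,b\}$, the left-hand side of \eqref{equ-prop-toy-example-estimate} equals
\[
\left| \int_{\{a,b\}} \mathcal{G}f \cdot g \, d\nu_{a,b} \right| = |(f(a)-f(b))(g(a)-g(b))| = |\Delta f|\,|\Delta g|,
\]
writing $\Delta h := h(a)-h(b)$. For the bracketed expression on the right-hand side, the same identity together with the convention $dB(V_0)(\Delta V) := \Re[\partial_x B(V_0)\Delta x + \partial_y B(V_0)\Delta y] + \partial_r B(V_0)\Delta r + \partial_s B(V_0)\Delta s$ (which is the natural differential of the real-valued $B$ evaluated on an increment with complex components in $x,y$) collapses the sum over $\{a,b\}$ to
\[
\Re\!\left[\,\sum_{z \in \{a,b\}}\! \bigl(\partial_x B(V_z)\mathcal{G}f(z) + \partial_y B(V_z)\mathcal{G}g(z) + \partial_r B(V_z)\mathcal{G}v_1(z) + \partial_s B(V_z)\mathcal{G}v_2(z)\bigr)\right] = dB(V_a)(V_a-V_b) + dB(V_b)(V_b-V_a).
\]

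Next I would apply \eqref{inequality-one leg convexity} with $(V,V_0) = (V_a,V_b)$ and then with $(V,V_0) = (V_b,V_a)$, obtaining
\[
B(V_a)-B(V_b) - dB(V_b)(V_a-V_b) \geq \frac{2}{Q}|\Delta f|\,|\Delta g|, \qquad B(V_b)-B(V_a) - dB(V_a)(V_b-V_a) \geq \frac{2}{Q}|\Delta f|\,|\Delta g|.
\]
Adding these two inequalities causes the $B$-values to cancel and yields
\[
dB(V_a)(V_a-V_b) + dB(V_b)(V_b-V_a) \geq \frac{4}{Q}|\Delta f|\,|\Delta g|.
\]

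Combining this with the identifications of both sides above, the bracketed real part on the right of \eqref{equ-prop-toy-example-estimate} is at least $\frac{4}{Q}|\Delta f|\,|\Delta g| \geq \frac{4}{Q}|\Delta f \cdot \Delta g|$, so the inequality holds with any constant $C \geq 1/4$. There is no serious obstacle here; the only subtle point is to confirm the conventions on $dB$ (the real-part for the complex variables $x,y$) so that the evaluation of the one-leg convexity \eqref{inequality-one leg convexity} matches the sum appearing inside the $\Re$ on the right-hand side of \eqref{equ-prop-toy-example-estimate}. Once this is settled the proof is a two-line telescoping from \eqref{inequality-one leg convexity}, and this toy case will be the template for the general discrete estimate \eqref{equ-calculation-2} in Proposition \ref{prop-markovian-estimate}, where the analog of $V_a,V_b$ becomes $V = (f,g,v_1,v_2)(x)$ against its averaged/transported counterpart, with the single-operator identity $\int(\Id-T)h \cdot k\, d\mu$ replacing $\int \mathcal{G}f \cdot g\, d\nu_{a,b}$.
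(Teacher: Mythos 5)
Your proposal is correct and follows essentially the same argument as the paper: expand the two-point integral, identify the right-hand bracket with $dB(V_a)(V_a-V_b)+dB(V_b)(V_b-V_a)$, and telescope two applications of the one-leg convexity \eqref{inequality-one leg convexity} with the roles of $V_a$ and $V_b$ swapped. The paper performs the same symmetrization (its $W_1,W_2$ are your $V_b,V_a$), and your remark about interpreting $dB$ as the real differential via $\Re[\partial_x B\,\Delta x]=\partial_{x_1}B\,\Delta x_1+\partial_{x_2}B\,\Delta x_2$ is exactly the identification $\C\cong\R^2$ made in the paper's proof.
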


\begin{proof}
The key observation is that our Bellman function satisfies its one-leg convexity, which will retranslate into \eqref{equ-prop-toy-example-estimate}.
Namely, take some $W_1,W_2 \in D_Q^\epsilon$.
Then \eqref{inequality-one leg convexity} gives
\begin{align}
B(W_1) - B(W_2) - dB(W_2)\cdot(W_1 - W_2) \geq \frac{c}{Q} |W_{1,x} - W_{2,x}| \cdot |W_{1,y} - W_{2,y}| \label{equ-1-proof-prop-toy-example-estimate}
\intertext{and}
B(W_2) - B(W_1) - dB(W_1)\cdot(W_2 - W_1) \geq \frac{c}{Q} |W_{1,x} - W_{2,x}| \cdot |W_{1,y} - W_{2,y}|. \label{equ-2-proof-prop-toy-example-estimate}
\end{align}
Taking the sum of \eqref{equ-1-proof-prop-toy-example-estimate} and \eqref{equ-2-proof-prop-toy-example-estimate} yields
\begin{equation}
\label{equ-3-proof-prop-toy-example-estimate}
dB(W_2) \cdot(W_2 - W_1) - dB(W_1) \cdot(W_2-W_1) \geq \frac{2c}{Q} |W_{1,x} - W_{2,x}| \cdot |W_{1,y} - W_{2,y}|.
\end{equation}
On the other hand, \eqref{equ-prop-toy-example-estimate} can be rewritten as
\begin{align}
& |f(a)-f(b)| \cdot |g(a) - g(b)| \leq C Q \Re [ ( \partial_x B(f(a),g(a),v_1(a),v_2(a)) \label{equ-4-proof-prop-toy-example-estimate} \\
& - \partial_x B(f(b),g(b),v_1(b),v_2(b)) ) \cdot (f(a) - f(b)) \nonumber \\
& +
\left( \partial_y B(f(a),g(a),v_1(a),v_2(a)) - \partial_y B(f(b),g(b),v_1(b),v_2(b)) \right) \cdot (g(a) - g(b)) \nonumber \\
& +
\left(\partial_r B(f(a),g(a),v_1(a),v_2(a)) - \partial_r B(f(b),g(b),v_1(b),v_2(b)) \right) \cdot (v_1(a) - v_1(b)) \nonumber \\
& +
\left(\partial_s B(f(a),g(a),v_1(a),v_2(a)) - \partial_s B(f(b),g(b),v_1(b),v_2(b)) \right) \cdot (v_2(a) - v_2(b)) ] \nonumber,
\end{align}
where the right hand side can be rewritten as $CQ$ times
\begin{align*}
& dB(f(a),g(a),v_1(a),v_2(a))\cdot (f(a)-f(b),g(a)-g(b),v_1(a)-v_1(b),v_2(a)-v_2(b)) \\
& - dB(f(b),g(b),v_1(b),v_2(b)) \cdot (f(a)-f(b),g(a)-g(b),v_1(a)-v_1(b),v_2(a)-v_2(b)) .
\end{align*}
Note hereby, that we have done the identification $\C = \R^2$, so that e.g. $f(a)-f(b) = (\Re(f(a)-f(b)),\Im(f(a)-f(b))) \in \R^2$.
If we put $W_1 = (f(b),g(b),v_1(b),v_2(b))$ and $W_2 = (f(a),g(a),v_1(a),v_2(a))$, then \eqref{equ-3-proof-prop-toy-example-estimate} gives \eqref{equ-4-proof-prop-toy-example-estimate}, and thus, \eqref{equ-prop-toy-example-estimate}.
\end{proof}

It turns out that under certain technical conditions which we will precise below, \eqref{equ-calculation-2} to show becomes an average of \eqref{equ-prop-toy-example-estimate}.
To this end, we recall the Gelfand transform which is used in \cite{CaDr} in a similar context.
Namely, suppose that $(\Omega,\mu)$ is a finite measure space.
Denote $\hat{\Omega}$ the maximal ideal space of the commutative $C^*$-algebra $L^\infty(\Omega,\mu)$.
Then there is an isometric isomorphism, the Gelfand isomorphism $\mathcal{F} : L^\infty(\Omega,\mu) \to C(\hat{\Omega})$ satisfying $\mathcal{F}(1) = 1$, $\mathcal{F}(f\cdot g) = \mathcal{F}(f) \cdot \mathcal{F}(g)$ and $\mathcal{F}(\overline{f}) = \overline{\mathcal{F}(f)}$.
In particular, $\mathcal{F}$ is positivity preserving.
We shall write in short $\mathcal{F}(f) = \hat{f}$.
Since $\hat{\Omega}$ is a compact Hausdorff space, by the Riesz representation theorem, the measure $\mu$ is transported to some positive Radon measure $\hat{\mu}$ on $\hat{\Omega}$ such that 
\[ \int_\Omega f d\mu = \int_{\hat{\Omega}} \hat{f} d\hat{\mu} \]
for $f \in L^\infty(\Omega,\mu)$.
Moreover, every $f \in L^\infty(\hat{\Omega},\hat{\mu})$ has a representative in $C(\hat{\Omega})$, so that $L^\infty(\hat{\Omega},\hat{\mu})$ and $C(\hat{\Omega})$ coincide as Banach spaces.

\begin{lemma}
\label{lem-Gelfand-functional-calculus}
Suppose that $(\Omega,\mu)$ is a finite measure space, so that the above Gelfand isomorphism exists.
Let $f \in L^\infty(\Omega)$, $D = \{ z \in \C : \: |z| \leq \|f\|_\infty \}$ and $G : D \to \C$ a continuous function.
Then $\widehat{G(f)} = G(\hat{f})$.
Similarly, let $f,g,v_1,v_2 \in L^\infty(\Omega)$ such that $(f(x),g(x),v_1(x),v_2(x)) \in {\mathcal{D}}^\epsilon_Q$ for some $\epsilon > 0$ and $Q \geq 1$, and all $x \in \Omega$.
Here $\mathcal{D}^\epsilon_Q$ is the domain of the Bellman function from Lemma \ref{L: existence and properties of the Bellman function}.
Let $G : \mathcal{D}^\epsilon_Q \to \C$ be a continuous function.
Then $(\hat{f}(y),\hat{g}(y),\hat{v}_1(y),\hat{v}_2(y)) \in \mathcal{D}^\epsilon_Q$ for any $y \in \hat{\Omega}$ and $\widehat{G(f,g,v_1,v_2)} = G(\hat{f},\hat{g},\hat{v}_1,\hat{v}_2)$.
\end{lemma}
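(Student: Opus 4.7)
\textbf{Plan for Lemma \ref{lem-Gelfand-functional-calculus}.} The plan is to exploit the fact that $\mathcal{F}$ is a unital isometric $*$-homomorphism, which automatically handles polynomial expressions in the variables and their conjugates, and then to pass to the continuous case via the Stone--Weierstrass theorem.

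First I would address the single-variable statement. For any polynomial $P(z,\bar z) = \sum a_{jk} z^j \bar z^k$, multiplicativity of $\mathcal{F}$ together with $\widehat{\bar f} = \overline{\hat f}$ yields
\[ \widehat{P(f,\bar f)} = \sum a_{jk} \hat f^{\,j} \overline{\hat f}^{\,k} = P(\hat f, \overline{\hat f}) \]
pointwise on $\hat\Omega$. Because $\|\hat f\|_\infty = \|f\|_\infty$, the range $\hat f(\hat\Omega)$ lies in the compact disk $D$, and by Stone--Weierstrass the continuous function $G$ is the uniform limit on $D$ of a sequence of polynomials $P_n(z,\bar z)$ in $\Re z$ and $\Im z$. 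The isometry property gives
\[ \|\widehat{G(f)} - \widehat{P_n(f,\bar f)}\|_\infty = \|G(f) - P_n(f,\bar f)\|_\infty \leq \sup_{z \in D} |G(z) - P_n(z,\bar z)| \longrightarrow 0, \]
while $P_n(\hat f,\overline{\hat f}) \to G(\hat f)$ in $C(\hat\Omega)$ by the same uniform bound. Passing to the limit in the polynomial identity yields $\widehat{G(f)} = G(\hat f)$.

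The multi-variable statement is proved along the same lines, modulo two preliminary verifications. First, positivity preservation of $\mathcal{F}$ (which follows from the $*$-homomorphism property, since a self-adjoint element is positive iff it is a square of a self-adjoint element) transfers the inequalities $\epsilon \leq v_1,v_2 \leq \epsilon^{-1}$ and $1 \leq v_1 v_2 \leq Q$ to $\hat v_1,\hat v_2$ pointwise on $\hat\Omega$; the bound $|\hat f|,|\hat g| \leq \|f\|_\infty, \|g\|_\infty$ is automatic. Hence the tuple $(\hat f(y),\hat g(y),\hat v_1(y),\hat v_2(y))$ lies in a compact subset $K \subset \mathcal{D}^\epsilon_Q$. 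Second, Stone--Weierstrass on $K$ (a compact subset of $\R^6$ after the identification $\C \cong \R^2$) lets me uniformly approximate $G|_K$ by polynomials in the real variables $x_1,x_2,y_1,y_2,r,s$. Each such polynomial is handled by multiplicativity together with $\widehat{\Re f} = \Re \hat f$ and $\widehat{\Im f} = \Im \hat f$, and the same isometry-and-limit argument as above concludes.

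There is no genuine obstacle here: the only care required is to check that the effective domain of evaluation is a compact subset of $\mathcal{D}^\epsilon_Q$ on which Stone--Weierstrass applies, which is ensured by the $L^\infty$ bounds and the order-preservation of $\mathcal{F}$, both standard consequences of the $C^*$-algebraic setting.
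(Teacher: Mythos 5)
Your proposal is correct and takes essentially the same route as the paper: approximate $G$ uniformly on a compact set (Stone--Weierstrass), use that $\mathcal{F}$ is a unital isometric $*$-homomorphism (hence positivity preserving) to commute with polynomials and to transfer the domain constraints to the hatted tuple, then pass to the limit via isometry. The only cosmetic difference is that the paper explicitly takes the approximation domain to be $\mathcal{D}^\epsilon_Q \cap \bigl(\Ran(f,g,v_1,v_2) \cup \Ran(\hat f,\hat g,\hat v_1,\hat v_2)\bigr)$, whereas you should make sure your compact $K$ contains the essential range of the unhatted tuple as well as the range of the hatted one (it does, thanks to the same $L^\infty$ bounds), so that the uniform approximation controls both $\|G(f,g,v_1,v_2)-P_n(f,g,v_1,v_2)\|_\infty$ and $\|G(\hat f,\hat g,\hat v_1,\hat v_2)-P_n(\hat f,\hat g,\hat v_1,\hat v_2)\|_\infty$ simultaneously.
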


\begin{proof}
The first statement is standard.
Indeed, since $D$ is compact, $G$ can be approximated uniformly on $D$ by polynomials $P_n$ in $z,\overline{z}$.
Now the fact that the Gelfand isomorphism is multiplicative and involutive yields $\widehat{P_n(f)} = P_n(\hat{f})$.
Then 
\[\|\widehat{G(f)}-\widehat{P_n(f)}\|_{L^\infty(\hat{\Omega},\hat{\mu})} = \|G(f) - P_n(f)\|_{L^\infty(\Omega,\mu)} \leq \|G-P_n\|_{L^\infty(D)} \|f\|_{L^\infty(\Omega)} \to 0\text{ as }n \to \infty . \]
On the other hand, 
\[\|G(\hat{f}) - P_n(\hat{f})\|_{L^\infty(\hat{\Omega},\hat{\mu})} \leq \|G - P_n \|_{L^\infty(D)} \|f\|_{L^\infty(\Omega,\mu)} \to 0\text{ as }n \to \infty . \]
Combining the two convergences yields the first statement.

For the second statement, note first that $(f,g,v_1,v_2) \in D^\epsilon_Q$ restates as $\epsilon \leq v_1,v_2 \leq \frac{1}{\epsilon}$ and $1 \leq w \leq Q$ with $w = v_1 v_2$.
Since $\mathcal{F}$ is involutive, we deduce that also $\hat{v}_1,\hat{v}_2$ are real valued.
Since $\mathcal{F}$ is an isometry (on the $L^\infty$ level), we have $\hat{v}_1,\hat{v}_2 \leq \frac{1}{\epsilon}$ and $\hat{w} = \hat{v}_1 \hat{v}_2 \leq Q$.
For the lower estimates, note that $\hat{v}_k - \epsilon = \widehat{v_k - \epsilon} \geq 0$ since $\mathcal{F}$ is positivity preserving. Similarly, $\hat{w} \geq 1$.
Thus, $(\hat{f},\hat{g},\hat{v}_1,\hat{v}_2)$ takes its values in $\mathcal{D}^\epsilon_Q$, too.
Now take again a sequence of polynomials $P_n$ in 6 commuting variables $z_f,\overline{z_f},z_g,\overline{z_g},x_1,x_2$ where $z_f,z_g \in \C$ and $x_1,x_2 \in \R$, approximating $G$ uniformly on $\mathcal{D}^\epsilon_Q \cap \left( \Ran(f,g,v_1,v_2) \cup \Ran(\hat{f},\hat{g},\hat{v}_1,\hat{v}_2) \right)$.
We have $\widehat{P_n(f,g,v_1,v_2)} = P_n(\hat{f},\hat{g},\hat{v}_1,\hat{v}_2)$, $\widehat{G(f,g,v_1,v_2)} = \lim_n \widehat{P_n(f,g,v_1,v_2)}$ and $G(\hat{f},\hat{g},\hat{v}_1,\hat{v}_2) = \lim_n P_n(\hat{f},\hat{g},\hat{v}_1,\hat{v}_2)$.
Thus, the second statement follows as was done for the first one.
\end{proof}

We will need the following proposition from \cite[Lemma 30]{CaDr}.

\begin{prop}
\label{prop-Gelfand-transform}
Let $(\Omega,\mu)$ be a finite measure space and $T$ a submarkovian operator on $(\Omega,\mu)$ in the sense of Definition \ref{defi-submarkovian}.
Then there exists a positive symmetric Radon measure $m_T$ on $\hat{\Omega} \times \hat{\Omega}$ such that
\[ \int_\Omega T f(x) g(x) d\mu(x) = \int_{\hat{\Omega} \times \hat{\Omega}} \hat{f}(x) \hat{g}(y) dm_T(x,y) \quad (f,g \in L^\infty(\Omega)) . \]
Here, by a symmetric measure we mean that $dm_T(x,y) = dm_T(y,x)$.
\end{prop}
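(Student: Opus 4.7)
The plan is to transport the bilinear form $(f,g) \mapsto \int_\Omega Tf \cdot g \, d\mu$ through the Gelfand isomorphism and disintegrate the Gelfand transform of $T$ as an integral kernel, from which the measure $m_T$ will be read off directly. Symmetry will come for free from self-adjointness of $T$, which is available because $L^\infty(\Omega) \subseteq L^2(\Omega)$ in the finite measure setting.

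First I would use the Gelfand isomorphism $\mathcal{F}:L^\infty(\Omega,\mu)\to C(\hat\Omega)$ to transfer $T$ to a bounded positivity-preserving operator $\hat T:C(\hat\Omega)\to C(\hat\Omega)$ defined by $\hat T(\hat f)=\widehat{Tf}$ (submarkovianity of $T$ gives positivity, and $\|\hat T\|\leq 1$). For each $x\in\hat\Omega$ the evaluation map $\hat f\mapsto \hat T(\hat f)(x)$ is a positive bounded linear functional on $C(\hat\Omega)$, so by the Riesz representation theorem it is integration against a positive Radon sub-probability measure $\kappa_x$:
\[ \hat T(\hat f)(x) = \int_{\hat\Omega} \hat f(y)\,d\kappa_x(y), \qquad \hat f \in C(\hat\Omega). \]

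Next, I would verify that $x \mapsto \kappa_x$ is weak-$\ast$ continuous: this reduces to continuity of $x\mapsto \hat T(\hat f)(x)$, which is immediate. It follows that for any $F\in C(\hat\Omega\times\hat\Omega)$ the map $x\mapsto \int F(x,y)\,d\kappa_x(y)$ is continuous, since for elementary tensors $F(x,y)=\hat g(x)\hat f(y)$ it equals $\hat g(x)\hat T(\hat f)(x)$ and such tensors are uniformly dense by Stone--Weierstrass. Thus the iterated integral
\[ \Lambda(F) := \int_{\hat\Omega}\int_{\hat\Omega} F(y,x)\,d\kappa_x(y)\,d\hat\mu(x) \]
defines a positive bounded linear functional on $C(\hat\Omega\times\hat\Omega)$, with $|\Lambda(F)|\leq \mu(\Omega)\|F\|_\infty$. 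Riesz again produces a positive Radon measure $m_T$ on $\hat\Omega\times\hat\Omega$ with $\Lambda(F)=\int F\,dm_T$. Testing against $F(x,y)=\hat f(x)\hat g(y)$ gives
\[ \int \hat f(x)\hat g(y)\,dm_T(x,y) = \int_{\hat\Omega} \hat g(x)\,\hat T(\hat f)(x)\,d\hat\mu(x) = \int_\Omega Tf\cdot g\,d\mu, \]
which is the announced identity.

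Finally, to get symmetry I would exploit that $\mu(\Omega)<\infty$ implies $L^\infty(\Omega)\subseteq L^2(\Omega)$, so that self-adjointness of $T$ on $L^2$ yields $\int Tf\cdot g\,d\mu = \int f\cdot Tg\,d\mu$ for $f,g\in L^\infty$. Applying the representation above to both sides produces
\[ \int \hat f(x)\hat g(y)\,dm_T(x,y) = \int \hat g(x)\hat f(y)\,dm_T(x,y) = \int \hat f(x)\hat g(y)\,dm_T^{\mathrm{refl}}(x,y), \]
where $m_T^{\mathrm{refl}}$ denotes the pushforward of $m_T$ under $(x,y)\mapsto(y,x)$. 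Since the linear span of pure tensors $\hat f\otimes\hat g$ is uniformly dense in $C(\hat\Omega\times\hat\Omega)$, these measures coincide, i.e. $dm_T(x,y)=dm_T(y,x)$. The only mildly delicate step is the passage from the kernel $\kappa_x$ to a genuine product-space measure, which is why the weak-$\ast$ continuity of $x\mapsto \kappa_x$ (established via Stone--Weierstrass) is the key technical ingredient.
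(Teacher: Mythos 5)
The paper does not actually prove this proposition; it is quoted verbatim from Carbonaro and Dragi\v{c}evi\'c \cite[Lemma 30]{CaDr}, so there is no in-paper argument to compare against. Your proof is correct and is the expected disintegration argument, handling the delicate points properly.

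In more detail: transferring $T$ through the Gelfand isomorphism to a positive contraction $\hat T$ on $C(\hat\Omega)$ and applying Riesz at each point $x\in\hat\Omega$ to obtain the sub-probability kernel $\kappa_x$ is exactly the move that makes this work (directly on $L^\infty(\Omega)$ one would not in general have a measurable Markov kernel, which is why the passage to the maximal ideal space is needed). The weak-$*$ continuity of $x\mapsto\kappa_x$ is precisely what makes $x\mapsto\int F(y,x)\,d\kappa_x(y)$ continuous for every $F\in C(\hat\Omega\times\hat\Omega)$ after the Stone--Weierstrass reduction to pure tensors, and the positivity and the bound $|\Lambda(F)|\le\mu(\Omega)\|F\|_\infty$ make the second application of Riesz on the compact space $\hat\Omega\times\hat\Omega$ legitimate. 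The symmetry step is also sound: since $T$ is positivity preserving it commutes with complex conjugation, so self-adjointness on $L^2(\Omega)$ (available because $\mu(\Omega)<\infty$ forces $L^\infty\subseteq L^2$) yields $\int Tf\cdot g\,d\mu=\int f\cdot Tg\,d\mu$ without conjugates; testing against pure tensors and invoking Stone--Weierstrass density once more identifies $m_T$ with its reflection. No gaps.
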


\begin{prop}
\label{prop-submarkovian-finite-measure-estimate}
Let $(\Omega,\mu)$ be a finite measure space and $T$ a submarkovian operator on $(\Omega,\mu)$.
Then for any $f,g \in L^\infty(\Omega)$ and real valued $v_1,v_2 \in L^\infty(\Omega)$ with $1 \leq v_1(x) \cdot v_2(x) \leq Q$ and $\epsilon \leq v_1(x),v_2(x) \leq \frac{1}{\epsilon}$, the estimate \eqref{equ-calculation-2} holds with a universal constant $C < \infty$, that is,
\begin{align}
\label{equ-prop-submarkovian-finite-measure-estimate}
\left| \int_{\Omega} (\Id - T)f \cdot g d\mu \right| & \leq C Q \Re\left[ \int_{\Omega} \partial_x B(f,g,v_1,v_2)(\Id - T)f \right.\\
&+ \partial_y B(f,g,v_1,v_2) (\Id - T)g \nonumber \\
& \left. + \partial_r B(f,g,v_1,v_2) (\Id - T)v_1 + \partial_s B(f,g,v_1,v_2) (\Id - T)v_2 d\mu \right]. \nonumber
\end{align}
\end{prop}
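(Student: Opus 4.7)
The plan is to reduce the assertion to a pointwise inequality on $\hat{\Omega}\times\hat{\Omega}$ by transporting everything via the Gelfand isomorphism and the symmetric measure $m_T$ from Proposition~\ref{prop-Gelfand-transform}, and then to apply, pointwise, the one-leg convexity of $B$ exactly as in the toy two-point case of Proposition~\ref{prop-toy-example-estimate}. Write $W=(f,g,v_1,v_2)$ and $\hat W(\cdot)=(\hat f(\cdot),\hat g(\cdot),\hat v_1(\cdot),\hat v_2(\cdot))$; by Lemma~\ref{lem-Gelfand-functional-calculus}, $\hat W$ takes values in $\mathcal{D}^\varepsilon_Q$ and commutes with $\partial_x B,\partial_y B,\partial_r B,\partial_s B$. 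Using Proposition~\ref{prop-Gelfand-transform} together with the fact that the marginal of $m_T$ equals $\widehat{T(1)}\,d\hat{\mu}$, I would compute for any $\phi\in L^\infty(\Omega)$ the identity
\[ \int_{\Omega}\phi\,(\Id-T)f\,d\mu = \int_{\hat{\Omega}\times\hat{\Omega}}\hat{\phi}(x)[\hat f(x)-\hat f(y)]\,dm_T(x,y) + \int_{\Omega}\phi\,f(1-T(1))\,d\mu, \]
and its analogs for $g,v_1,v_2$. The symmetry $dm_T(x,y)=dm_T(y,x)$ then lets me rewrite each $dm_T$-integral as a symmetric difference of the form $\tfrac{1}{2}\int[\hat\phi(x)-\hat\phi(y)][\hat f(x)-\hat f(y)]\,dm_T$.

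Applying this to the left-hand side (with $\phi=g$) and to each of the four terms on the right-hand side of \eqref{equ-prop-submarkovian-finite-measure-estimate} (with $\phi=\partial_x B(W)$, etc.), I split both sides as
\[ \text{LHS}=\mathrm{I}_L+\mathrm{II}_L,\qquad \text{RHS}=\mathrm{I}_R+\mathrm{II}_R, \]
where $\mathrm{I}_L,\mathrm{I}_R$ are the symmetric-difference integrals against $dm_T$ and $\mathrm{II}_L,\mathrm{II}_R$ are the leftover integrals against the positive function $1-T(1)$.

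For the $dm_T$ part I use, at each pair $(x,y)$, the inequality obtained by adding the two one-leg convexity bounds \eqref{inequality-one leg convexity} with $W_1=\hat W(y)$ and $W_2=\hat W(x)$ and vice versa — this is exactly the estimate \eqref{equ-3-proof-prop-toy-example-estimate} deployed in the proof of Proposition~\ref{prop-toy-example-estimate}. It gives
\[ \Re\bigl[(\partial_x B(\hat W(x))-\partial_x B(\hat W(y)))(\hat f(x)-\hat f(y))+\cdots\bigr] \geq \tfrac{2c}{Q}\,|\hat f(x)-\hat f(y)|\,|\hat g(x)-\hat g(y)|. \]
Integrating against $dm_T$, multiplying by $\tfrac12$, and comparing with $|\mathrm{I}_L|\leq\tfrac12\int|\hat f(x)-\hat f(y)|\,|\hat g(x)-\hat g(y)|\,dm_T$ yields $|\mathrm{I}_L|\leq CQ\,\Re\,\mathrm{I}_R$. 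For the remainder I invoke the pointwise bound \eqref{estimate-error} evaluated at $V=W(\cdot)$: it gives $\Re[\partial_x B(W)f+\partial_y B(W)g+\partial_r B(W)v_1+\partial_s B(W)v_2]\gtrsim \tfrac{1}{Q}|f|\,|g|$, and multiplying by the nonnegative function $1-T(1)$ and integrating produces $|\mathrm{II}_L|\leq\int|f|\,|g|(1-T(1))\,d\mu\leq CQ\,\Re\,\mathrm{II}_R$. Adding the two estimates and using $\Re\,\mathrm{I}_R,\Re\,\mathrm{II}_R\geq 0$ yields the claim.

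The only delicate point is the legitimacy of the $m_T$-representation for the products $\partial_k B(W)\cdot T(\cdot)$, which requires $\partial_k B(W)\in L^\infty(\Omega)$ and $(\Id-T)$ applied to an $L^\infty$ function — both are fine thanks to Lemmas~\ref{lem-partial5-L1} and~\ref{lem-partial1-L2} and the hypothesis $f,g,v_1,v_2\in L^\infty(\Omega)$. The main conceptual step, and where I expect any subtlety to hide, is the bookkeeping that allows the submarkovian defect $1-T(1)\geq 0$ to be handled by the \emph{same} Bellman function via \eqref{estimate-error}, without having to introduce a cemetery point at this stage.
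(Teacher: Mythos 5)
Your argument is correct and follows essentially the same route as the paper's proof: split off the submarkovian defect $1-T(1)$, rewrite the remaining ``markovian'' part via the Gelfand transform and the symmetric measure $m_T$ from Proposition~\ref{prop-Gelfand-transform}, apply the summed one-leg convexity \eqref{equ-3-proof-prop-toy-example-estimate} pointwise on $\hat\Omega\times\hat\Omega$ and integrate, then control the defect term via \eqref{estimate-error} multiplied by the nonnegative function $1-T(1)$. The paper organizes this slightly differently, decomposing $\Id-T=(\Id-T(1)\Id)+(T(1)\Id-T)$ up front instead of stating the identity for a generic $\phi\in L^\infty(\Omega)$, but the two presentations are mathematically identical.
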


\begin{proof}
The idea of the proof, stemming from \cite{CaDr}, is to use the representation of $T$ by the symmetric measure $m_T$ from Proposition \ref{prop-Gelfand-transform} and then to integrate over the estimate obtained in Proposition \ref{prop-toy-example-estimate}.
First, we decompose 
\begin{equation}
\label{equ-1-prop-submarkovian}
 \int_\Omega (\Id - T)f \cdot g d\mu = \int_{\Omega} (\Id - T(1) \Id)f \cdot g d\mu + \int_\Omega (T(1)\Id-T)f \cdot g d\mu.
\end{equation}
The first integral can be considered as an error term and will be controlled through the estimate \eqref{estimate-error}.
We start with the second integral.
Note that according to Proposition \ref{prop-Gelfand-transform},
\begin{align*}
\int_\Omega (T(1)\Id - T)f \cdot g d\mu & = \int_\Omega T(1)(x)f(x) g(x) d\mu(x) - \int_\Omega Tf(x) g(x) d\mu(x) \\
& = \int_{\hat{\Omega} \times \hat{\Omega}} 1 \cdot \hat{f}(y) \hat{g}(y) dm_T(x,y) - \int_{\hat{\Omega} \times \hat{\Omega}} \hat{f}(x) \hat{g}(y) dm_T(x,y) \\
& = \frac12 \int_{\hat{\Omega} \times \hat{\Omega}} \hat{f}(x) \hat{g}(x) + \hat{f}(y) \hat{g}(y) dm_T(x,y) \\
& - \frac12 \int_{\hat{\Omega} \times \hat{\Omega}} \hat{f}(x) \hat{g}(y) + \hat{f}(y) \hat{g}(x) dm_T(x,y) \\
& = \frac12 \int_{\hat{\Omega} \times \hat{\Omega}} (\hat{f}(x) - \hat{g}(x)) (\hat{f}(y) - \hat{g}(y)) dm_T(x,y) \\
&  = \frac12 \int_{\hat{\Omega} \times \hat{\Omega}} \left( \int_{\{x,y\}} \mathcal{G}\hat{f} \cdot \hat{g} d\nu_{x,y}\right) dm_T(x,y).
\end{align*}
Here we have used the symmetry of the measure $m_T$.
On the other hand, according to Lemma \ref{lem-Gelfand-functional-calculus} and Proposition \ref{prop-Gelfand-transform},
\begin{align*}
\Re & \int_\Omega \partial_x B(f,g,v_1,v_2) (T(1)\Id - T)f + \partial_y B(f,g,v_1,v_2)(T(1)\Id - T)g \\
& + \partial_r B(f,g,v_1,v_2)(T(1)\Id - T)v_1 + \partial_s B(f,g,v_1,v_2)(T(1)\Id - T)v_2 d\mu \\
& = \Re \int_{\hat{\Omega} \times \hat{\Omega}} \partial_x B(\hat{f},\hat{g},\hat{v}_1,\hat{v}_2)(x)\hat{f}(x) -  \partial_x B(\hat{f},\hat{g},\hat{v}_1,\hat{v}_2)(y)\hat{f}(x) \\
& +  \partial_y B(\hat{f},\hat{g},\hat{v}_1,\hat{v}_2)(x)\hat{g}(x) - \partial_y B(\hat{f},\hat{g},\hat{v}_1,\hat{v}_2)(y)\hat{g}(x) \\
& + \partial_r B(\hat{f},\hat{g},\hat{v}_1,\hat{v}_2)(x)\hat{v}_1(x) - \partial_r B(\hat{f},\hat{g},\hat{v}_1,\hat{v}_2)(y)\hat{v}_1(x) \\
& + \partial_s B(\hat{f},\hat{g},\hat{v}_1,\hat{v}_2)(x)\hat{v}_2(x) - \partial_s B(\hat{f},\hat{g},\hat{v}_1,\hat{v}_2)(y)\hat{v}_2(x) dm_T(x,y) \\
& = \frac12 \Re \int_{\hat{\Omega} \times \hat{\Omega}} \int_{\{x,y\}}  \partial_x B(\hat{f},\hat{g},\hat{v}_1,\hat{v}_2) \mathcal{G}\hat{f}  + \partial_y B(\hat{f},\hat{g},\hat{v}_1,\hat{v}_2)\mathcal{G}\hat{g} \\
& + \partial_r B(\hat{f},\hat{g},\hat{v}_1,\hat{v}_2) \mathcal{G}\hat{v}_1 + \partial_s B(\hat{f},\hat{g},\hat{v}_1,\hat{v}_2) \mathcal{G}\hat{v}_2
d\nu_{x,y} dm_T(x,y).
\end{align*}
Noting that $\epsilon \leq \hat{v}_1,\hat{v}_2 \leq \frac{1}{\epsilon}$ and that $1 \leq \hat{v}_1 \hat{v}_2 \leq Q$, we deduce from Proposition \ref{prop-toy-example-estimate} together with an integration over $\hat{\Omega} \times \hat{\Omega}$, and the above that
\begin{align}
& \left| \int_\Omega (T(1)\Id - T)f\cdot g d\mu \right| \leq CQ \Re \int_\Omega \partial_x B(f,g,v_1,v_2) (T(1)\Id - T)f \label{equ-2-prop-submarkovian} \\
&  + \partial_y B(f,g,v_1,v_2)(T(1)\Id - T)g \nonumber \\
& + \partial_r B(f,g,v_1,v_2)(T(1)\Id - T)v_1 + \partial_s B(f,g,v_1,v_2)(T(1)\Id - T)v_2 d\mu. \nonumber
\end{align}
In other words, the second integral of the right hand side in \eqref{equ-1-prop-submarkovian} is estimated.
We proceed to the first integral of the right hand side in \eqref{equ-1-prop-submarkovian}.
According to \eqref{estimate-error}, we have the pointwise estimate
\begin{align*}
\MoveEqLeft \Re\left[ \partial_x B(f,g,v_1,v_2)\cdot f + \partial_y B(f,g,v_1,v_2)\cdot g + \partial_r B(f,g,v_1,v_2) \cdot v_1 + \partial_s B(f,g,v_1,v_2)\cdot v_2 \right] \\
& \geq \frac{c}{Q} |f\cdot g|.
\end{align*}
Multiplying this inequality pointwise with the positive function $1 - T(1)$ and then integrating over $\Omega$ yields
\begin{align}
\Re & \left[ \int_\Omega \partial_x B(f,g,v_1,v_2)\cdot (1 - T(1))f + \partial_y B(f,g,v_1,v_2)\cdot(1 - T(1)) g  \right.\label{equ-3-prop-submarkovian} \\
& \left.+ \partial_r B(f,g,v_1,v_2) \cdot (1 - T(1))v_1 + \partial_s B(f,g,v_1,v_2) \cdot (1 - T(1)) v_2d\mu\right] \nonumber \\
& \geq \frac{c}{Q} \int_\Omega |(1 - T(1))f\cdot g| d\mu \geq \frac{c}{Q} \left| \int_\Omega (1-T(1))f \cdot g d\mu \right|. \nonumber
\end{align}
Summing the above estimates \eqref{equ-2-prop-submarkovian} and \eqref{equ-3-prop-submarkovian} for the first and the second integral in \eqref{equ-1-prop-submarkovian}, we obtain
\begin{align*}
\left| \int_\Omega (\Id - T)f g d\mu \right| & \leq \left| \int_\Omega (\Id - T(1)\Id)fgd\mu \right| + \left| \int_\Omega (T(1)\Id -T)fg d\mu \right| \\
& \leq CQ \Re \int_\Omega \partial_x B(f,g,v_1,v_2) (\Id - T)f + \partial_y B(f,g,v_1,v_2)(\Id - T)g \\
& + \partial_r B(f,g,v_1,v_2)(\Id - T)v_1 + \partial_s B(f,g,v_1,v_2)(\Id - T)v_2 d\mu.
\end{align*}
Thus, we proved \eqref{equ-prop-submarkovian-finite-measure-estimate}.
\end{proof}

We are now in the position to prove \eqref{equ-calculation-2} in its general form, and thus to conclude the proof of Theorem \ref{thm-bilinear}.

\begin{prop}
\label{prop-markovian-estimate}
Let $(\Omega,\mu)$ be a $\sigma$-finite measure space and $T$ a submarkovian operator on $(\Omega,\mu)$.
Then for any $f,g \in L^1(\Omega) \cap L^\infty(\Omega)$ and real valued $v_1,v_2 \in L^\infty(\Omega)$ with $1 \leq v_1(x) \cdot v_2(x) \leq Q$ and $\epsilon \leq v_1(x),v_2(x) \leq \frac{1}{\epsilon}$, the estimate \eqref{equ-calculation-2} holds, that is
\begin{align}
\left| \int_{\Omega} (\Id - T)f \cdot g d\mu \right| & \leq C Q \Re\left[ \int_{\Omega} \partial_x B(f,g,v_1,v_2)(\Id - T)f \right. \label{equ-prop-markovian-estimate}\\
&+ \partial_y B(f,g,v_1,v_2) (\Id - T)g \nonumber \\
& \left. + \partial_r B(f,g,v_1,v_2) (\Id - T)v_1 + \partial_s B(f,g,v_1,v_2) (\Id - T)v_2 d\mu \right]. \nonumber
\end{align}
\end{prop}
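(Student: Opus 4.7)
The plan is to reduce Proposition \ref{prop-markovian-estimate} to the finite-measure version Proposition \ref{prop-submarkovian-finite-measure-estimate} by an exhaustion argument. Using $\sigma$-finiteness, choose an increasing sequence $(\Omega_n)_n$ of measurable sets with $\mu(\Omega_n) < \infty$ and $\bigcup_n \Omega_n = \Omega$. Let $P_n$ denote multiplication by the indicator $1_{\Omega_n}$ and set $T_n = P_n T P_n$, viewed as an operator on $(\Omega_n, \mu|_{\Omega_n})$. Then $T_n$ inherits the defining properties of a submarkovian operator on the finite-measure space $\Omega_n$: contractivity on every $L^p(\Omega_n)$ for $p \in [1,\infty]$, positivity preservation, and self-adjointness on $L^2(\Omega_n)$ (the latter from self-adjointness of $T$ on $L^2(\Omega)$ together with $P_n^\ast = P_n$).

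Applying Proposition \ref{prop-submarkovian-finite-measure-estimate} on $(\Omega_n,\mu|_{\Omega_n})$ to $T_n$ with the restricted data $f|_{\Omega_n},g|_{\Omega_n},v_1|_{\Omega_n},v_2|_{\Omega_n}$ (the pointwise constraints on $v_1,v_2$ being inherited and the constant $C$ being universal) yields, writing $B = B(f,g,v_1,v_2)$,
\begin{equation*}
\Bigl| \int_{\Omega_n} (\Id - T_n) f \cdot g \, d\mu \Bigr| \leq CQ\, \Re \int_{\Omega_n} \Bigl[ \partial_x B \cdot (\Id - T_n) f + \partial_y B \cdot (\Id - T_n) g + \partial_r B \cdot (\Id - T_n) v_1 + \partial_s B \cdot (\Id - T_n) v_2 \Bigr] d\mu.
\end{equation*}
The task is then to justify passing to the limit $n \to \infty$ in both sides.

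For the LHS and the two RHS terms involving $f$ and $g$, this is routine dominated convergence: since $f,g \in L^1 \cap L^\infty \subseteq L^2$, we have $1_{\Omega_n} f \to f$ in $L^2$, hence $T(1_{\Omega_n} f) \to T f$ in $L^2$, while $\partial_x B, \partial_y B \in L^2(\Omega)$ by Lemma \ref{lem-partial1-L2}, and Cauchy--Schwarz combined with dominated convergence produces the required limits.

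The main obstacle lies in the two terms $\int_{\Omega_n} \partial_r B \cdot (\Id - T_n) v_1 \, d\mu$ and its $v_2$-analogue, since $v_1, v_2$ are only bounded and not in general integrable. The remedy is to transfer $T$ onto the integrable factor, using the $L^1$--$L^\infty$ duality $\langle Tu, w \rangle = \langle u, T w \rangle$ for $u \in L^1, w \in L^\infty$ (valid for submarkovian $T$ by a standard density argument from self-adjointness on $L^2$). Expand
\begin{equation*}
\int_{\Omega_n} \partial_r B \cdot (\Id - T_n) v_1 \, d\mu = \int_\Omega 1_{\Omega_n}\partial_r B \cdot v_1 \, d\mu - \int_\Omega T\bigl(1_{\Omega_n}\partial_r B\bigr) \cdot 1_{\Omega_n} v_1 \, d\mu.
\end{equation*}
By Lemma \ref{lem-partial5-L1}, $\partial_r B \in L^1(\Omega)$, so $1_{\Omega_n}\partial_r B \to \partial_r B$ in $L^1$ by dominated convergence, and hence $T(1_{\Omega_n}\partial_r B) \to T\partial_r B$ in $L^1$; pairing against $v_1 \in L^\infty$ and applying dominated convergence once more on the residual $1_{\Omega_n}$ factors shows the second integral converges to $\int_\Omega T\partial_r B \cdot v_1 \, d\mu = \int_\Omega \partial_r B \cdot T v_1 \, d\mu$, while the first integral tends to $\int_\Omega \partial_r B \cdot v_1 \, d\mu$ with envelope $\|v_1\|_\infty |\partial_r B|$. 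Subtracting, the $v_1$-term on $\Omega_n$ converges to $\int_\Omega \partial_r B \cdot (\Id - T) v_1 \, d\mu$, and the $v_2$-term is handled identically. Collecting all convergences completes the proof.
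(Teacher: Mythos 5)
Your proof is correct and follows essentially the same route as the paper's: exhaust $\Omega$ by finite-measure sets, truncate $T$ to $T_n = 1_{\Omega_n} T 1_{\Omega_n}$ and invoke Proposition \ref{prop-submarkovian-finite-measure-estimate}, then pass to the limit using Lemmas \ref{lem-partial1-L2} and \ref{lem-partial5-L1} together with $L^1$--$L^\infty$ duality to transfer $T$ onto the integrable Bellman derivatives in the weight terms. The only cosmetic difference is that you expand the weight terms explicitly and apply dominated convergence, whereas the paper phrases the same step as a weak-$*$ convergence argument for $v_{1,n} \to v_1$.
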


\begin{proof}
Since $\Omega$ is $\sigma$-finite, we can write $\Omega = \bigcup_{n \in \N} \Omega_n$ with $\Omega_n$ an increasing sequence of measurable sets of finite measure.
Let $f,g \in L^1(\Omega) \cap L^\infty(\Omega)$ and put $f_n = 1_{\Omega_n} f$ and similarly $g_n,v_{1,n},v_{2,n}$.
We note that $T_n = 1_{\Omega_n} T 1_{\Omega_n}$ is again a submarkovian operator, acting on $(\Omega_n,\mu|_{\Omega_n})$ of finite measure.
Also, $(\Id - T_n)f_n = 1_{\Omega_n} (\Id-T)f_n$ and similarly for $g$, $v_1$ and $v_2$.
Thus, according to Proposition \ref{prop-submarkovian-finite-measure-estimate}, the estimate \eqref{equ-prop-markovian-estimate} holds with $f,g,v_1$ and $v_2$ replaced by $f_n,g_n,v_{1,n}$ and $v_{2,n}$ and $\Omega$ replaced by $\Omega_n$.
We have pointwise and $L^2$ dominated convergence of $f_n\to f$ and $g_n\to g$, so these are convergences in $L^2$.
Thus, $\int_{\Omega_n} (\Id - T)f_n g_n d\mu \to \int_{\Omega} (\Id - T)fg d\mu$.
We argue similarly for the right hand side of \eqref{equ-prop-markovian-estimate}, noting that $\partial_k B(f_n,g_n,v_{1,n},v_{2,n})$ converges to $\partial_k B(f,g,v_1,v_2)$ in $L^2$ for $k = x,y$ using Lemma \ref{lem-partial1-L2} and in $L^1$ for $k = r,s$ using Lemma \ref{lem-partial5-L1}, together with the following reasoning:
$v_{1,n}$ converges to $v_1$ weak$*$ in $L^\infty(\Omega)$.
Then, writing $w= \partial_r B(f,g,v_1,v_2) \in L^1(\Omega)$, we have
\begin{align*}
\langle 1_{\Omega_n} (\Id - T)(1_{\Omega_n} v_1),w \rangle_{L^\infty,L^1} & = \langle v_1, 1_{\Omega_n}(\Id - T)(1_{\Omega_n} w) \rangle \\
& \to \langle v_1, (\Id - T)w \rangle = \langle (\Id - T)v_1 , w \rangle,
\end{align*}
since a subsequence of $(\Id - T)(1_{\Omega_n}w)$  converges pointwise and dominated in $L^1(\Omega)$ to $(\Id - T)(w)$.
Use the same reasoning for $v_{2,n}$ and $\partial_s B(f,g,v_1,v_2)$.
\end{proof}

\begin{proof}[- conclusion of Theorem \ref{thm-bilinear}]
According to Proposition \ref{prop-markovian-estimate}, estimate \eqref{equ-calculation-2} holds for $f,g \in L^1(\Omega) \cap L^\infty(\Omega)$ and $T = T_t$.
Dividing by $t$ and letting $t \to 0+$ yields \eqref{equ-calculation-1}.
Since $T_t$ leaves $L^1(\Omega) \cap L^\infty(\Omega)$ invariant, we deduce that for these $f$ and $g$,
\[ |\langle AT_tf,T_tg \rangle| \leq-  CQ \mathcal{E}'(t). \]
Integrating over $t \in (0,\infty)$ yields in view of the lower and upper estimate of the Bellman function that
\begin{align*}
\int_0^\infty | \langle AT_tf,T_tg \rangle| dt & \leq CQ( \liminf_{r \to 0+}\mathcal{E}(r) - \limsup_{s \to \infty} \mathcal{E}(s) )\leq CQ \liminf_{r \to 0+}\mathcal{E}(r) \\
& \lesssim Q \left(\liminf_{r \to 0+} \int_\Omega |T_rf|^2 (T_r(w^{-1}))^{-1} d\mu + \int_\Omega |T_rg|^2 (T_rw)^{-1} d\mu \right) \\
& \lesssim Q^A_2(w) \left(\|f\|_{L^2(\Omega,wd\mu)}^2 + \|g\|_{L^2(\Omega,w^{-1}d\mu)}^2 \right).
\end{align*}
Here we have used the assumption $\lim_{t \to 0} T_{t}(w^{\pm 1})(x) = w^{\pm 1}(x)$ almost everywhere  in the last estimate in the case of alternative assumption 2.
In case of alternative assumption 1., we have $\lim_{n \to \infty} T_{t_n}(w^{\pm 1})(x) = w^{\pm 1}(x)$ almost everywhere along a sequence $t_n \to 0+$, since then $T_t(w^{\pm 1}) \to w^{\pm 1}$ in $L^1(\Omega)$, which allows still to conclude as above.
Changing $f \leadsto \lambda f$ and $g \leadsto \lambda^{-1} g$ and optimizing in $\lambda > 0$ yields the desired bilinear estimate
\[ \int_0^\infty | \langle AT_tf, T_tg \rangle| dt \leq CQ \|f\|_{L^2(\Omega,wd\mu)} \|g\|_{L^2(\Omega,w^{-1}d\mu)}. \]
To replace the restriction $f,g \in L^1(\Omega,\mu)\cap L^\infty(\Omega,\mu)$ by the requirement $f \in L^2(\Omega,\mu) \cap L^2(\Omega,wd\mu)$ and $g \in L^2(\Omega,\mu) \cap L^2(\Omega,w^{-1}d\mu)$, we use a standard approximation $f_n,g_n \in L^1(\Omega,\mu) \cap L^\infty(\Omega,\mu)$ with $|f_n|\leq |f|$ and $|g_n|\leq |g|$, and pointwise convergence $f_n \to f$ and $g_n \to g$.
Then we have $AT_tf = \lim_n AT_tf_n$ and $T_tg = \lim_n T_tg_n$ in $L^2(\Omega)$, so that $\lim_n \langle AT_tf_n, T_tg_n \rangle = \langle AT_t f, T_tg \rangle$.
Then by dominated convergence, for $0 < t_0 < t_1 < \infty$,
\begin{align*}
\int_{t_0}^{t_1} | \langle AT_tf, T_tg \rangle | dt & = \lim_n \int_{t_0}^{t_1}  | \langle AT_t f_n, T_t g_n \rangle| dt \leq CQ \lim_n \|f_n\|_{L^2(\Omega,w d\mu)} \|g_n\|_{L^2(\Omega,w^{-1} d\mu)} \\
& = CQ \|f\|_{L^2(\Omega, w d\mu)} \|g\|_{L^2(\Omega, w^{-1} d\mu)}.
\end{align*}
We can now let $t_0 \to 0+$ and $t_1 \to \infty$ to conclude.

Let us now indicate the adaptation to the case of the alternative hypothesis 3.
In this case, in order to have the functional $\E(t)$ differentiable, we have to modify the involved weight functions.
That is, we let for given $h > 0$ the weights
\[ v = \frac{1}{h} \int_0^h T_s (w^{-1}) ds , \quad \tilde{w} = \frac{1}{h} \int_0^h T_rw dr . \]
Then by classical semigroup theory, $v$ and $\tilde{w}$ belong to $D(A_\infty)$.
Moreover, $\epsilon \leq v,\tilde{w} \leq \frac{1}{\epsilon}$.
We now leverage the local diffusion property to enframe $T_tv \cdot T_t\tilde{w}$.
Namely, we have, as soon as $h \leq t$,
\begin{align*}
T_tv \cdot T_t\tilde{w} & = \frac{1}{h} \int_0^h T_{t+s}(w^{-1}) ds \cdot \frac{1}{h} \int_0^h T_{t+r}w dr \\
& = \frac{1}{h^2} \int_0^h \int_0^h T_{t+s}(w^{-1}) T_{t+r}w ds dr \\
& = \frac{1}{h^2} \int_0^h \int_0^r T_{t+s}(w^{-1}) T_{t+r}w ds dr + \frac{1}{h^2} \int_0^h \int_0^s T_{t+s}(w^{-1}) T_{t+r}w dr ds \\
& \lesssim \frac{1}{h^2} \int_0^h \int_0^r T_{R(t+r)}(w^{-1}) T_{R(t+r)}w ds dr + \frac{1}{h^2} \int_0^h \int_0^s T_{R(t+s)}(w^{-1}) T_{R(t+s)}w dr ds \\
& \leq Q^A_2(w) \frac{1}{h^2} \int_0^h \int_0^r 1 ds dr + Q^A_2(w) \frac{1}{h^2} \int_0^h \int_0^s 1  dr ds \\
& = Q^A_2(w).
\end{align*}
In the same manner, we estimate from below
\begin{align*}
T_tv \cdot T_t\tilde{w} & = \frac{1}{h^2} \int_0^h \int_0^r T_{t+s}(w^{-1}) T_{t+r}w ds dr + \frac{1}{h^2} \int_0^h \int_0^s T_{t+s}(w^{-1}) T_{t+r}w dr ds \\
& \gtrsim \frac{1}{h^2} \int_0^h \int_0^r T_{\frac1R(t+s)}(w^{-1}) T_{\frac1R(t+s)}w ds dr + \frac{1}{h^2} \int_0^h \int_0^s T_{\frac1R(t+r)}(w^{-1}) T_{\frac1R(t+r)}w dr ds \\
& \geq \frac{1}{h^2} \int_0^h \int_0^r 1 ds dr + \frac{1}{h^2} \int_0^h \int_0^s 1 dr ds \\
& = 1.
\end{align*}
Thus, defining $Q = C Q^A_2(w)$ and multiplying $v$ with a constant, as we did in the case of the alternative hypotheses 1 and 2, we get $2 \leq T_tv T_t{\tilde w} \leq Q/2$ for $h \leq t$.
We define the functional
\[ \mathcal{E}(t) = \int_\Omega B(T_tf,T_tg,T_tv,T_t\tilde{w}) d\mu \quad (t \geq h) \]
and deduce from Lemma \ref{lem-E-differentiable} that $\mathcal{E}$ is differentiable with derivative given by \eqref{equ-E-calculated}.
Going along the same lines as in the case of the alternative hypotheses 1 and 2, we deduce that
\[ - \mathcal{E}(t) \geq \frac{c}{Q} | \langle AT_tf, T_tg \rangle|  \quad (t \geq h) \]
and thus that
\[ \int_h^\infty | \langle AT_tf, T_tg \rangle | dt \leq C Q \left( \|T_hf \|_{L^2(\Omega,(T_h v)^{-1} d\mu)}^2 + \|T_h g\|_{L^2(\Omega,(T_h \tilde{w})^{-1} d\mu)}^2 \right) . \]
Letting $h \to 0+$, the left hand side converges to $\int_0^\infty | \langle AT_tg, T_tg \rangle| dt$.
Moreover, according to the assumption of Theorem \ref{thm-bilinear}, $T_hv$ converges pointwise almost everywhere to $w^{-1}$.
Also $|T_hf|^2 \to |f|^2$ in $L^1(\Omega)$, so converges pointwise almost everywhere and $L^1$ dominated, along a sequence $h = h_k \to 0$.
It follows that $\|T_{h_k}f\|^2_{L^2(\Omega,(T_{h_k}v)^{-1}d\mu)} \to \|f\|^2_{L^2(\Omega,wd\mu)}$.
In the same manner, there exists a subsequence $h_l' =h_{k_l}$ such that $\|T_{h_l'} g\|_{L^2(\Omega,(T_{h_l'} \tilde{w})^{-1} d\mu)} \to \|g\|_{L^2(\Omega,w^{-1} d\mu)}$.
We infer that
\[ \int_0^\infty | \langle AT_tf, T_tg \rangle| dt \leq CQ \left( \|f\|_{L^2(\Omega,w d\mu)}^2 + \|g\|_{L^2(\Omega,w^{-1}d\mu)}^2 \right). \]
We conclude the proof with the optimization $f \leadsto \lambda f$ and $g \leadsto \lambda^{-1} g$ and the approximation of $L^2$ functions by $L^1\cap L^\infty$ functions as we did in the case of alternative hypothesis 1 and 2 above.
\end{proof}

\begin{prop}
\label{prop-maximal-regularity}
Let $(T_t)_{t \geq 0}$ be a markovian semigroup on $(\Omega,\mu)$.
Assume one of the following alternative conditions.
\begin{enumerate}
\item The measure space is finite, $\mu(\Omega) < \infty$, or
\item For any $t > 0$, $T_t$ maps $L^\infty(\Omega)$ into the domain $D(A_\infty)$ of the $w^*$ $L^\infty$ realization of $A$, or
\item $(T_t)_{t \geq 0}$ satisfies the local diffusion from Definition \ref{defi-local-diffusion}.
\end{enumerate}
In case 2. and 3. above, assume moreover that for any $v \in L^\infty(\Omega)$, $T_tv(x) \to v(x)$ as $t \to 0+$ $\mu$-almost everywhere.
Assume that the weight $w$ satisfies $w^\delta \in Q^A_2$ for some $\delta > 1$.
Then $A$ has an $\HI(\Sigma_\theta)$ calculus on $L^2(\Omega,wd\mu)$ for some $\theta < \frac{\pi}{2}$ and in particular, the analytic semigroup $T_z$ extends boundedly to $L^2(\Omega,wd\mu)$ for $|\arg z| < \frac{\pi}{2} - \theta$, and $A$ has maximal regularity on $L^2(\Omega,wd\mu)$.
\end{prop}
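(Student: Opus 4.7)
The plan is to combine Corollary \ref{cor-bilinear} (applied to the two weights $w$ and $w^\delta$) with complex interpolation, then invoke McIntosh's theorem and de Simon's theorem, both of which are available because the ambient space is Hilbert.

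First, Jensen's inequality applied pointwise to the unital positive contraction $T_t$ yields $(T_tw)^\delta \leq T_t(w^\delta)$ and analogously $(T_t(w^{-1}))^\delta \leq T_t(w^{-\delta})$, whence
\[ Q^A_2(w) \leq Q^A_2(w^\delta)^{1/\delta} < \infty. \]
Thus Corollary \ref{cor-bilinear} applies both to $w$ and to $w^\delta$: in particular $A$ is $\frac{\pi}{2}$-sectorial on $L^2(\Omega, wd\mu)$, and on $L^2(\Omega, w^\delta d\mu)$ the imaginary powers obey $\|A^{is}\|_{L^2(w^\delta d\mu)} \leq C_\eta e^{(\pi/2 + \eta)|s|}$ for every $\eta > 0$.

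Second, on the unweighted space $L^2(\Omega, d\mu)$ the operator $A$ is self-adjoint and positive, so $\|A^{is}\|_{L^2(d\mu)} \leq 1$ for every $s \in \R$. I plan to invoke the standard complex-interpolation identity for weighted $L^2$ spaces,
\[ \left[L^2(\Omega, d\mu),\; L^2(\Omega, w^\delta d\mu)\right]_{1/\delta} = L^2(\Omega, wd\mu), \]
together with Remark \ref{rem-bilinear-to-calculus-weight}, which guarantees that the realizations of $A^{is}$ produced by the spectral theorem on $L^2(d\mu)$ and by the weighted calculus on $L^2(w^\delta d\mu)$ coincide on the dense intersection $L^2(d\mu) \cap L^2(wd\mu) \cap L^2(w^\delta d\mu)$. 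Riesz--Thorin interpolation then gives
\[ \|A^{is}\|_{L^2(wd\mu) \to L^2(wd\mu)} \leq C e^{\theta'|s|}, \qquad \theta' := (\pi/2 + \eta)/\delta, \]
and choosing $\eta$ small enough produces $\theta' < \pi/2$ since $\delta > 1$. Hence $A$ has bounded imaginary powers on $L^2(\Omega, wd\mu)$ with BIP-angle strictly less than $\pi/2$.

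Third, since $L^2(\Omega, wd\mu)$ is a Hilbert space, McIntosh's theorem (which equates the $\HI$-calculus angle with the BIP-angle on Hilbert spaces) upgrades the bounds above to a bounded $\HI(\Sigma_\theta)$ calculus on $L^2(\Omega, wd\mu)$ for every $\theta > \theta'$; in particular for some $\theta < \pi/2$. This $\theta$-sectoriality with $\theta < \pi/2$ immediately yields the analytic extension of $(T_z)$ to the sector $|\arg z| < \pi/2 - \theta$, and on the Hilbert space $L^2(\Omega, wd\mu)$ de Simon's theorem delivers maximal regularity of $A$.

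The main technical point is the compatibility of the three candidate definitions of $A^{is}$ across the spaces $L^2(d\mu), L^2(wd\mu), L^2(w^\delta d\mu)$ needed to make Riesz--Thorin legitimate; this is the uniqueness statement carried along the dense intersection of the three spaces by Remark \ref{rem-bilinear-to-calculus-weight} (applied once between $L^2(d\mu)$ and $L^2(wd\mu)$, and once between $L^2(d\mu)$ and $L^2(w^\delta d\mu)$). Everything else is a routine assembly of standard interpolation, McIntosh, and de Simon.
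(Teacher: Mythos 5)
Your proof is correct and follows essentially the same route as the paper: complex interpolation of $L^2(\Omega,wd\mu)$ between $L^2(\Omega,\mu)$ and $L^2(\Omega,w^\delta d\mu)$, followed by an angle reduction via imaginary powers. The only cosmetic difference is that the paper first interpolates the $\HI(\Sigma_\sigma)$ calculus and then cites \cite[Prop. 5.8]{JLMX} for the angle reduction, while you interpolate the BIP bound directly and invoke McIntosh's Hilbert-space theorem — but this is the same underlying mechanism, and your Jensen observation $Q^A_2(w)\leq Q^A_2(w^\delta)^{1/\delta}$, though not needed by the paper, is a harmless (and correct) addition.
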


\begin{proof}
Corollary \ref{cor-bilinear} yields that $A$ is $\frac{\pi}{2}$-sectorial on $L^2(\Omega,w^\delta d\mu)$ and has an $\HI(\Sigma_\sigma)$ calculus for any $\sigma > \frac{\pi}{2}$.
Moreover, by self-adjointness, $A$ is $0$-sectorial on $L^2(\Omega,\mu)$ and has an $\HI(\Sigma_\sigma)$ calculus for any $\sigma > 0$.
Note that the spaces interpolate (complex) and we have $L^2(\Omega,w d\mu) = [L^2(\Omega,\mu),L^2(\Omega,w^\delta d\mu)]_{\frac1\delta}$ \cite[5.5.3 Theorem]{BeL}.
By Stein's interpolation \cite{Ste56}, $A$ is then $(\frac1\delta \cdot \frac{\pi}{2})$-sectorial on $L^2(\Omega,w d\mu)$.
Moreover, by complex interpolation, $A$ has an $\HI(\Sigma_\sigma)$ calculus on $L^2(\Omega,wd\mu)$ for any $\sigma > \max(0,\frac{\pi}{2}) = \frac{\pi}{2}$.
The angle $\frac{\pi}{2}$ can then be reduced to $\theta_0 = \frac1\delta \cdot \frac{\pi}{2} < \frac{\pi}{2}$ e.g. by the method of imaginary powers \cite[Proof of Proposition 5.8]{JLMX}.
Then $A$ has an $\HI(\Sigma_\theta)$ calculus on $L^2(\Omega,wd\mu)$ for any $\theta > \theta_0$.
It is well-known that this implies that on $L^2(\Omega,wd\mu)$, the analytic semigroup angle is bigger or equal to $\frac{\pi}{2} - \theta$ and that $A$ has maximal regularity \cite{KW04,dS}.
\end{proof}

\begin{remark}
\label{rem-power-characteristic}
Note that if $\Omega = \R^n$ and the characteristic $Q^A_2$ is equivalent to the spatial characteristic $Q^{class}_2$ (see Remark \ref{remark-classical-characteristic}), then according to \cite[Theorem 2.7, p. 399]{GCR}, a weight $w \in Q^A_2$ already satisfies $w^\delta \in Q^A_2$ for some $\delta > 1$.
Thus, under the hypotheses of Corollary \ref{cor-bilinear}, Proposition \ref{prop-maximal-regularity} yields that for any $Q^A_2$ weight $w$, $A$ has an $\HI(\Sigma_\theta)$ calculus on $L^2(\Omega,wd\mu)$ for some $\theta < \frac{\pi}{2}$, and thus maximal regularity.
\end{remark}


\begin{remark}
\label{rem-DSY-GY}
Assume that $(\Omega, \dist, \mu)$ is a space of homogeneous type and that the semigroup $(T_t)_{t \geq 0}$ satisfies Gaussian estimates \eqref{equ-GE} and is self-adjoint on $L^2(\Omega)$.
In \cite[Theorem 3.2]{DSY} \cite[Theorem 4.2]{GY}, a functional calculus for the (negative) generator $A$ on weighted $L^p$ spaces is proved.
If $(T_t)_{t \geq 0}$ is in addition markovian, then one can compare these results to ours.
On the one hand, the results in \cite{DSY,GY} are stronger in respect that $L^p$ spaces with exponents $p \in (r_0,\infty)$ (for a certain $r_0 \in [1,2)$) different from $2$ are allowed and that non-holomorphic spectral multipliers $m$ defined on $[0,\infty)$ and satisfying a so-called H\"ormander condition
\[ \|m\|_{\mathcal{H}^s} = \sup_{t  > 0} \|\eta m(t\cdot)\|_{W^\infty_s(\R)} < \infty \]
are admitted.
Here, $s > \frac{d}{2}$, where $d$ is a doubling dimension of $\Omega$, $W^\infty_s(\R)$ stands for the usual Sobolev space and $\eta$ is any $C^\infty_c(0,\infty)$ function different from $0$.
Note that any $\HI(\Sigma_\theta)$ function for any $\theta \in (0,\pi)$ satisfies the H\"ormander condition and $\|m\|_{\mathcal{H}^s} \lesssim_{\theta,s} \|m\|_{\infty,\theta}$.
On the other hand, our result, Corollary \ref{cor-bilinear} is stronger in respect that $Q^A_2$ weights are admitted, whereas in \cite{DSY,GY}, one has to take the smaller class of weights belonging to $Q^{class}_{2/r_0} \subset Q^{class}_{2} \subset Q^A_2$ (cf. Remark \ref{remark-classical-characteristic}).
Indeed, Corollary \ref{cor-bilinear} applies according to Remarks \ref{rem-automatic-infty-domain} and \ref{rem-automatic-pointwise-convergence}.

Note that in our setting, no H\"ormander calculus result on weighted $L^2$ can be available in general, see Theorem \ref{thm-no-Hormander}. \end{remark}

\section{The positive $\frac{\pi}{2}$ angle result, submarkovian case}
\label{sec-submarkovian}

In Theorem \ref{thm-bilinear}, we assumed that $(T_t)_{t \geq 0}$ is a markovian semigroup, so that $T_t(1) = 1$.
It turns out that there is also a version of that theorem for submarkovian semigroups.
So the main objective of this section is to prove Theorem \ref{thm-bilinear-submarkovian} and Corollary \ref{cor-bilinear-submarkovian}.
Let $(T_t)_{t \geq 0}$ be a submarkovian semigroup on $(\Omega,\mu)$.
Define $\Omega' = \Omega \cup \{ \infty \}$ with some exterior cemetery point $\infty \not\in \Omega$.
Define moreover the measure $\mu'(A) = \mu(A \cap \Omega) + \delta_A(\infty)$ on $\Omega'$.
Then we put for $f' = f'|_{\Omega} + f'(\infty) \delta_\infty \in L^\infty(\Omega')$ and $t \geq 0$
\begin{equation}
\label{equ-extension-submarkovian-markovian}
S_t(f')(x) = \begin{cases} T_t(f'|_{\Omega})(x) + f'(\infty)(1-T_t(1))(x) & : \: x \in \Omega \\ f'(\infty) & : \: x = \infty. \end{cases} \end{equation}
It is easy to check that $S_t$ is a positive semigroup\footnote{\thefootnote. We do not need any continuity assumption of $t \mapsto S_t$.}
on $L^\infty(\Omega')$ and that moreover, $S_t(1) = 1$.
Thus $S_t$ are contractions on $L^\infty(\Omega')$.
Note however that $S_t$ is in general no longer self-adjoint or even defined on $L^p(\Omega')$ for $p < \infty$.
If $w : \Omega \to (0,\infty)$ is a weight, we define the characteristic associated with $S_t$ by
\[ \tilde{Q}^A_2(w) = \sup_{t > 0} \esssup_{x \in \Omega'} S_t(w')(x) S_t(w'^{-1})(x), \]
where $w'(x) = w(x)$ for $x \in \Omega$ and $w'(\infty) = 1$.
Note that even if $w$ has support only in $\Omega$, the characteristic $\tilde{Q}^A_2(w)$ is in general larger than $Q^A_2(w)$.
As in the markovian case, we will need the following lemma on differentiability of the Bellman functional.

\begin{lemma}
\label{lem-E-differentiable-submarkovian}
Let $B$ be the Bellman function from Lemma \ref{L: existence and properties of the Bellman function} with domain $\mathcal{D}^{\varepsilon}_Q$.
Let $(\Omega,\mu)$ be a $\sigma$-finite measure space, $(T_t)_{t \geq 0}$ a submarkovian semigroup on $\Omega$ and $f,g \in L^1(\Omega) \cap L^\infty(\Omega)$.
Consider the one point addition $\Omega' = \Omega \cup \{ \infty \}$ and the amplified semigroup $(S_t)_{t \geq 0}$ associated with $(T_t)_{t \geq 0}$, as above.
Let further $v,w \in L^\infty(\Omega')$ with $\varepsilon \leq v,w \leq \frac{1}{\varepsilon}$ and $2 \leq S_tv S_tw \leq Q/2$ for all $t$ belonging to some interval $(t_0,t_1) \subseteq (0,\infty)$.
Assume one of the following conditions.
\begin{enumerate}
\item The measure space is finite, i.e. $\mu(\Omega) < \infty$, or
\item $T_t(v|_\Omega),T_t(w|_\Omega),T_t(1_\Omega)$ belong to $D(A_\infty)$ for $t \in (t_0,t_1)$.
\end{enumerate}
Define the functional
\[ \mathcal{E}(t) = \int_\Omega B(T_tf(x) , T_tg(x), S_tv(x), S_t w(x)) d\mu(x) \quad (t \in (t_0,t_1)) .\]
Then $\mathcal{E}(t)$ is differentiable for $t \in (t_0,t_1)$, and we have
\begin{align}
\label{equ-E-calculated-submarkovian}
- \mathcal{E}'(t)&  = \Re [ \int_\Omega \partial_x B(T_tf, T_tg, S_tv, S_tw)A T_tf + \partial_y B(T_tf, T_tg,S_tv, S_tw)A T_tg \\
& + \partial_r B(T_tf, T_tg, S_tv, S_tw)(AT_t(v|_\Omega) - v(\infty) AT_t1) \nonumber \\
& + \partial_s B(T_tf, T_tg, S_tv, S_tw)(AT_t(w|_\Omega) - w(\infty) AT_t1) d \mu]. \nonumber
\end{align}
\end{lemma}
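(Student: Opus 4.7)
The plan is to closely follow the proof of Lemma \ref{lem-E-differentiable}, since the only new ingredient is the derivative identity
\begin{equation*}
\partial_t S_tv(x) = -AT_t(v|_\Omega)(x) + v(\infty)\,AT_t(1_\Omega)(x) \qquad (x\in\Omega),
\end{equation*}
which comes directly from the definition \eqref{equ-extension-submarkovian-markovian} of $S_t$. Once we establish differentiability of $\mathcal{E}$, the chain rule combined with $\partial_t T_tf = -AT_tf$ and $\partial_t T_tg = -AT_tg$ immediately yields \eqref{equ-E-calculated-submarkovian}. The constraints $\varepsilon \leq v,w \leq \varepsilon^{-1}$ and $2 \leq S_tv\cdot S_tw \leq Q/2$ ensure that $(T_tf,T_tg,S_tv,S_tw)$ stays in $\mathcal{D}_Q^\varepsilon$ throughout the interval, and remains there under a small time perturbation $t\to t+h$.

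Under alternative hypothesis~1 ($\mu(\Omega)<\infty$), all four components of $dB(T_tf,T_tg,S_tv,S_tw)$ lie in $L^2(\Omega)$ by Lemmas \ref{lem-partial5-L1} and \ref{lem-partial1-L2}. On the other hand, the finite-measure assumption forces $v|_\Omega, w|_\Omega, 1_\Omega \in L^\infty(\Omega) \subseteq L^2(\Omega)$, and hence all four difference quotients
\begin{equation*}
\tfrac1h(T_{t+h}f - T_tf),\;\; \tfrac1h(T_{t+h}g - T_tg),\;\; \tfrac1h(S_{t+h}v - S_tv),\;\; \tfrac1h(S_{t+h}w - S_tw)
\end{equation*}
converge in $L^2(\Omega)$, the last two to $-AT_t(v|_\Omega) + v(\infty) AT_t(1_\Omega)$ and its analogue for $w$. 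Integrating the first-order Taylor expansion of $B$ over $\Omega$ and disposing of the $o(\cdot)$-remainders by the same subsequence-plus-dominated-convergence argument as in Lemma \ref{lem-E-differentiable} yields \eqref{equ-E-calculated-submarkovian}.

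Under alternative hypothesis~2, we split the increment of $B$ into four pieces by varying one coordinate at a time, as in the second half of the proof of Lemma \ref{lem-E-differentiable}. The $(x,y)$-block is handled as before using $L^2$-convergence, combined with the fact that $\partial_x B, \partial_y B$ evaluated at the perturbed state $(T_tf,T_tg,S_{t+h}v,S_{t+h}w)$ converge to the corresponding values at $(T_tf,T_tg,S_tv,S_tw)$ in $L^2(\Omega)$ by dominated convergence. The latter relies on the uniform estimate $\|S_{t+h}v - S_tv\|_\infty = O(h)$, and this is precisely where hypothesis~2 enters: since $T_t(v|_\Omega), T_t(w|_\Omega), T_t(1_\Omega) \in D(A_\infty)$, we get the $w^*$-convergence
\begin{equation*}
\tfrac1h(S_{t+h}v - S_tv) \xrightarrow{w^*} -AT_t(v|_\Omega) + v(\infty)\,AT_t(1_\Omega)
\end{equation*}
in $L^\infty(\Omega)$ with uniformly bounded difference quotients. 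For the $r$-block, the mean-value theorem together with Lemma \ref{lem-B-Lipschitz} yields $L^1$-convergence of $\partial_r B(T_tf,T_tg,V(h,\cdot),S_{t+h}w)$ to $\partial_r B(T_tf,T_tg,S_tv,S_tw)$, and pairing this $L^1$-limit with the above $w^*$-limit in $L^\infty$ produces the $\partial_r B\cdot(AT_t(v|_\Omega)-v(\infty)AT_t(1_\Omega))$ term in \eqref{equ-E-calculated-submarkovian}. The $s$-block is identical. The main obstacle is essentially bookkeeping: carefully tracking the cemetery correction $v(\infty)\,AT_t(1_\Omega)$ that distinguishes the submarkovian case from the markovian Lemma \ref{lem-E-differentiable}, and making sure the two contributions $T_t(v|_\Omega)$ and $v(\infty)(1-T_t(1_\Omega))$ in $S_tv$ carry the correct signs after differentiation.
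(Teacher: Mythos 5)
Your proposal is correct and takes essentially the same approach as the paper. The paper's proof is very terse: it records exactly the derivative identity you state for $\frac1h(S_{t+h}v - S_tv)$, notes that the convergence to $1_\Omega\{-AT_t(v|_\Omega) + v(\infty)AT_t(1)\}$ holds in $L^1(\Omega)$ under hypothesis 1 and in $w^*$-$L^\infty(\Omega)$ (with $\|S_{t+h}v - S_tv\|_\infty \leq Ch$) under hypothesis 2, and then says the rest follows the proof of Lemma \ref{lem-E-differentiable} verbatim; your write-up supplies precisely the details that "the rest" entails, correctly adapting the $L^2$/Taylor argument under hypothesis 1 and the coordinate-by-coordinate split with the $L^1$--$w^*$ pairing under hypothesis 2.
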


\begin{proof}
Observe that
\begin{align*}
 \frac{1}{h} (S_{t+h}(v) - S_tv) & = 1_\Omega \left\{ \frac1h (T_{t+h}(v|_\Omega) - T_t(v|_\Omega)) + v(\infty) \frac1h (T_t1 - T_{t+h}1) \right\} \\
& \to 1_\Omega \left\{ - AT_t(v|_\Omega) + v(\infty) AT_t(1)  \right\} \quad (h \to 0+),
\end{align*}
and the same formula for $w$ in place of $v$.
This convergence holds in $L^1(\Omega)$ under the first assumption, and in $L^\infty(\Omega)$ with respect to $w^*$ topology under the second assumption.
In particular, under the second assumption, we have $\|S_{t+h}v - S_tv\|_{L^\infty(\Omega)} \leq Ch \to 0 \quad (h \to 0+)$.
Now the rest of the proof goes the same lines as that of Lemma \ref{lem-E-differentiable}.
\end{proof}

\begin{thm}
\label{thm-bilinear-submarkovian}
Let $(\Omega,\mu)$ be a $\sigma$-finite measure space and $(T_t)_{t \geq 0}$ a submarkovian semigroup.
Assume one of the following alternative conditions.
\begin{enumerate}
\item The measure space is finite, $\mu(\Omega) < \infty$, or
\item For any $t > 0$, $T_t$ maps $L^\infty(\Omega)$ into the domain $D(A_\infty)$ of the $w^*$ $L^\infty$ realization of $A$, or
\item The amplified semigroup $(S_t)_{t \geq 0}$ on $\Omega'$ satisfies the local diffusion from Definition \ref{defi-local-diffusion} and $1_\Omega \in D(A_\infty)$.
\end{enumerate}
In case 2. and 3. above, assume moreover that for any $v \in L^\infty(\Omega)$, $T_tv(x) \to v(x)$ as $t \to 0+$ $\mu$-almost everywhere.
Let moreover $w :\Omega \to (0,\infty)$ be a weight with $\tilde{Q}^A_2(w) < \infty$.
Then there exists a constant $C < \infty$ such that for any $f \in L^2(\Omega,\mu) \cap L^2(\Omega,w d\mu)$ and $g \in L^2(\Omega,\mu) \cap L^2(\Omega, w^{-1} d\mu)$, we have
\[ \int_0^\infty | \langle AT_t f , T_t g \rangle| dt \leq C \tilde{Q}^A_2(w) \| f \|_{L^2(\Omega,wd\mu)} \| g \|_{L^2(\Omega,w^{-1} d\mu)} . \]
\end{thm}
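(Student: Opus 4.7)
The strategy is to reduce everything to the markovian case already treated in Theorem \ref{thm-bilinear}, by working throughout with the amplified semigroup $(S_t)_{t \geq 0}$ on the enlarged measure space $(\Omega',\mu')$. Since $S_t(1) = 1$ and $S_t$ is a positive contraction on $L^\infty(\Omega')$, it is a markovian semigroup in the sense used in Section \ref{sec-bilinear}; however, unlike in the markovian case, the weight-like quantities that will feed into the Bellman function have to be defined on $\Omega'$, whereas the functions $f,g$ live on $\Omega$ and must be extended by zero at the cemetery point.

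First I would truncate the weight so that $\epsilon \leq w \leq \epsilon^{-1}$ (via Lemma \ref{lem-truncated-weight}, with the observation that $\tilde Q^A_2(w_n) \leq \tilde Q^A_2(w)$, which goes through by the same argument since $S_t$ is a positive $L^\infty$-contraction). Put $v = 2w^{-1}$, $\tilde w = w$ on $\Omega$, and extend by $v(\infty) = 2$, $\tilde w(\infty) = 1$; then on $\Omega'$ we still have $\epsilon \leq v,\tilde w \leq \epsilon^{-1}$ and $v\tilde w = 2$ pointwise. Setting $Q = 4\tilde Q^A_2(w)$ and applying Lemma \ref{lem-semigroup-CSU} to $S_t$ gives
\[ 2 \leq S_tv(x)\,S_t\tilde w(x) \leq Q/2 \quad (x \in \Omega',\ t \geq 0). \]
Fixing $f,g \in L^1(\Omega)\cap L^\infty(\Omega)$ and defining
\[ \mathcal{E}(t) = \int_\Omega B(T_tf, T_tg, S_tv, S_t\tilde w)\, d\mu, \]
Lemma \ref{lem-E-differentiable-submarkovian} yields differentiability of $\mathcal{E}$ under any of the three alternative hypotheses, together with the formula \eqref{equ-E-calculated-submarkovian}.

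The core point is to establish the submarkovian analogue of \eqref{equ-quantitative-monotonicity-version-2}, namely
\[ |\langle AT_tf, T_tg\rangle_\Omega| \leq CQ\,[-\mathcal{E}'(t)]. \]
Here I would not reprove the pointwise estimate from scratch: instead, apply Proposition \ref{prop-markovian-estimate} on the $\sigma$-finite space $(\Omega',\mu')$ to the submarkovian operator $T := S_h$, with test functions $f' = T_tf \oplus 0$ and $g' = T_tg \oplus 0$ extended by zero to $\infty$, and weight-variables $S_tv$, $S_t\tilde w$. Dividing by $h$ and letting $h \to 0^+$ produces the generator-form estimate on $\Omega'$. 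The crucial cancellation is that on the cemetery point one has $f'(\infty) = g'(\infty) = 0$ and $S_tv(\infty) = v(\infty)$, $S_t\tilde w(\infty) = \tilde w(\infty)$ constant in $t$, so the point-mass at $\infty$ contributes zero to every term; moreover, the restriction to $\Omega$ of $\tilde A(S_tv)$, where $\tilde A$ is the generator of $S_t$ on $\Omega'$, equals $AT_t(v|_\Omega) - v(\infty)AT_t(1)$, exactly matching the $\partial_r B$ term in \eqref{equ-E-calculated-submarkovian}, and analogously for $\tilde w$. Hence the restricted estimate is precisely the desired $|\langle AT_tf, T_tg\rangle| \leq CQ [-\mathcal{E}'(t)]$.

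Integrating in $t$ over $(0,\infty)$ and invoking the Bellman size bound \eqref{inequality-size} gives
\[ \int_0^\infty |\langle AT_tf, T_tg\rangle|\, dt \lesssim Q\,\liminf_{r \to 0^+} \int_\Omega \bigl(|T_rf|^2(S_rv)^{-1} + |T_rg|^2(S_r\tilde w)^{-1}\bigr) d\mu , \]
and since $S_rv \to v$, $S_r\tilde w \to \tilde w$ pointwise a.e.\ (or along a subsequence, depending on the alternative hypothesis, using Lemma \ref{lem-semigroup-CSU} type arguments) together with $S_rv \cdot S_r\tilde w \leq Q/2 \lesssim \tilde Q^A_2(w)$, the right-hand side is controlled by $\tilde Q^A_2(w)(\|f\|_{L^2(wd\mu)}^2 + \|g\|_{L^2(w^{-1}d\mu)}^2)$. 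The standard polarization $f \leadsto \lambda f$, $g \leadsto \lambda^{-1}g$ and the $L^1 \cap L^\infty$ to $L^2$ approximation used at the end of the proof of Theorem \ref{thm-bilinear} finish the argument. The main obstacle I anticipate is the third technical hypothesis, where to make $S_tv$ land in $D(A_\infty)$ one must time-average the weight via $v^h = h^{-1}\int_0^h S_s v\, ds$ and deduce the required two-sided control $2 \leq S_tv^h \cdot S_t\tilde w^h \leq Q/2$ from the local diffusion property of $(S_t)$; the hypothesis $1_\Omega \in D(A_\infty)$ then ensures that the additional term $v(\infty)AT_t(1)$ arising in \eqref{equ-E-calculated-submarkovian} is well-defined and controllable during the passage $h \to 0^+$.
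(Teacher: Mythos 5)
The key flaw is the claim that $S_h$ is a submarkovian operator on $(\Omega',\mu')$, which is what you would need to invoke Proposition \ref{prop-markovian-estimate} on the amplified space. In fact the paper points out right after \eqref{equ-extension-submarkovian-markovian} that $(S_t)_t$ is in general \emph{not} self-adjoint and \emph{not} even bounded on $L^p(\Omega')$ for $p<\infty$. Concretely, take $g=\delta_\infty$ (so $g|_\Omega=0$, $g(\infty)=1$); then $S_t g(x)=(1-T_t1)(x)$ for $x\in\Omega$ and $S_t g(\infty)=1$, hence $\|S_t g\|_{L^1(\Omega')}=1+\int_\Omega(1-T_t1)\,d\mu>1=\|g\|_{L^1(\Omega')}$ and, for any $f\in L^\infty(\Omega')$ with $f(\infty)=0$, $\langle S_tf,g\rangle_{\Omega'}=0$ while $\langle f,S_tg\rangle_{\Omega'}=\int_\Omega f(1-T_t1)\,d\mu$, which is nonzero in general. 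The machinery behind Proposition \ref{prop-markovian-estimate} --- the Gelfand transform and above all the \emph{symmetric} measure $m_T$ from Proposition \ref{prop-Gelfand-transform} --- requires self-adjointness of $T$, so outsourcing the pointwise inequality to $T:=S_h$ on $\Omega'$ is not available, and your core step $|\langle AT_tf,T_tg\rangle|\le CQ[-\mathcal{E}'(t)]$ is not established.

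The paper closes the gap differently: it stays on $\Omega$, applies Proposition \ref{prop-markovian-estimate} to the genuinely submarkovian $T_t$, uses the decomposition $(\Id-S_t)v'|_\Omega=(\Id-T_t)(v'|_\Omega)+v'(\infty)(T_t(1)-1)$ to split the right-hand side of \eqref{equ-2-proof-thm-bilinear-submarkovian} into a $T_t$-part and a correction, and then observes that the correction carries a \emph{sign}: $T_t(1)-1\le 0$ by the submarkovian property and $\partial_r B,\partial_s B\le 0$ by property \eqref{derivative-sign} of the Bellman function, so the correction is nonnegative and may be discarded. You have in fact identified the relevant decomposition implicitly (when you note that the restriction to $\Omega$ of $\tilde A(S_tv)$ equals $AT_t(v|_\Omega)-v(\infty)AT_t(1)$), but you never invoke \eqref{derivative-sign}, which is exactly the ingredient needed to absorb the extra term. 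Without it the argument does not close.
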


\begin{proof}
We proceed in a similar manner to Theorem \ref{thm-bilinear}.
We let $w'$ be the extended weight $w'(x) = w(x)$ if $x \in \Omega$ and $w'(\infty) = 1$.
Again, due to Lemma \ref{lem-truncated-weight}, it suffices to assume that the weight $w'$ satisfies $\epsilon \leq w' \leq \frac{1}{2 \epsilon}$ and thanks to positivity of $S_t$ and the fact that $S_t(1) = 1$, we will then also have $\epsilon \leq S_t(w'),S_t(w'^{-1}) \leq \frac{1}{2 \epsilon}$.
Assume the alternative assumption 1. or 2. above.
Put $v = 2(w')^{-1}$ and $Q = 4 \tilde{Q}^A_2(w)$, so that $2 \leq S_tv S_tw' \leq Q/2$.
Then for $f,g$ as in the theorem, we define the functional
\[ \mathcal{E}(t) = \int_{\Omega} B(T_tf, T_tg, S_tv, S_t(w')) d\mu . \]
As in the proof of Theorem \ref{thm-bilinear}, it will suffice to prove for $f,g \in L^2(\Omega)$ and $t > 0$ that
\begin{equation}
\label{equ-1-proof-thm-bilinear-submarkovian}
\left| \int_\Omega A T_t f \cdot T_tg d\mu \right| \leq - CQ  \mathcal{E}'(t) .
\end{equation}
Indeed, then we use 
\begin{align*}
\liminf_{r \to 0+} \mathcal{E}(r) & \leq C \liminf_{r \to 0+} \|T_rf\|^2_{L^2(\Omega,S_r(v)^{-1}d\mu)} + \|T_rg\|^2_{L^2(\Omega,S_r(w')^{-1}d\mu)} \\
& \leq C \left( \|f\|^2_{L^2(\Omega,w d\mu)} + \|g\|^2_{L^2(\Omega,w^{-1} d\mu)} \right),
\end{align*}
since $S_r(v)(x) \to v(x) + 0$ and $S_r(w')(x) \to w(x) + 0$ for a.e. $x \in \Omega$.
We have according to \eqref{equ-E-calculated-submarkovian}
\begin{align*}
- \mathcal{E}'(t) & = \Re \int_\Omega \partial_x B(T_tf, T_tg, S_tv,S_t(w')) AT_tf + \partial_y  B(T_tf, T_tg, S_tv,S_t(w')) AT_tg  \\
& + \partial_r B(T_tf,T_tg, S_tv, S_t(w')) (AT_t(v|_\Omega) - v(\infty)AT_t1) \\
& + \partial_s B(T_tf,T_tg, S_tv, S_t(w')) (AT_t(w) - w(\infty)AT_t(1)) d\mu,
\end{align*}
Replace first, as in the proof of Theorem \ref{thm-bilinear}, $T_tf$, $T_tg$, $S_tv$, $S_t(w')$ by generic $f,g \subseteq L^2(\Omega)$ and $v_1,v_2 \in L^\infty(\Omega')$ with $1 \leq v_1(x)v_2(x) \leq Q$ and $\epsilon \leq v_1(x),v_2(x) \leq \frac{1}{\epsilon}$.
Then replacing $A$ by $\frac{1}{t}(\Id - T_t)$, it will then suffice to show for $f,g \in L^\infty(\Omega) \cap L^1(\Omega)$,
\begin{align}
\label{equ-2-proof-thm-bilinear-submarkovian}
\left| \int_\Omega (\Id - T_t)f \cdot g d\mu \right| & \leq CQ \Re \int_\Omega \partial_x B(f, g, v_1, v_2) (\Id - T_t)f + \partial_y  B(f, g, v_1, v_2) (\Id - T_t)g \\
& + \partial_r B(f,g, v_1, v_2) (\Id - S_t)v_1 + \partial_s B(f,g, v_1, v_2) (\Id - S_t)v_2 d\mu . \nonumber
\end{align}
According to \eqref{equ-extension-submarkovian-markovian}, we decompose the right hand side of \eqref{equ-2-proof-thm-bilinear-submarkovian} into
\begin{align*}
& CQ \Re \int_\Omega \partial_x B(f, g, v_1, v_2) (\Id - T_t)f + \partial_y  B(f, g, v_1, v_2) (\Id - T_t)g \\
& + \partial_r B(f,g, v_1, v_2) (\Id - T_t)v_1 + \partial_s B(f,g, v_1, v_2) (\Id - T_t)v_2 d\mu \\
& + CQ \int_{\Omega} \partial_r B(f,g,v_1,v_2) v(\infty)(T_t(1) - 1) + \partial_s B(f,g,v_1,v_2) w(\infty)(T_t(1) - 1) d\mu .
\end{align*}
Note that the first term is indeed minorised by $\left| \langle (\Id - T_t) f , g \rangle \right|$, according to Proposition \ref{prop-markovian-estimate} (note that we had allowed $T$ to be a submarkovian operator in this proposition).
The second term is positive, since $T_t(1) - 1 \leq 0$ and $\partial_r B, \partial_s B \leq 0$, according to property \eqref{derivative-sign}.
Thus, \eqref{equ-2-proof-thm-bilinear-submarkovian} and \eqref{equ-1-proof-thm-bilinear-submarkovian} are shown, and Theorem \ref{thm-bilinear-submarkovian} follows in the case of the alternative assumptions 1 or 2.

In case of the alternative assumption 3., we proceed as in the proof of Theorem \ref{thm-bilinear}.
We define on $\Omega$, $v = \frac1h \int_0^h S_s(w')^{-1} ds  =  \frac1h \int_0^h T_s(w^{-1}) + 1 - T_s(1) ds$ and $\tilde{w} = \frac1h \int_0^h S_r(w') dr = \frac1h \int_0^h T_rw + 1 - T_r(1) dr$.
Note that $v|_\Omega ,\tilde{w}|_\Omega$ belong to $D(A_\infty)$ since $1 \in D(A_\infty)$ by assumption.
As in the proof of Theorem \ref{thm-bilinear}, the local diffusion property implies that
$c 1 \leq S_t v S_t(\tilde{w}) \leq C \tilde{Q}^A_2(w)$.
The rest of the proof goes along the same lines as the end of the proof of Theorem \ref{thm-bilinear}, with the modified functional $\mathcal{E}(t) = \int_\Omega B(T_tf,T_tg, S_tv, S_t\tilde{w}) d\mu$ as above.
\end{proof}

\begin{cor}
\label{cor-bilinear-submarkovian}
Let $(T_t)_{t \geq 0}$ be a submarkovian semigroup on some $\sigma$-finite measure space $(\Omega,\mu)$.
Assume one of the following alternative conditions.
\begin{enumerate}
\item The measure space is finite, $\mu(\Omega) < \infty$, or
\item For any $t > 0$, $T_t$ maps $L^\infty(\Omega)$ into the domain $D(A_\infty)$ of the $w^*$ $L^\infty$ realization of $A$, or
\item The amplified semigroup $(S_t)_{t \geq 0}$ on $\Omega'$ satisfies the local diffusion from Definition \ref{defi-local-diffusion} and $1_\Omega \in D(A_\infty)$.
\end{enumerate}
In case 2. and 3. above, assume moreover that for any $v \in L^\infty(\Omega)$, $T_tv(x) \to v(x)$ as $t \to 0+$ $\mu$-almost everywhere.
Let $J > 1$.
Then there exists a constant $C_J$ depending only on $J$ such that for any weight $w : \Omega \to (0,\infty)$ with $\tilde{Q}^A_2(w) < \infty$, we have
\[ \|m(A)\|_{L^2(\Omega,wd\mu) \to L^2(\Omega,wd\mu)} \leq C_J \tilde{Q}^A_2(w) \left( |m(0)| + \|m\|_{\HI(\Sigma_{\frac{\pi}{2}};J)} \right) . \]
\end{cor}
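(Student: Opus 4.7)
The plan is to mirror exactly the derivation of Corollary \ref{cor-bilinear} from Theorem \ref{thm-bilinear}, replacing the markovian input by its submarkovian companion. By Theorem \ref{thm-bilinear-submarkovian}, under the stated alternative hypotheses we have the bilinear estimate
\[ \int_0^\infty \left| \langle A T_t f, T_t g \rangle \right| dt \leq C \tilde{Q}^A_2(w) \|f\|_{L^2(\Omega,wd\mu)} \|g\|_{L^2(\Omega,w^{-1}d\mu)} \]
for $f \in L^2(\Omega,\mu) \cap L^2(\Omega,wd\mu)$ and $g \in L^2(\Omega,\mu) \cap L^2(\Omega,w^{-1}d\mu)$.

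Next, since $(T_t)_{t \geq 0}$ is submarkovian, $T_t$ is self-adjoint on $L^2(\Omega,\mu)$ and lattice positive. Together with the semigroup law, this gives $\langle AT_tf, T_tg \rangle = \langle AT_{2t}f, g \rangle$, and a change of variable $s = 2t$ converts the bilinear bound above into
\[ \int_0^\infty \left| \langle A T_s f, g \rangle \right| ds \leq 2 C \tilde{Q}^A_2(w) \|f\|_{L^2(\Omega,wd\mu)} \|g\|_{L^2(\Omega,w^{-1}d\mu)}, \]
for the same class of test functions, with the duality bracket $\langle f,g \rangle = \int_\Omega fg \, d\mu$ identifying $L^2(\Omega,w^{-1}d\mu)$ as the dual of $L^2(\Omega,wd\mu)$.

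Finally, we feed this estimate into Proposition \ref{prop-bilinear-to-calculus-weight} (together with Remark \ref{rem-bilinear-to-calculus-weight} to account for the $|m(0)|$ term coming from the kernel of $A_Y$). The hypotheses of that proposition are met because $A$ is positive self-adjoint on $L^2(\Omega,\mu)$ (as $(T_t)_{t \geq 0}$ is submarkovian in the sense of Definition \ref{defi-submarkovian}). Inspecting the proof of Proposition \ref{prop-bilinear-to-calculus}, the bilinear constant enters linearly in the resulting functional calculus estimate, so we obtain
\[ \|m(A)\|_{L^2(\Omega,wd\mu) \to L^2(\Omega,wd\mu)} \leq C_J \tilde{Q}^A_2(w) \left( |m(0)| + \|m\|_{\HI(\Sigma_{\pi/2};J)} \right) \]
for all $J > 1$ and $m \in \HI(\Sigma_{\pi/2};J)$, which is the claim. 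There is no genuine obstacle here beyond bookkeeping: the content of the corollary is Theorem \ref{thm-bilinear-submarkovian}, and the passage to $\HI(\Sigma_{\pi/2};J)$ calculus is abstract.
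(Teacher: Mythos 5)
Your proposal is correct and takes essentially the same route as the paper: apply Theorem \ref{thm-bilinear-submarkovian}, use self-adjointness and the semigroup law to rewrite $\int_0^\infty |\langle AT_t f, T_t g\rangle|\,dt = \tfrac12\int_0^\infty |\langle AT_s f, g\rangle|\,ds$, and then feed the resulting bilinear bound into Proposition \ref{prop-bilinear-to-calculus-weight} (via Remark \ref{rem-bilinear-to-calculus-weight}). The paper simply says to copy the proof of Corollary \ref{cor-bilinear} with Theorem \ref{thm-bilinear-submarkovian} in place of Theorem \ref{thm-bilinear}; your write-up spells out exactly those steps, including the correct observation that the $\tilde{Q}^A_2(w)$ dependence propagates linearly through Proposition \ref{prop-bilinear-to-calculus}.
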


\begin{proof}
Copy the proof of Corollary \ref{cor-bilinear}, Theorem \ref{thm-bilinear-submarkovian} replacing Theorem \ref{thm-bilinear}.
\end{proof}

\begin{prop}
\label{prop-submarkovian-maximal-regularity}
Let $(T_t)_{t \geq 0}$ be a submarkovian semigroup on $(\Omega,\mu)$.
Assume one of the following alternative conditions.
\begin{enumerate}
\item The measure space is finite, $\mu(\Omega) < \infty$, or
\item For any $t > 0$, $T_t$ maps $L^\infty(\Omega)$ into the domain $D(A_\infty)$ of the $w^*$ $L^\infty$ realization of $A$, or
\item The amplified semigroup $(S_t)_{t \geq 0}$ on $\Omega'$ satisfies the local diffusion from Definition \ref{defi-local-diffusion} and $1_\Omega \in D(A_\infty)$.
\end{enumerate}
In case 2. and 3. above, assume moreover that for any $v \in L^\infty(\Omega)$, $T_tv(x) \to v(x)$ as $t \to 0+$ $\mu$-almost everywhere.
Assume that the weight $w$ satisfies $w^\delta \in \tilde{Q}^A_2$ for some $\delta > 1$.
Then $A$ has an $\HI(\Sigma_\theta)$ calculus on $L^2(\Omega,wd\mu)$ for some $\theta < \frac{\pi}{2}$ and in particular, the analytic semigroup $T_z$ extends boundedly to $L^2(\Omega,wd\mu)$ for $|\arg z| < \frac{\pi}{2} - \theta$, and $A$ has maximal regularity on $L^2(\Omega,wd\mu)$.
\end{prop}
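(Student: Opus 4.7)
The plan is to mimic the proof of Proposition \ref{prop-maximal-regularity} line by line, replacing every invocation of the markovian machinery by its submarkovian counterpart. First, I would apply Corollary \ref{cor-bilinear-submarkovian} to the weight $w^\delta$ (which lies in $\tilde{Q}^A_2$ by hypothesis); this yields that $A$ is $\frac{\pi}{2}$-sectorial on $L^2(\Omega,w^\delta d\mu)$ and admits an $\HI(\Sigma_\sigma)$ calculus for every $\sigma > \frac{\pi}{2}$, with constant controlled by $\tilde{Q}^A_2(w^\delta)$. At the other endpoint, since $(T_t)_{t\ge 0}$ is submarkovian, it is self-adjoint on $L^2(\Omega,\mu)$, so $A$ is $0$-sectorial there and has an $\HI(\Sigma_\sigma)$ calculus for every $\sigma > 0$ by the spectral theorem.

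Next, I would interpolate between these two endpoints. The identity $L^2(\Omega,w d\mu) = [L^2(\Omega,\mu), L^2(\Omega, w^\delta d\mu)]_{1/\delta}$ holds by \cite[5.5.3 Theorem]{BeL}. Stein's interpolation theorem \cite{Ste56} then transfers sectoriality, giving that $A$ is $\left(\frac{1}{\delta} \cdot \frac{\pi}{2}\right)$-sectorial on $L^2(\Omega,w d\mu)$, and the complex interpolation of $\HI$ calculi gives a bounded $\HI(\Sigma_\sigma)$ calculus on $L^2(\Omega,w d\mu)$ for every $\sigma > (1 - \tfrac{1}{\delta}) \cdot 0 + \tfrac{1}{\delta} \cdot \tfrac{\pi}{2} = \tfrac{\pi}{2\delta}$ (in particular for some $\sigma < \frac{\pi}{2}$, which is already the conclusion we need after an angle reduction).

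To bridge the gap between the sectoriality angle $\theta_0 = \frac{\pi}{2\delta}$ and the a priori larger $\HI$ angle coming directly from interpolation (which is $\frac{\pi}{2}$), I would apply the method of imaginary powers as in \cite[Proof of Proposition 5.8]{JLMX}: the boundedness of $A^{it}$ on $L^2(\Omega,w d\mu)$ combined with the already-known sectoriality angle $\theta_0$ allows one to push the $\HI$ calculus angle down to any $\theta > \theta_0 < \frac{\pi}{2}$. Finally, once $A$ has a bounded $\HI(\Sigma_\theta)$ calculus with $\theta < \frac{\pi}{2}$, the classical consequences recorded in \cite{KW04,dS} apply: the analytic semigroup $(T_z)$ extends boundedly to $L^2(\Omega,w d\mu)$ on the sector $|\arg z| < \frac{\pi}{2} - \theta$, and $A$ has maximal regularity on $L^2(\Omega,w d\mu)$.

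The only real subtlety, and thus the main point to be careful about, is verifying that the technical hypotheses imposed in Corollary \ref{cor-bilinear-submarkovian} (one of the three alternative conditions plus the a.e.\ pointwise convergence $T_tv(x) \to v(x)$ when relevant) are preserved on passing from the weight $w$ to $w^\delta$. These conditions concern only the semigroup $(T_t)_{t\ge 0}$ itself and not the weight, so they transfer verbatim; this is what makes the interpolation step legitimate. Beyond this verification, all the heavy lifting has already been done in the preceding sections, and the argument is essentially an abstract interpolation/angle-reduction package.
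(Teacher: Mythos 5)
Your proposal is correct and matches the paper's approach exactly: the paper's own proof of this proposition consists of the single sentence that it is the same as the proof of Proposition \ref{prop-maximal-regularity}, and your line-by-line adaptation (invoking Corollary \ref{cor-bilinear-submarkovian} in place of Corollary \ref{cor-bilinear}, with $\tilde{Q}^A_2(w^\delta)$ in place of $Q^A_2(w^\delta)$, then interpolating against the unweighted self-adjoint endpoint, then reducing the angle via imaginary powers) is precisely that. Your closing observation that the technical hypotheses concern only the semigroup and hence transfer without change from $w$ to $w^\delta$ is the right thing to note. One small slip worth flagging: in the middle paragraph you first assert that complex interpolation of $\HI$ calculi already yields an $\HI(\Sigma_\sigma)$ calculus for $\sigma > \pi/(2\delta)$, which is not what interpolation gives — interpolation of $\HI$ calculi produces the \emph{larger} of the two endpoint angles, here $\max(0,\pi/2) = \pi/2$. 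You implicitly correct this in the next paragraph (calling $\pi/2$ the \emph{a priori} angle from interpolation and then invoking the imaginary-powers method of \cite[Proof of Proposition 5.8]{JLMX} to bring it down to $\pi/(2\delta)$), so the argument as a whole is sound; just strike the premature claim in the middle.
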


\begin{proof}
The proof is the same as that of Proposition \ref{prop-maximal-regularity}.
\end{proof}

\section{Negative results: tensor powers of the two-point semigroup}
\label{sec-negative-results}

Our result from Corollary \ref{cor-bilinear} showed that any markovian semigroup satisfying technical conditions has an $\HI(\Sigma_\theta)$ calculus on weighted $L^2$ space for any $\theta > \frac{\pi}{2}$ and any $Q^A_2$ weight $w$, and that moreover, the dependence of the norm of this $\HI$ calculus is linear in the $Q^A_2(w)$ constant.
The question arises whether the angle $\theta$ can be lowered in this result.
In this section, we show the partial negative result in Theorem \ref{thm-no-Hormander} below.
We recall here the definition of the H\"ormander spectral multiplier class for a parameter $s > 0$:
\begin{equation}
\label{equ-def-Hormander}
\mathcal{H}^s = \left\{ m \in L^1_{\mathrm{loc}}(\R_+) : \: \|m\|_{\mathcal{H}^s} = \sup_{t > 0} \| \eta m(t \cdot)\|_{W^\infty_s(\R)} < \infty \right\},
\end{equation}
where $W^2_\infty(\R)$ stands for the usual Sobolev space and $\eta$ is any non-zero cut-off function from $C^\infty_c(0,\infty)$.
Note that $\mathcal{H}^s$ functional calculus is related to $H^\infty(\Sigma_\theta)$ functional calculus for angles $\theta \to 0$ according to \cite[Theorem 4.10]{CDMcIY}.
Namely, if a sectorial operator $A$ has a $\mathcal{H}^s$ functional calculus for some fixed $s > 0$, then it has a $\HI(\Sigma_\theta)$ calculus for any $\theta \in (0, \pi)$, and for any $s' > s$ there is a constant $C > 0$ such that
\begin{equation}
\label{equ-Hor-410}
\|m(A)\| \leq C \theta^{-s'} \|m\|_{H^\infty(\Sigma_\theta)} \quad (m \in H^\infty_0(\Sigma_\theta),\: \theta \in (0,\pi)).
\end{equation}
Conversely, \eqref{equ-Hor-410} implies that $A$ has a $\mathcal{H}^s$ calculus for any $s > s'$.
Thus, Theorem \ref{thm-no-Hormander} below can be read as a failure of the weighted functional calculus from Corollary \ref{cor-bilinear} when the angle $\theta$ is close to $0$.

\begin{thm}
\label{thm-no-Hormander}
There exists a markovian semigroup $T_t = \exp(-tA)$ on a probability space and a $Q^A_2$ weight $w$ such that $A$ does not have a H\"ormander $\mathcal{H}^s$ calculus for any $s > 0$ on weighted $L^2(w)$ space, that is, for no $s > 0$ and no $C > 0$ the estimate
\begin{equation}
\label{equ-1-thm-no-Hormander}
\|m(A)\|_{L^2(\Omega,w d\mu) \to L^2(\Omega,w d\mu)} \leq C \left( \|m\|_{\mathcal{H}^s} + |m(0)| \right) \quad (m \in \mathcal{H}^s)
\end{equation}
holds.
In fact, \eqref{equ-1-thm-no-Hormander} does not even hold for $m(\lambda) = \exp(-\lambda z)$ with $z \in \Sigma_{\frac{\pi}{2}}$.
\end{thm}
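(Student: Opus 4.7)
I would base the counterexample on the infinite tensor product of the two-point markovian semigroup from Proposition~\ref{prop-toy-example-estimate}. Take $(\Omega,\mu) = \{a,b\}^{\mathbb{N}}$ with the product uniform probability measure and $T_t = \bigotimes_{i \geq 1} T_t^{(i)}$, whose negative generator $A = \sum_{i \geq 1} A_i$ is markovian on the probability space $(\Omega,\mu)$. Fix a sequence $\beta_i > 0$ with $\sum_i \beta_i^2 < \infty$ yet with $\sum_{i \leq N}\beta_i$ growing like a positive power of $N$ (the concrete choice $\beta_i = i^{-3/4}$ will do), and define the product weight $w(x) = \prod_{i \geq 1} w_i(x_i)$ with $w_i(a) = e^{\beta_i}$, $w_i(b) = e^{-\beta_i}$. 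An elementary two-point calculation gives $Q^{A_i}_2(w_i) = 1 + \sinh^2 \beta_i$, so by the product structure $Q^A_2(w) \leq \prod_i (1 + \sinh^2 \beta_i) \leq \exp\bigl(\sum_i \sinh^2 \beta_i\bigr) < \infty$, and $w$ is a genuine $Q^A_2$ weight on the probability space.

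The crucial step is to estimate $\|e^{-zA}\|_{L^2(\Omega,wd\mu)}$ from below along $z = z_x := x + i\pi/2$ as $x \to 0^+$. Conjugating by $\sqrt{w}$ identifies this norm with the operator norm on $L^2(\Omega,\mu)$ of the Hilbert tensor product $\bigotimes_i \sqrt{w_i}\, e^{-zA_i}\, (\sqrt{w_i})^{-1}$, so
\[
\|e^{-zA}\|_{L^2(\Omega,wd\mu)} \;=\; \prod_{i \geq 1} \|e^{-zA_i}\|_{L^2(\{a,b\},w_i d\mu_i)}.
\]
On each two-point factor, $e^{-zA_i}$ is an explicit $2 \times 2$ matrix and $\|e^{-zA_i}\|^2_{L^2(w_id\mu_i)}$ is the larger root $\lambda_+^{(i)}(z)$ of the quadratic with trace $(1+a^2) + (1+a)^2 \sinh^2 \beta_i$ and determinant $a^2$, where $a = e^{-2x}$. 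A direct asymptotic analysis as $x \to 0^+$ yields the two-regime behaviour
\[
\log \lambda_+^{(i)}(z_x) \;\asymp\; \begin{cases} \beta_i^2/x & \text{if } \beta_i \ll x, \\ 2\beta_i & \text{if } \beta_i \gg x, \end{cases}
\]
the first regime being the perturbative expansion around $\beta_i = 0$ and the second regime being a direct computation from the closed form (at $a = 1$ one checks that $\lambda_+^{(i)} = e^{2\beta_i}$ exactly). With $\beta_i = i^{-3/4}$ the crossover sits at $i \sim x^{-4/3}$, and summing both ranges produces $\sum_i \log \lambda_+^{(i)}(z_x) \gtrsim x^{-1/3}$, hence
\[
\|e^{-z_x A}\|_{L^2(\Omega,wd\mu)} \;\gtrsim\; \exp\bigl(c\, x^{-1/3}\bigr)
\]
for some $c > 0$ and all sufficiently small $x > 0$.

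On the other hand, differentiating $\eta(\lambda) e^{-zt\lambda}$ against a test function $\eta \in C^\infty_c(0,\infty)$ supported in some $[a_0,b_0] \subset (0,\infty)$ yields the pointwise bound $\bigl|\partial_\lambda^k[\eta(\lambda) e^{-zt\lambda}]\bigr| \lesssim_k (1 + |z|t)^k e^{-\mathrm{Re}(z)\, t a_0}$, and optimising in $t > 0$ produces the standard $\|e^{-z\,\cdot}\|_{\mathcal{H}^s} \lesssim_s (|z|/\mathrm{Re}\, z)^s$; along $z = z_x$ this reads $\|m_{z_x}\|_{\mathcal{H}^s} \lesssim_s x^{-s}$. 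Combining both estimates gives
\[
\frac{\|m_{z_x}(A)\|_{L^2(\Omega,wd\mu)}}{|m_{z_x}(0)| + \|m_{z_x}\|_{\mathcal{H}^s}} \;\gtrsim\; \frac{\exp(c\, x^{-1/3})}{1 + x^{-s}} \;\longrightarrow\; \infty
\]
as $x \to 0^+$, for every fixed $s > 0$, which contradicts \eqref{equ-1-thm-no-Hormander} and proves the stronger final assertion of the theorem.

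The main obstacle is the two-regime asymptotic analysis of $\lambda_+^{(i)}(z_x)$: the naive perturbation in $\beta_i$ around $\beta_i = 0$ yields only the first regime and breaks down precisely at the crossover $\beta_i \sim x$, so establishing the $\log \lambda_+^{(i)} \gtrsim 2\beta_i$ lower bound for $\beta_i \gg x$ requires retaining the exact closed form of the larger singular value and analysing its large-$\beta_i$ behaviour by hand. Subsidiary technical points are the identification of $\|e^{-zA}\|_{L^2(wd\mu)}$ with $\prod_i \|e^{-zA_i}\|_{L^2(w_i d\mu_i)}$ in the infinite tensor product (routine after the $\sqrt{w}$-conjugation and Hilbert tensor product considerations) and the observation that if the infinite product were to diverge, then $e^{-z_x A}$ is already unbounded on $L^2(\Omega,wd\mu)$ and the H\"ormander estimate fails for trivial reasons.
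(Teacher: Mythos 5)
The proposal is correct and reaches the same conclusion, but by a genuinely different construction. You work on a single infinite tensor product $\{a,b\}^{\mathbb{N}}$ with per-factor weights $\beta_i = i^{-3/4}$ decaying just slowly enough that $\sum_i \beta_i^2 < \infty$ (so $Q^A_2(w) < \infty$) while the partial sums $\sum_{i\le N}\beta_i$ grow. Your asymptotics for the weighted norm of the two-point operator at $z_x = x + i\pi/2$ — the quadratic with trace $(1+a^2) + (1+a)^2\sinh^2\beta_i$ and determinant $a^2$, giving $\lambda_+ = e^{2\beta}$ at $a = 1$, and the two-regime behaviour $\log\lambda_+^{(i)}\asymp \beta_i^2/x$ versus $\asymp \beta_i$ with crossover at $\beta_i\sim x$ — check out, and the resulting lower bound $\|e^{-z_x A}\|_{L^2(w)}\gtrsim e^{c x^{-1/3}}$ against $\|e^{-z_x\cdot}\|_{\mathcal{H}^s}\lesssim x^{-s}$ does give the contradiction.

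The paper instead keeps $|z| = r$ small and fixed and sends $\arg z = \phi \to \pi/2$; for each $N$ it takes the $N$-fold tensor with the \emph{identical} weight $\epsilon_k^{(N)} = 1/\sqrt{N}$ in every factor, extracts from Proposition~\ref{prop-two-point-heart-estimate} and Lemma~\ref{lem-arctan31} the uniform lower bound $\log\|T_z^{(N)}\|_{L^2(w^{(N)})} \gtrsim \tan^2\phi$ for $N \ge \tan^2\phi$, and then assembles the final probability space as a \emph{countable disjoint union} $\bigsqcup_N \Omega_0^N$ of these finite pieces. The trade-off is clear: the disjoint-sum construction works entirely with finite tensor products, so the factorisation of the weighted operator norm and of the $Q^A_2$ characteristic (Lemma~\ref{lem-tensor-power-extension}) applies verbatim and no convergence issues arise, whereas your single infinite product is conceptually leaner but forces you to address a few technical points you currently pass over: (i) that the infinite product weight $w=\prod_i w_i$ is a.e.\ finite and positive (Kolmogorov three-series, using $\sum\beta_i^2<\infty$) and that $\int w\,d\mu = \prod\cosh\beta_i < \infty$ so the constant lies in $L^2(w\,d\mu)$; (ii) that the characteristic estimate $Q^A_2(w)\le\prod_i Q^{A_i}_2(w_i)$ survives the passage to infinitely many factors (the inequality direction you use does, because $\operatorname{esssup}$ of a product over independent coordinates splits); and (iii) that the lower bound $\|e^{-zA}\|_{L^2(w)}\ge\prod_{i\le n}\|e^{-zA_i}\|_{L^2(w_i)}$ holds for every finite $n$ by testing on $f_1\otimes\cdots\otimes f_n\otimes 1\otimes 1\otimes\cdots$ — the equality you state is more than you need and not obviously true in the infinite Hilbert tensor product without pinning down an admissible sequence of unit vectors. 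None of these gaps is fatal, but the paper's choice of a direct sum of finite products is precisely what makes them disappear. A further small difference: your two-regime sum requires separating $\beta_i\ll x$ from $\beta_i\gg x$ and handling the finitely many $i$ where $\beta_i$ is not small; the paper's uniform choice $1/\sqrt{N}$ within each block avoids any crossover analysis.
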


We remind that the above theorem is in contrast with the positive result in Remark \ref{rem-DSY-GY} for self-adjoint semigroups on spaces of homogeneous type satisfying Gaussian estimates and a restricted weight class.

The semigroup exhibiting the counter-example for the statement \eqref{equ-1-thm-no-Hormander} is based on the two-point semigroup that we have already encountered in Proposition \ref{prop-toy-example-estimate}, together with a tensor power extension of the semigroup.
So we consider a two point space $\Omega_0 = \{ a , b \}$ equipped with counting measure $\mu_0 = \delta_a + \delta_b$.
Consider moreover the operator 
\[ \Gc = \begin{bmatrix} 1 & -1 \\ -1 & 1 \end{bmatrix} \co L^2(\Omega_0,\mu_0) \to L^2(\Omega_0,\mu_0) \]
which generates the markovian semigroup
\begin{equation}
\label{equ-two-point-semigroup}
\exp(-t\Gc) = \frac12 \begin{bmatrix} 1 + e^{-2t} & 1 - e^{-2t} \\ 1 - e^{-2t} & 1 + e^{-2t} \end{bmatrix}.
\end{equation}

\begin{lemma}
\label{lem-tensor-power-extension}
For any $n \in \N$, the above semigroup admits a tensor power extension to a markovian semigroup in the following way.
We let $w_1 =(u_1,v_1),\ldots,w_n = (u_n,v_n)$ be weights on $\Omega_0$.
Then we put
\begin{align*}
\Omega & = \underset{n \text{ factors}}{\underbrace{\Omega_0 \times \Omega_0 \times \ldots \times \Omega_0}} = \Omega_0^n, \\
\mu & = \underset{n \text{ factors}}{\underbrace{\mu_0 \otimes \mu_0 \otimes \ldots \otimes \mu_0}} = \mu_0^{\otimes n}, \\
T_t & =  \underset{n \text{ factors}}{\underbrace{e^{-t\Gc} \otimes e^{-t\Gc} \otimes \ldots \otimes e^{-t\Gc}}}, \\
w & = w_1 \otimes w_2 \otimes \ldots \otimes w_n,
\end{align*}
where $T_t(\sum_k f_1^{(k)} \otimes \ldots \otimes f_n^{(k)}) = \sum_k (e^{-t\Gc} (f_1^{(k)})) \otimes \ldots \otimes (e^{-t\Gc}(f_n^{(k)}))$ and $w(x_1,\ldots,x_n) = w_1(x_1) \cdot \ldots \cdot w_n(x_n)$.
We have that $T_t$ is a markovian semigroup on $(\Omega,\mu)$.
Moreover,
\begin{align}
\|T_z\|_{L^2(\Omega,w d\mu) \to L^2(\Omega,w d\mu)}& \geq \prod_{k = 1}^n \|e^{-z\Gc}\|_{L^2(\Omega_0,w_kd\mu_0) \to L^2(\Omega_0,w_k d\mu_0)}, \label{equ-1-prop-tensor-power-extension}\\
Q^A_2(w) & = \prod_{k = 1}^n Q^{\Gc}_2(w_k) = \prod_{k = 1}^n \frac14 \left(2 + \frac{u_k}{v_k} + \frac{v_k}{u_k} \right), \label{equ-2-prop-tensor-power-extension}
\end{align}
where $Q^A_2$ stands for the weight characteristics with respect to the markovian semigroup $T_t$.
\end{lemma}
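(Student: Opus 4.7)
My approach splits the lemma into the three stated conclusions, handled in increasing order of nontriviality.

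For the assertion that $T_t$ is markovian, the four defining conditions from Definition~\ref{defi-submarkovian} transfer from $e^{-t\Gc}$ through the tensor structure. Contractivity on $L^p(\Omega)$ for $p=2$ is the Hilbert-space identity $\|S_1 \otimes \cdots \otimes S_n\| = \prod_k \|S_k\|$; for $p = 1$ and $p = \infty$ it follows from positivity preservation together with $T_t(1_\Omega) = 1_\Omega$, and the remaining $p \in (1,\infty)$ comes by Riesz--Thorin. Self-adjointness on $L^2(\Omega,\mu)$ and positivity are inherited from the single-factor case on simple tensors and then by linearity/density. Finally, $T_t(1_\Omega) = (e^{-t\Gc} 1_{\Omega_0})^{\otimes n} = 1_{\Omega_0}^{\otimes n} = 1_\Omega$.

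For the norm inequality \eqref{equ-1-prop-tensor-power-extension}, given $\varepsilon > 0$ I would choose, for each $k$, a unit vector $f_k \in L^2(\Omega_0, w_k d\mu_0)$ with $\|e^{-z\Gc}f_k\|_{L^2(\Omega_0,w_k d\mu_0)} \geq \|e^{-z\Gc}\|_{L^2(w_k) \to L^2(w_k)} - \varepsilon$, and set $f = f_1 \otimes \cdots \otimes f_n$. Since $\mu$ and $w$ both factorize and $\Omega_0$ is finite (so Fubini is trivial), one gets $\|f\|_{L^2(\Omega,w d\mu)}^2 = \prod_k \|f_k\|^2 = 1$ and $\|T_z f\|_{L^2(\Omega,w d\mu)}^2 = \prod_k \|e^{-z\Gc} f_k\|^2$. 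Letting $\varepsilon \to 0$ yields the inequality.

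The characteristic identity \eqref{equ-2-prop-tensor-power-extension} hinges on one key algebraic observation: the scalar function $\phi_k(t, x_k) := e^{-t\Gc}(w_k)(x_k) \cdot e^{-t\Gc}(w_k^{-1})(x_k)$ is in fact independent of $x_k \in \Omega_0$. Indeed, with $s := e^{-2t} \in [0,1]$ and $P_k := (u_k+v_k)/2$, $Q_k := (u_k-v_k)/2$, and analogous $P_k', Q_k'$ defined from $w_k^{-1}$, formula \eqref{equ-two-point-semigroup} gives $e^{-t\Gc}(w_k)(a) = P_k + sQ_k$, $e^{-t\Gc}(w_k)(b) = P_k - sQ_k$, with the same pattern for $w_k^{-1}$. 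A short direct verification shows $P_k Q_k' + P_k' Q_k = 0$, so the linear-in-$s$ cross terms cancel and one obtains
\[ \phi_k(t, a) = \phi_k(t, b) = \tfrac{1}{4}\bigl[(2 + A_k) - s^2(A_k - 2)\bigr], \qquad A_k := u_k/v_k + v_k/u_k \geq 2. \]
This is monotonically increasing in $t$ (since $s^2$ decreases in $t$ and $A_k - 2 \geq 0$), with supremum $\frac{1}{4}(2 + u_k/v_k + v_k/u_k)$ attained as $t \to \infty$, giving the single-factor formula $Q^{\Gc}_2(w_k) = \frac{1}{4}(2 + u_k/v_k + v_k/u_k)$. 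For the tensor product, $T_t(w)(x) T_t(w^{-1})(x) = \prod_k \phi_k(t, x_k)$ is constant in $x$, and since each factor is monotone in $t$ with supremum at $t \to \infty$, the supremum commutes with the product to yield $Q^A_2(w) = \prod_k Q^{\Gc}_2(w_k)$.

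The only genuine obstacle --- and the reason the formula for $Q^A_2(w)$ is a clean product rather than merely an upper bound --- is the cancellation $P_k Q_k' + P_k' Q_k = 0$ that forces $\phi_k(t, \cdot)$ to be constant on $\Omega_0$. Without this symmetry of the two-point semigroup, the supremum over $t$ of an $x$-dependent product of positive quantities could exceed the product of individual suprema, and only the direction $Q^A_2(w) \leq \prod_k Q^{\Gc}_2(w_k)$ would be automatic.
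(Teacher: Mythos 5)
Your proof is correct and follows essentially the same route as the paper's: both verify markovianity by combining positivity, $T_t(1)=1$, self-adjointness, and interpolation; both prove \eqref{equ-1-prop-tensor-power-extension} by evaluating $T_z$ on a pure tensor of (near-)extremal vectors; and for \eqref{equ-2-prop-tensor-power-extension} both carry out the same algebra showing that the $e^{-2t}$ term cancels so that $e^{-t\Gc}(w_k)\cdot e^{-t\Gc}(w_k^{-1}) = \tfrac14\bigl(2+A_k - e^{-4t}(A_k-2)\bigr)$ is constant on $\Omega_0$ and monotone in $t$. Your $P,Q$-reparametrization and the explicit identification of the cancellation $P_kQ_k'+P_k'Q_k=0$ as the key structural fact are a slightly cleaner way of packaging exactly the computation the paper carries out coordinate-by-coordinate.
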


\begin{proof}
It is easy to check that $t \mapsto T_t$ satisfies the semigroup property.
Since all of the four entries of $\exp(-t\Gc)$ in \eqref{equ-two-point-semigroup} are positive for any $t \geq 0$, all the entries of $T_t$ are positive too, and $T_t$ is positive.
Moreover, $T_t(1 \otimes 1 \otimes \ldots \otimes 1) = e^{-t\Gc}(1) \otimes \ldots \otimes e^{-t\Gc}(1) = 1 \otimes \ldots \otimes 1$, so that $T_t$ is $L^\infty$ contractive.
Self-adjointness of $T_t$ is again easy, so that $T_t$ is $L^1$ contractive and finally $L^p$ contractive for all $p \in [1,\infty]$.
We infer that $T_t$ is a markovian semigroup on $(\Omega,\mu)$.
For the two claimed estimates, we observe that for a normalised function $f_k \in L^2(\Omega,w_k d\mu_0)$ such that $\|e^{-z\Gc}\|_{L^2(w_k) \to L^2(w_k)} = \|e^{-z\Gc}f_k\|_{L^2(w_k)}$, we have
\[ \|T_z(f_1 \otimes \ldots \otimes f_k)\|_{L^2(w)} = \prod_{k = 1}^n \|e^{-z\Gc}f_k\|_{L^2(w_k)} = \prod_{k=1}^n \|e^{-z\Gc}\|_{L^2(w_k)} \]
and $\|f_1 \otimes \ldots \otimes f_k\|_{L^2(w)} = \prod_{k = 1}^n \|f_k\|_{L^2(w_k)} = 1$.
Thus, \eqref{equ-1-prop-tensor-power-extension} follows.
For \eqref{equ-2-prop-tensor-power-extension}, we note
\begin{align*}
Q^A_2(w) & = \sup_{t > 0} \|T_tw T_t(w^{-1})\|_\infty \\
& = \sup_{t > 0} \|e^{-t\Gc}w_1 \otimes \ldots \otimes e^{-t\Gc} w_n \cdot e^{-t\Gc}(w_1^{-1}) \otimes \ldots \otimes e^{-t\Gc}(w_n^{-1}) \|_\infty \\
& = \sup_{t > 0} \prod_{k = 1}^n \|e^{-t\Gc}(w_k) e^{-t\Gc}(w_k^{-1})\|_\infty .
\end{align*}
We claim that each of the $n$ $L^\infty$ norms above attains its supremum for $t = \infty$, so that we can swap $\sup_{t > 0}$ and $\prod_{k = 1}^n$ above and thus deduce the first equality in \eqref{equ-2-prop-tensor-power-extension}.
Indeed, if $w_0 = (u,v) \in \R^2$ with $u,v > 0$, then $e^{-t\Gc}w_0 = \frac12 \left(u+v+e^{-2t}(u-v),u+v+e^{-2t}(v-u)\right)$, so that the first of the two coordinates of $e^{-t\Gc}w_0 \cdot e^{-t\Gc}(w_0^{-1})$ equals
\begin{align*}
& \frac14 \left(u+v+e^{-2t}(u-v)\right)\left(\frac{1}{u} + \frac{1}{v} + e^{-2t}\left(\frac{1}{u} - \frac{1}{v}\right)\right) \\
& =  \frac14 \left(2 + \frac{u}{v} + \frac{v}{u} + e^{-2t}(u-v)\left(\frac{1}{u} + \frac{1}{v}\right) + e^{-2t}\left(\frac{1}{u} - \frac{1}{v}\right)(u+v) + e^{-4t}(u-v)\left(\frac{1}{u} - \frac{1}{v}\right)\right) \\
& = \frac14 \left( 2 + \frac{u}{v} + \frac{v}{u} + e^{-4t}\underset{\leq 0}{\underbrace{\left(2 - \frac{u}{v} - \frac{v}{u}\right)}} \right).
\end{align*}
The last quantity clearly attains its sup at $t = \infty$ where its value is $\frac14 (2 + \frac{u}{v} + \frac{v}{u})$.
Now the same calculation with exchanged roles of $u$ and $v$ works for the second coordinate.
We deduce the first equality in \eqref{equ-2-prop-tensor-power-extension}, and in fact also the second equality.
\end{proof}

The tensor power extension of Lemma \ref{lem-tensor-power-extension} will be used to bootstrap lower estimates for the two-point semigroup on weighted $L^2$ space, to a lower estimate of the same kind, but with a better constant.
We shall now establish such lower bounds on $\|e^{-z \Gc}\|_{L^2(w) \to L^2(w)}$ (in terms of $Q^{\Gc}_2(w) = \frac14 \left( 2 + \frac{u}{v} + \frac{v}{u} \right)$ ).

\begin{prop}
\label{prop-two-point-heart-estimate}
Consider the markovian semigroup $e^{-t\Gc}$ on $(\Omega_0,\mu_0)$ from \eqref{equ-two-point-semigroup}.
Let $w_0 = (1,v^2)$ be a weight on $\Omega_0$ with $v = 1 + \epsi$ for some small $\epsi > 0$.
Let $z \in \C_+ \backslash [0,\infty)$.
Then we have the following asymptotic formula for weighted norm of the analytic semigroup:
\begin{equation}
\label{equ-prop-two-point-heart-estimate}
\| T_z \|_{L^2(\Omega_0,w_0 d\mu_0) \to L^2(\Omega_0,w_0 d\mu_0)} = 1 +\frac{1}{16}  \left(4 |1 - \gamma|^2 + \frac12 d_\gamma\right) \epsi^2 + o(\epsi^2).
\end{equation}
where $\gamma = e^{-2z}$ and $\displaystyle d_\gamma = \frac{|1-\gamma|^4 + (1 - |\gamma|^2)^2 + 4 \Im(\gamma)^2}{(1-|\gamma|^2)^2}$.
\end{prop}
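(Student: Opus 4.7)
The plan is to reduce the computation of the weighted norm to the top singular value of a concrete $2\times 2$ matrix, and then Taylor expand in $\epsilon$. First, identify $L^2(\Omega_0, w_0 d\mu_0)$ with $\ell^2_2$ via the isometry $U : f \mapsto (f(a), v f(b))$. Under this identification, conjugation transforms the semigroup matrix of $T_z = e^{-z\Gc}$ into
\[
M := U T_z U^{-1} = \frac12 \begin{pmatrix} 1+\gamma & v^{-1}(1-\gamma) \\ v(1-\gamma) & 1+\gamma \end{pmatrix},
\]
so that $\|T_z\|_{L^2(w_0) \to L^2(w_0)} = \|M\|_{\ell^2 \to \ell^2} = \sqrt{\lambda_{\max}(M^*M)}$. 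Compute $M^*M = \begin{pmatrix} a & b \\ \bar b & c \end{pmatrix}$ explicitly with
\[
a = \tfrac14(|1+\gamma|^2 + v^2|1-\gamma|^2), \quad c = \tfrac14(|1+\gamma|^2 + v^{-2}|1-\gamma|^2),
\]
\[
b = \tfrac14\bigl(v^{-1}\overline{(1+\gamma)}(1-\gamma) + v (1+\gamma)\overline{(1-\gamma)}\bigr),
\]
and use the closed-form top eigenvalue $\lambda_{\max} = \frac{a+c}{2} + \sqrt{\bigl(\frac{a-c}{2}\bigr)^2 + |b|^2}$.

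Next, Taylor expand each of the three quantities $\frac{a+c}{2}$, $\bigl(\frac{a-c}{2}\bigr)^2$, and $|b|^2$ in $\epsilon$ via $v^2 + v^{-2} = 2 + 4\epsilon^2 + O(\epsilon^3)$ and $v^2 - v^{-2} = 4\epsilon + O(\epsilon^2)$. Two identities do the main simplification: the parallelogram law $|1+\gamma|^2 + |1-\gamma|^2 = 2(1 + |\gamma|^2)$ collapses the leading term of $\frac{a+c}{2}$ to $\tfrac12(1+|\gamma|^2)$, and the algebraic identity $|u|^2 + \Re(u^2) = 2(1-|\gamma|^2)^2$ applied to $u := \overline{(1+\gamma)}(1-\gamma)$ (together with $|u|^2 = (1-|\gamma|^2)^2 + 4\Im(\gamma)^2$) collapses the leading term of $|b|^2$ to $\tfrac14(1-|\gamma|^2)^2$. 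This is the step where $d_\gamma$ should appear naturally: after dividing the $\epsilon^2$-correction inside the square root by $(1-|\gamma|^2)^2$, one recognizes the combination $\frac{|1-\gamma|^4 + (1-|\gamma|^2)^2 + 4\Im(\gamma)^2}{(1-|\gamma|^2)^2} = d_\gamma$.

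Finally, extract $\sqrt{\lambda_{\max}}$. Since $z \in \C_+ \setminus [0,\infty)$ means $\Re z > 0$, one has $|\gamma| = e^{-2\Re z} < 1$, so the leading-order term $\tfrac12 (1-|\gamma|^2)$ inside the inner square root is strictly positive, allowing the expansion $\sqrt{1 + \epsilon^2 X} = 1 + \tfrac12 \epsilon^2 X + o(\epsilon^2)$ to be applied safely; a second application of $\sqrt{1 + \epsilon^2 Y} = 1 + \tfrac12 \epsilon^2 Y + o(\epsilon^2)$ converts $\lambda_{\max}$ into $\|T_z\|$ and delivers the claimed formula.

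The main obstacle is bookkeeping: the $\epsilon^2$ coefficient of $\|T_z\|$ is built from three distinct $\epsilon^2$ contributions (from $\tfrac{a+c}{2}$, from $(v^2 - v^{-2})^2$ inside $\bigl(\frac{a-c}{2}\bigr)^2$, and from $(v^2 + v^{-2})$ inside $|b|^2$), and one must verify they combine exactly into $\tfrac{1}{16}(4|1-\gamma|^2 + \tfrac12 d_\gamma)$. The delicate point is that the three pre-square-root $\epsilon^2$ terms carry different prefactors of $|1-\gamma|^2$, $|1-\gamma|^4$ and $|u|^2$, and only after the division by $(1-|\gamma|^2)^2$ and the outer square root do they align into the compact form involving $d_\gamma$; care with the Landau symbols $o(\epsilon^2)$ vs.\ $O(\epsilon^3)$ is required since $d_\gamma$ may blow up as $|\gamma| \to 1$ (i.e.\ $\Re z \to 0$), but for fixed $z \in \C_+ \setminus [0,\infty)$ all quantities are bounded and the asymptotic is uniform on compact subsets of $\C_+ \setminus [0,\infty)$.
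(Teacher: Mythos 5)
Your proposal matches the paper's proof step for step: both conjugate $T_z$ by the weight isometry $M_{\sqrt{w_0}}$ to reduce to the operator norm of a concrete $2\times 2$ matrix on unweighted $\ell^2$, compute $S^*S$, invoke the closed-form largest eigenvalue of a $2\times 2$ positive matrix, Taylor expand in $\epsi$ via the same expansions of $v^2\pm v^{-2}$ and $(v\pm v^{-1})^2$, and finish with a final square-root expansion. The only cosmetic difference is that you streamline the algebra via the parallelogram law and the identity $|u|^2+\Re(u^2)=2(1-|\gamma|^2)^2$, whereas the paper expands $\beta$ directly and uses $2|1+\gamma|^2+2|1-\gamma|^2+4(1-|\gamma|^2)=8$ to collapse the zeroth-order term.
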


\begin{proof}
First we rewrite the norm on weighted $L^2$ space into a norm on unweighted $L^2$ space.
To this end, consider the multiplication operator $M_{\sqrt{w_0}} \co L^2(\Omega_0,\mu_0) \to L^2(\Omega_0,\mu_0), \: (f_1,f_2) \mapsto (f_1,v f_2)$.
Then for any operator $T$ on $L^2(\Omega_0,w_0 d\mu_0)$, we have
\begin{align*}
\| T \|_{L^2(\Omega_0,w_0 d\mu_0) \to L^2(\Omega_0,w_0 d\mu_0)} & = \sup_{f \neq 0} \frac{\| T f\|_{L^2(\Omega_0,w_0 d\mu_0)}}{\|f\|_{L^2(\Omega_0,w_0 d\mu_0)}} \\ 
& = \sup_{f \neq 0} \frac{ \| M_{\sqrt{w_0}} T f \|_{L^2(\Omega_0,\mu_0)}}{\|M_{\sqrt{w_0}} f\|_{L^2(\Omega_0,\mu_0)}} \\
& = \sup_{f \neq 0} \frac{ \| M_{\sqrt{w_0}} T (M_{\sqrt{w_0}})^{-1} f \|_{L^2(\Omega_0,\mu_0)}}{\|f\|_{L^2(\Omega_0,\mu_0)}}, 
\end{align*}
so that $\|T\|_{L^2(\Omega_0,w_0 d\mu_0) \to L^2(\Omega_0,w_0 d\mu_0)} = \| M_{\sqrt{w_0}} T (M_{\sqrt{w_0}})^{-1} f \|_{L^2(\Omega_0,\mu_0) \to L^2(\Omega_0,\mu_0)}.$
Write $S = M_{\sqrt{w_0}} T (M_{\sqrt{w_0}})^{-1}$.
The norm of $S$ on unweighted space is well-known to be $\sqrt{m}$, where $m$ denotes the maximal eigenvalue of $S^*S$.
We have with $T = T_z$ and $\gamma = e^{-2z}$
\[ S = \begin{bmatrix} 1 & 0 \\ 0 & v \end{bmatrix} \cdot \frac12 \begin{bmatrix} 1 + \gamma & 1 - \gamma \\ 1 - \gamma & 1 + \gamma \end{bmatrix} \begin{bmatrix} 1 & 0 \\ 0 & \frac{1}{v} \end{bmatrix} = \frac12 \begin{bmatrix} 1 + \gamma & \frac{1}{v} ( 1 - \gamma ) \\ v ( 1 - \gamma ) & 1 + \gamma \end{bmatrix} ,\]
so
\[ S^* S = \frac14 \begin{bmatrix} |1+\gamma|^2 + v^2 |1-\gamma|^2 & \beta \\ \ovl{\beta} & | 1 + \gamma |^2 + \frac{1}{v^2} | 1 - \gamma |^2 \end{bmatrix}, \]
with $\beta = \frac{1}{v} (1 + \ovl{\gamma})(1-\gamma) + v (1 - \ovl{\gamma})(1+ \gamma) = (v+\frac{1}{v})(1-|\gamma|^2) + 2i(v-\frac{1}{v}) \Im(\gamma)$.
The eigenvalues of a positive matrix $\begin{pmatrix} \alpha & \beta \\ \ovl{\beta} & \delta \end{pmatrix}$ are
\[  \frac12 \left(\alpha + \delta \pm \sqrt{ ( \alpha - \delta)^2 + 4 |\beta|^2 }\right), \]
so that we obtain with choice of sign ``$+$'' here that
\begin{align*}
& \|S^* S\|_{L^2(\Omega_0,\mu_0) \to L^2(\Omega_0,\mu_0)} \\
& = \frac18 \left( 2 |1 + \gamma|^2 + (v^2 + \frac{1}{v^2}) |1 - \gamma|^2 + \sqrt{ (v^2 - \frac{1}{v^2})^2 |1 - \gamma|^4 + 4 (v + \frac{1}{v})^2(1 - |\gamma|^2)^2 + 4 \cdot 4 (v-\frac{1}{v})^2 \Im(\gamma)^2 } \right) \\
& = \frac18 \left( 2 |1+\gamma|^2 + (2 + 4\epsi^2 + o(\epsi^2))| 1 - \gamma|^2 \right. \\
& \left. + \sqrt{(16 \epsi^2 + o(\epsi^2))|1 - \gamma|^4 + 4(4+4 \epsi^2 + o(\epsi^2))(1- |\gamma|^2)^2 + 16\cdot (4 \epsi^2  + o(\epsi^2)) \Im(\gamma)^2 } \right) \\
& = \frac18 \left( 2 |1 + \gamma|^2 + 2 |1 - \gamma|^2 + (4\epsi^2 + o(\epsi^2))|1-\gamma|^2 \right. \\
& \left. + \sqrt{16 (1 - |\gamma|^2)^2 + (\epsi^2 + o(\epsi^2))(16 |1 - \gamma|^4 + 16 ( 1 - |\gamma|^2)^2 + 16 \cdot 4 \Im(\gamma)^2) } \right),
\end{align*}
where we have used the following elementary Taylor series expansions (recall $v = 1 + \epsi$)
\begin{align*}
v^2 + \frac{1}{v^2} & = 2 + 4 \epsi^2 + o(\epsi^2), \\
(v^2 - \frac{1}{v^2})^2 & = 16 \epsi^2 + o(\epsi^2), \\
(v + \frac{1}{v})^2 & = 4 + 4 \epsi^2 + o(\epsi^2), \\
(v - \frac{1}{v})^2 & = 4 \epsi^2 + o(\epsi^2).
\end{align*}
Write in short 
\[d_\gamma = \frac{|1-\gamma|^4 + (1 - |\gamma|^2)^2 + 4 \Im(\gamma)^2}{(1-|\gamma|^2)^2} .\]
Then the above calculation continues with
\begin{align*}
\|S^*S\| & = \frac18 \left( 2 |1 + \gamma|^2 + 2 |1 - \gamma|^2 + (4\epsi^2 + o(\epsi^2))|1-\gamma|^2  + 4 (1-|\gamma|^2) \sqrt{1 + (\epsi^2 + o(\epsi^2))d_\gamma} \right) \\
& = \frac18 \left( 2 |1 + \gamma|^2 + 2 |1 - \gamma|^2 + (4 \epsi^2 + o(\epsi^2))|1-\gamma|^2 + 4(1 - |\gamma|^2)\left(1 + \frac12 d_\gamma (\epsi^2 + o(\epsi^2))\right) \right) \\
& = 1 + \frac18 \left(4 |1 - \gamma|^2 + \frac12 d_\gamma\right)(\epsi^2 + o(\epsi^2)),
\end{align*}
where we have used that $2 | 1 + \gamma|^2 + 2 | 1 - \gamma|^2 + 4(1-|\gamma|^2) = 8$.
Take now the square root of $\|S^*S\|$ and use (again) the asymptotics $\sqrt{1+x} = 1 + \frac12 x + o(x)$ to obtain \eqref{equ-prop-two-point-heart-estimate}.
\end{proof}

We now use \eqref{equ-prop-two-point-heart-estimate} to produce a lower estimate taylored for an application to Theorem \ref{thm-no-Hormander}.

\begin{lemma}
\label{lem-arctan31}
Consider again the two-point semigroup from \eqref{equ-two-point-semigroup}, and a weight $w = (1,v^2)$ with $v = 1 + \epsi$.
Then we have for $z = r e^{i \phi}$ with $r > 0$ and $\phi \in \left( - \frac{\pi}{2}, \frac{\pi}{2} \right)$,
\begin{equation}
\label{equ-1-lem-arctan31}
\|e^{-z \Gc}\|_{L^2(w) \to L^2(w)} = 1 + \frac{1}{32}\left(1 + \tan^2(\phi) + o_r(1)\right) \epsi^2 +o_\epsi(\epsi^2). 
\end{equation}
\end{lemma}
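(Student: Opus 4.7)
The plan is to derive \eqref{equ-1-lem-arctan31} by substituting $z = re^{i\phi}$ into the exact asymptotic formula \eqref{equ-prop-two-point-heart-estimate} of Proposition \ref{prop-two-point-heart-estimate} and then expanding the $\gamma$-dependent coefficient
\[
\mathcal{K}(\gamma) := \frac{1}{16}\left(4|1-\gamma|^2 + \tfrac12 d_\gamma\right),\qquad
d_\gamma = \frac{|1-\gamma|^4 + (1-|\gamma|^2)^2 + 4\Im(\gamma)^2}{(1-|\gamma|^2)^2},
\]
as $r\to 0^+$ (with $\phi$ held fixed in $(-\tfrac{\pi}{2},\tfrac{\pi}{2})$). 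This is the $o_r(1)$ regime announced in the lemma, and the assertion to prove is simply $\mathcal{K}(\gamma) = \tfrac{1}{32}(1+\tan^2\phi) + o_r(1)$.

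The key step is to Taylor-expand $\gamma = e^{-2z}$ and its derived quantities. Writing $-2z = -2r\cos\phi - 2ir\sin\phi$, I will compute
\[
1-\gamma = 2re^{i\phi} + O(r^2),\quad |1-\gamma|^2 = 4r^2 + O(r^3),
\]
\[
1-|\gamma|^2 = 1 - e^{-4r\cos\phi} = 4r\cos\phi + O(r^2),\quad (1-|\gamma|^2)^2 = 16r^2\cos^2\phi\,(1+O(r)),
\]
\[
\Im(\gamma) = -2r\sin\phi + O(r^2),\quad 4\Im(\gamma)^2 = 16r^2\sin^2\phi + O(r^3),
\]
and $|1-\gamma|^4 = O(r^4)$. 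Summing, the numerator of $d_\gamma$ satisfies
\[
|1-\gamma|^4 + (1-|\gamma|^2)^2 + 4\Im(\gamma)^2 = 16r^2(\cos^2\phi + \sin^2\phi) + O(r^3) = 16r^2\,(1+O(r)),
\]
whence
\[
d_\gamma = \frac{16r^2(1+O(r))}{16r^2\cos^2\phi\,(1+O(r))} = \frac{1}{\cos^2\phi}\,(1+o_r(1)).
\]

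Combining these expansions gives $4|1-\gamma|^2 = 16 r^2 + O(r^3) = o_r(1)$ and $\tfrac12 d_\gamma = \tfrac{1}{2\cos^2\phi}(1 + o_r(1))$, so that
\[
\mathcal{K}(\gamma) = \tfrac{1}{16}\bigl(o_r(1) + \tfrac{1}{2\cos^2\phi}(1+o_r(1))\bigr) = \tfrac{1}{32\cos^2\phi}\,(1+o_r(1)) = \tfrac{1}{32}\bigl(1+\tan^2\phi + o_r(1)\bigr),
\]
using the identity $1/\cos^2\phi = 1 + \tan^2\phi$. Plugging this into \eqref{equ-prop-two-point-heart-estimate} yields the desired formula \eqref{equ-1-lem-arctan31}, with the $o_\epsi(\epsi^2)$ remainder coming directly from the $o(\epsi^2)$ term in Proposition \ref{prop-two-point-heart-estimate}.

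The computation is essentially bookkeeping of Taylor expansions, so there is no substantive obstacle; the only point that warrants care is that the $o_r(1)$ is pointwise in $\phi$ (the implicit constants degenerate as $\phi \to \pm \tfrac{\pi}{2}$, since $\cos\phi \to 0$), which is consistent with the stated conclusion because $\tan^2\phi$ also blows up in that limit. Aside from this, verifying that $(1-|\gamma|^2)^2$ and $|1-\gamma|^2$ share the same leading order $r^2$ is what makes the ratio $d_\gamma$ non-degenerate and produces the explicit $\tan^2\phi$ term.
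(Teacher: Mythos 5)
Your proof is correct and follows essentially the same route as the paper's: substitute $\gamma = e^{-2re^{i\phi}}$ into Proposition \ref{prop-two-point-heart-estimate}, expand $|1-\gamma|^2$, $1-|\gamma|^2$, and $\Im(\gamma)$ to leading order in $r$, and conclude $d_\gamma = \frac{1}{\cos^2\phi}(1+o_r(1)) = 1+\tan^2\phi+o_r(1)$ while absorbing $4|1-\gamma|^2 = 16r^2 + O(r^3)$ into the $o_r(1)$ term. The remark that the $o_r(1)$ is only pointwise in $\phi$ is a sensible observation, consistent with the paper's use of the estimate (where $r$ is eventually fixed small and then $\phi \to \pi/2$).
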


\begin{proof}
Since we shall have both asymptotics in $r$ and $\epsi$, we distinguish the little $o$ notations $o_r$ and $o_\epsi$.
In the course of the proof we shall pick $z = re^{i\phi}$, choose $r$ sufficiently close to $0$ and use \eqref{equ-prop-two-point-heart-estimate}.
Namely, we have for $r$ close to $0$, $\gamma = e^{-2 r e^{i\phi}} = 1 - 2r e^{i\phi} + o_r(r)$.
Then $|1 - \gamma| = 2r + o_r(r)$ and $1 - |\gamma|^2 = 1 - |1 - 2r e^{i\phi} + o_r(r)|^2 = 1 - (1 - 4r \cos(\phi) + o_r(r) ) = 4r \cos(\phi) + o_r(r)$, as well as $\Im(\gamma) = -2r \sin(\phi) + o_r(r)$.
This yields
\begin{align*}
d_\gamma & = \frac{16r^4 + 16 r^2 \cos^2(\phi) + 16 r^2 \sin^2(\phi) + o_r(r^2)}{16 r^2 \cos^2(\phi) + o_r(r^2)} \\
& = \frac{r^2}{ \cos^2(\phi)} + 1 + \tan^2(\phi) + o_r(1) = 1 + \tan^2(\phi) + o_r(1).
\end{align*}
Thus, in view of \eqref{equ-prop-two-point-heart-estimate}, we have
\begin{align*}
\|e^{-z \Gc}\|_{L^2(w) \to L^2(w)} & = 1 + \frac{1}{16}\left(16 r^2 + \frac12 \cdot\left( 1 + \tan^2(\phi) \right)  + o_r(1)\right) \epsi^2 + o_\epsi(\epsi^2) \\
& = 1 + \frac{1}{32}\left(1 + \tan^2(\phi) + o_r(1)\right) \epsi^2 +o_\epsi(\epsi^2). 
\end{align*}
\end{proof}

Putting the above intermediate results together, we are now in a position to prove the main result of this section.

\begin{proof}[of Theorem \ref{thm-no-Hormander}]
We take the spectral multiplier $m(\lambda) = \exp(-\lambda z)$ with $z  = re^{i\phi}$, where $r > 0$ (resp. $\phi \in (0,\frac{\pi}{2})$) is sufficiently close to $0$ (resp. to $\frac{\pi}{2}$) to be determined later.
The counterexample will be the direct sum of tensor powers of the two-point semigroup as in Lemma \ref{lem-tensor-power-extension}.
We pick a sequence of weights $w_n = \left(1, (1 + \epsi_n)^2 \right)$.
We note first that there exists a constant $C > 0$ such that for any $\epsi \geq 0$, we have $Q^{\Gc}_2(w) \leq 1 + C \epsi^2$ for the weight $w = \left(1 , (1 + \epsi)^2 \right)$.
Indeed, we already know this if $\epsi \leq \epsi_0$ for a certain $0 < \epsi_0 \ll 1$,  from the asymptotics $Q^{\Gc}_2(w) = 1 + \epsi^2 + o(\epsi^2)$.
Then for $\epsi \geq \epsi_0$, we have $Q^{\Gc}_2(w) = \frac14 \left( 2 + (1 + \epsi)^2 + \frac{1}{(1 + \epsi)^2}\right) \leq \frac14 \left( 2 + 1 + 2 \epsi + \epsi^2 + 1 \right) \leq  \frac14 \left( 2 + 1 + \frac{2}{\epsi_0} \epsi^2 + \epsi^2 + 1 \right) \leq 1 + \frac14\left(\frac{2}{\epsi_0} + 1 \right) \epsi^2$.
From this, we deduce for the tensor power weight $w = w_1 \otimes \ldots \otimes w_n$ from Lemma \ref{lem-tensor-power-extension} that 
\[\log(Q^A_2(w)) \leq \sum_{k = 1}^n \log(1 + C \epsi_k^2) \leq \sum_{k = 1}^n C \epsi_k^2 .
\]
Now assume that for a given $N \in \N$ the sequence $(\epsi_k)_{k \in \N} = (\epsi^{(N)}_k)_{k \in \N}$ satisfies $\epsi^{(N)}_k = \begin{cases} \frac{1}{\sqrt{N}} & k \leq N \\ 0 & k > N \end{cases}$.
Then according to the above, the associated weight $w^{(N)} = w_1^{(N)} \otimes \ldots \otimes w_N^{(N)}$ satisfies 
\begin{equation}
\label{equ-3-proof-thm-no-Hormander}
Q^A_2(w) \leq e^{C \sum_{k = 1}^N (\epsi_k^{(N)})^2} \leq Q,
\end{equation}
where $Q$ can be chosen independent of $N$.
Take now the semigroup $T_t = T_t^{(N)}$ associated to $\Omega_0^N$, $\mu_0^{\otimes N}$ and $w^{(N)}$ as in Lemma \ref{lem-tensor-power-extension}.
We estimate with this lemma together with \eqref{equ-1-lem-arctan31}
\[ \|T_z^{(N)}\|_{L^2(w^{(N)}) \to L^2(w^{(N)})} \geq \prod_{k = 1}^N \left(1 + \frac{1}{32} (  1 + \tan^2(\phi) + o_r(1) ) \left(\epsi_k^{(N)}\right)^2 + o_\epsi\left( \left(\epsi_k^{(N)}\right)^2 \right) \right). \]
Choose $r$ sufficiently close to $0$ to have $o_r(1) \geq -1$ here above.
Moreover, for given $z = re^{i\phi}$, choose $N$ so large that $\tan^2(\phi) \left(\epsi_k^{(N)}\right)^2 \leq 1$, i.e. $N \geq \tan^2(\phi)$, and that $o_\epsi((\epsi_k^{(N)})^2)$ is in force.
Then we obtain

\begin{align}
\log\left(\|T_z^{(N)}\|_{L^2(w^{(N)}) \to L^2(w^{(N)})} \right) & \geq \sum_{k = 1}^N \log \left(1 + \frac{1}{32} \tan^2(\phi) \left( \epsi_k^{(N)} \right)^2 + o_\epsi\left( \left( \epsi_k^{(N)} \right)^2 \right) \right) \nonumber \\
& \gtrsim \sum_{k = 1}^N \frac{1}{32} \tan^2(\phi) \left(\epsi_k^{(N)}\right)^2 \nonumber \\
& = \frac{1}{32} \tan^2(\phi). \label{equ-1-proof-thm-no-Hormander}
\end{align}

Take now the direct sum $\Omega = \bigsqcup_{N \in \N} \Omega_0^N$ equipped with the sum measure $\mu = \bigoplus_{N \in \N} \frac{1}{2^{2N}} \mu_0^{\otimes N}$.
Note that each $\mu_0^{\otimes N}$ has total mass $2^N$, so that $\mu$ is a probability measure.
Take moreover the weight $w = \bigoplus_{N \in \N} w^{(N)}$, that is, on each $N$-component, we have  $w|_{\Omega_0^N} = w^{(N)}$.
We also take the direct sum semigroup $T_tf(x_N) = T_t^{(N)}(f|_{\Omega_0^N})(x_N)$, where $T_t^{(N)}$ is as above and $f \co \bigsqcup_{N \in \N} \Omega_0^N \to \C, \: x_N \mapsto f(x_N)$.
It is not hard to check that $(T_t)_t$ is again markovian and that with respect to this semigroup, we have $Q^A_2(w) \leq \sup_{N \in \N} Q^{A^{(N)}}_2(w^{(N)}) \leq Q < \infty$ according to \eqref{equ-3-proof-thm-no-Hormander}.
Moreover, for given $z = r e^{i \phi}$ with fixed $r$ sufficiently close to $0$ as above, but $\phi$ varying and approaching $\frac{\pi}{2}$, we have for $N \geq \tan^2(\phi)$
\begin{equation}
\label{equ-2-proof-thm-no-Hormander}
\|T_z\|_{L^2(w) \to L^2(w)} \geq \|T_z^{(N)}\|_{L^2(w^{(N)}) \to L^2(w^{(N)})} \overset{\eqref{equ-1-proof-thm-no-Hormander}}{\geq} e^{c \tan^2(\phi)},
\end{equation}
where $c$ does not depend on $\phi$.

Now if this markovian semigroup $T_t$ had a weighted H\"ormander calculus on $L^2(\Omega,w d\mu)$, we would have
$ \|T_z\|_{L^2(w) \to L^2(w)} \leq C(w) \|\lambda \mapsto e^{- \lambda z} \|_{\mathcal{H}^s} $, and the last quantity is bounded by $(\frac{\pi}{2} - |\phi|)^{-s} \cong \left|\tan(\phi)\right|^s$ according to \cite[Lemma 3.9 (1)]{KrW3}.
But no inequality $e^{c \tan^2(\phi)} \lesssim \left| \tan(\phi) \right|^s$ can hold for all $\phi \in \left( - \frac{\pi}{2}, \frac{\pi}{2} \right)$,
so that we get a contradiction from \eqref{equ-2-proof-thm-no-Hormander}.
\end{proof}

\begin{remark}
Comparing Corollary \ref{cor-bilinear} and Theorem \ref{thm-no-Hormander}, the question arises if for a certain $\theta$ between $0$ and $\frac{\pi}{2}$ there is a $H^\infty(\Sigma_\theta)$ calculus result for any markovian semigroup on $L^2(\Omega,wd\mu)$ and any $Q^A_2$ weight $w$.
We say that $\theta \in (0,\pi)$ is a universal angle for weighted $L^2$ calculus if for any markovian semigroup satisfying the technical hypotheses of Corollary \ref{cor-bilinear} and any $Q^A_2$ weight $w$, there is a constant $C_w > 0$ such that
\[ \|m(A)\|_{L^2(\Omega,w d\mu) \to L^2(\Omega,w d\mu)} \leq C_w \|m\|_{H^\infty(\Sigma_\theta)} \quad (m \in H^\infty_0(\Sigma_\theta)) . \]
According to Corollary \ref{cor-bilinear}, any $\theta > \frac{\pi}{2}$ is a universal angle for weighted $L^2$ calculus.
We conjecture that no angle $\theta < \frac{\pi}{2}$ is a universal angle.
\end{remark}

\subsubsection*{Acknowledgment}
The second author acknowledges support by the grant ANR-17-CE40-0021 of the French National Research Agency ANR (project Front) and by the grant ANR-18-CE40-0021 (project HASCON). All authors acknowledge support by the European Research Council grant DLV-862402 (CHRiSHarMa). The third author acknowledges support by the Alexander von Humboldt Stiftung.


\vspace{0.2cm}
\footnotesize{
\noindent Komla Domelevo\\
Institut f{\"u}r Mathematik\\
Universit{\"a}t W{\"u}rzburg\\
Emil-Fischer-Stra{\ss}e 40\\
97074 W{\"u}rzburg\\
komla.domelevo@mathematik.uni-wuerzburg.de\\

\noindent Christoph Kriegler\\
Laboratoire de Math\'ematiques Blaise Pascal (UMR 6620)\\
Universit\'e Clermont Auvergne\\
63 000 Clermont-Ferrand, France \\
URL: \href{http://math.univ-bpclermont.fr/~kriegler/indexenglish.html}{http://math.univ-bpclermont.fr/{\raise.17ex\hbox{$\scriptstyle\sim$}}\hspace{-0.1cm} kriegler/indexenglish.html}\\
christoph.kriegler@uca.fr\\

\noindent Stefanie Petermichl\\
Institut f{\"u}r Mathematik\\
Universit{\"a}t W{\"u}rzburg\\
Emil-Fischer-Stra{\ss}e 40\\
97074 W{\"u}rzburg\\
stefanie.petermichl@mathematik.uni-wuerzburg.de
}

\end{document}